\documentclass[11pt]{article}

\usepackage[english]{babel}

\usepackage[letterpaper,top=2cm,bottom=2cm,left=3cm,right=3cm,marginparwidth=2cm]{geometry}

\usepackage{bm} 
\usepackage{amsmath}
\usepackage{graphicx}
\usepackage[colorlinks=true, allcolors=black]{hyperref}
\newcommand{\w}{{\mathrm{w}}}
\usepackage{amsthm}
\usepackage{amsmath}
\usepackage{amssymb}
\usepackage{mathrsfs}
\usepackage{graphicx}
\usepackage{caption}
\usepackage{subcaption}
 \usepackage[colorlinks=true]{hyperref}
\usepackage{ucs}
\usepackage{yfonts}
\usepackage{bbm}
\usepackage{geometry}
\usepackage{graphicx}
\usepackage{enumerate}
\usepackage{xifthen}
\usepackage{upgreek} 
\usepackage{stmaryrd} 
\newcommand{\esp}[2]{%
\ifthenelse{\equal{#2}{}}{{E}\left[ #1 \right] }{{E}^{#2}\left[ #1 \right] }%
}

\newcommand{\espk}[2]{%
\ifthenelse{\equal{#2}{}}{\mathbb{E}^{\dag}\left[ #1 \right] }{\mathbb{E}^{\dag}_{#2}\left[ #1 \right] }%
}

\usepackage{mathrsfs}
\usepackage{bm} 
\usepackage{mathabx} 
\usepackage{geometry}
\geometry{hmargin=1.8cm,vmargin=1.6cm}

\newtheorem{theo}{Theorem}
\newtheorem{lem}[theo]{Lemma}
\newtheorem{prop}[theo]{Proposition}
\newtheorem{def1}[theo]{Definition}
\newtheorem{cor}[theo]{Corollary}

\newtheoremstyle{named}{}{}{\itshape}{}{\bfseries}{.}{.5em}{\thmnote{#3}}
\theoremstyle{named}

\setcounter{tocdepth}{3} 
\usepackage{tcolorbox}

\numberwithin{equation}{section}
\numberwithin{theo}{section}

\newcommand{\dd}{{\mathrm{d}}}

\renewcommand{\epsilon}{\varepsilon}

\usepackage{array} 
\newcounter{hypocounter}

\setcounter{hypocounter}{0}

\usepackage{pict2e,picture,graphicx}
\usepackage{mathtools}
\usepackage{todonotes}
\makeatletter
\DeclareRobustCommand{\Arrow}[1][]{%
\check@mathfonts
\if\relax\detokenize{#1}\relax
\settowidth{\dimen@}{$\m@th\rightarrow$}%
\else
\setlength{\dimen@}{#1}%
\fi
\sbox\z@{\usefont{U}{lasy}{m}{n}\symbol{41}}%
\begin{picture}(\dimen@,\ht\z@)
\roundcap
\put(\dimexpr\dimen@-.7\wd\z@,0){\usebox\z@}
\put(0,\fontdimen22\textfont2){\line(1,0){\dimen@}}
\end{picture}%
}
\makeatother

\makeatletter
\newcommand*\bigcdot{\mathpalette\bigcdot@{.5}}
\newcommand*\bigcdot@[2]{\mathbin{\vcenter{\hbox{\scalebox{#2}{$\m@th#1\bullet$}}}}}
\makeatother

\DeclareMathSymbol{\shortminus}{\mathbin}{AMSa}{"39}

\newcommand{\trev}{\sigma_h \shortminus \hspace{0,2mm}  \bigcdot}

\usepackage{titling} 

\thanksmarkseries{fnsymbol} 

\makeatletter
\renewcommand*{\@fnsymbol}[1]{\ensuremath{\ifcase#1\or *\or \dagger\or \ddagger\or
   \mathsection\or \mathparagraph\or \|\or **\or \dagger\dagger
   \or \ddagger\ddagger \else\@ctrerr\fi}}
\makeatother
\title{Excursion theory for Markov processes indexed by Lévy trees}
\author{ Armand Riera\thanks{Sorbonne Universit\'e, LPSM, France.\hfill  \texttt{riera@lpsm.paris}}  ~ and ~ Alejandro Rosales-Ortiz\thanks{Universit\"at Z\"urich, Institute of Mathematics, Switzerland.\hfill  \texttt{alejandro.rosalesortiz@math.uzh.ch}}.}

\date{}

\newcommand{\lambdaC}{\resizebox{3 mm}{2.5mm}{$\curlywedge$}}

\setcounter{tocdepth}{2}

\begin{document}
\maketitle
\vspace{-7mm}
\begin{abstract}
We develop an excursion theory that describes the evolution of a Markov process indexed by a Lévy tree away from a regular and instantaneous point $x$ of the state space. The theory builds upon a notion of local time at $x$ that was recently introduced in  \cite{2024structure}.   Despite the radically different setting, our results exhibit striking similarities to the classical excursion theory for $\mathbb{R}_+$-indexed Markov processes. We then show that the genealogy of the excursions  can be encoded in a Lévy tree  called the tree coded by the local time. In particular,  we recover by different methods the excursion theory of Abraham and Le Gall \cite{ALG15}, which was developed for Brownian motion indexed by the Brownian tree. 
\end{abstract}

\tableofcontents

\section{Introduction}

The purpose of this work is to develop an excursion theory for Markov processes  indexed by  Lévy trees, away from a regular and instantaneous point $x$ of the state space.   This theory builds upon  a notion of local time process introduced recently in the companion paper \cite{2024structure}. Informally,   a Markov process indexed by a rooted $\mathbb{R}$-tree can be understood as follows:  the Markov process starts at the root of the tree at a point in the state space, travels along the branches of the tree away from the root, and at every branching point it splits into independent copies that continue to evolve with the same dynamics.  In this work, the tree itself is random, and to ensure that the tree-indexed process possesses basic Markovian properties  the family of random trees that we consider are  Lévy trees. This class of random rooted $\mathbb{R}$-trees plays a central role in modern probability theory. For instance, they appear as scaling limits of Galton-Watson trees \cite{DLG02} and, as their discrete analogs, they  are characterised by a branching property  \cite{WeillLevyTrees}.  One notable member of this family is the celebrated Brownian tree\footnote{Also known as Aldous' Continuum Random Tree \cite{AdousCRT} with free total mass.} which has played a key role in the study of different models of random geometry.  We refer to   \cite{AdousCRT, AldousTreeBasedModels, Btree} and references within for background and related works. The concept of a Markov process indexed by a Lévy tree has appeared over the years in various settings, making it  also   an important  canonical probabilistic object. They are closely related to  the theory of super-processes \cite{DLG02}, and   Brownian motion indexed by the Brownian tree has been essential  for the study of the so-called Brownian geometry -- a family of random surfaces arising as scaling limits of random  planar maps \cite{baur2016classification, bettinelli2022compact, CLGmodif, LG11,Mie11}. We refer to the discussion at the end of this introduction for a more detailed account. 
\par 
In this work, we show that it is possible to build a theory that describes the evolution of the tree-indexed process  between visits to the point $x$ along the branches of the Lévy tree. We shall see that, despite the radically different setting, this theory shares striking similarities with  classical  excursion theory for $\mathbb{R}_+$-indexed Markov processes.  To highlight these similarities,  let us start by briefly recalling some  elements of the classical theory. 
\par 
 Consider a strong Markov process $(\xi_t)_{t  \geq 0}$ taking values in some Polish space $E$, and suppose there exists a point $x$ of the state space which is  instantaneous and regular  for the process. For the purposes of this work, we further assume that the motion is  continuous,  the point $x$ is recurrent and   $\int_0^\infty \mathrm{d}s~\mathbbm{1}_{\xi_s=x}=0$ a.s. The path of 
 the Markov process can be decomposed in excursions away from $x$, where if we write $(a_i,b_i)_{i \in \mathcal{I}}$ for the connected components of $\mathbb{R}_+ \setminus \{ t \geq 0 : \xi_t = x \}$, the excursion associated with the excursion interval $(a_i,b_i)$ is the piece of path given  by $\xi^{i,*} := (\xi_{(a_i + t)\wedge b_i } : t \geq 0)$. The study of the family of excursions relies on  a remarkable continuous additive functional $(\mathcal{L}_t)_{t \geq 0}$ of the Markov process named the local time at $x$. This process is unique up to a multiplicative constant that we fix arbitrarily. Due to the fact that the support of the Stieltjes measure $\dd \mathcal{L}$  is  precisely the set $\{t \geq 0 : \xi_t = x \}$, the local time is well suited to index the family of excursions,  and the order induced by $(\mathcal{L}_t)_{t \geq 0}$  is consistent with the order induced by time. The main result of the theory   states that the point measure 
\begin{equation*}
   \sum_{i \in \mathcal{I}} \delta_{(\mathcal{L}_{a_i}, \xi^{i,*})}
\end{equation*}
is a Poisson point measure with intensity $\mathbbm{1}_{\mathbb{R}_+}(t)\dd t \otimes \mathcal{N}$, where $\mathcal{N}$ is an infinite measure called the excursion measure.  Informally,  one can think of $\mathcal{N}$ as describing the law of a typical excursion. For historical reasons, the point measure in the last display is often called the excursion process of $(\xi_t)_{t \geq 0}$. In addition to the collection of excursions, there is another closely related family of trajectories that plays an important  role in the theory. Namely,  it is natural to consider the family of trajectories obtained by shifting $(\xi, \mathcal{L})$ at the starting point of an excursion, i.e. $(\xi^i,\mathcal{L}^i):=((\xi_{a_i+t},\mathcal{L}_{a_i+t}):~t\geq 0)$, $i\in \mathcal{I}$.  Although this family of trajectories cannot possess a Poissonian structure due to the lack of independence, the theory of exit systems introduced by Maisonneuve \cite{Maisonneuve} gives a framework for their study. In particular, \cite{Maisonneuve} provides a collection of measures $\mathcal{N}_{x,r}$, $r \geq 0$, which, roughly speaking, encode the law of  a typical element of this family. Here, $r \geq 0$ stands for the value of the local time at the starting point of the trajectory. 
 \par 
Let us now start by giving an informal overview of the results of this work. Precise versions, along with related definitions and notations, will be discussed  afterwards.  Lévy trees are constructed from excursions above the running infimum of a subclass of spectrally positive Lévy processes, and their law is  determined by the corresponding Laplace exponent $\psi$. Any Lévy tree is equipped with a clockwise exploration that informally, starts at the root of the tree and travels along the branches from ``left to right", following the contour of the tree. From the corresponding  $\psi$-Lévy excursion, one can construct a functional  $H= (H_t)_{t \geq 0}$ called the height function, which encodes at every $t\geq 0$  the distance to the root during the clockwise exploration. The tree and its clockwise exploration can be fully recovered from $H$, which is why we denote the corresponding tree by  $\mathcal{T}_H$. These constructions hold under the excursion measure above the running infimum of the $\psi$-Lévy process, as well as under the underlying probability measure. In the former case, the tree $\mathcal{T}_H$ is compact and  referred to as a $\psi$-Lévy tree.  In the latter, the resulting tree is no longer compact, but it can be thought as a concatenation of $\psi$-Lévy trees at their root - one for each excursion of the Lévy process above  its ruining infimum.  For this reason, $\mathcal{T}_H$  under the underlying probability measure is referred to as a forest of $\psi$-Lévy trees. For each point  $a \in \mathcal{T}_H$, we write $(\xi_a, \mathcal{L}_a)$ for the value at the point $a$ of the Markov process paired with its local time at $x$.  The tree-indexed process $(\xi_a, \mathcal{L}_a)_{a \in \mathcal{T}_H}$  should be interpreted as the Markov process   $(\xi_t, \mathcal{L}_t)_{t \in \mathbb{R}_+}$ indexed by the Lévy tree $\mathcal{T}_H$, and we start it from the root of $\mathcal{T}_H$ at $(x,0)$.   We often think of  $(\xi_a, \mathcal{L}_a)_{a \in \mathcal{T}_H}$  as a collection of labels for $\mathcal{T}_H$. The role played  by the set  $\{t \geq 0: \xi_t = x \}$ in classical excursion theory is now taken over by
 \begin{equation}\label{intro:puntosx}
     \mathscr{Z} := \{ a \in \mathcal{T}_H : \xi_a = x  \},
 \end{equation}
 which is a  random subset of $\mathcal{T}_H$. The structure of this set was studied in detail in the companion paper \cite{2024structure},  and in this work we shall  investigate  the  family of excursions away from $x$.  
 \par 
The excursions consist of the restrictions of  $(\xi_a)_{a \in \mathcal{T}_H}$ to the closures of connected components  of $\mathcal{T}_H \setminus  \mathscr{Z}$, which we refer to as excursion components.  The closest point of an excursion component  to the root of $\mathcal{T}_H$ is called  a debut point, and we denote the collection of debut points  by $\mathcal{D}$. Excursion components and  debut points  are in one-to-one  correspondence.  For every $u \in \mathcal{D}$, we write $\mathcal{C}_u$ for the  associated excursion component and we set $\xi^{u,*} = (\xi_a :  a \in \mathcal{C}_u)$  for the corresponding excursion.  The interiors of  excursion components play a  role analogous  to the one of excursion intervals in the time-indexed setting, but the former are of a more intricate nature due to the fact that each $\mathcal{C}_u$ is itself a random tree. In particular, each excursion is a tree-indexed process in its own right. The family of excursions $(\xi^{u,*})_{u \in \mathcal{D}}$  can be indexed by means of  a remarkable additive functional $(A_t)_{t \geq 0}$, recently introduced in \cite{2024structure},  which  plays a crucial role in this work. Informally, at each time $t \geq 0$, the variable $A_t$ measures  the amount of time spent by the clockwise exploration of the tree at points with label $x$  up to time $t$. Hence, the process $(A_t)_{t \geq 0}$ plays the role of the local time at $x$ for the tree-indexed process when the tree is explored in clockwise order. We mark each excursion $\xi^{u,*}$, $u \in \mathcal{D}$,  with the value of $(A_t)_{t \geq 0}$ at the  time of first visit to $u$ by the clockwise exploration.  When working under the underlying probability measure, we prove that the family of excursions indexed by  their corresponding  mark  is  of Poissonian nature [Theorem \ref{theorem:excursionPPP}],    and  their law can be described by an (infinite) measure $\mathbf{N}_x^{*}$ playing the role of $\mathcal{N}$ in the tree indexed setting. For this reason, we named $\mathbf{N}_x^*$ the excursion measure away from $x$ of the tree-indexed process $(\xi_a)_{a \in \mathcal{T}_H}$. Our result shows that despite the complex structure of the underlying tree, the family of excursions ordered with respect to the clockwise exploration, are roughly speaking i.i.d. and distributed according to $\mathbf{N}_x^{*}$. The proof of this theorem relies on a precise description of not only the excursions, but as well of their whole descendant line in the tree $\mathcal{T}_H$. In this direction, we introduce a closely related family of tree-indexed processes $(\xi^{u}, \mathcal{L}^u)_{u \in \mathcal{D}}$, where each $(\xi^{u},\mathcal{L}^u)$ consists in the restriction of $(\xi_a, \mathcal{L}_a)_ {a \in \mathcal{T}_H}$ to the subtree stemming from $u$, i.e. $\{ a \in \mathcal{T}_H : u \preceq a \}$.  Here   $\preceq$ stands for  the genealogical order in the tree, in the sense that we write $u \preceq a$ if $u$ belongs to the geodesic path connecting $a$ to the root. In particular, the excursion $\xi^{u,*}$ is just the restriction of $\xi^u$ to the excursion component $\mathcal{C}_u$ and we call the family  $(\xi^{u}, \mathcal{L}^u)_{u \in \mathcal{D}}$  the subtrajectories stemming from $u \in \mathcal{D}$. We introduce a collection of measures $\mathbf{N}_{x,r}$, $r \geq 0$,   and we show that these  describe the law of a typical element of the family  $(\xi^{u}, \mathcal{L}^u)_{u \in \mathcal{D}}$,  where $r \geq 0$ stands for  the value of the local time $\mathcal{L}^u$ at the debut $u$. The relationship is established by means of an \textit{exit formula}  [Theorem \ref{theorem:exit}] which shares  strong resemblance with a classic result  of   Maisonneuve  \cite[Theorem 4.1]{Maisonneuve},  where the role played by $(\mathcal{N}_{x,r}, r \geq 0)$ in the classic theory is now played by $(\mathbf{N}_{x,r}: r \geq 0)$. Notably, the family $(\mathbf{N}_{x,r}$:  $r \geq 0)$ admits a very simple representation:  the measure  $\mathbf{N}_{x,r}$, for $r \geq 0$, can be thought of as the law of the Markov process distributed according to $\mathcal{N}_{x,r}$ indexed by a $\psi$-Lévy tree. The excursion measure $\mathbf{N}_x^*$ is simply given by the pushforward of  $\mathbf{N}_{x,0}$ through a pruning operation on the branches of the tree at the first return  to $x$ by the labels.  In contrast with the time-indexed setting where the boundary of an excursion interval consists of only two points, the boundary of an excursion component\footnote{The set of points in the component with label $x$}  is in general a rather complex fractal subset of $\mathcal{T}_H$. We associate to each excursion component $\mathcal{C}_u$ a non-negative variable $\ell_u$  that can be interpreted as its boundary size.  It is a  non-trivial matter  to prove that one can make sense, in a canonical way, of the notion of boundary size $\ell_u$ of every such excursion component and Section \ref{section:L} is entirely devoted to this task. Our approach heavily relies on the theory of exit local times, we refer to \cite{DLG02, 2024structure} for background. In the last part of the work, we argue that the genealogy of the excursions and their respective  boundary sizes $(\ell_u)_{u \in \mathcal{D}}$ can be encoded in another Lévy tree  $\mathcal{T}_{\widetilde{H}}$,  with explicit height process $\widetilde{H}$,  which has already been studied in the work \cite{2024structure}. In short, $\mathcal{T}_{\widetilde{H}}$ can be obtained from $\mathcal{T}_H$ by identifying each excursion component $\mathcal{C}_u$ in a single point, say $[u]$. This tree encodes by construction the genealogy induced by $\mathcal{T}_H$ on the excursions, but in fact the relationship is  more precise. Our last main result yields that every such point $[u]$, issued from identifying an excursion component with non-null boundary size, is a branching point of infinite  multiplicity\footnote{This is, $\mathcal{T}_{\widetilde{H}} \setminus [u]$ has infinite connected components.}   of the tree ${\mathcal{T}_{\widetilde{H}}}$, and that its ``mass'' in the sense of \cite[Theorem 4.7]{FractalAspectsofLevyTrees} is precisely $\ell_u$.   As a consequence we deduce that,  conditionally on $\mathcal{T}_{\widetilde{H}}$, the family of excursions  with positive boundary size are independent, and the respective conditional distribution of every such excursion $\xi^{u,*}$ is given by $\mathbf{N}^{*,\ell_u}_x$ where the measure $\mathbf{N}^{*,\ell}_x$, for $\ell > 0$, should read as the law of an excursion conditioned to have boundary size equal to $\ell$. 
\par 
The preceding discussion is informal and for instance we did not give a rigorous definition for the tree indexed Markov process. Defining an appropriate framework for  random processes indexed by random sets is a challenge on its own right,  and to circumvent this difficulty we rely in the formalism of Lévy snakes developed in \cite{DLG02,2024structure}. We stress that this is certainly not a disadvantage: 
the Lévy snake provides a temporal exploration of the resulting labelled tree which is essential for our purposes. An  overview of the theory can be found in Section \ref{section:framework},  here we limit ourselves to recalling  some of its main elements. The tree-indexed process $(\xi_a, \mathcal{L}_a)_{a \in \mathcal{T}_H}$ is made of  two layers of randomness: on the one hand, the underlying Lévy tree $\mathcal{T}_H$ and on the other hand, the corresponding labels $(\xi_a, \mathcal{L}_a)$, for $a \in \mathcal{T}_H$.   We begin by briefly reviewing the theory of Lévy trees, and we start  with the construction of a tree from a continuous, non-negative function.  This procedure is standard, we refer to  \cite{Duq.trees.notes} for background on coding of trees by continuous functions. For any continuous, non-negative function $h: \mathbb{R}_+ \to \mathbb{R}_+$, we introduce the pseudo-distance 
\begin{equation*}
d_{h}(s,t):=h(s)+h(t) - 2\cdot \min\limits_{[s\wedge t ,  s\vee t]} h, \quad \text{ for } s,t \geq 0,
\end{equation*}
and with the convention $\inf\emptyset =\infty$ we write $\sigma_h=\inf\{s\geq 0:~h_t=0 \text{ for every } t \geq s\}$ for the lifetime of $h$.   With the convention $[0,\infty]=[0,\infty)$, the pseudo-distance $d_h$ induces an equivalence relation on $[0,\sigma_h]$  by identifying $s \sim_h t$ when  $d_h(s,t) = 0$. We set $\mathcal{T}_h := [0,\sigma_h] / \sim_h$ for the resulting quotient space and we write  $p_h = (p_h(t):t \in [0,\sigma_h] )$ for the canonical projection from $[0,\sigma_h]$  onto $\mathcal{T}_h$.  The metric $d_h$ is well defined in the quotient space $\mathcal{T}_h$, and we call $p_h$  the clockwise exploration of $\mathcal{T}_h$. The pointed metric space $(\mathcal{T}_h, d_h, p_h(0))$ is the real tree coded by the function $h$ in the sense of \cite{Duq.trees.notes}. For the sake of simplicity we   often just write  $\mathcal{T}_h$ for the triplet $(\mathcal{T}_h, d_h, p_h(0))$. The multiplicity of a point $a \in \mathcal{T}_h$ is the number of connected components of $\mathcal{T}_{h}\setminus \{ a \}$. The points of multiplicity $1$ are called leaves, the ones of multiplicity  at least $2$ are named interior points, and any point of multiplicity strictly greater than $2$ is referred to as a branching point. We now randomise the coding function $h$, and in this direction we write $P$ for the law of a $\psi$-Lévy process $X$ and  $N$ for the associated   excursion measure of $X$ over its running infinimum. Under both measures,  we still write $H$ for the  height  function of $X$. In particular, the random variable  $\sigma_H$  under   $P$ is infinite, and it is finite  under $N$. The pointed metric space $\mathcal{T}_H$ under $N$ is a compact real tree that we refer to  as the $\psi$-Lévy tree coded by $H$.    This  construction  still makes sense under $P$ and, as we already mentioned, the resulting tree $\mathcal{T}_H$ is non-compact and referred to as a forest of $\psi$-Lévy trees.  When the underlying Lévy process is a Brownian motion, $\mathcal{T}_H$ is the Brownian tree.  In this case, all branching points are of multiplicity $3$, due to the fact that  local minima of Brownian motion are distinct. In contrast, when the Lévy process has no Brownian component, every branching point is of infinite multiplicity. The height process is in general not a Markov process, and this makes its direct  study a rather difficult task. To circumvent this difficulty, another closely related process was introduced in \cite{LGLJ98} that has been key in the study of Lévy trees. The so-called exploration process $\rho = (\rho_t: t \geq 0)$ is a rcll (right-continuous with left limits)  strong Markov process with values in $\mathcal{M}_f(\mathbb{R}_+)$, the set of finite measures in $\mathbb{R}_+$ endowed with the weak topology. It is  defined in terms of the $\psi$-Lévy process/excursion, and at time $t = 0$ it starts at the null measure $0$.  Informally, at each fixed $t \in  [0,\sigma_H]$, the variable $\rho_t$ encodes the heights of the subtrees attached to the right of the geodesic path connecting the root of the tree to the point $p_H(t) \in \mathcal{T}_H$. If for any element $\mu \in \mathcal{M}_f(\mathbb{R}_+)$ we write $H(\mu)$ for  the supremum of its topological support,  the exploration process and the height process are related by the identity $H_t = H(\rho_t)$, for $t \geq 0$. Due to its Markovian  nature, the study of $H$ and the underlying Lévy tree $\mathcal{T}_H$  often relies on the exploration process $(\rho_t : t \geq 0)$. The second layer of randomness of the tree-indexed Markov process is given by the labels on top of the Lévy tree, and we now explain how these can be defined by making use of the formalism of snakes. To simplify notation, set $\overline{E} := E \times \mathbb{R}_+$ and   write $\mathcal{W}_{\overline{E}}$
for the collection of finite $\overline{E}$-valued paths. Hence, any element $\overline{\w} \in \mathcal{W}_{\overline{E}}$ is a map $\overline{\w}: \overline{E} \mapsto [0,\zeta_{\overline{\w}}]$ where $\zeta_{\overline{\w}}$ is a finite non-negative  number called its lifetime. We equip the space $\mathcal{W}_{\overline{E}}$   with a metric, whose definition is recalled in  \eqref{d:W:E} and under which $\mathcal{W}_{\overline{E}}$ is a complete metric space.   Now, conditionally on $H$,  we define a $\mathcal{W}_{\overline{E}}$-valued process $\overline{W} = (W_t,\Lambda_t)_{t \geq 0}$ satisfying that, for every fixed $t \geq 0$, the pair  $(W_t, \Lambda_t)$ is a random element of  $\mathcal{W}_{\overline{E}}$ with lifetime $\zeta_{\overline{W}_t} = H_t$, and is distributed as  the Markov process $(\xi_h, \mathcal{L}_h)_{h \geq 0}$ started at $(x,0)$  restricted to $[0,H_t]$. Further, its dynamics as $t \geq 0$ varies can be understood  as follows: informally, when $H$ decreases, the path is erased from its tip and, when $H$ increases, the path is extended by adding independent  “little pieces” of trajectories of $(\xi_h,\mathcal{L}_h)_{h \geq 0}$ at the tip. The  key property  for  defining a $\mathcal{T}_H$-indexed process in terms of the pair $(W , \Lambda)$ is that    $(W , \Lambda)$ is compatible with the equivalence relation $\sim_H$. Namely, for every $s , t  \geq 0$, we have
\begin{equation}\label{equation:intro*}
    \big(W_s(h), \Lambda_s(h)\big) = \big(W_t(h), \Lambda_t(h)\big), \quad \text{ for every } 0 \leq h \leq \min_{[s \wedge t, s \vee t]}H. 
\end{equation}
 In particular, if for $t \geq 0$ we write $(\widehat{W}_t, \widehat{\Lambda}_t):= (W_t(H_t), \Lambda_t(H_t))$ for the tip of the path $(W_t, \Lambda_t)$,  we can assign a label $(\xi_a, \mathcal{L}_a)$  on any $a \in \mathcal{T}_H$ by setting $(\xi_a, \mathcal{L}_a) := (\widehat{W}_t, \widehat{\Lambda}_t)$ where $t$ is any element of  $p_H^{-1}(\{a\})$. It readily follows from our construction that for any fixed  
 $t \in  [0,\sigma_H]$, the path $(W_t, \Lambda_t)$ codes the labels on the geodesic connecting the root 
 of $\mathcal{T}_H$ to $p_H(t)$.   Under both $P$ and $N$,  the process  $(\rho , W, \Lambda)$ takes values in  $\mathbb{D}(\mathbb{R}_+, \mathcal{M}_f(\mathbb{R}_+) \times \mathcal{W}_{\overline{E}})$, the space of $\mathcal{M}_f(\mathbb{R}_+) \times \mathcal{W}_{\overline{E}}$ valued rcll functions, and is referred to as the $\psi$-Lévy snake with spatial motion $(\xi_t, \mathcal{L}_t)_{t \geq 0}$. When $\rho$ is considered under $N$ (resp.  ${P}$) we denote the law of this process by $\mathbb{N}_{x,0}$ (resp. $\mathbb{P}$). Since both measures are used throughout this work, it is convenient to assume that $(\rho, W, \Lambda)$ is the canonical process in  $\mathbb{D}(\mathbb{R}_+, \mathcal{M}_f(\mathbb{R}_+) \times \mathcal{W}_{\overline{E}})$. 
\par 
We are now in position to give a more precise version of our results using the formalism of Lévy snakes.  In what follows we argue under $\mathbb{P}$,  analogous versions of our results under $\mathbb{N}_{x,0}$ can be found in the manuscript.  We start by defining, for each debut $u \in \mathcal{D}$, a piece of path of  $(\rho , \overline{W})$ which  codes the tree-indexed process $(\xi^u, \mathcal{L}^u)$ in a sense that will be made precise --  recall that $(\xi^u, \mathcal{L}^u)$ is the restriction of $(\xi_a, \mathcal{L}_a)_{a \in \mathcal{T}_H}$ to the subtree  stemming from $u$. In this direction, for every $u \in \mathcal{D}$, we set $\mathfrak{g}(u) := \inf\{ t \geq 0 : p_H(t) = u \}$ and $\mathfrak{d}(u) := \sup\{ t \geq 0 : p_H(t) = u \}$ respectively for the first and last time 
  when the exploration $p_H$ visits the component $\mathcal{C}_u$.  Remark that the subtree stemming from $u$ is precisely the image of $[\mathfrak{g}(u), \mathfrak{d}(u)]$ under the projection $p_H$. We define a ``shift'' operation $\uptheta_z$ for $z\geq 0$ on elements of $\mathcal{M}_f(\mathbb{R}_+)$ and $\mathcal{W}_{\overline{E}}$ as follows. We   let  $\uptheta_z(\mu)$, and  $\uptheta_z(\w)$ if $z \leq \zeta_{\w}$, be the element of $\mathcal{M}_f(\mathbb{R}_+)$ and $\mathcal{W}_{\overline{E}}$  defined respectively by: 
    \begin{equation} \label{operacion:translacion:intro}
    \langle \uptheta_{z} (\mu), f \rangle   := \int  \mu (\dd r)  f(r - z ) \mathbbm{1}_{\{r > z\}},     
    \quad \text{ and } \quad 
    \uptheta_{z} (\overline{\w}) := ( \overline{\w}(z+r),  0 \leq r \leq \zeta_\w-z). 
\end{equation}
 Recall as well the notation $H(\mu)$ for the supremum of the topological support of $\mu$.  With these notations in hand, we define the subtrajectory associated with  the interval $[\mathfrak{g}(u), \mathfrak{d}(u)]$ to be the $\mathcal{M}_f(\mathbb{R}_+) \times \mathcal{W}_{\overline{E}}$-valued process defined by the relation
\begin{equation*}
    (\rho_t^u, \overline{W}_t^u) := 
        \big( \uptheta_{H_{\mathfrak{g}(u)}}(\rho_{(\mathfrak{g}(u)+t) \wedge \mathfrak{d}(u)}), \uptheta_{H_{\mathfrak{g}(u)}}(\overline{W}_{(\mathfrak{g}(u)+t) \wedge \mathfrak{d}(u)} )\big), \quad \text{ for } t \geq 0. 
\end{equation*}
For each $u \in \mathcal{D}$, we refer to  $(\rho^u, \overline{W}^u)$ as the  subtrajectory stemming from the debut $u$. The process $H(\rho^u)$ is continuous and  we write $\mathcal{T}_{H(\rho^u)}$ for the tree coded by $H(\rho^u)$. Further, the pair 
$\overline{W}^u := ({W}^u_t, \Lambda^u_t)_{t \geq 0}$  is compatible with the equivalence relation $\sim_{H(\rho^u)}$, in the sense  that  
\begin{equation*}
    (W^u_s(h), \Lambda^u_s(h)) = (W^u_t(h), \Lambda^u_t(h)), \quad \text{ for every } 0 \leq h \leq \min_{[s \wedge t, s \vee t]}H(\rho^u),  
\end{equation*}
for all $s, t \geq 0$. The corresponding $\mathcal{T}_{H(\rho^u)}$-indexed process -- obtained by an analogous procedure as before  -- and  $(\xi^u, \mathcal{L}^u)$ are related through a label-preserving isometry between $\mathcal{T}_{H(\rho^u)}$ and  $\{ a \in \mathcal{T}_H : u \preceq a \}$. In Section~\ref{section:excursionmeasure},  we shall argue that one can define a family of infinite measures $(\mathbf{N}_{x,r}: r \geq 0)$ on $\mathbb{D}(\mathbb{R}_+, \mathcal{M}_f(\mathbb{R}_+) \times \mathcal{W}_{\overline{E}})$ that code the law of a typical subtrajectory stemming from $u$. Informally, for each fixed $r \geq 0$, one can think of $\mathbf{N}_{x,r}$ as the 
 law of  the $\psi$-Lévy snake with spatial motion $\mathcal{N}_{x,r}$ when $\rho$ is considered under the excursion measure  $N$.  In particular, the tree coded by $H$ is still a Lévy tree.\footnote{ Since under $\mathbf{N}_{x,r}$ the conditional distribution of $W$ given $H$ is an infinite measures, this statement should be handled with care.} This description is informal since $\mathcal{N}_{x,r}$ is an infinite measure, but it can be made precise by introducing an appropriate entrance law for the Lévy snake. Section \ref{section:excursionmeasure} is devoted to defining the measures $(\mathbf{N}_{x,r}: r \geq 0)$ and to establishing some  of its   basic properties. It is important to stress that, due to their simple nature,  results for the Lévy snake can be often carried-over under $(\mathbf{N}_{x,r}: r \geq 0)$ with minor modifications. This makes their study rather straightforward, in virtue of the now comprehensive  literature concerning Lévy snakes. In Section  \ref{subsection:exitformula} we relate the family of measures $(\mathbf{N}_{x,r}: r \geq 0)$ with  $\mathbb{P}$. In this direction, for every   $u \in \mathcal{D}$, consider the functional  
\begin{equation}\label{equation:trimming:intro}
    \int_0^t \dd s \, \mathbbm{1}_{  s \,  \in \,  (\mathfrak{g}(u),\mathfrak{d}(u) )  }, \quad t \geq 0, 
\end{equation}
and write $(\mathrm{T}_t^{u}: t \geq 0)$ for its right-inverse. If we set $\texttt{T}_u (\rho,\overline{W}) := (\rho_{\mathrm{T}_t^{u}}, W_{\mathrm{T}_t^{u}})_{t \geq 0}$, it is plain that this  time-changed process is obtained from gluing the two pieces of path $( (\rho_t, \overline{W}_t) : 0 \leq t < \mathfrak{g}(u))$ and $( (\rho_t, \overline{W}_t) :  \mathfrak{d}(u) \leq t)$,  so that  the subtrajectory $(\rho^u, \overline{W}^u)$  stemming from the debut point  $u$   has been ``trimmed" from   $(\rho, \overline{W})$. In terms of labelled trees, since the subtrajectory $(\rho^u, \overline{W}^u)$ codes the labels of the subtree stemming from $u$, one can think of   $\texttt{T}_u (\rho,\overline{W})$ as coding  the labels on the closure of the complementary set $\mathcal{T}_H \setminus \{v \in \mathcal{T}_H : u \preceq v  \}$.  The following theorem is the main result of Section  \ref{subsection:exitformula}  and  
 relates the family of measures $(\mathbf{N}_{x,r}: r \geq 0)$ with $\mathbb{P}$
 through the so-called exit formula.
\begin{enumerate}
    \item[\mbox{}] \textbf{Theorem}. For every  non-negative measurable functions $F,G$ on $\mathbb{D}(\mathbb{R}_+, \mathcal{M}_f(\mathbb{R}_+)\times \mathcal{W}_{\overline{E}})\times \mathbb{R}$ \\ and 
 $\mathbb{D}(\mathbb{R}_+, \mathcal{M}_f(\mathbb{R}_+)\times \mathcal{W}_{\overline{E}})$, we have 
    \begin{equation}\label{INTRO:exit}
   \mathbb{E}\Big(  \sum_{u \in \mathcal{D}} F\big( \,  {\texttt{T}}_u (\rho,\overline{W}), \mathfrak{g}(u) \big) \cdot G(\rho^u , \overline{W}^u)  \Big)  = 
    \mathbb{E}\Big( \int_0^\infty \mathrm{d} A_s  ~F(\rho,\overline{W},s ) \cdot \mathbf{N}_{x, \widehat{\Lambda}_s}\big(G(\rho,\overline{W})\big)\Big).
\end{equation}
\end{enumerate}
This result shares   strong   similarities with the exit formula from Maisonneuve  \cite[Theorem 4.1]{Maisonneuve},  and our nomenclature is borrowed from there. It allows for instance  to transfer properties holding for $(\mathbf{N}_{x,r}: r \geq 0)$ to the family of subtrajectories  stemming from $u \in \mathcal{D}$ under  $\mathbb{P}$ and vice-versa, and  both directions will be of use. It will be crucial in the study of  the boundary measure of excursion components, the excursions away from $x$, and their descendent line in the tree. 
\par 
In Section \ref{section:Poisson} we turn our attention to the study of the family of excursions away from $x$.  In the same vein  as before, for each $u \in \mathcal{D}$, we associate with each excursion  $\xi^{u,*}$  a  process constructed in terms of deterministic operations on $(\rho^u , W^u)$ which codes both the  excursion component $\mathcal{C}_u$ and the respective labels when the component is explored from left to right. This can now be easily done by removing from each $(\rho^u, W^u)$ every path $(\rho^u_s, W^u_s)$, for which $W^u_s$ has returned to $x$ before the end of its lifetime. Formally,
for any finite $E$-valued path $\w$, write $\uptau(\w) := \inf\{ h > 0 : \w(h) = x \}$ for its first return time to $x$. For every element $(\upvarrho , \omega, \lambdaC) \in \mathbb{D}(\mathbb{R}_+, \mathcal{M}_f(\mathbb{R}_+) \times \mathcal{W}_{\overline{E}}) $, we consider the functional $\int_0^t \mathbbm{1}_{\{ H(\upvarrho_s) \leq \uptau(\omega_s)  \}}\dd s$, for $t\geq 0$, and write $\Gamma(\upvarrho, \omega)$ for its right inverse. If we set $\text{tr}(\upvarrho, \omega) := (\upvarrho_{\Gamma(\upvarrho , \omega)}, \omega_{\Gamma(\upvarrho, \omega)})$, for every  $u \in \mathcal{D}$ the process $\mathrm{tr}(\rho^u, W^u)$     is informally obtained from $(\rho^u, W^u)$ by removing  every pair $(\rho^u_s, W^u_s)$ for $s \geq 0$ for which $\uptau(W^u_s) < H(\rho^u_s)$. Now,  to simplify notation, 
 for every $u \in \mathcal{D}$,  we write   
\begin{equation*}
  (\rho^{u,*} , W^{u,*}):=   \mathrm{tr}\big( \rho^u, W^{u}   \big).  
\end{equation*}
For every $u \in \mathcal{D}$, the process $H(\rho^{u,*})$ is continuous,   $W^{u,*}$ is well defined in $\mathcal{T}_{H(\rho^{u,*})}$, and  the corresponding tree-indexed process is related to  $\xi^{u,*}$  by a label-preserving isometry between $\mathcal{T}_{H(\rho^{u,*})}$ and  $\mathcal{C}_u$. For this reason,  with a slight abuse of notation, we will refer to the family  $(\rho^{u,*} , W^{u,*})_{u \in \mathcal{D}}$   as the excursions away from $x$ of $(\xi_a)_{a \in \mathcal{T}_H}$.   Recalling that $\mathbf{N}_{x,0}$ codes the law of a typical subtrajectory stemming from $u$,  it is  natural to define a candidate measure in $\mathbb{D}(\mathbb{R}_+, \mathcal{M}_f(\mathbb{R}_+) \times \mathcal{W}_E)$ for the excursion measure,  by setting 
\begin{equation*}
    \mathbf{N}_{x}^*(\dd \upvarrho , \dd \omega) := \mathbf{N}_{x,0} \big( \mathrm{tr}(\rho , W) \in (\dd \upvarrho, \dd \omega) \big). 
\end{equation*}
In other words, $\mathbf{N}_{x}^*$ is the law of $\mathrm{tr}(\rho , W)$ under  $\mathbf{N}_{x,0}$.     Note that the excursion measure does not  keep track of the local time $(\mathcal{L}_a)_{a \in \mathcal{T}_H}$.  The reason being, that the local time  is constant on any excursion component $\mathcal{C}_u$ [Lemma \ref{lemma:contanteExcursion}] and therefore  there is no need to keep track of it in the excursion measure $\mathbf{N}_x^*$. It is important to note that in contrast with the family $(\mathbf{N}_{x,r}: r \geq 0)$, the tree coded by  $H(\rho)$ under $\mathbf{N}_{x}^*$ is \textit{not} a Lévy tree. For this reason, the study of the measure $\mathbf{N}_{x}^*$ is systematically performed through the measures $(\mathbf{N}_{x,r}: r \geq 0)$ which are of much simpler nature. The second main result of this work is a version of Itô's theorem in the tree-indexed setting. In particular it justifies our nomenclature for the measure $\mathbf{N}_x^*$. 
\begin{enumerate}
    \item[\mbox{}] \textbf{Theorem}. Under $\mathbb{P}$, the point measure 
    \begin{equation*}
  \mathcal{E} =  \sum_{u \in \mathcal{D}} \delta_{(A_{\mathfrak{g}(u)} , \rho^{u,*},  W^{u,*} )}
\end{equation*}
    is a Poisson point measure in $\mathbb{R}_+ \times \mathbb{D}(\mathbb{R}_+, \mathcal{M}_f(\mathbb{R}_+) \times \mathcal{W}_E)$ with intensity   $\mathbbm{1}_{\mathbb{R}_+}(t)\dd t \otimes \mathbf{N}_x^*$.  
\end{enumerate}
We refer to $\mathcal{E}$ as the excursion process of $(\xi_a)_{a \in \mathcal{T}_H}$.  The proof of this result relies mainly in the special Markov property of the Lévy snake established in \cite[Theorem 3.7]{2024structure} paired with  the exit formula \eqref{INTRO:exit}. We refer to Section  \ref{section:Poisson} for details.   Let us mention that in contrast to the other main results presented in this introduction, this theorem does not admit a straight adaptation  under    $\mathbb{N}_{x,0}$.  It is however possible to characterise  the law of $\mathcal{E}$ under $\mathbb{N}_{x,0}$ by making use of the last main result of this work, that we shall now present. 
\par    
In the last part of this manuscript, we relate the tree-like structure of the set \eqref{intro:puntosx} with the collection of excursions. This additional structure of  the set of points with label $x$ is not present   in the time-indexed setting and therefore this last part somewhat deviates from the classic excursion theory for  $\mathbb{R}_+$-indexed Markov processes. In this direction, recall that for each $u \in \mathcal{D}$, we write $\ell_u$ for the boundary size of the component $\mathcal{C}_u$.   We shall argue that the genealogy of the excursions  as well as their respective boundary sizes $(\ell_u)_{u \in \mathcal{D}}$ can be   encoded in a random real tree. Our study will shed  light on the law of the excursion process  $\mathcal{E}$ under $\mathbb{N}_{x,0}$,  thereby complementing the statement of Theorem \ref{theorem:excursionPPP}. The real tree suited for this purpose has already made its appearance in the previous work \cite{2024structure}.  More precisely, if we set 
\begin{equation*}
    \widetilde{H}_t  := 
    \widehat{\Lambda}_{A^{-1}_t}, \quad  t \geq 0,
\end{equation*}
then  \cite[Theorem 5.1]{2024structure} states that $\widetilde{H}$ under $\mathbb{P}$ has the law of the height process of a forest of $\widetilde{\psi}$-Lévy trees. The Laplace exponent $\widetilde{\psi}$ is explicit, see \cite[Proposition 4.7]{2024structure} and it can be shown that it does not have Brownian component \cite[Corollary 4.1]{2024structure}. In particular, $\widetilde{\psi}$ can be uniquely decomposed in the form:
$$
\widetilde{\psi}(\lambda)=\widetilde{\alpha}\lambda+\int_{(0,\infty)}\widetilde{\pi}(\dd \ell)\big(\exp(-\lambda \ell)-1+\lambda \ell\big),
$$ 
for some constant $\widetilde{\alpha}\in \mathbb{R}_+$ and a Lévy measure $\widetilde{\pi}$ on $(0,\infty)$  satisfying $\int \widetilde{\pi}(\dd \ell) ~(\ell\wedge \ell^2)<\infty$  by \cite[Section 1.1]{DLG02}.  Since  the constancy intervals of $(A_t)_{t \geq 0}$ and   $(\widehat{\Lambda}_t)_{t \geq 0}$ coincide \cite[Theorem 4.20]{2024structure} and  $(\widehat{\Lambda}_t)_{t \geq 0}$ is constant on each $p_H^{-1}(\mathcal{C}_u)$ for $u \in \mathcal{D}$,  the tree coded by $\widetilde{H}$  does fulfil the description given above, where we    informally\footnote{This description can be made precise by making use of the notion of subordination of trees introduced in \cite{Subor} we refer to Section \ref{section:joinlaw} and \cite{2024structure} for details.} obtained   $\mathcal{T}_{\widetilde{H}}$ from $\mathcal{T}_H$ by identifying each excursion component $\mathcal{C}_u$ in a single point $[u] \in \mathcal{T}_{\widetilde{H}}$.   The tree $\mathcal{T}_{\widetilde{H}}$ is then a forest of $\widetilde{\psi}$-Lévy trees that we refer to as the tree coded by the local time. The Lévy measure $\widetilde{\pi}$  is closely related to our notion of boundary size in the following sense.  We show that the boundary size  $\ell_u$ of any  excursion component  $\mathcal{C}_u$  is a measurable function of the corresponding excursion   $(\rho^{u,*}, W^{u,*})$. In particular, this notion of  boundary size is also well defined under $\mathbf{N}_{x}^*$ and we establish in Proposition \ref{corollary:exponente}  that its ``law'' when restricted to $(0,\infty)$ is precisely the Lévy measure $\widetilde{\pi}$. We are now in position to present a precise version of the  relation between the branching points of the tree $\mathcal{T}_{\widetilde{H}}$ and the collection of boundary sizes   $(\ell_u)_{u \in \mathcal{D}}$. In this direction, under $\mathbb{P}$,  we  partition the collection of excursion debuts in the two  following  families $\mathcal{D}_+ :=\{u\in \mathcal{D} :~ \ell_u >0\}$ and  $\mathcal{D}_0 :=\{u\in \mathcal{D}  : ~ \ell_u =0\}$. If we set 
\begin{equation}\label{equation:jumpLevy:intro}
       \widetilde{\mathcal{E}} :=  \sum_{u \in \mathcal{D}_+} \delta_{(A_{g(u)} , \,  \ell_u )}, 
\end{equation}
standard properties of Poisson measures  and the previous discussion entail that, under $\mathbb{P}$, the point measure $\widetilde{\mathcal{E}}$ is a Poisson point measure with intensity $\mathbbm{1}_{\mathbb{R}_+}(t) \dd t \otimes  \widetilde{\pi}(\dd \ell)$. We write $\widetilde{\mathcal{E}}^{(c)}$ for the compensated measure $\widetilde{\mathcal{E}}^{(c)}( \dd  t,  \dd \ell):=  \mathcal{E}(\dd  t,  \dd \ell)- \dd  t \widetilde{\pi}(\dd \ell)$. 
\begin{enumerate}
    \item[\mbox{}] \textbf{Theorem}. Under $\mathbb{P}$, the process $(\widetilde{X}_t)_{t \geq 0}$ defined as 
\begin{equation} 
\widetilde{X}_t  \coloneqq \widetilde{\alpha} t + \int_{[0,t]\times \mathbb{R}_+} \widetilde{\mathcal{E}}^{(c)}( \dd  s,  \dd \ell)~ \ell,\quad t\geq 0,  
\end{equation}
is a $\widetilde{\psi}$-Lévy process, and $\widetilde{H}$ is its height process.  
\end{enumerate}
 If we write $\mathcal{D}_{\widetilde{X}}$ for the set of jump-times of $\widetilde{X}$, the map $u \to A_{\mathfrak{g}(u)}$ is a bijection between the sets $\mathcal{D}_+$,  $\mathcal{D}_{\widetilde{X}}$ and if for $t \in \mathcal{D}_{\widetilde{X}}$ it holds that  $t = A_{\mathfrak{g}(u)}$ it is plain from the definition of $\widetilde{X}$ that  $\Delta \widetilde{X}_{t}=\ell_u$. General results on Lévy trees from \cite{FractalAspectsofLevyTrees} then yield that the set of branching points of $\mathcal{T}_{\widetilde{H}}$ are precisely the points $[u] \in \mathcal{T}_{\widetilde{H}}$ for $u \in \mathcal{D}_+$, and the fractal mass of every such point $[u]$ in the sense of \cite[Theorem 4.7]{FractalAspectsofLevyTrees} is given by the corresponding boundary size $\ell_u$. One important application of this theorem is that it allows to derive the   conditional distribution of the family of excursions conditionally on $\widetilde{H}$.  Namely, since $\widetilde{X}$ and $\widetilde{H}$ are measurable functions of each-other  (see the discussion at the end of Section \ref{subsection:explorationprocess}),   we deduce  that under $\mathbb{P}$,  the collection $(\rho^{u,*}, W^{u,*})_{u \in \mathcal{D}_0}$ is independent from $\widetilde{H}$ and $(\rho^{u,*}, W^{u,*})_{u \in \mathcal{D}_+}$. Furthermore, conditionally on $\widetilde{H}$  the family of excursions $(\rho^{u,*}, W^{u,*})_{u \in \mathcal{D}_+}$ are independent, and for each $u \in \mathcal{D}_+$,   the  conditional law of $(\rho^{u,*}, W^{u,*})$ is given by $\mathbf{N}_x^{*, \Delta \widetilde{X}_t }$, where $t = A_{\mathfrak{g}(u)}$. Here it is important to consider an enumeration of the atoms measurable with respect to $\widetilde{H}$, we refer to Section \ref{section:joinlaw} for details.  This last result also  bears  similarities  with the time-indexed setting, where excursions are in bijection with jumps  of the inverse local time process;   conditionally on the inverse local time, the excursions are independent, and the distribution of an excursion corresponding to a jump of size $\Delta_i$ is given by the excursion measure conditioned to have duration $\Delta_i$. 
\\
\\
We conclude this introduction with a discussion concerning the relation of our theory with the closely related work of Abraham and Le Gall  \cite{ALG15},  as well as with some connections to  related theories and open problems.  
\\
\\
\textit{Connection with Abraham-Le Gall excursion theory} \cite{ALG15}. An excursion theory for Brownian motion indexed by the Brownian tree was developed by Abraham and Le Gall in  \cite{ALG15} using radically different methods. Specifically, \cite{ALG15} considers the case when $(\xi_t)_{t \geq 0}$ is a 
 linear Brownian motion, the underlying tree is a Brownian tree, and $x=0$.  
The work \cite{ALG15}  relies  on  symmetries and scaling properties of Brownian motion indexed by the Brownian tree, and these arguments cannot be exploited in our general setting. Another key difference with the present work is that \cite{ALG15} does not introduce a notion of local time suitable to index the excursions. In the last section of this work, we verify the consistency of our excursion theory with that in \cite{ALG15}; that is, we show that the excursion measure introduced in \cite{ALG15} and their notion of boundary measure coincide with those in this work in the special case of Brownian motion indexed by the Brownian tree. In the process, we derive the master formula \cite[Theorem 23]{ALG15} as a particular case of the exit formula [Theorem \ref{theorem:exit}]. Finally, Corollary  \ref{corolary:condlaw} is a more precise version of \cite[Theorem 40]{ALG15}.  We stress that although the methods employed in \cite{ALG15} are drastically different from those used here,   the results and ideas employed by Abraham and Le Gall have been fundamental for the development of this manuscript. Furthermore some results in \cite{ALG15}  are exclusive to Brownian motion indexed by the Brownian tree and therefore can not be established in our general setting.
\\
\\
\textit{Related works and applications.}   As in \cite{ALG15}, one of the main motivations of this work is the connection between  Lévy trees with labels and continuous models of 2D random geometry. As mentioned above, Brownian motion indexed by the Brownian tree has served as building block for the family of random metric spaces that are generically referred to as   Brownian geometries \cite{LeGallLectures, LeGallRiera-nonCompactModels}. One notable member of this rich family is  the celebrated Brownian sphere (or Brownian map) \cite{LG11, Mie11}. The theory developed in \cite{ALG15} has been shown to be a powerful tool in the study of Brownian geometries, and specially when establishing spatial Markov properties --  continuous analogues of  peelings of planar maps \cite{peeling}.  In short, when performing a metric exploration in a Brownian surface (by considering, for example, increasing balls or hulls) the space is  divided progressively in  discovered  and  undiscovered  components. Spatial Markov properties characterise the distribution of these components conditional on their boundary lengths. The  present excursion theory should allow  to obtain more precise versions of these results, and should permit for instance to recover  the initial surface through a gluing procedure of the components. As a notable example,  the Brownian sphere can be divided in the Lévy net \cite{Axiomatic, Subor} (coded by the tree of local time) and a collection of  disjoint Brownian disks (which are coded by excursions of Brownian motion indexed by the Brownian tree),  that have been shown to be independent conditionally on their boundary lengths \cite{fragmentations}. Our work should allow for the reconstruction of the Brownian sphere by gluing the Brownian disks back into the Lévy net, in a completely  explicit way.  In recent years, new models of 2D random geometries closely related to  Brownian motion indexed by stable trees have been introduced, see e.g. \cite{archer2024stable,kortchemski2024random}. Our results should shed light in the study of these new models and permit to solve questions that until now where out of reach. It is expected that these geometries can be connected  to different versions of Lévy nets \cite{Axiomatic} and to stable causal maps \cite{causal:maps}. In particular, we hope to obtain  similar reconstruction results for these models, and this is currently a work in progress.  We also expect the present theory to  relate  Brownian motion  indexed by stable trees to large classes of growth-fragmentation processes, and more precisely with the recently introduced family of self-similar Markov trees \cite{BCR:2024}. These conform  a family of decorated  $\mathbb{R}$-trees that code the genealogy of general growth-fragmentation processes,  and are scaling limits of multi-type Galton–Watson trees when the types along branches are in the domain of attraction of a positive self-similar  Markov process.   This connection has already been observed in the special case of Brownian motion indexed by the Brownian tree \cite{fragmentations}  thanks to \cite{ALG15}, see also Section~4.3 in \cite{BCR:2024}, and the stable case seems to be connected to some discret models of parking on trees \cite{BCR:2024,C:C:Parking,chen2021enumerationfullyparkedtrees}. Another  direction that   has not been yet  explored  are  invariance principles towards the excursion measure $\mathbf{N}^*_x$. Invariance principles for Lévy snakes are an important topic \cite{Marzouk}, and we hope that the definition of the excursion measure $\mathbf{N}_x^*$ in terms of  a pruning operation of $\mathbf{N}_{x,0}$  will be useful to extend known convergence results. Finally, it has been recently pointed out to us that the additive $A$ should also appear in scaling limit results  concerning local times of Branching random walks and this direction is currently under study \cite{Bai-Chen-Hu}.
\\ \\
\noindent \textit{The rest of the work is organised as follows.} In Section \ref{section:preliminaries} we recall the main elements of the theory of Lévy trees and Lévy snakes that are needed for this work. In Section  \ref{section:framework} we fix the setup and hypotheses under which we develop the excursion theory, these are maintained throughout the rest of the manuscript.  In Section \ref{section:excursionstrajectories} we introduce multiple functionals and notions   that will be  used extensively -- namely, the truncation operation on snake paths,  the exit local time $(L_t)_{t \geq 0}$, debut points, the notion of excursion away from $x$ and the one of a subtrajectory stemming from a debut point. We then proceed in Section \ref{section:excursionmeasure} to define candidate measures $\mathbf{N}_{x}^*$ and  $\mathbf{N}_{x,r}$, for $r\geq 0$, to study  the excursions away from $x$, and the subtrajectories stemming from a debut point.  Next,  we establish     spinal decompositions in Section \ref{section:MainSpinalDecomp} that will be often needed for technical reasons. The  main aspects of the excursion theory are developed in Section \ref{section:excursiontheory}. In this direction, we start by recalling properties of the local time at $x$ process $(A_t)_{t \geq 0}$. We then prove the exit formulas  [Theorem \ref{theorem:exit}, Corollary \ref{corollary:exitPx}] and show that the excursion process of a Markov process indexed by a forest of Lévy trees is a Poisson point measure  [Theorem \ref{theorem:excursionPPP}] with intensity $\mathbbm{1}_{\mathbb{R}_+}(t)\dd t \otimes \mathbf{N}_x^*$. These constitute the main results of this work.  
 In Section \ref{section:L} we develop a theory, analogous to that of exit local times, under the measures $\mathbf{N}_{x,r}, \mathbf{N}_{x}^*$. For instance, we prove a version of the special Markov property [Proposition \ref{proposition:L*}] under $\mathbf{N}_{x,r}$. Section \ref{section:joinlaw} is devoted to the study of the relation between the so-called tree coded by the local time and the family of excursions. The main result of the section is Theorem \ref{theo:X:H:tilde}. Finally, in Section \ref{sec:ref:consis:} we show that our theory is consistent with the work of Abraham and Le Gall \cite{ALG15}.
\\
\\
 \textbf{Acknowledgments.} We thank Mathieu Merle and  Victor Rivero for  insightful  conversations concerning exit systems. The second author would like to express his gratitude to Jean Bertoin for his support and  encouragement during the elaboration of this work. The research of the second author was supported by the Swiss National Science Foundation (SNSF), grant number 212115.

\section{Preliminaries} \label{section:preliminaries}

This section is divided into two main parts. We begin  Section \ref{section:LablesandTrees}  by introducing a general construction of trees that is well suited for defining (possibly random) labels on them. This construction will allow us to  define the notion of a Markov process indexed by a tree at the end of Section \ref{section:LablesandTrees}. We then proceed in Section \ref{section:MPonLT} to randomise the underlying tree within the class of Lévy trees. This is done by making use of the classical theory of Lévy trees and Lévy snakes \cite{DLG02}, and for ease of reading we  provide a brief overview of both theories.  If $M$ is a Polish space,  we will use the standard notation $\mathbb{C}(\mathbb{R}_+, M)$ and $\mathbb{D}(\mathbb{R}_+, M)$ respectively for the space of $M$-valued continuous and rcll\footnote{Right-continuous with left limits.} paths indexed by $\mathbb{R}_+$.  These spaces are endowed with the topology of local uniform convergence and the Skorokhod topology respectively, along with  the corresponding Borel sigma-fields.

\subsection{Labels on trees coded by excursions and snakes}\label{section:LablesandTrees}

The family of trees that we  work with are coded by  continuous non-negative functions on  $\mathbb{R}_+$, that we broadly refer to as coding functions.   The construction is classic, and we recall its main elements in Section~\ref{sec:trees:1}.    Similarly, the labels on the tree are also defined in terms of a path-valued function in $\mathbb{R}_+$ that is generically referred to as a  \textit{snake trajectory}. This notion is introduced and studied in Section \ref{secsnake}.

\subsubsection{Trees coded by non-negative functions}\label{sec:trees:1}

In this short section, we recall standard notation and basic properties of  $\mathbb{R}$-trees.   For a detailed account, the reader is referred to the lecture notes by Evans \cite{evans} or by Le Gall \cite{LG05}.  An $\mathbb{R}$-tree $(\mathcal{T},d)$ is a uniquely arcwise connected metric space, in which each arc is isometric to a compact interval of $\mathbb{R}$. In this work, we  exclusively consider rooted  $\mathbb{R}$-trees, which further imposes  $\mathcal{T}$ to have a distinguished point $\vartheta \in \mathcal{T}$, called the root.   For every $u,v\in \mathcal{T}$ we write $\llbracket u,v \rrbracket$ for the range of the unique injective path connecting $u$ and $v$, and  $u \curlywedge v$  for the unique element of $\mathcal{T}$ verifying the relation  $\llbracket \vartheta , u \curlywedge v \rrbracket =  \llbracket \vartheta ,  u \rrbracket \cap \llbracket \vartheta , v \rrbracket$. It is   natural to define a partial order $\preceq$ 
 coding  the genealogy of $\mathcal{T}$. Namely, we shall write  $u \preceq v$ if $u\in\llbracket \vartheta , v \rrbracket$, and when the latter holds we say that $u$ is an ancestor of $v$. The element $u \curlywedge v$ is called the  common ancestor of $u$ and $v$. The multiplicity of $u$ is defined as the (possibly infinite) number of connected components of $\mathcal{T} \setminus \{ u \}$. For every $i \in \mathbb{N}^* \cup { \infty }$, we write $\text{Mult}_i(\mathcal{T})$ for the family of points in $\mathcal{T}$ with multiplicity $i$. These definitions allow us to extend classic notions from discrete trees to the framework of $\mathbb{R}$-trees. For instance, we shall make use of the following standard nomenclature:
 \begin{itemize}
\item The elements of $\text{Mult}_1(\mathcal{T})$ are  the \textit{leaves} of $\mathcal{T}$;
\item The elements with multiplicity at least $2$  conform the \textit{skeleton} of $\mathcal{T}$;
\item  The elements with multiplicity at least $3$  are referred to as  the \textit{branching points} of $\mathcal{T}$.
 \end{itemize}
 Let us now present a canonical way to construct  $\mathbb{R}$-trees in terms of continuous non-negative functions.  This method is standard and we refer to \cite{Duq.trees.notes}  for a thorough study on coding of $\mathbb{R}$-trees.
 Fix a continuous non-negative function $h: \mathbb{R}_+ \mapsto \mathbb{R}_+$ and  set  
 \begin{equation}\label{def:sigma:h}
     \sigma_h:=\inf \big\{t\geq 0:~h(s)=0 \, \text{ for every }s \geq t\big\} 
 \end{equation}
  for its lifetime, where as  usual  $\inf \emptyset = \infty$ (this convention will be maintained  throughout this work).   For every $s, t \in \mathbb{R}_+$ we set 
\begin{equation}\label{def:m:h}
    \displaystyle m_{h}(s,t):= \inf_{s\wedge t\leq r \leq s\vee t}h(r),  
\end{equation}
and, with the convention $[a,\infty]:=[a,\infty)$ for  $a\in \mathbb{R}$, we  define a pseudometric $d_h$ on $[0,\sigma_h]$ by  
\begin{equation}\label{def:d:h}
    d_h(s,t) := h(s) + h(t) - 2 \cdot  m_{h}(s ,  t),   \quad \text{for  }s,t \in [0,\sigma_h]. 
\end{equation}
We  let  $\sim_h$ be the equivalence relation on $[0,\sigma_h]$ identifying  every pair of points $s,t \in [0,\sigma_h]$ satisfying that $d_h(s,t) = 0$, and for every such pair we write $s\sim_h t$.    We set $\mathcal{T}_h := [0,\sigma_h] / \sim_h$  for the corresponding quotient space  and   we write   $p_h:[0,\sigma_h] \mapsto \mathcal{T}_h$ for  the associated canonical projection.  The metric space $(\mathcal{T}_h, d_h)$  that we  root at $p_h(0)$  is an $\mathbb{R}$-tree which, with a slight abuse of notation, we  simply denote by $\mathcal{T}_h$ and the function $h$ is referred to as the coding function of $\mathcal{T}_h$.   When the duration $\sigma_h$ is finite,  the resulting tree $\mathcal{T}_h$ is a compact metric space.  Since we have $s\sim_h t$ if and only if   $m_h(s , t) = h(s ) = h (t)$, the function $h$ is well defined in the quotient space $\mathcal{T}_h$, and for each $u \in \mathcal{T}_h$ we write $h(u)$ for $h(t)$, where $t$ is any arbitrary element of $p_h^{-1}(\{u\})$.  
\par  Informally, the continuous mapping $( p_h(t) : t \in [0,\sigma_h] )$ explores the tree $\mathcal{T}_h$ from left to right following its contour, and for this reason,  we  call it the clockwise exploration of $\mathcal{T}_h$. For every $u \in \mathcal{T}_h$ and with the convention $\sup \emptyset  = \infty$, we  write respectively
\begin{equation} \label{definition:gauchedroite}
    \mathfrak{g}(u) := \inf \big\{t \geq 0 : p_h(t) = u \big\}  \quad \text{ and } \quad  \quad \mathfrak{d}(u):= \sup \big\{t >g(u) : p_h(t) = u \big\} \footnote{The notation $\mathfrak{g}$ and $\mathfrak{d}$ stems from ``gauche" and ``droite", which in french means left and right.}
\end{equation}
for  the first time and the last time  at which  the exploration $p_h$ visits the point $u$.  
We stress that,  in contrast with $\mathfrak{g}(u)$, the right-end point $\mathfrak{d}(u)$ might be infinite but, by \eqref{def:d:h}, the quantity $\mathfrak{d}(u)$ always coincides with  $\inf\{t> \mathfrak{g}(u):~h(t) < h(u)\}$.    In this work, we  often consider coding functions $h$ satisfying the additional condition  $h(0)=0$. In this scenario, we simply have $d_{h}(0,t)=h(t)$ for every $t\in [0,\sigma_h]$, and consequently the function $h$ codes precisely the distances to the root of the tree.   It is also worth noting that in that case, the genealogy of $\mathcal{T}_h$ is encoded in $h$ in a very simple way. Namely, it is easy to check that for any pair of points  $u,v \in \mathcal{T}_h$, the relation $u \preceq v$ holds if and only if $\mathfrak{g}(u)\leq \mathfrak{g}(v)\leq \mathfrak{d}(u)$.  Moreover, if $u \preceq v$ and letting 
\begin{equation*}
\gamma(r) := \sup \big\{ s \leq \mathfrak{g}(v) : h(s) \leq r \big\}, \quad \text{for } r \in [h(u),h(v)],
\end{equation*}
 definition \eqref{def:d:h} gives $d_h(p_h(\gamma(r)), p_h(\gamma(r'))) =| r-r' |$ for $ r,r' \in [h(u),h(v)]$, which yields that the path  $(p_h(\gamma(r)) : r \in [h(u), h(v)])$ is the geodesic path connecting $u$ and $v$. It then follows from the definition of $\llbracket u,v \rrbracket$ that
\begin{equation}\label{equation:geodesicpath}
\llbracket u, v \rrbracket = p_h \big(\big\{ s \in [\mathfrak{g}(u),\mathfrak{g}(v)] : m_h(s,\mathfrak{g}(v)) = h(s) \big\}\big).
\end{equation}
It is possible to extend the previous discussion to coding functions $h$   that do  not fulfil the condition $h(0) = 0$, but the general version of \eqref{equation:geodesicpath} is more difficult  to work with.  Since this generalisation is not needed in this work we do not discuss it here. 

\subsubsection{Snake trajectories}\label{secsnake} 

From now on we fix a Polish space $(E,\dd_E)$. The goal of this section is to recall the notion of  \textit{snake trajectory}, first defined in \cite{ALG15} and further developed in \cite{2024structure}. Snake trajectories provide an appropriate framework for defining functions indexed by $\mathbb{R}$-trees with values in $E$, and throughout this work we will make extensive use of their properties.  In this direction, we let $\mathcal{W}_E$ be the space of $E$-valued finite paths in $E$. Hence, every element $\w$ of $\mathcal{W}_E$ is a continuous mapping $\w : [0,\zeta_\w] \mapsto E$, where $\zeta_\w$ is a finite non-negative number called the lifetime of $\w$. The endpoint or tip of the path  $\w$ is denoted by   $ \widehat{\w} := \w(\zeta_\w)$. For every $y \in E$, we write  $\mathcal{W}_{E,y} \subseteq \mathcal{W}_E$ for the collection of continuous finite paths starting from $y$. With a slight abuse of notation, we still write  $y$ for the unique element of $\mathcal{W}_{E,y}$ with lifetime $\zeta_{\w}=0$. If for every $\w, \w' \in \mathcal{W}_E$ we set 
\begin{equation}\label{d:W:E}
    d_{\mathcal{W}_{E}}(\mathrm{w},\mathrm{w}^{\prime}):=|\zeta_{\mathrm{w}}-\zeta_{\mathrm{w}^{\prime}}|+\sup \limits_{r\geq 0}d_{E}\big(\mathrm{w}(r\wedge \zeta_{\mathrm{w}}),\mathrm{w}^{\prime}(r\wedge \zeta_{\mathrm{w}^{\prime}})\big),
\end{equation}
 the function $d_{\mathcal{W}_E}: \mathcal{W}_E \times \mathcal{W}_E \to \mathbb{R}$ is a metric on $\mathcal{W}_E$ and $(\mathcal{W}_E, d_{\mathcal{W}_E})$ is a Polish space. In what follows, we   systematically write $\omega = (\omega_s : s \geq 0)$ for a $\mathcal{W}_E$-valued function indexed by $\mathbb{R}_+$.  In particular, for every fixed $s\geq 0$, $\omega_s$ is an element of $\mathcal{W}_E$ with lifetime  $\zeta_{\omega_s}$, and the value at its tip is just  $\widehat{\omega}_s$.  We shall refer to 
\begin{equation*}
\zeta(\omega) := (\zeta_{\omega_s} : s \geq 0)
\end{equation*}
 as the  lifetime process of $\omega$, and to simplify notation we write $\zeta_s(\omega)$ for $\zeta_{\omega_s}$. Remark that the definition of the metric $d_{\mathcal{W}_E}$ in \eqref{d:W:E} yields that $\zeta(\omega)$ is continuous if $\omega$ is continuous. Finally, we will use  indifferently the notation 
\begin{equation*}
    \sigma(\omega) = \sigma_{\zeta(\omega)} = \inf \big\{t\geq 0:~\zeta_{\omega_s}=0  \text{ for every }s \geq t\big\},
\end{equation*}
for the lifetime of $\zeta(\omega)$. 
\begin{def1}[Snake trajectory] \label{definition:snaketrajectory}
 Fix $\w \in \mathcal{W}_E$. A continuous function  $\omega:\mathbb{R}_+ \to \mathcal{W}_E$   started at  $\omega_0 = \w$
 and  satisfying for any $s,t \geq 0$,  that 
  \begin{equation}\label{weak:Snake:omega}
    \omega_{s}(r) = \omega_{t}(r),  \quad \quad  \text{ for every } \, \,  0 \leq r \leq m_{\zeta(\omega)}(s ,t),
\end{equation}
will be referred to as a snake trajectory started from $\w$.  We denote  the collection of snake trajectories started from  $\w$ by $\mathcal{S}^\circ_{\w}$, and we write  
\begin{equation*}
    \mathcal{S}^\circ := \bigcup_{\w \in \mathcal{W}_E} \mathcal{S}^\circ_\w ,
\end{equation*}
for the set of all snake trajectories.\footnote{Note that \cite{ALG15} defines a snake trajectory under the additional conditions that $\omega$ starts from  some $y \in E$  (instead of a general path $\w \in \mathcal{W}_E$)  and that $\sigma(\omega)$ is  finite.}
\end{def1} 
When the property \eqref{weak:Snake:omega} is satisfied for some $\omega : \mathbb{R}_+ \to  \mathcal{W}_E$ we say that $\omega$ satisfies the \textit{strong snake property}.  Let us now explain why snake trajectories are well suited for coding functions indexed by $\mathbb{R}$-trees. First, as was mentioned above,  the continuity of $\omega \in \mathcal{S}^\circ$ ensures that the lifetime process $\zeta(\omega)$ is continuous,  so we can consider the tree coded by $\zeta(\omega)$, i.e. $\mathcal{T}_{\zeta(\omega)}$. Now, the strong snake property \eqref{weak:Snake:omega}  entails that the continuous function $\widehat{\omega} = (\widehat{\omega}_s : s \geq 0)$ is  compatible with the equivalence relation $\sim_{\zeta(\omega)}$ and therefore, is well defined in the quotient space  $\mathcal{T}_{\zeta(\omega)}$. With a slightly abuse of notation, for every $a\in \mathcal{T}_{\zeta(\omega)}$, we write  $\widehat{\omega}_a$ for $\widehat{\omega}_s$, where $s$ is any element of $p_{\zeta(\omega)}^{-1}(\{a\})$. We  interpret $\widehat{\omega}_a$ as a label for  the point  $a \in \mathcal{T}_{\zeta(\omega)}$ and $(\widehat{\omega}_a:~a\in \mathcal{T}_{\zeta(\omega)})$  as a process indexed by $\mathcal{T}_{\zeta(\omega)}$ with values on $E$. 
\par 
Let us now introduce a family of probability measures on the space of snake trajectories which  will allow us to define the notion of Markov process indexed by a (deterministic) $\mathbb{R}$-tree.
\medskip
\\
\noindent\textbf{Snakes driven by continuous functions.} Fix an $E$-valued strong Markov process with continuous sample paths   and for every $y \in E$, we let  $\Pi_y$ be its law in $\mathbb{C}(\mathbb{R}_+ , E)$  started from $y$. To simplify notation, we set $\Pi := (\Pi_y)_{y \in E}$ and write $\xi = (\xi_t : t \geq 0)$  for the canonical process on $\mathbb{C}(\mathbb{R}_+ , E)$.\footnote{It is implicitly assumed in our definition that the mapping $y \mapsto \Pi_y$ is measurable.} For every $\w \in \mathcal{W}_E$ and  $a,b \in \mathbb{R}_+$  with $ a \leq  \zeta_{\w}$ and $a \leq b$, we let  $R_{a,b}(\w, \dd \w' )$ be the probability measure on $\mathcal{W}_E$ characterised by the following properties:  
 \begin{itemize}
    \item[$\bullet$] $R_{a,b}(\mathrm{w},\dd \mathrm{w}^{\prime})$-a.s., $\mathrm{w}^{\prime}(s)=\mathrm{w}(s)$ for every $s\in[0,a]$.
    \item[$\bullet$] $R_{a,b}(\mathrm{w},\dd \mathrm{w}^{\prime})$-a.s., $\zeta_{\mathrm{w}^{\prime}}=b$.
    \item[$\bullet$] Under $R_{a,b}(\mathrm{w},\dd \mathrm{w}^{\prime})$, $(\mathrm{w}^{\prime}(s+a))_{s\in[0,b-a]}$
is distributed as $(\xi_{s})_{s\in[0,b-a]}$ under $\Pi_{\mathrm{w}(a)}$.
\end{itemize}
In other words, $\w'$ under $R_{a,b}(\w, \dd \w')$ coincides with $\w$ up to time $a$,  and then it is  distributed as the Markov process $(\xi_t : 0 \leq t \leq b-a)$ under $\Pi_{\w(a)}$. \par 
We will now endow $\mathcal{W}_E^{\mathbb{R}_+}$ with a probability measure and in this direction, we write $W = (W_t : t \geq 0)$ for the canonical process on $\mathcal{W}_E^{\mathbb{R}_+}$. We fix an arbitrary $\w \in \mathcal{W}_E$ as well as a continuous non-negative function $h$ on $\mathbb{R}_+$ such that $\zeta_\w = h(0)$,  and   for $s,t \geq 0$ recall from \eqref{def:m:h} the definition of $m_h(s,t)$. We let $Q_\w^h(\dd W)$ be the probability measure on $\mathcal{W}_E^{\mathbb{R}_+}$ characterised by the relation: 
\begin{align}\label{equation:fddsSnake}
     Q^h_\w \Big(  W_{t_0}\in A_0, \dots, W_{t_n}\in A_n\Big) 
    := \mathbbm{1}_{A_0}(\w) \int_{A_1\times \dots\times A_n}  R_{ m_{h}(0,t_1),h(t_1) }(\w,\dd \w_1) \dots R_{ m_{h}(t_{n-1},t_n),h(t_n) }(\w_{n-1},\dd \w_n),
\end{align}
for arbitrary   $0 = t_0 < t_1 < \dots < t_n$ and   $n \geq 1$.  The canonical process $W$ under $Q_\w^h$ is a time-inhomogenous $\mathcal{W}_E$-valued Markov process, called \textit{the snake driven by $h$ with spatial motion $\Pi$  started from $\emph{w}$}. The function $h$ is referred to as the driving function since  for every $t \geq 0$,  $Q_\w^h$-a.s. it holds that $\zeta_{W_t} = h(t)$.  Informally, under $Q^h_{\rm{w}}$,  when $h(t)$ decreases the path $W_t$ is erased from its tip, while when $h(t)$ increases, the path $W_t$ is extended by adding “little pieces” of trajectories of the Markov process $\Pi$ at its tip. The term  snake stems from  the fact that by construction,   for every fixed  $s, t \in \mathbb{R}_+$ and $Q_\w^h$-a.s.   we have 
\begin{equation}\label{weak:Snake}
    W_{s}(r) = W_{t}(r),  \quad \quad  \text{ for every } 0 \leq r \leq m_h(s,t). 
\end{equation}
 We stress that the equality $\zeta_{W_t} = h(t)$ and \eqref{weak:Snake} only hold $Q_\w^h$-a.s. for fixed $s,t \in \mathbb{R}_+$, and in particular we cannot say that $W$ is $Q_\w^h$ -a.s. an element of $\mathcal{S}_\w^\circ$.  This difficulty can be circumvented as soon as $W$ has a continuous modification  under the metric $d_{\mathcal{W}_E}$, since it is plain from the definition \eqref{d:W:E} of  $d_{\mathcal{W}_E}$ that this modification will now verify $Q_\w^h$-a.s.,  
 for every pair $s,t \in \mathbb{R}_+$, both the identity  $\zeta_{W_t} = h(t)$ and \eqref{weak:Snake}. To this end, let us recall from \cite{2024structure}  sufficient conditions on the pair $(h, \Pi)$  ensuring that $W$ under $Q^{h}_{\mathrm{w}}$ has a continuous modification with respect to  $d_{\mathcal{W}_E}$. With the  convention   $[a,\infty]:=[a,\infty)$ for $a  \in  \mathbb{R}$,  consider a family of disjoint intervals $([a_i, b_i], i \in \mathcal{J})$ indexed by an arbitrary subset $\mathcal{J} \subseteq \mathbb{N}$,  with $a_i < b_i$, for $a_i, b_i \in \mathbb{R}_+ \cup \{ \infty \}$. 
A continuous, non-negative function $h: \mathbb{R}_+ \mapsto \mathbb{R}_+$ is said to be locally $\theta$-Hölder continuous in  $([a_i,b_i] : i \in \mathcal{J} )$ for some $\theta \in (0,1]$ if, for every $n > 0$,  there exists a constant $C_n> 0$ such that
\begin{equation*}
    |h(s) - h(t)| \leq C_n|s-t|^\theta, \quad  \text{ for every } s,t \in [a_i \wedge n,b_i \wedge n],\, i \in \mathcal{J}.   
\end{equation*}
Now, we consider the following assumptions on the pair $(h, \Pi)$.  
\begin{enumerate}
    \item[{(i)}] There exists  a constant $C_\Pi> 0$ and two positive numbers $p,q > 0$ such that, for every $y \in E$ and $t\geq 0$, we have:
\begin{equation} \label{regularidad:estimate}
    \Pi_{y}\big(  \sup_{0 \leq s \leq t } \dd_E(\xi_s , y)^p  \big) \leq C_\Pi\cdot t^{q}.
\end{equation}
    \item[{(ii)}]  If we let  $((a_i, b_i):~ i \in \mathcal{J})$ be the excursion intervals  of $h$ above its running infimum, the  function  $h$ is locally $\theta$-Hölder continuous in $([a_i, b_i]: i \in \mathcal{J})$, for some $\theta\in(0,1)$ with $\theta  q > 1$.
\end{enumerate}
Proposition 1 in \cite{2024structure} states that under conditions (i) and (ii) on $( h,\Pi )$,  for every $\w \in \mathcal{W}_E$ with $\zeta_{{\w}} = h(0)$, the process $W$ under $Q_\w^h$ possesses a continuous modification.  Hence, when (i) and (ii) are fulfilled,  the measure $Q^h_{\rm{w}}$ can be defined in the space $\mathbb{D}(\mathbb{R}_+,\mathcal{W}_E)$.\footnote{The reason for considering $Q^h_{\rm{w}}$ as a measure on $\mathbb{D}(\mathbb{R}_+,\mathcal{W}_E)$, rather than in $\mathbb{C}(\mathbb{R}_+,\mathcal{W}_E)$ or even $\mathcal{S}^\circ$, stems simply from the fact that we will have to consider different rcll processes in various spaces, and it will thus be more convenient to see  $Q^h_{\rm{w}}$ as a measure on $\mathbb{D}(\mathbb{R}_+,\mathcal{W}_E)$ to have a comprehensible framework.}  With a slight abuse of notation, we still denote this measure by $Q^h_{\rm{w}}$, and it is plain from a previous remark that for $Q^h_{\rm{w}}(\mathrm{d} \omega)$-almost every $\omega$ we now have $\omega \in \mathcal{S}^\circ$.  In particular,  under conditions (i) and (ii), we can consider the process $(\widehat{\omega}_a)_{a\in   \mathcal{T}_h}$ under  $Q^h_{\rm{w}}(\mathrm{d} \omega)$, and we call it a $\Pi$-Markov process indexed by $\mathcal{T}_h$,  started from $\mathrm{w}$.    
\par 
The informal interpretation given after \eqref{equation:fddsSnake} takes a nicer form when $h(0)=0$. In this scenario, the duration $\zeta_\w$ of the initial condition $\w$ is null so that $\w = y$ for some $y \in E$. Consequently, for each $t \geq 0$ the path $W_t$ under $Q_{y}^h$ is distributed as $\xi$ under $\Pi_{y}$ restricted to $[0,h(t)]$.   Moreover,  the strong snake property and \eqref{equation:geodesicpath} yield that for every $u\in \mathcal{T}_h$  the 
 mappings 
\begin{equation} \label{equation:snakeproperty2}
    \upsilon \mapsto  \widehat{\omega}_v, \quad  \text{and} \quad  v \mapsto \omega_{t_u}(h(v)),\quad \text{ for } v\in \llbracket 0,u \rrbracket 
\end{equation}
are identical, where  $t_u$ is any arbitrary element of $p_{h}^{-1}(\{ u \})$.  Therefore,   the process  $(\widehat{\omega}_a)_{a\in   \mathcal{T}_h}$ behaves along every ancestral line as the Markov process $\xi$ under $\Pi_{y}$. 
\par 
At this point, it is important to stress that we will not work  with random variables taking values in the space of  metric spaces  with  labels. Defining an appropriate framework for the latter is a technical challenge that has been overcome in special situations  \cite{BCR:2024}, and working with snake trajectories circumvents this difficulty since it allows us to consistently work with processes indexed by $\mathbb{R}_+$.  Note that this is by no means a setback, since it will allow us to make use of the classical theory and machinery for $\mathbb{R}_+$-indexed stochastic processes. throughout this work, our analysis is then always carried over through the lens 
 of   $\mathbb{R}_+$-indexed processes, and when convenient, we interpret our results in terms of labelled trees, since these are well  suited for heuristic purposes. Nevertheless, to avoid measurability problems, when doing so we limit ourselves to properties  that can  be described in terms of some $\mathbb{R}_+$-indexed process.

\subsection{Markov processes indexed by Lévy trees}
\label{section:MPonLT}

The objective of the   section is to introduce the notion of a \textit{Markov process  indexed by a $\psi$-Lévy tree}. To this end, we first present $\psi$-Lévy trees through their coding function, the so-called height process of a $\psi$-Lévy process. We will then assign labels to the $\psi$-Lévy tree by considering  the snake driven by the height process of the Lévy tree.

\subsubsection{The height process}\label{subsection:height}

The law of the height process is determined by a function $\psi:\mathbb{R}_+\to \mathbb{R}_+$ known as its  branching mechanism. In this work, we always assume that $\psi$ can be written in the following form 
\begin{equation} \label{equation:psi}
     \psi(\lambda)=\alpha\lambda+\beta \lambda^{2}+\int_{(0,\infty)}\pi(\dd x)\big(\exp(-\lambda x)-1+\lambda x\big), \quad \lambda \geq 0, 
 \end{equation}
for some non-negative $\alpha, \beta \in \mathbb{R}_+$, and where $\pi(\dd x)$ is  a measure on $(0,\infty)$ satisfying the integrability condition  $\int_{(0,\infty)} \pi(\dd x) (x\wedge x^{2})<\infty$.  We  impose also  the following additional condition on the branching mechanism  
\begin{equation}\label{1_infinity_psi} 
 \int_{1}^{\infty}\frac{\dd \lambda}{\psi(\lambda)}<\infty,  
\end{equation}
which in particular will ensure that the corresponding Lévy tree is compact.  It is classic that for every such function $\psi$ one can associate a spectrally positive Lévy process with Laplace exponent $\psi$.  In this direction, we write $X$ for the canonical process on $\mathbb{D}(\mathbb{R}_+, \mathbb{R})$,   and we denote  the law of a  Lévy process with Laplace exponent $\psi$  started from $0$ by $P$.   In other words, $X$ under $P$ is a Lévy process with  $X_0 = 0$ and with distribution characterised by
\[{E}[\exp(-\lambda X_{1})]=\exp\big(\psi(\lambda)\big),\quad \text{ for } \lambda\geq 0. \]
From our conditions on $\psi$, it follows that under $P$, the process $X$ does not have negative jumps, its paths have infinite variation by \eqref{1_infinity_psi}, and either oscillate or drift towards $-\infty$, depending on whether $\alpha = 0$ or $\alpha > 0$, respectively. For a more detailed account on our standing hypothesis on $\psi$ we refer to \cite{ALG15,2024structure}. Let us now turn  our attention to the so-called height process. Our presentation follows  \cite[Chapter 1]{DLG02}  and we refer to \cite[Section VIII-1]{RefSnake} for heuristics stemming from the discrete setting. Let us start by introducing some notation.  For every $0 \leq s\leq t$, we set 
\begin{equation*}
    I_{s,t}:=\inf_{s \leq u \leq t} X_u,
\end{equation*}
for the infimum of $X$ in $[s,t]$, and    when $s = 0$ we  simply write  $I_{t}=I_{0,t}$ for the running infimum of $X$ at time $t$.  Since $X$  drifts towards $-\infty$ or oscillates, we must have $I_{t}\to -\infty$ as $t \to  \infty$. By  \cite[Lemma 1.2.1]{DLG02}, for 
 each fixed $t\geq 0$,  the limit:
\begin{equation} \label{definition:alturaH}
    H_t := \lim\limits_{\varepsilon \to 0}\frac{1}{\varepsilon }\int_{0}^t \dd r~ \mathbbm{1}_{\{ X_{r}<I_{r,t}+\varepsilon \}}
\end{equation}
exists in probability. Roughly speaking, for every fixed $t\geq 0$, the variable $H_t$ measures the size of the set: 
\begin{equation*}
    \{ 0 < r  \leq t :~ X_{r-} \leq  I_{r,t} \}
\end{equation*}
and we refer to  $H = (H_t : t \geq 0)$ as the height process of $X$.
By \cite[Theorem 1.4.3]{DLG02}, condition \eqref{1_infinity_psi} ensures that $H$ possesses a continuous modification, that we consider from now on and that we still denote by $H$. Hence, under $P$, the process $H$ is  continuous, non-negative and it starts at $H_0 = 0$.  
\par
\indent  It will be crucial for our purposes to also define the height process $H$ under the excursion measure of the reflected Lévy process $X-I = (X_t - \inf_{[0,t]}X : t \geq 0)$. Let us be more precise. Under our current hypothesis, the point $0$ is regular and instantaneous for the Markov process $X-I$.  The process $-I$ is a local time at $0$ for $X-I$ and we denote the corresponding excursion measure by $N$, and we refer  to $X$ under $N$ as a $\psi$-Lévy excursion. For $e \in \mathbb{D}(\mathbb{R}_+ , \mathbb{R})$, we still write  $\sigma_e$ for the lifetime of  $e$, i.e. $\sigma_e :=  \inf \big\{t\geq 0:~e(s)=0 \, \text{ for every }s \geq t\big\}$.   Now, denote the excursion intervals of $X-I$ away from $0$ by $(a_i,b_i)_{i \in \mathbb{N}}$ --  recall that these are defined as the connected components of the open set $\{ t \geq 0 : X_t - I_t > 0 \}$ --  and for every $i \in \mathbb{N}$, we set $e_i =( X_{(a_i + t) \wedge b_i} - I_{a_i} : t \geq 0)$ for the corresponding excursion. Remark that since $I_t \rightarrow  - \infty$ as $t \uparrow \infty$, every excursion interval has finite duration $\sigma_{e_i}=b_i-a_i$, and it can be further shown \cite[Lemma 1.2.2 or see \eqref{equation:zeros} below]{DLG02} that   $(a_i,b_i)_{i \in \mathbb{N}}$ are precisely the excursion intervals away from $0$ of  $H$.   Now, the key observation is that the (shifted) restriction $(H_{(a_i + t)\wedge b_i} : t \geq 0)$ of $H$ to an arbitrary excursion interval $[a_i,b_i]$  can be written in terms of a functional that only depends on the corresponding excursion $e_i$, say $H(e_i)$. Informally, this should not come as a surprise,  since the definition of $H$ in \eqref{definition:alturaH}  only depends on the excursion straddling $t$; we refer to the discussion preceding Lemma 1.2.4 in \cite{DLG02} for a more detailed account. To simplify notation, under $N(\dd e)$ we write $H$ for $H(e)$.  We now have all the necessary ingredients to define Lévy trees.
\\
\\
\textbf{Lévy trees.} 
Under $N$, the quotient space $\mathcal{T}_H$ equipped with the metric $d_H$ and rooted at $p_H(0)$ is  the Lévy tree with branching mechanism $\psi$. In particular,   when $\psi(\lambda) = \lambda^2/2$ for $\lambda \geq 0$, the height process $H$ under $N$ is a non-negative Brownian excursion, the corresponding tree $\mathcal{T}_H$  is the so-called Brownian tree, and $\mathcal{T}_H$ under $N(\, \cdot \,  | \sigma_{H} = 1)$ is the celebrated Brownian continuum random tree. If we work instead under $P$, the height process $H$ is still a continuous, non-negative function    on $\mathbb{R}_+$ with infinite lifetime $\sigma_H$. Thus, $\mathcal{T}_H$ under  $P$ is a non-compact $\mathbb{R}$-tree  which can be thought of as a collection of Lévy trees concatenated at their root.  Let us be more precise.  Write $(a_i,b_i)_{i \in \mathbb{N}}$ for the excursion intervals of $H$ away from $0$ and for every $i \in \mathbb{N}$, set  $H^i:= (H_{(a_i + t)\wedge b_i} : t \geq 0)$.  The tree $\mathcal{T}_H$ under $P$  can be interpreted as the concatenation at the root  of the collection of trees $(\mathcal{T}_{H^i} : i \in \mathbb{N})$, according to  the order induced by the local time  $-I$.  For this reason, under $P$ we refer to $\mathcal{T}_H$ as a forest of  $\psi$-Lévy trees.  We stress that under $P$ and $N$ we have $H_0=0$, so $H$ codes  the distances to the root of the tree $\mathcal{T}_H$ when  explored in clockwise order in the sense of Section \ref{sec:trees:1}. 
\\
\\
One of the main difficulties arising in the study of the height process is that it is not Markovian as soon as $\pi \neq 0$. To circumvent this difficulty, we will need to introduce a measure-valued Markov process,  called the exploration process,   which roughly speaking carries the information needed to make  $H$ Markovian.

\subsubsection{The exploration process} \label{subsection:explorationprocess}

We denote the space of finite measures in $\mathbb{R}_+$ equipped with the topology of weak convergence by  $\mathcal{M}_f(\mathbb{R}_+)$, and with a slight abuse of notation we still denote   the identically null measure by  $0$. For each fixed $t \geq 0$, we write $\dd_s I_{s,t}$ for the Lebesgue-Stieltjes measure associated with the non-decreasing, continuous mapping $r \mapsto I_{r,t}$ for $r \in [0,t]$. The exploration process is the $\mathcal{M}_f(\mathbb{R}_+)$-valued process $\rho = (\rho_t : t \geq 0)$ defined,  for every measurable function $f:\mathbb{R}\to \mathbb{R}_+$,  by the relation 
    \begin{equation*}
        \langle \rho_t , f \rangle := \int_{[0,t]} \dd_s I_{s,t}~ f(H_s), \quad t \geq 0. 
    \end{equation*}
    Note that in particular, the total mass $\langle \rho_t , 1  \rangle$ for $t \geq 0$ is simply given by 
 \begin{equation}\label{eq:rho:1:mass}
 \langle \rho_t,1 \rangle=X_t-I_t,\quad t\geq 0.
 \end{equation}   
  Despite its rather technical definition, the exploration process possesses crucial properties which make its study possible. Notably,   $(\rho_t : t \geq 0)$ is an   $\mathcal{M}_{f}(\mathbb{R}_{+})$--valued  rcll  strong Markov process  \cite[Proposition 1.2.3]{DLG02} and  the decomposition of the measure $\rho_t$ on its continuous and atomic parts is given by 
    \begin{equation}\label{rho_atoms}
\rho_{t}(\dd r) = \beta \mathbbm{1}_{[0,H_{t}]}(r)\dd r+\mathop{\sum \limits_{0<s\leq t}}_{X_{s-}<I_{s,t}}(I_{s,t}-X_{s-})\:\delta_{H_{s}}(\dd r),\quad t\geq 0. 
\end{equation}
It was later established in \cite[Theorem 2.1]{Abraham-Delmas} that the exploration process is in fact a Feller process. Let us now briefly address some important  properties of $\rho$ as well as its connection with $H$. To this end,  we use the notation $\mu$ for an arbitrary element of $\mathcal{M}_f(\mathbb{R}_+)$ and we denote the supremum of  its topological support  by $H(\mu)$, i.e. $H(\mu) := \sup ( \text{supp } \mu )$, with the convention $H(0) = 0$. The following properties hold $P$ - a.s.  
\begin{itemize}
    \item[\rm{(i)}]   $\text{supp } \rho_t = [0,H_t]$ for  every $t\geq 0$ at which $\rho_t \neq 0$.
    \item[\rm{(ii)}] The process $t \mapsto \rho_t$ is rcll with respect to the total variation distance. 
    \item[\rm{(iii)}] We have: 
\begin{equation}  \label{equation:zeros}
       \{ t \geq 0 : \rho_t = 0 \} = \{ t \geq 0 : X_t - I_t = 0 \} = \{ t \geq 0 : H_t = 0 \}
\end{equation}
 and the Lebesgue measure of these sets is null.
\end{itemize}
Observe that by points (i) and (iii) it holds that $(H_t: t \geq 0) = (H(\rho_t): t \geq 0)$,  and by (iii) the excursion intervals away from $0$ of $X-I$, $H$ and $\rho$  coincide   (in particular, under $P$ the process $\rho$ starts from $\rho_0 = 0$). Point (ii) was proved in \cite[Proposition 1.2.3]{DLG02} while  points (i) and the first statement in (iii) are a direct consequence of \cite[Lemma 1.2.2]{DLG02} and \eqref{eq:rho:1:mass}. Finally, the second statement in (iii) follows from  Theorems~6.5 and 6.7 in \cite{kyprianou2014fluctuations}. As we already mentioned, the process $\rho$ is a Markov process which starts from $0$ under $P$. In order to define its  law when  started from an arbitrary $\mu \in \mathcal{M}_f(\mathbb{R}_+)$  we need to introduce  two deterministic operations on the elements of $\mathcal{M}_f(\mathbb{R}_+)$. 
\smallskip \\
\textit{Pruning.} For $\mu \in \mathcal{M}_f(\mathbb{R}_+)$ and $a \geq 0$,  we write $\kappa_a\mu$ for the element of $\mathcal{M}_f(\mathbb{R}_+)$ characterised by the relation: 
\begin{equation}\label{definition:pruning}
    \kappa_a\mu ( [0,r] ) = \mu([0,r]) \wedge  ( \langle \mu , 1 \rangle - a),  
\end{equation}
with the convention $\kappa_a\mu = 0$ if $a \geq  \langle \mu , 1 \rangle$.
Roughly speaking, the operation $\mu \mapsto \kappa_a \mu$ prunes the measure $\mu$ from the tip of its topological support so that the remaining mass is $\langle \mu , 1 \rangle - a$. Remark that despite the fact that $H(\mu)$ might be infinite, 
  the measure  $\kappa_a \mu$ has compact support for every $a > 0$. Further, if $\mu$ has compact support and  $\text{supp } \mu = [0,H(\mu)]$, the mapping $\mathbb{R}_+ \ni  a \mapsto H(\kappa_a \mu)$ is continuous.  \smallskip  \\
  \textit{Concatenation}. Consider  two elements $\mu , \nu$ of $\mathcal{M}_f(\mathbb{R}_+)$ and assume $H(\mu) < \infty$. We write  $[\mu,\nu]$ for the element of $\mathcal{M}_f(\mathbb{R}_+)$ defined  by:  
  \begin{equation*}
      \langle  [\mu , \nu] ,f  \rangle := \int_{[0, H(\mu) ]} \mu( \dd r) \,  f(r) + \int_{\mathbb{R}_+} \nu(\dd r)f(H(\mu) + r),   
  \end{equation*}
  where $f:\mathbb{R}\to \mathbb{R}_+$ is any arbitrary measurable function. \smallskip \\
  \indent 
  We can  now  define the law of the exploration process started from an arbitrary measure $\mu \in \mathcal{M}_f(\mathbb{R}_+)$. Under $P$, we write  $\rho^{\mu} = ( \rho_t^{\mu} : t \geq 0 )$ for the $\mathcal{M}_f(\mathbb{R}_+)$-valued process defined at time $t = 0$ as $\rho^\mu_0 := \mu$ and  
  \begin{equation} \label{equation:LeyExploration}
      \rho^{\mu}_t := [\kappa_{-I_t} \mu , \rho_t],\quad \text{for } t>0. 
  \end{equation}
   Remark that the right-hand side is well defined since $\kappa_a \mu$ has compact support for every $a > 0$.  We will use the notation $\mathbf{P}_\mu$ to denote the law  of $\rho^\mu$ in $\mathbb{D}(\mathbb{R}_+, \mathcal{M}_f(\mathbb{R}_+))$. If for $r \geq 0$ we set $T_r := \inf \{ t \geq 0 : -I_{t} = r \}$,  it follows from our definitions that 
   \begin{equation}\label{equation:LevyPmu}
       \langle \rho_t^{\mu}, 1 \rangle 
       = X_t + \langle \mu , 1 \rangle, \text{ for } 0 \leq t \leq T_{\langle \mu , 1 \rangle}, \quad \text{ and } \quad  \langle \rho_t^{\mu}, 1 \rangle  = X_t - I_t,  \text{ for } t \geq T_{\langle \mu , 1 \rangle}. 
   \end{equation}
   Said otherwise,  the process $(\langle \rho_t^\mu , 1 \rangle : 0 \leq t \leq T_{\langle \mu,1\rangle} )$ is distributed as the Lévy process $X$ started from $\langle \mu , 1 \rangle$ taken up to its hitting time to $0$,  and  $(\langle \rho_{t+T_{\langle \mu,1\rangle}}^\mu , 1 \rangle : t\geq 0)$ has the same law as $(\langle \rho_t , 1 \rangle:~t\geq 0)$ under $P$.   In this work, we will exclusively consider  initial conditions $\mu$ in  the following  subset of $\mathcal{M}_f(\mathbb{R}_+)$   
\begin{equation}\label{df:M:O:f}\mathcal{M}^{0}_{f}:=\Big\{\mu\in \mathcal{M}_{f}(\mathbb{R}_{+}):\:H(\mu)<\infty \ \text{ and } \text{supp } \mu = [0,H(\mu)]\Big\}\cup\big\{0\big\}. 
\end{equation}
Let us explain the reason behind this restriction. First and foremost, remark that for $\mu \in \mathcal{M}_f^0$ the process $H(\rho^\mu)$  is continuous, a property that might fail for arbitrary elements $\mu \in \mathcal{M}_f(\mathbb{R}_+)$. Furthermore, by property (i) above  and \eqref{equation:LeyExploration},   the process $\rho^\mu$  for $\mu\in \mathcal{M}^{0}_{f}$ takes values in $\mathcal{M}^{0}_{f}$. This fact will be used frequently in the sequel.  
 \par  
 Informally, the exploration process  encodes at each time $t \geq 0$, the height of the subtrees in $\mathcal{T}_H$ attached ``to the right'' of the geodesic path $\llbracket 0,p_H(t) \rrbracket$. We now introduce another measure-valued process closely related to the exploration process that informally codes at each time $t \geq 0$, the height of the subtrees attached ``to the left'' of the same geodesic path. This process will  arise in our work  when discussing spinal decompositions in   Section \ref{section:MainSpinalDecomp}.   Under $P$,  we consider the measure-valued process $\eta := (\eta_t : t \geq 0)$  defined by 
    \begin{equation} \label{definition:eta}
        \eta_t(\dd r) := \beta \mathbbm{1}_{[0,H_{t}]}(r)\dd r+\mathop{\sum \limits_{0<s\leq t}}_{X_{s-}<I_{s,t}}(X_{s} - I_{s,t} )\:\delta_{H_{s}}(\dd r),\quad t\geq 0.
    \end{equation}
 The process $\eta$ is also rcll  with respect to the total variation distance of measures \cite[Corollary 3.1.6]{DLG02} and  takes values in  $\mathcal{M}_f(\mathbb{R}_+)$ \cite[Lemma 3.1.1]{DLG02}. Further, $P$-a.s. we have $H(\eta_t) = H(\rho_t)$ for every $t \geq 0$ and the set $\{ t \geq 0 : \eta_t = 0 \}$ coincides with \eqref{equation:zeros}. The pair $(\rho, \eta)$, is a strong Markov process \cite[Proposition 3.1.2]{DLG02}. For a complete account on $(\eta_t : t \geq 0 )$ we refer to  \cite[Section 3.1]{DLG02}, see as well \cite{LGLJ98}  for heuristics on $(\rho , \eta)$ in terms of queuing systems.  The argument we employed to define  $H$ under $N$ can be easily extended to  the pair
 $(\rho , \eta)$;  it is plain that under $N$, we still have $H=H(\rho)=H(\eta)$   and that properties (i)-(iii) still hold.   As a straightforward  consequence of our previous discussion and  excursion theory for  the reflected Lévy process $X-I$ we deduce that, under $P$, if we write $(a_i,b_i)_{i \in \mathbb{N}}$ for the connected component of $\{ t \geq 0 : X_t - I_t > 0 \}$, the random measure in $\mathbb{R}_+ \times \mathcal{M}_f(\mathbb{R}_+)^2$ defined by
\begin{equation} \label{definition:PoissonRMExcursionesRho}
    \sum_{i \in \mathbb{N}}\delta_{(-I_{a_i} , \rho_{(a_i + \cdot)\wedge{b_i}}, \eta_{(a_i + \cdot)\wedge{b_i}} )}
\end{equation}
is a Poisson point measure with intensity $\mathbbm{1}_{\mathbb{R}_+}(\ell)\dd \ell  \, N(\dd  \rho, \dd \eta    )$. Note that (iii) and our discussion on the process $\eta$ immediately yields that under $P$, the measure $(0,0) \in \mathcal{M}_f^2(\mathbb{R}_+)$ is regular and instantaneous for the Markov process $(\rho, \eta)$, and that $-I$ is a local time. It then follows that the corresponding excursion measure away from $(0,0)$ for the pair $(\rho , \eta)$ is precisely $N(\dd \rho , \dd \eta)$.  The process $\eta$  is often referred to as the dual of $\rho$. This terminology is justified by the following remarkable identity in distribution:
\begin{equation}\label{dualidad:etaRho}
     \big( (\rho_t, \eta_t) : t \in [0,\sigma_{H(\rho)}] \big) \overset{(d)}{=} \big( (\eta_{(\sigma_{H(\rho)} -t )-}, \rho_{(\sigma_{H(\rho)} -t )-} ) : t \in [0,\sigma_{H(\rho)}]  \big), \quad \text{under } N,
\end{equation}
with the convention that $(\eta_{0-}, \rho_{0-} ) = (\eta_{0}, \rho_{0} )$,  we refer to \cite[Corollary 3.1.6]{DLG02} for a proof. 
\par 
 Let us close this section  with a discussion which concerns the reconstruction of $H, \rho,\eta $ and $X$ under $P$ or $N$. Namely, we will prove that given any of these four processes,  the remaining three can be recovered in a measurable.  First, recall that  $H,\rho$ and $\eta$ are functionals of $X$, and that $H=H(\rho)=H(\eta)$. Thus,  it remains to check that one can recover $X$  from the height process $H$. To this end, let us recall that  by \cite[Lemma 1.3.2]{DLG02} and the monotonicity of $t\mapsto I_{t}$ we have:
\begin{equation}\label{temps:local:I}
    \lim \limits_{\epsilon \to 0}E \big[\sup\limits_{s\in[0,t]}\big| \epsilon^{-1} \int_{0}^{s}\dd r \, \mathbbm{1}_{\{0< H_r<\epsilon\}}+I_s\big|\big]=0,\quad \text{ for every }t\geq 0.
\end{equation}
The convergence in the previous display yields that  $-I$ can also be thought of as the local time at $0$ of $H$. In particular, by \eqref{temps:local:I} we can find a decreasing subsequence $(\varepsilon_k)_{k\geq 0}$ of positive numbers converging to $0$ along which $P$-a.s. we have
\begin{equation}\label{temps:local:I:p:s}
    \lim \limits_{k\to \infty}\sup\limits_{s\in[0,t]}\big| \epsilon^{-1}_k \int_{0}^{s}\dd r \, \mathbbm{1}_{\{0< H_r<\epsilon_k\}}+I_s\big|=0,\quad \text{ for every }t\geq 0.
\end{equation}
From now on, we fix the subsequence $(\varepsilon_k)_{k\geq 0}$. The following result is undoubtedly known, but since we have not been able to find a reference in the literature we give a quick proof.
\begin{lem}\label{coro:reconstruccionRhoX}
    Under $P$ and $N$ and  for each $t \geq 0$,  the following convergence holds a.e. 
\begin{equation} \label{equation:X-I}
    X_t - I_t =\lim \limits_{k\to \infty}\epsilon^{-1}_k\int_{t}^{\infty}  \mathrm{d}r \,  \mathbbm{1}_{\{0< m_H(t,r) <H_r< m_{H}(t,r) +\epsilon_k\}}.
\end{equation} 
\end{lem}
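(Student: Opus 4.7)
The plan is to apply the strong Markov property to $(X,\rho)$ at time $t$ in order to decompose $H$ on $[t,\infty)$ into the contribution from the ``spine'' coded by $\rho_t$ and an independent contribution from an exploration process started from $0$. Once this decomposition is in place, the desired limit will reduce directly to the local-time approximation \eqref{temps:local:I:p:s} already established for the standard height process.

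More concretely, I would set $X^{(t)}_s := X_{t+s}-X_t$, $I^{(t)}_s := \inf_{[0,s]} X^{(t)}$, and let $\rho^{(t)}$, $H^{(t)}$ denote the exploration and height processes associated with $X^{(t)}$. Under $P$, the strong Markov property of $\rho$ combined with formula \eqref{equation:LeyExploration} applied with initial measure $\rho_t$ should give
\begin{equation*}
\rho_{t+s} \;=\; \bigl[\,\kappa_{-I^{(t)}_s}\rho_t,\ \rho^{(t)}_s\,\bigr], \qquad s\geq 0,
\end{equation*}
and hence, taking suprema of supports, $H_{t+s} = H(\kappa_{-I^{(t)}_s}\rho_t) + H^{(t)}_s$. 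The core step is then to identify $m_H(t,t+s) = H(\kappa_{-I^{(t)}_s}\rho_t)$. I would obtain the inequality ``$\geq$'' from monotonicity of $a \mapsto H(\kappa_a\rho_t)$ together with the above identity, while for ``$\leq$'' the idea is to choose $u^\ast \in [0,s]$ at which $X^{(t)}$ attains its running infimum (such $u^\ast$ exists because $X^{(t)}$ is spectrally positive): there $X^{(t)}_{u^\ast} = I^{(t)}_{u^\ast}$, which by \eqref{equation:zeros} forces $\rho^{(t)}_{u^\ast} = 0$, whence $H_{t+u^\ast} = H(\kappa_{-I^{(t)}_s}\rho_t)$. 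This is the step I expect to be the most delicate, as it genuinely relies on combining spectral positivity with the precise behaviour of $\rho^{(t)}$ at its zeros.

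Next, using the explicit form \eqref{rho_atoms} together with the absence of negative jumps (which places all atoms of $\rho_t$ at strictly positive heights), one checks that $\rho_t(\{0\})=0$ and consequently $H(\kappa_a\rho_t)>0$ if and only if $a < \langle\rho_t,1\rangle = X_t - I_t$. Setting $T := \inf\{s\geq 0:\ -I^{(t)}_s \geq X_t - I_t\}$, the condition $m_H(t,t+s)>0$ will then be equivalent to $s<T$, and continuity of $-I^{(t)}$ (again via spectral positivity) yields $-I^{(t)}_T = X_t - I_t$ with $T$ finite a.s. For $s\in[0,T)$, the indicator on the right-hand side of \eqref{equation:X-I} reduces to $\mathbbm{1}_{\{0<H^{(t)}_s<\varepsilon_k\}}$, and the change of variable $r=t+s$ yields
\begin{equation*}
\varepsilon_k^{-1}\int_t^\infty \mathbbm{1}_{\{0<m_H(t,r)<H_r<m_H(t,r)+\varepsilon_k\}}\,\dd r \;=\; \varepsilon_k^{-1}\int_0^T \mathbbm{1}_{\{0<H^{(t)}_s<\varepsilon_k\}}\,\dd s.
\end{equation*}

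To conclude, I will invoke the fact that $X^{(t)}$ has law $P$ by stationarity of increments, so \eqref{temps:local:I:p:s} holds for $H^{(t)}$ along the fixed subsequence $(\varepsilon_k)$ uniformly on every compact time interval. Since $T<\infty$ a.s., evaluating this uniform convergence at $s=T$ produces the limit $-I^{(t)}_T = X_t - I_t$, which establishes \eqref{equation:X-I} under $P$. The case under $N$ should follow by the same argument on $\{t<\sigma_H\}$: the usual Markov property under $N$ still provides the decomposition of $\rho_{t+\cdot}$ through an independent exploration process, $T$ now equals $\sigma_H - t$ (as the excursion ends at $\sigma_H$), and $I_t\equiv 0$ throughout the excursion, so the limit is again $X_t - I_t$.
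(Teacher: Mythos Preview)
Your proof is correct and follows essentially the same approach as the paper's: both apply the Markov property at time $t$ to decompose $\rho_{t+\cdot}$ via the concatenation formula \eqref{equation:LeyExploration}, identify $m_H(t,t+s)=H(\kappa_{-I^{(t)}_s}\rho_t)$, and thereby reduce the integral to $\varepsilon_k^{-1}\int_0^{T}\mathbbm{1}_{\{0<H^{(t)}_s<\varepsilon_k\}}\,\dd s$ with $T=T_{\langle\rho_t,1\rangle}$, after which \eqref{temps:local:I:p:s} applied to the shifted process concludes. The paper simply asserts the identity $\min_{[0,r]}H(\rho^\mu)=H(\kappa_{-I_r}\mu)$ ``by construction'' where you give an explicit argument via the no-overshoot property of the spectrally positive process, but otherwise the arguments coincide.
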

By right-continuity of $X$, we deduce from   the previous lemma and \eqref{temps:local:I:p:s} that under $P$ and $N$, we can recover the process   $X$ from $H$ in a measurable way. We shall refer to the process $X$ obtained from $H$ under $P$ (resp $N$) through \eqref{temps:local:I:p:s} and \eqref{equation:X-I} as the $\psi$-Lévy process (resp. $\psi$-Lévy excursion) associated with $H$.  
\begin{proof}
We only establish the lemma under $P$, the result under $N$ follows by the same arguments. In this direction we fix $t \geq  0$ and we start by expressing \eqref{equation:X-I} solely in terms of $\rho$. To this end,   recall from \eqref{eq:rho:1:mass} the identity $X_t - I_t = \langle \rho_t , 1 \rangle$ and  note  that as a direct consequence of \eqref{equation:zeros},  the variable  $\inf\{ s \geq t : H_s = 0 \}$ coincides with $\inf\{ s \geq t : \langle \rho_s , 1 \rangle = 0 \}$. Further,  by the Markov property, the law of $(\rho_{s+t}: s \geq 0 )$ under $P$ is $\textbf{P}_{\rho_t}$, where we recall that for $\mu \in \mathcal{M}_f(\mathbb{R}_+)$ the measure $\textbf{P}_\mu$ is the law under $P$  of $\rho^{\mu}$  as defined in \eqref{equation:LeyExploration}.  By our previous remarks,  to obtain \eqref{equation:X-I} it suffices to show that  for every fixed  $\mu\in \mathcal{M}_f(\mathbb{R}_+)$, it holds that: 
\begin{equation}\label{eq:Markov:reconstruction:lem}
    \langle\mu,1\rangle=\lim \limits_{k\to \infty}\epsilon^{-1}_k\int_{0}^{\inf\{ s \geq 0 : \rho^\mu_s = 0\}} \hspace{-8mm} \mathrm{d}r \,  \mathbbm{1}_{\{0<H(\rho^\mu_r)\leq \min_{[0,r]} H(\rho^\mu) +\epsilon_k\}}, \quad P\text{-a.s.}
\end{equation}
By construction, under $P$ we can write   $H(\rho_r^\mu) = H(\kappa_{-I_r}\mu ) + H(\rho_r)$ as well as  $\min_{[0,r]}H(\rho^\mu) = H(\kappa_{-I_r}\mu )$ for $r \geq 0$,  and remark that      $\inf\{ s \geq 0 : \rho^\mu_s = 0\}$ coincides with  $T_{\langle \mu , 1 \rangle} = \inf \{ s \geq 0 : -I_s = \langle \mu, 1 \rangle \}$ by \eqref{equation:LeyExploration}. This  gives that the convergence   \eqref{eq:Markov:reconstruction:lem}  can be written as  
$$  
\langle\mu,1\rangle=\lim \limits_{k\to \infty}\epsilon^{-1}_k\int_{0}^{T_{\langle\mu,1\rangle}}\mathrm{d}r \,  \mathbbm{1}_{\{0<H_r\leq\epsilon_k\}}, \quad P\text{-a.s.}
$$
However, the convergence in the last display  clearly holds  by  \eqref{temps:local:I:p:s}. This concludes the proof of the lemma. 
\end{proof}

In what follows, our analysis is carried over in terms of $\rho$, instead of $X$. In particular, we use the notation $H(\rho)$ instead of $H$. This is due to the fact that the exploration process is better suited to study Lévy trees. 
 
\subsubsection{The Lévy snake}\label{section:snake}

Fix a branching mechanism $\psi$ as in Section \ref{sec:trees:1} as well as the law of a continuous strong Markov  process $\Pi = (\Pi_y)_{y \in E}$ with values in some fixed  Polish space $(E, \dd_E)$. The goal of this section is to define the snake with  spatial motion   $\Pi$   driven by the height process of a $\psi$-Lévy process. This will give rise to the notion of a $\Pi$-Markov process indexed by a $\psi$-Lévy tree. In this direction,  recall that for $\mu \in \mathcal{M}_f(\mathbb{R}_+)$ the measure    $\mathbf{P}_\mu$  stands for the law in $\mathbb{D}(\mathbb{R}_+, \mathcal{M}_f(\mathbb{R}_+) )$ of the exploration process started from $\mu$. With a slight abuse of notation, it will be convenient to write $\rho = (\rho_t: t \geq 0)$ for the canonical process in $\mathbb{D}(\mathbb{R}_+, \mathcal{M}_f(\mathbb{R}_+) )$.  We shall argue that, for an appropriate subset of initial conditions $(\mu, \w) \in \mathcal{M}_f(\mathbb{R}_+) \times \mathcal{W}_E$ and pairs $(\psi, \Pi)$,  the probability measure $Q_\w^{H(\rho)}(\dd W)$ under $\mathbf{P}_\mu$ is well defined in $\mathbb{D}(\mathbb{R}_+, \mathcal{W}_E)$ and supported on $\mathcal{S}^\circ$.    First,  remark that it is imperative to consider initial conditions $(\mu, \mathrm{w})$ for which  $\zeta_{\mathrm{w}}= H(\mu)$, and ensuring the continuity of $H(\rho)$ under $\mathbf{P}_\mu$. By the discussion following \eqref{df:M:O:f}, it is clear that both conditions are  verified for any pair $(\mu , \w)$ in the following subset: 
\begin{equation*}
    \Theta:=\Big\{(\mu, \w) \in \mathcal{M}_f^0 \times \mathcal{W}_{E}:~ H(\mu)=\zeta_{\w}\Big\},
\end{equation*}
for  $\mathcal{M}^{0}_{f}$ defined as in \eqref{df:M:O:f}. Next, recall that in Section \ref{secsnake}, we gave, for a continuous non-negative function $h$ with $h(0) = \zeta_\w$, sufficient conditions [(i)-(ii) in Section \ref{secsnake}] on $(h,\Pi)$  ensuring that $Q_{\mathrm{w}}^{h}$ can be defined as a measure on $\mathbb{D}(\mathbb{R}_+, \mathcal{W}_E)$ with support on $\mathcal{S}^\circ$. We shall then enforce conditions on $\psi$ and $\Pi$ to ensure that 
conditions (i)-(ii) from Section \ref{secsnake} are verified $\mathbf{P}_{\mu}$-a.s. by $(H(\rho), \Pi)$. To this end,  we will always assume that $\psi$ and $\Pi$ verify\footnote{For instance, it can  be checked that condition \eqref{continuity_snake} is fulfilled if the Lévy tree has exponent $\psi(\lambda)= \lambda^\alpha$ for $\alpha \in (1,2]$ and $\Pi$ is a linear   Brownian motion.} the following hypothesis.
\begin{enumerate}
    \item[\mbox{}] \textbf{Hypothesis $(\textbf{H}_{0})$}  There exists  a constant $C_\Pi> 0$ and two positive numbers $p,q > 0$ such that,\\ for every $y \in E$ and $t\geq 0$, we have:
\begin{equation} \tag{$\mathbf{H_{0}}$} \label{continuity_snake}
    \hspace{-15mm}\Pi_{y}\big(  \sup_{0 \leq s \leq t } \dd_E(\xi_s , y)^p  \big) \leq C_\Pi\cdot t^{q}, \hspace{7mm} \text{ and } \hspace{7mm} q\cdot (1-\Upsilon^{-1})>1,
    \end{equation}
   where $\Upsilon:=\sup \big\{ r \geq 0 : \lim_{\lambda \rightarrow \infty} \lambda^{-r}\psi(\lambda) = \infty \big\}.$\footnote{ Remark that the convexity  of $\psi$ ensures that $\Upsilon \geq 1$. }
\end{enumerate}
Let us explain the implications of these conditions.   
Fix  $(\mu,\mathrm{w})\in \Theta$ and a pair $(p,q)$ satisfying assumption \eqref{continuity_snake}. Note that   in particular condition (i) from Section \ref{secsnake} is   satisfied. By  \cite[Theorem 1.4.4]{DLG02} (see as well the discussion following assumption H$_0$ in page 18 of \cite{2024structure}), for every  $\theta \in (0, 1-1/\Upsilon)$, the height process $H(\rho)$ is $\mathbf{P}_\mu$-a.s. locally $\theta$-Hölder continuous in   the excursion intervals of $H(\rho)$ over its running infimum. Thanks to the standing  assumption $q \cdot  (1- \Upsilon^{-1}) > 1$    we can further impose on $\theta$ that $ q \theta >1$, so that   condition (ii) from Section \ref{secsnake} is satisfied $\textbf{P}_{\mu}$--a.s. by $H(\rho)$ for every such $\theta$. We are then in position to apply Proposition 2.2 in \cite{2024structure} which entails that  $\textbf{P}_{\mu}$--a.s. the measure $Q_{{\w}}^{H(\rho)}$ is well defined in $\mathbb{D}(\mathbb{R}_+, \mathcal{W}_E)$ and is supported on $\mathcal{S}^\circ$. If, with a slight abuse of notation, we write  $(\rho , W)$ for the canonical process in $\mathbb{D}(\mathbb{R}_+, \mathcal{M}_f(\mathbb{R}_+)\times \mathcal{W}_E )$,  it now follows from our previous discussion that for every $(\mu,\text{w})\in \Theta$, we can define a probability measure in  $\mathbb{D}(\mathbb{R}_+, \mathcal{M}_f(\mathbb{R}_+)\times \mathcal{W}_E )$  by setting  
\begin{equation}\label{definition:Pmuw}
\mathbb{P}_{\mu,\text{w}}(\dd \rho,\: \dd W):=\textbf{P}_\mu(\dd \rho)\:Q^{H(\rho)}_{\text{w}}(\dd W). 
\end{equation} 
 The process $(\rho , W)$ under $\mathbb{P}_{\mu , \w}$ is called the $\psi$-Lévy snake with spatial motion $\Pi$ started from $(\mu , \w)$. The key reason to work with the pair $(\rho, W)$, instead of solely with $W$, is that the Lévy snake is a Markov process (while the process $W$ is only Markovian when $\pi \neq 0$). Namely, if we write $\mathcal{F} := (\mathcal{F}_t: t \in [0,\infty])$ for its canonical filtration,  by  \cite[Theorem 4.1.2]{DLG02}  the family $((\rho , W), (\mathbb{P}_{\mu , \w} : (\mu , \w) \in \Theta ))$ is a strong Markov process with respect to the filtration $(\mathcal{F}_{t+}:~t\in [0,\infty])$. We stress that by construction, under $\mathbb{P}_{\mu , \w}$ for $(\mu , \w) \in \Theta$ and outside of a negligible set,  the process  $W$  takes values in $\mathcal{S}^\circ$, and its lifetime process $\zeta$ is continuous and indistinguishable from  $H(\rho)$.  These facts will be used frequently in the sequel. 
\par  
Let us now  introduce the notion of \textit{snake path} which summarises the regularity properties of $(\rho,W)$ that we shall need.  We denote systematically  the elements of the   space  $\mathbb{D}(\mathbb{R}_+ , \mathcal{M}_f(\mathbb{R}_+) \times \mathcal{W}_E)$ by:  
\begin{equation*}
    (\uprho, \omega) = \big(  (\uprho_s, \omega_s) : \, s \in \mathbb{R}_+  \big).  
\end{equation*}
In particular, by definition we have $(\rho_s(\upvarrho), W_s(\omega)) = (\upvarrho_s, \omega_s )$ for $s \in \mathbb{R}_+$. 
\begin{def1}[Snake paths]\label{definition:snakePath}
Fix $(\mu , \w) \in \Theta$.  An element  $(\upvarrho , \omega) \in 
\mathbb{D}(\mathbb{R}, \mathcal{M}_f(\mathbb{R}_+) \times \mathcal{W}_{E})$  is called a snake path started from $(\mu , \w)$ if the following properties hold:
\begin{enumerate}
    \item[\emph{(S1)}]   $(\upvarrho_0 , \omega_0) = (\mu,  \w)$  and $\omega\in \mathcal{S}^\circ_\w$;
    \item[\emph{(S2)}] For every  $s \geq 0$, we have  $(\upvarrho_s, \omega_s) \in \Theta$; in  particular   $H(\upvarrho) = \zeta(\omega)$.
\end{enumerate}
The family of snake paths started from some fixed $(\mu , \w) \in \Theta$ is denoted by $\mathcal{S}_{\mu , {\w}}$, and when $(\mu , \w)$ is of the form $(0,y)$ for some $y \in E$, we simply write $\mathcal{S}_{y}$. 
We set 
\begin{equation*}
   \mathcal{S} := \bigcup_{(\mu , \w)\in \Theta}\mathcal{S}_{\mu , \w} 
\end{equation*}
for the collection of snake paths. 
\end{def1}
When working with a snake path  $(\upvarrho, \omega)$, the equivalent notations $\zeta_s(\omega)$ and  $H(\upvarrho_s)$ for $s \geq 0$ will be used indifferently, and note that in particular $\sigma_{H(\upvarrho)}$ and $\sigma(\omega)$ coincide. The continuity of  $H(\rho)$ and $\zeta$ under $\mathbb{P}_{\mu , \w}$ for $(\mu , \w) \in \Theta$ paired with the fact that $\mathbb{P}_{\mu , \w}$ a.s. the process $W$ belongs to $\mathcal{S}^\circ$,  immediately gives by construction of the measure $\mathbb{P}_{\mu , \w}$  that the process $((\rho , W), (\mathbb{P}_{\mu , \w} : (\mu , \w) \in \Theta ))$ takes a.s. values in $\mathcal{S}$. Said otherwise, for each $(\mu , \w) \in \Theta$ and  $\mathbb{P}_{\mu,\text{w}}\text{-a.s.}$ we have 
\[\zeta_{s}=H(\rho_{s}), \:\: \text{ for every } s \geq 0, \]
and  for any $t, t'  \geq 0$, 
\[W_{t}(r)=W_{t^{\prime}}(r), \:\: \text{ for all } 0 \leq r\leq m_{H(\rho)}(t,t^{\prime}). \]
  Recalling the discussion at the end of Section \ref{secsnake}, under  $\mathbb{P}_{\mu,\text{w}}$ and outside of a negligible set, we can  consider the tree-indexed process $(\widehat{\omega}_a:~a\in \mathcal{T}_{H(\uprho)})$ and we call it the Markov process $\Pi$ indexed by the $\psi$-Lévy tree $\mathcal{T}_{H(\rho)}$  started from $(\mu,\text{w})$.   It is straightforward to check  from the definition that $\mathcal{S}$ is a measurable subset of the space $\mathbb{D}(\mathbb{R}, \mathcal{M}_f(\mathbb{R}_+) \times \mathcal{W}_{E})$.
\medskip 
\\
 The study of the Lévy snake and the geometric properties of the corresponding tree-indexed process is often performed by working with  restrictions of $(\rho,W)$ to different sub-intervals of interest. We now introduce a notion  that is well suited for this purpose. 
 \\
 \\
\noindent \textbf{Subtrajectories.} Let us start by introducing some notation. First, for $\mu  \in  \mathcal{M}_f(\mathbb{R}_+) $ and $z\geq 0$,  we let $ \uptheta_z(\mu)$ be the element of $\mathcal{M}_f(\mathbb{R}_+)$  defined by the relation:
    \begin{equation} \label{operacion:translacion}
    \langle \uptheta_{z} (\mu), f \rangle   := \int  \mu (\dd r)  f(r - z ) \mathbbm{1}_{\{r > z\}},    
\end{equation}
    where $f: \mathbb{R}_+ \mapsto \mathbb{R}$ is any  measurable bounded function. Similarly, for 
$\w\in \mathcal{W}_{E}$ and $z\geq 0$ satisfying the additional condition $z\leq \zeta_\w$,  we write $\uptheta_z(\w)$ for the shifted path $\w(z+r), r\in [0,\zeta_\w-z]$. 
   Now, consider an arbitrary  $(\upvarrho , \omega) \in 
\mathbb{D}(\mathbb{R}, \mathcal{M}_f(\mathbb{R}_+) \times \mathcal{W}_E)$  and fix $0 \leq s_1 < s_2 \leq \infty $ such that $\zeta_{s_1}(\omega) \leq  \zeta_t(\omega)$  for every $t \in [s_1,s_2]$.  The subtrajectory of $(\upvarrho, \omega)$  associated with  $[s_1,s_2]$  is the element of $\mathbb{D}(\mathbb{R}, \mathcal{M}_f(\mathbb{R}_+) \times \mathcal{W}_E)$ defined  by
       \begin{equation}\label{eq:subtrajectory:a:b}
        \mathscr{T}_{s_1,s_2}(\upvarrho , \omega) :=\big( \uptheta_{\zeta_{s_1}(\omega)}(\upvarrho_{(s_1+t) \wedge s_2}), \uptheta_{\zeta_{s_1}(\omega)}(\omega_{(s_1+t) \wedge s_2} )\big), \quad \text{ for } t \geq 0. 
    \end{equation}  
Let us briefly comment on the use of this definition in our main case of interest.  Consider an arbitrary element $(\upvarrho , \omega)$ from  $\mathcal{S}$. Let $0 \leq s_1 < s_2 \leq \sigma(\omega)$ be as before and  assume additionally that  $\zeta_{s_1}(\omega) = \zeta_{s_2}(\omega)$. In this case, it follows by construction that $\mathscr{T}_{s_1,s_2}(\upvarrho , \omega)$  belongs  to $\mathcal{S}_{\widehat{\omega}_{s_1}}$.  One can think of  $\mathscr{T}_{s_1,s_2}(\upvarrho , \omega)$ as coding the labels of the restricted process $(\widehat{\omega}_v:~v\in  p_{H(\uprho)}([s_1, s_2]))$. 
\medskip 
\\
\par We conclude this section by  introducing the excursion measure of the  Lévy snake and by recalling  some of its main properties. Identity \eqref{equation:zeros} combined with the snake property yields  that under $\mathbb{P}_{0,y}$ for every $y \in {E}$, the point $(0,y) \in \mathcal{M}_f^0 \times  \mathcal{W}_{{E}}$  is  regular and instantaneous for $(\rho, {W})$ and that $-I$ is a local time for $(\rho , {W})$ at $(0, y )$. We write $\mathbb{N}_{y}$ for the corresponding excursion measure in $\mathbb{D}(\mathbb{R}, \mathcal{M}_f(\mathbb{R}_+) \times \mathcal{W}_E)$.  Let $(a_i,b_i)_{i \in \mathbb{N}}$ be the connected components of  $\{ t\geq 0 : \rho_t  \neq  0  \}$ and for every $i \in \mathbb{N}$, write $(\rho^i, {W}^i)$ for the subtrajectory  associated with the excursion interval $(a_i , b_i)$. By excursion theory, under $\mathbb{P}_{0,y}$  the measure 
\begin{equation} \label{PPPsobreinfimo}
     \sum_{i \in \mathbb{N}} \delta_{(-I_{a_i} , \rho^i, {W}^i )} 
\end{equation}
is a Poisson point measure with intensity $\mathbbm{1}_{\mathbb{R}_+}(\ell) \dd \ell  \, \mathbb{N}_{y}(\dd \rho , \dd {W})$. Since the excursion measure of $\rho$ under $\mathbb{P}_{0,y}$ is $N(\dd\rho)$, it readily follows from the form of the conditional law of ${W}$ given $\rho$ that the measure $\mathbb{N}_{y}$ writes: 
\begin{equation}\label{N:H(rho)}
\mathbb{N}_{y}(\dd \rho, \: \dd {W})=N(\dd \rho )\: Q^{H(\rho)}_{y}(\dd {W}). 
\end{equation}
Recalling that $\eta$ under $N(\dd \rho)$ is a functional of $\rho$,  it follows that  $(\rho , \eta)$ under $\mathbb{N}_{y}$ is distributed as $(\rho , \eta)$ under the excursion measure $N$ and, conditionally on the pair $(\rho , \eta)$, the process ${W}$ has the law of a snake driven by $H(\rho)$ with spatial motion $\Pi_y$. Under $\mathbb{N}_y$  we write $\sigma$ for the lifetime of $\zeta$, which coincides with  $\sigma_{H(\rho)}$.  Let us briefly comment on the Markovian character of $\mathbb{N}_y$ as well as on its properties under time-reversal. Starting with the latter, by the disintegration  of  $\mathbb{N}_y$ given above and \eqref{dualidad:etaRho},  under  $\mathbb{N}_{y}$   the following  identity  holds in distribution: 
\begin{equation}\label{dualidad:etaRhoW}
     \big( (\rho_t, \eta_t, {W}_t) : t \in [0,\sigma] \big) \overset{(d)}{=} \big( (\eta_{(\sigma -t )-}, \rho_{(\sigma -t )-} , {W}_{\sigma -t } ) :t \in [0,\sigma] \big). 
\end{equation}
  We refer to the identity in the last display as the duality property of the Lévy snake.  Further, the Lévy snake is still a strong Markov process under $\mathbb{N}_y$, and the strong Markov property under  $\mathbb{N}_{y}$ takes the following form. Let $T > 0$ be a  $\mathcal{F}_{t+}$-stopping time and consider an arbitrary non-negative $\mathcal{F}_{T+}$-measurable function $\Phi$. By \cite[Theorem 4.1.2]{DLG02}, for every nonegative measurable  function   $F$  on $\mathbb{D}(\mathbb{R}_+, \mathcal{M}_f(\mathbb{R}_+) \times \mathcal{W}_E)$,   we have\footnote{Let us mention that \cite[Theorem 4.1.2]{DLG02} is stated for snakes taking values on a  more general space than $\mathcal{W}_{{E}}$, namely the space of killed $E$-valued rcll  paths endowed with  a different metric than $d_{\mathcal{W}_E}$. It is however straightforward to check that the proof of \cite[Theorem 4.1.2]{DLG02} still holds in our framework.} 
\begin{equation*}
    \mathbb{N}_{y} \big( \mathbbm{1}_{\{ T < \infty \}} \Phi \cdot  F (  \rho_{T+s} , {W}_{T+s} : s \geq 0 )  \big)=  \mathbb{N}_{y} \big( \mathbbm{1}_{\{ T < \infty \}} \Phi \cdot \mathbb{E}_{\rho_T , {W}_T}^\dag [F] \big)    
\end{equation*} 
where we write $\mathbb{P}_{\mu , {\w}}^\dag$ for the law of $(\rho , W)$ stopped at time $\inf\{ s \geq 0 : \rho_s = 0 \}$ under $\mathbb{P}_{\mu , \w}$.

\section{Setup for the excursion theory}\label{section:framework}

In this section we fix the framework that we shall consider for the rest of this manuscript and under which we  develop the excursion theory.  We work under the same assumptions as in the companion paper \cite[Sections 4-5]{2024structure},  these are restated here for the reader's convenience.
\medskip \\
Fix  a Polish space $E$ and  the  family of distributions  $\Pi := (\Pi_y)_{y \in E}$ on the canonical space $\mathbb{C}(\mathbb{R}_+ , E)$ of an $E$-valued continuous strong Markov process.  As usual, we write $\xi = (\xi_t : t \geq 0)$  for the canonical process on $\mathbb{C}(\mathbb{R}_+ , E)$.   We assume the existence of a reference point $x$ in $E$  satisfying the following conditions: 
\begin{equation}\label{Asssumption_2}
\bm{x} \:\:\textbf{is regular, instantaneous and recurrent for}\:\:  {\xi}, \tag*{\text{$\mathbf{(H_{1})}$}}
\end{equation}
and
\begin{equation}\label{Asssumption_3}
{\int_{0}^{\infty}\dd t \:\mathbbm{1}_{\{\xi_{t}= {x}\}} =0}, \quad  \quad \Pi_x -\text{a.s. } \tag*{\text{$\mathbf{(H_{2})}$}}
\end{equation}
The first two conditions in hypothesis \ref{Asssumption_2} ensure the existence of a local time  at $x$ for the Markov process, and  we denote  it by $\mathcal{L} = (\mathcal{L}_t : t \geq 0)$. The recurrence 
 hypothesis is assumed for convenience, and we expect our results to hold without it up to minor modifications.   The local time $\mathcal{L}$ is  unique up to a multiplicative constant, that we fix arbitrarily, and we write $\mathcal{N}$  for the corresponding (infinite) excursion measure.   We set $\uptau(\xi) := \inf \{s > 0 : \xi_s = x \}$ for the return time to $x$ of $\xi$, and to simplify notation when there is no risk of confusion we simply denote it by $\uptau$.   If we write $(a_i,b_i)_{i \in \mathbb{N}}$ for the excursion intervals away from $x$ of $\xi$ and we let  $(\xi^{i,*})_{i \in \mathbb{N}}$ be the corresponding excursions with respective  duration  $\uptau _i := \uptau(\xi^{i,*})$,  the point measure 
  \begin{equation}\label{eq:mea:xi=*}
      \sum_{i \in \mathbb{N}}\delta_{(\mathcal{L}_{a_i}, \xi^{i,*})}
  \end{equation}
  under $\Pi_x$ is a Poisson point measure with intensity $\mathbbm{1}_{\mathbb{R}_+}(t)\dd t \otimes \mathcal{N}$. Note that  since $x$ is recurrent, the duration of each excursion is finite $\Pi_x$-a.s. Since the family $(a_i,b_i)_{i \in \mathbb{N}}$ are the connected components of $\mathbb{R}_+ \setminus \{  t \geq 0 : \, \xi_t = x\}$,    condition \ref{Asssumption_3} ensures that for any non-negative measurable function $f: E \mapsto \mathbb{R}_+$ we have $\int_0^\infty \dd s f(\xi_s) = \sum_{i \in \mathbb{N}} \int_{0}^{\uptau_i} \dd s \,  f(\xi_s^{i,*}), \, \Pi_{x}$--a.s.  This fact will be used frequently in our computations without further notice, but it is by no means the   reason behind working under  \ref{Asssumption_3}. The main implication of  \ref{Asssumption_3} is  postponed for now and discussed  in \eqref{prelim:Branchingx} below, as it concerns the geometry of the corresponding  tree indexed process.  It will be crucial for us to not only work with the Markov process $\Pi$ indexed by the underlying  Lévy tree, but  to keep track as well of the value of its local time along the branches of the tree. This can be easily solved by  working instead with the  pair $\overline{\xi}_s := ( \xi_s , \mathcal{L}_s )$,   $s \geq 0$, under $\Pi_y$ for $y \in E$. Namely, this is still a strong Markov process taking values in the Polish space $\overline{E}:=E\times \mathbb{R}_+$, that we  equip with the product metric $d_{\overline{E}}$.  We  write $\Pi_{y,r}$ for  its law on $\mathbb{C}(\mathbb{R}_+ , \overline{E})$ started from an arbitrary point $(y,r)\in \overline{E}$ and  for convenience we assume that $\overline{\xi}$ is the canonical process in $\mathbb{C}(\mathbb{R}_+ , \overline{E})$.   We still write $\overline{\xi} = (\xi , \mathcal{L})$ for its first and second component and to simplify notation, we set $\overline{\Pi} := ({\Pi}_{y,r})_{(y,r) \in \overline{E}}$. Under ${\Pi}_{y,r}$, we also use the notation $\uptau$ for $\inf\{ t > 0 : \xi_t = x \}$.  
  \par
  We next introduce another family of measures, closely related to the excursion measure, that will play a crucial role in our work.  For each $r \geq 0$ we let $\mathcal{N}_{x,r}$ be the law in $\mathbb{C}(\mathbb{R}_+, \overline{E})$ of the path obtained by concatenating  $(({\xi}_t, r): 0 \leq  t <  \uptau)$ under $\mathcal{N}$, with an independent process distributed according to $\Pi_{x,r}$. This somewhat informal description can be made precise by working in the product space $\mathbb{C}(\mathbb{R}_+ , \overline{E})^2$ under the product measure $\mathcal{N} \otimes \Pi_{x,r}$, and by considering a concatenation operation like the one described in \cite[Chapter II - 14]{Sharpe}. Since this procedure is standard  we skip the details. Heuristically, the measure $\mathcal{N}_{x,r}$ codes the law of the Markov process and its local time shifted at the beginning  of an excursion away from $x$, when the value of the local time at the start of the excursion is given by $r \geq 0$. It is not difficult to check that the process $\overline{\xi}$ under $\mathcal{N}_{x,r}$ is still Markovian in the following sense. For any $t > 0$  and every non-negative $\sigma(\overline{\xi}_s : 0 \leq s \leq t)$ - measurable function $\Phi$ we have  
\begin{equation}\label{equation:markovExcursionquevive}
    \mathcal{N}_{x,r} \big(  \Phi \cdot  G( \bar{\xi}_{t + s} : s \geq 0) \big)
    = 
     \mathcal{N}_{x,r} \big(  \Phi \cdot {\Pi}_{\bar{\xi}_{t}} [ G ] \big) ,
\end{equation}
where $G$ is an arbitrary non-negative measurable function on $\mathbb{C}(\mathbb{R}_+, \overline{E})$.   Let us mention that the collection $({\mathcal{N}}_{x,r}: r \geq 0)$ fall in the much more general framework of exit systems introduced by Maisonneuve in \cite{Maisonneuve}, and   the heuristic description of the measure $\mathcal{N}_{x,r}$ given above is justified by the so called exit formula \cite[Theorem 4.1]{Maisonneuve}.  See as well  \cite[Theorem 5.1]{Maisonneuve}  for a proof of the Markov property in a much more general setting. 
\par
Let us now return to the setting of Lévy snakes. Recall that we assume that $(\psi, \Pi)$ satisfies \eqref{continuity_snake}, and that  we write  $\mathbb{N}_{y}$, $y\in E$, and $\mathbb{P}_{\mu, \w}$, $(\mu,\w)\in \Theta$, for the family of measures associated with the $(\psi, \Pi)$-Lévy snake introduced in Section \ref{secsnake}.  In order to keep track of the local time along  the branches of the tree, we will often need to work with the  $(\psi, \overline{\Pi})$-Lévy snake, so that the spatial motion now consists in the Markov process paired with its local time $(\xi, \mathcal{L})$. This can be done as soon as additional regularity assumptions on the pair $(\psi, \overline{\Pi})$ are satisfied. To this end, we shall henceforth assume that  $(\psi, \overline{\Pi})$ satisfies the following slightly stronger version of hypothesis \eqref{continuity_snake}. 
\begin{enumerate}
    \item[\mbox{}] \textbf{Hypothesis $(\textbf{H}_{0}')$}.  There exists  a constant $C_{\overline{\Pi}}> 0$ and two positive numbers $p,q > 0$ such that, \\ for every $y \in E$ and $t\geq 0$, we have:
\begin{equation*} \label{continuity_snake_2}
    \hspace{-15mm}\Pi_{y,0}\Big(  \sup_{0 \leq s \leq t } d_{\overline{E}}\big((\xi_s, \mathcal{L}_s) , (y,0) \big)^p  \Big) \leq C_{\overline{\Pi}}\cdot t^{q} \hspace{7mm} \text{ and } \hspace{7mm} q\cdot (1-\Upsilon^{-1})>1.  \tag*{\text{$\mathbf{(H_{0}')}$}}
\end{equation*}
\end{enumerate}
Hence, the framework that we have discussed in Section \ref{section:snake} can be applied to the motion  $\overline{\Pi}$ and it allows us to work, when required,  with the  $\psi$-Lévy snake with spatial motion $\overline{\Pi}$.
\par 
Let us  briefly comment on some  notations that will be used when working with the $(\psi,\overline{\Pi})$-Lévy snake.  Every element of $\mathcal{W}_{\overline{E}}$ can be written in the form $\overline{\w} = (\w, \ell)$ for some $\w \in \mathcal{W}_E$ and $\ell \in \mathcal{W}_{\mathbb{R}_+}$ with identical lifetimes.   We  use respectively the notation   $\overline{\Theta}, \overline{\mathcal{S}}^\circ,  \overline{\mathcal{S}}_{\mu , \overline{\w}}, \overline{\mathcal{S}}$ for  the sets defined as $\Theta, \mathcal{S}^\circ,  \mathcal{S}_{\mu , \w}, \mathcal{S}$ but   associated with the Polish space $\overline{E}$. For every $(\mu , \overline{\w}) \in \overline{\Theta}$  the law of the $(\psi, \overline{\Pi})$-Lévy snake started from $(\mu , \overline{\w})$ is denoted by  $\mathbb{P}_{\mu , \overline{\w}}$,   and for every $(y,r) \in \overline{E}$ we let $\mathbb{N}_{y,r}$ be its excursion measure away from $(y,r)$; note that these are now measures on  $\mathbb{D}(\mathbb{R}_+, \mathcal{M}_f(\mathbb{R}_+)\times \mathcal{W}_{\overline{E}})$.  Further, we write $(\rho , \overline{W}) = (\rho , W, \Lambda)$  for the  canonical process in $\mathbb{D}(\mathbb{R}_+, \mathcal{M}_f(\mathbb{R}_+)\times \mathcal{W}_{\overline{E}})$,  where   $W_{s}:[0,\zeta_{{s}}]\mapsto E$ and $\Lambda_{s}:[0,\zeta_{{s}}]\mapsto \mathbb{R}_+$ for every $s \geq 0$,  and  we  write $\overline{\mathcal{F}} := (\overline{\mathcal{F}}_t: t \in [0,\infty])$ for its canonical filtration. 
Since the law of $(\rho , W)$ under $\mathbb{P}_{\mu , \overline{\w}}$ and $\mathbb{N}_{y,r}$  is respectively $\mathbb{P}_{\mu , {\w}}$ and $\mathbb{N}_{y}$, when the local time of the motion is not needed in our reasoning we systematically work under the latter measures. Note from the definition of $\mathbb{P}_{\mu ,\overline{\w}}$ that for every $(y,r) \in \overline{E}$ and under $\mathbb{P}_{0,y,r}$, for each fixed $s\geq 0$ and  conditionally on $\zeta_s$, the pair  $(W_s,\Lambda_s) = \big( (W_s(h) , \Lambda_s(h)\big): h \in [0,\zeta_s]\big)$ is distributed as $(\xi_h, \mathcal{L}_h)_{h\in [0,\zeta_s]}$ under $\Pi_{y,r}$. In particular,  the  Lebesgue-Stieltjes  measure induced by  $\Lambda_s$ is  supported on the closure of  $\{ h \in [0,\zeta_s) : W_s(h) = x \}$,  $\mathbb{P}_{0,y,r}$--a.e. Note however that  this property might fail if we work under $\mathbb{P}_{\mu , \overline{\w}}$ for an arbitrary $(\mu , \overline{\w}) \in \overline{\Theta}$. 
Therefore, it will be convenient for our purposes to impose further restrictions on the set of initial conditions $(\mu , \overline{\w}) \in \overline{\Theta}$ that we shall work with. In this direction, we let $\overline{\Theta}_x$ be the subset of $\overline{\Theta}$ conformed by pairs $(\mu , \overline{\w})$, with $\overline{\w} = (\w, \ell)$,  
 satisfying the following conditions: 
 \begin{enumerate} 
      \item[\rm{(i)}] The measure $\mu$ does not charge the set $\{ h \in [0,\zeta_\w] : \w(h) = x\}$, i.e.   
     \begin{equation*}
         \int_{[0, \zeta_{\w}]} \mu(\dd h) \, \mathbbm{1}_{\{ \w(h) = x  \}} = 0.
     \end{equation*}
     \item[\rm{(ii)}] $\ell$   is a non-decreasing continuous function and the support  of its Lebesgue-Stieltjes  measure is $$\overline{\big\{ h \in [0,\zeta_\w) : \w(h) = x \big\}}.$$ 
 \end{enumerate}
  The following lemma taken from \cite{2024structure} justifies our definition for $\overline{\Theta}_x$. 
 \begin{lem} \label{lemma:tequedasenThetax}\emph{\cite[Lemma 5]{2024structure} } For every  $(\mu, \overline{\w})\in \overline{\Theta}_x$ and $(y,r) \in \overline{E}$,  the process $(\rho,\overline{W})$ under  $\mathbb{P}_{\mu,\overline{\w} }$ and $\mathbb{N}_{y,r}$  takes values in  $\overline{\Theta}_x$ a.e.
 \end{lem}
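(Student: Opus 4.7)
The plan is to verify the two conditions defining $\overline{\Theta}_x$ for the canonical process $(\rho_t, W_t, \Lambda_t)$ first at a fixed $t \geq 0$, and then uniformly in $t$, under $\mathbb{P}_{\mu, \overline{\w}}$; the statement under $\mathbb{N}_{y,r}$ will follow by applying the argument to each excursion via the Poisson decomposition \eqref{PPPsobreinfimo}. The key structural fact is that, by \eqref{equation:LeyExploration} together with the snake property, one can split at height $h_t := H(\kappa_{-I_t}\mu)$: the restriction of $\rho_t$ to $[0, h_t]$ equals $\kappa_{-I_t}\mu$, the restriction of $(W_t, \Lambda_t)$ to $[0, h_t]$ coincides with $(\w, \ell)$, and above $h_t$ one has conditionally on $\zeta_t$ a fresh copy of the exploration process together with $(\xi_h, \mathcal{L}_h)_{h \in [0, \zeta_t - h_t]}$ under $\Pi_{\w(h_t), \ell(h_t)}$.

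The initial piece inherits conditions (i) and (ii) directly from $(\mu, \overline{\w}) \in \overline{\Theta}_x$: the pruning $\kappa_{-I_t}\mu$ is dominated by (a restriction of) $\mu$ and hence cannot charge $\{h : \w(h) = x\}$, while the truncation of $\ell$ to $[0, h_t]$ is still continuous and non-decreasing, and its support inherits the required characterization via a short check from \eqref{definition:pruning}. For the fresh piece above $h_t$, condition (ii) is immediate from the construction of $\mathcal{L}$ as local time of $\xi$ at $x$: the Lebesgue--Stieltjes measure $\dd\mathcal{L}$ is by definition supported on $\overline{\{h : \xi_h = x\}}$ under $\Pi_{y', r'}$.

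The main obstacle is verifying condition (i) for the fresh piece, namely $\int_{(h_t, \zeta_t]} \rho_t(\dd h)\, \mathbbm{1}_{W_t(h) = x} = 0$. Using the decomposition \eqref{rho_atoms}, the Brownian component contributes $\beta \int_0^{\zeta_t - h_t} \dd r\, \mathbbm{1}_{\xi_r = x}$, which vanishes by \ref{Asssumption_3}. For the atomic component, one must show that at each branching height $H_s$ (an atom of the fresh exploration), $\xi_{H_s} \neq x$ almost surely. Conditional on the fresh $\rho$-part (equivalently, on $X$ up to time $t$), the path $\xi$ is an independent $\Pi_{y', r'}$-motion, so by Fubini it suffices to combine: first, for Lebesgue-almost every $h > 0$ one has $\Pi_{y', r'}(\xi_h = x) = 0$, a direct consequence of \ref{Asssumption_3} via Fubini; and second, the joint law of the countably many atom heights $H_s$ (jump heights of the exploration) is absolutely continuous with respect to Lebesgue measure, which can be read off from the jump structure of $X$ combined with \eqref{definition:alturaH}. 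Together these force the atomic sum to vanish almost surely.

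Finally, to upgrade ``a.s.\ for each fixed $t$'' to ``a.s.\ for all $t \geq 0$'', I would use that $(\rho, W, \Lambda)$ is rcll and that new atoms of $\rho$ can only be created at jump times of $X$, a countable set. Applying the fixed-time statement at each rational $t$ and at each jump time of $X$, and observing that on intervals free of such events the conditions are preserved pathwise (the continuous Brownian mass of $\rho$ accumulates against a path of $\xi$ whose set $\{\xi_r = x\}$ has zero Lebesgue measure, while $\Lambda$ keeps being the local time of $W$ along its lifetime), gives the claim almost surely under $\mathbb{P}_{\mu, \overline{\w}}$. Applying the argument excursion by excursion under \eqref{PPPsobreinfimo} then yields the statement under $\mathbb{N}_{y,r}$.
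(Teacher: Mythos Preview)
The paper itself does not give a proof of this lemma; it is quoted verbatim from the companion paper \cite[Lemma 5]{2024structure}. So there is no proof in the present paper to compare against, and I can only assess your argument on its own.

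Your overall architecture --- split at the minimum height $h_t=H(\kappa_{-I_t}\mu)$, inherit (i)--(ii) on $[0,h_t]$ from the initial condition, and treat the fresh piece via the law of $(\xi,\mathcal{L})$ --- is sound. The Brownian part of (i) and all of (ii) go through as you describe. The real difficulty is the atomic part of (i), and here there is a genuine gap. You assert that ``the joint law of the countably many atom heights $H_s$ is absolutely continuous with respect to Lebesgue measure, which can be read off from the jump structure of $X$ combined with \eqref{definition:alturaH}.'' This is the crux of the argument and it is \emph{not} immediate from \eqref{definition:alturaH}: for a fixed~$t$, the atom heights are the values $H_s$ at the (random) jump times $s$ of $X$ satisfying $X_{s-}<I_{s,t}$, and nothing in the definition of $H$ tells you directly that these avoid a given Lebesgue-null set. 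Since \ref{Asssumption_3} only yields $\Pi_{y'}(\xi_h=x)=0$ for Lebesgue-a.e.\ $h$, not for every $h$, you cannot simply take a countable union without this absolute continuity step; as written the argument is incomplete.

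A cleaner route that avoids this issue entirely is to first show condition (i) for Lebesgue-a.e.\ $s$ via the many-to-one formula \eqref{eq:many:to:one:N}: under $\mathbb{N}_{y,r}^\bullet$ the measure $\rho_U$ is distributed as $\mathcal{J}_a$, whose atoms sit at the jump times of the subordinator $U^{(1)}$ on $[0,a]$; by the compensation formula these jump times have intensity proportional to Lebesgue in $h$, and independence from $\xi$ together with \ref{Asssumption_3} then gives $\langle\rho_U,\mathbbm{1}_{\{W_U(\cdot)=x\}}\rangle=0$ a.e. To upgrade to all $s$, use the snake property: if some $\rho_{t_0}$ charged $\{h:W_{t_0}(h)=x\}$, the same violation would persist on a whole open $t$-interval (atoms of $\rho$ survive on a right-neighbourhood of each jump time and their heights $H_s$ do not move, while the Brownian mass varies continuously), contradicting the a.e.\ statement. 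Your own upgrade sketch has a related weakness: you propose to apply the fixed-$t$ conclusion at the jump times of $X$, but those are random, so a fixed-$t$ statement does not apply to them directly; the persistence-on-an-interval argument above sidesteps this.
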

This holds for instance for initial conditions  $(\mu, \overline{\w} )$ of the form $(0,y,r)$ for $(y,r) \in \overline{E}$, making the space $\overline{\Theta}_x$ the natural subset of initial condition to work with. 
With a slight abuse of notation, for every $(\mu , \overline{\w}) \in \overline{\Theta}_x$ and $(y,r) \in \overline{E}$, under  $\mathbb{P}_{\mu , \overline{\w}}$ and $\mathbb{N}_{y,r}$ we write 
 \begin{equation} \label{definition:treeindexedprocess:2}
    \big( (\widehat{W}_a, \widehat{\Lambda}_a) : a \in \mathcal{T}_{H(\rho)}  \big) 
\end{equation}
for the labels induced on $\mathcal{T}_{H(\rho)}$ by $(\widehat{W},\widehat{\Lambda})$ in the sense of Section \ref{secsnake}. Finally,  assumption \ref{Asssumption_3} entails that, for every  $(\mu,{\mathrm{w}})\in {\Theta}_x$ and $y \in {E}$,  under $\mathbb{P}_{\mu, {\w}}$ and $\mathbb{N}_{y}$, we have: 
\begin{equation} \label{prelim:Branchingx}
    \big\{ a \in \mathcal{T}_{H(\rho)} : \widehat{W}_a = x  \big\} \cap  \big\{ a \in \text{Mult}_3 (\mathcal{T}_{H(\rho)}) \cup \text{Mult}_\infty(\mathcal{T}_{H(\rho)}) : \, a  \neq 0 \big\} = \emptyset
\end{equation}
and we refer to  \cite[Proposition 4.4]{2024structure} for a proof. In words, since branching points in a Lévy tree can only be of multiplicity $3$ or infinite,  the process  $( \widehat{W}_a : a \in \mathcal{T}_{H(\rho)}  )$ can not take the value $x$ at a branching point in $\mathcal{T}_{H(\rho)} \setminus \{ 0 \}$ .

\section{\texorpdfstring{Excursions and trajectories stemming from $x$}{Excursions and trajectories stemming from x}}\label{section:excursionstrajectories}
In this section we introduce, the notion of excursion away from $x$,  the one of a trajectory stemming away from $x$, and related functionals crucial in their study. Our definitions are reminiscent from classical excursion theory of $\mathbb{R}_+$-indexed processes. First,  we shall need   a notion of trajectory $(\upvarrho , \omega) \in \mathbb{D}(\mathbb{R}_+, \mathcal{M}_f(\mathbb{R})\times \mathcal{W}_E )$ stopped at its first return to $x$. To this end, we define in Section \ref{section:truncationboundary} a truncation operation on the elements of $\mathbb{D}(\mathbb{R}_+, \mathcal{M}_f(\mathbb{R})\times \mathcal{W}_E )$ which informally,  prunes each $(\upvarrho_s, \omega_s)$, for $s \geq 0$,   upon its first return to $x$. For functions in $\mathbb{R}_+$ with values in $E$, the interface between the   function stopped at its first return to $x$  and the piece of path coming afterwards consists of a single point. In our setting this scenario is more complex, since the interface consists of a family of points that typically  are of fractal nature. Making use of the theory of exit local times, in Section \ref{section:truncationboundary} we introduce a functional well suited to study these interfaces. Section \ref{sub:sect:debut} is then devoted to the definition of excursions away from $x$, trajectories  stemming from $x$, and to addressing some elementary geometric properties closely related to these objects. 

\subsection{Truncation and exit local times}\label{section:truncationboundary}

We start defining the truncation operation  of an  arbitrary element $(\upvarrho , \omega)\in \mathbb{D}(\mathbb{R}_+ , \mathcal{M}_f(\mathbb{R}_+) \times \mathcal{W}_{E} )$, that we fix from now on.  To this end we first need to introduce some notations.  For $\w \in \mathcal{W}_{E}$, we write 
\begin{equation} \label{equation:taudef}
   \uptau(\w) := \inf \{ t > 0 : \w(t) = x \},
\end{equation}
for the first return time to $x$ of the path $\w$  and   we set 
\begin{equation}\label{definition:V:star}
    V_t (\upvarrho,\omega) := \int_0^t \dd s  \,  {1}_{\{ H(\upvarrho_s) \leq \uptau(\omega_s) \}},\quad t\geq 0.  
\end{equation} 
When $V_{\sigma(\omega)} (\upvarrho,\omega)  > 0$,  we denote the right-inverse of $(V_t (\upvarrho,\omega)  : t\geq 0)$ by $(\Gamma_t( \upvarrho,\omega)  : t\geq 0 )$, i.e. 
\begin{equation} \label{definition:gamma}
    \Gamma_t (\upvarrho,\omega)  = \inf \big\{ s \geq 0 : V_s (\upvarrho,\omega)  > t \big\}, \quad \text{for } t\in [0,  V_{\sigma(\omega)} (\upvarrho,\omega) ) ,
\end{equation}
with the convention that  $\Gamma_t (\upvarrho,\omega)  = \Gamma_{V_{\sigma} - } (\upvarrho,\omega) $,  for $t \geq V_{\sigma(\omega)} (\upvarrho,\omega) $.  For definiteness, when $V_{\sigma(\omega)} (\upvarrho,\omega)  = 0$ we simply set  $\Gamma_t(\upvarrho, \omega) = 0$, for every $t \geq 0$.  We shall write $\mathrm{tr}(\upvarrho, \omega)$ for the element of $\mathbb{D}(\mathbb{R}_+ , \mathcal{M}_f(\mathbb{R}_+) \times \mathcal{W}_{E} )$ defined by the relation: 
\begin{equation} \label{definition:troncature*}
    \mathrm{tr} (\upvarrho, \omega)_t := ( \upvarrho_{\Gamma_t(\upvarrho, \omega)}, \omega_{\Gamma_t(\upvarrho, \omega)} ), \quad \quad  \text{ for } t \geq 0.  
\end{equation} 
 We write $\text{tr}(\upvarrho)$ and $\text{tr}(\omega)$ respectively for the first and second coordinate of $\text{tr}(\upvarrho, \omega)$. We stress  that  $\text{tr}(\upvarrho ,\omega)$ is rcll since $\upvarrho, \omega$  and $\Gamma(\upvarrho,\omega)$ are rcll processes.    Observe that if $V_{\sigma(\omega)}(\upvarrho, \omega) = 0$ the truncation  $\mathrm{tr}(\upvarrho , \omega)$   only takes the value   $(\upvarrho_0, \omega_0)$. Hence, this operation is only of interest when applied to elements $(\upvarrho , \omega)$ for which $V_{\sigma(\omega)}(\upvarrho, \omega)$ is non-null. This is for instance  the case if  $(\upvarrho, \omega) \in \mathcal{S}$ and for  some $t \geq 0$ we have $\uptau(\omega_t) > 0$, and this is the scenario we are interested in.  Roughly speaking,  when applied to an element $(\upvarrho, \omega)$ of $\mathcal{S}$, since $H(\upvarrho)=\zeta(\omega)$  the truncation operation removes the trajectories $(\upvarrho_s, \omega_s)$  from $(\upvarrho,\omega)$ for which $\uptau(\omega_s) < \zeta_s(\omega)$  and glues the remaining endpoints. We also stress  that when $(\upvarrho, \omega)$ is an element of $\mathcal{S}$, the truncation $\mathrm{tr}(\upvarrho , \omega)$ is still in $\mathcal{S}$  since   $\mathrm{tr} (\omega)$ is  a snake trajectory  by   \cite[Proposition 10]{ALG15}, and  the others conditions in Definition \ref{definition:snakePath} are clearly satisfied.
\\
\\
The goal now is to define a function ``counting" the time spent  by $(\upvarrho , \omega)$  at those values of $s \geq 0$ at which $\uptau(\omega_s) = H(\upvarrho_s)$. To this end, we now introduce a notion of \textit{exit local time}  suited for this purpose;  we refer to \cite[Section 4.3]{DLG02} and \cite[Section 3.1]{2024structure} for background and the general theory. Recalling the definition of the sequence $(\varepsilon_k)_{k\geq 0}$ introduced in \eqref{temps:local:I:p:s}, we consider  the functional $L : \mathbb{D}(\mathbb{R}_+,\mathcal{M}_f(\mathbb{R}_+)\times \mathcal{W}_E) \rightarrow \mathbb{R}_+^{\mathbb{R}_+}$  defined for every $t\geq 0$ by the relation  
\begin{equation} \label{definition:exitMultiusos} 
  L_t(\upvarrho , \omega) :=    \liminf_{k\to \infty }   \frac{1}{\epsilon_k} \int_0^t \dd s~ \mathbbm{1}_{\{ \uptau(\omega_s) < H(\upvarrho_s) < \uptau(\omega_s) + \epsilon_k \}}.   
\end{equation}
When the previous convergence holds uniformly in compact intervals, with a limit instead of a liminf,   the process  $L(\upvarrho , \omega) = (L_s(\upvarrho, \omega) : s \geq 0)$ is  continuous and  non-decreasing. In particular,  we can consider the associated Stieltjes measure $\dd L(\upvarrho , \omega)$ which by \eqref{definition:exitMultiusos} is
 supported on $\{s\geq 0:~H(\upvarrho_s)=\uptau(\omega_s)\}$. The following lemma establishes that the previous discussion holds  for a.e. $(\upvarrho , \omega)$ under $\mathbb{P}_{0,y}$  and $\mathbb{N}_{y}$, for  $y \in {E}$, with $y \neq x$. 
 
\begin{lem}\label{lem:equa:convL}  For every $y \in {E}$ with $y \neq x$, under $\mathbb{P}_{0,y}$  and $\mathbb{N}_{y}$,  a.e.  for every $M \geq 0$ we have 
    \begin{equation}\label{equation:convL}
        \lim \limits_{k\to \infty}\sup_{t \leq M} \Big|  \frac{1}{\epsilon_k} \int_0^t \dd s ~\mathbbm{1}_{\{ \uptau(W_s) < H(\rho_s) < \uptau(W_s) + \epsilon_k \}} - L_t(\rho , W)   \Big|=0. 
        \end{equation}
\end{lem}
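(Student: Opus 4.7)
The plan is to deduce the statement from the general theory of exit local times for Lévy snakes, as developed in \cite[Section 4.3]{DLG02} and reviewed in \cite[Section 3.1]{2024structure}, applied to the open set $D := E \setminus \{x\}$. When $y \neq x$, the functional $\uptau(\mathrm{w})$ from \eqref{equation:taudef} coincides $\mathbb{P}_{0,y}$-a.s.\ (resp.\ $\mathbb{N}_y$-a.s.) with the first exit time from $D$ of the paths $W_s$ carried by the snake, so we are precisely in the framework of that theory.

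First, I would invoke the existence of the exit local time from $D$: there exists a continuous non-decreasing process $\widetilde{L} = (\widetilde{L}_t)_{t \geq 0}$, supported on $\{s \geq 0 : H(\rho_s) = \uptau(W_s)\}$, satisfying
\begin{equation*}
\sup_{t \leq M} \Big| \frac{1}{\epsilon} \int_0^t \mathrm{d}s \, \mathbbm{1}_{\{\uptau(W_s) < H(\rho_s) < \uptau(W_s) + \epsilon\}} - \widetilde{L}_t \Big| \longrightarrow 0 \quad \text{as } \epsilon \downarrow 0,
\end{equation*}
in $L^p$ for some $p \geq 1$ and every $M \geq 0$. This is exactly the construction carried out in the cited references; the key input is the strong Markov property of $(\rho, W)$ together with moment bounds that come from hypothesis \ref{continuity_snake} on $(\psi, \Pi)$.

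Second, specialising this $L^p$-uniform convergence to the subsequence $(\epsilon_k)_k$ and applying a Borel--Cantelli argument, one extracts a further subsequence $(\epsilon_{k_j})_j$ along which the convergence is a.s.\ and uniform on compacts. By uniqueness of limits, the pointwise a.s.\ limit along $(\epsilon_{k_j})$ coincides with $\widetilde{L}_t$ at each $t$, and comparing with the liminf definition \eqref{definition:exitMultiusos} of $L(\rho, W)$ this forces $L(\rho, W) = \widetilde{L}$ a.s., as continuous non-decreasing processes. To upgrade from convergence along a sub-subsequence to convergence along the original $(\epsilon_k)_k$ itself, I would exploit the monotonicity in $t$ of each approximating functional together with the (now established) continuity of the limit $\widetilde{L}$: a classical Dini-type argument for monotone functions converging pointwise to a continuous limit on a countable dense set delivers uniformity on compacts, while pointwise a.s.\ convergence along the full sequence $(\epsilon_k)_k$ follows from the $L^p$-convergence combined with the uniqueness of the limit (any sub-subsequence admits a further almost surely convergent sub-subsequence with limit $\widetilde{L}$). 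Finally, the transfer from $\mathbb{P}_{0,y}$ to $\mathbb{N}_y$ is immediate from the Poissonian decomposition \eqref{PPPsobreinfimo} of the excursions of $(\rho, W)$ away from $(0,y)$ under $\mathbb{P}_{0,y}$: the almost sure claim transfers excursion by excursion, and those excursions are distributed according to $\mathbb{N}_y$.

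The main obstacle is the $L^p$-uniform convergence of the approximating functionals in the first step, which lies at the heart of the exit local time theory and requires careful second-moment estimates exploiting the subtree decomposition of the snake along excursions of $W$ from $D$. Fortunately, this work is already performed in \cite{DLG02, 2024structure}, so the remaining technical effort here is to verify that the hypotheses of the general construction apply to the specific exit time $\uptau$ associated with the singleton-complement open set $D = E \setminus \{x\}$ and to the current hypotheses \ref{Asssumption_2}--\ref{Asssumption_3}.
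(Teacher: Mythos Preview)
Your approach has a genuine gap in the step where you claim ``pointwise a.s.\ convergence along the full sequence $(\epsilon_k)_k$ follows from the $L^p$-convergence combined with the uniqueness of the limit (any sub-subsequence admits a further almost surely convergent sub-subsequence with limit $\widetilde{L}$).'' This subsequence principle characterises convergence \emph{in probability}, not almost sure convergence: the typewriter sequence is the standard counterexample. Since the general exit local time results in \cite[Section~4.3]{DLG02} and \cite[Section~3.1]{2024structure} only deliver $L^1$-uniform convergence as $\epsilon \to 0$ (see e.g.\ the display \eqref{def:eq:approx:L:upsilon:} later in the paper), you have no mechanism to obtain a.s.\ convergence along the \emph{specific} sequence $(\epsilon_k)$ fixed in \eqref{temps:local:I:p:s}. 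The Dini argument you invoke would only upgrade pointwise to uniform, but you never establish pointwise a.s.\ convergence along $(\epsilon_k)$ to begin with.

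The paper's proof avoids this by exploiting what makes the sequence $(\epsilon_k)$ special. It time-changes by the inverse of $\int_0^{\cdot} \mathbbm{1}_{\{H(\rho_u)>\uptau(W_u)\}}\,\dd u$ and defines a shifted measure-valued process $\rho'$ via \eqref{def:rho:D}; the key input is \cite[Proposition~4.3.1]{DLG02}, which says $\rho'$ has the same law as $\rho$ under $\mathbb{P}_{0,y}$. Hence $\langle\rho',1\rangle$ is distributed as the reflected L\'evy process, and the very sequence $(\epsilon_k)$ was \emph{chosen} in \eqref{temps:local:I:p:s} precisely so that the local time at $0$ approximation for $H(\rho)$ converges a.s.\ uniformly on compacts along it; this transfers to $\rho'$ by equality in law. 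A change of variables then rewrites the exit local time approximation as a time-change of this a.s.\ convergent approximation for $\rho'$, giving the result directly. The transfer to $\mathbb{N}_y$ is done as you suggest, via the first excursion of duration $>\epsilon$.
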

 \begin{proof}
 We fix an arbitrary $y \in E$ and  we start by establishing that the convergence \eqref{equation:convL} holds under $\mathbb{P}_{0,y}$. For every $t \geq 0$, set  
\[\sigma_{t}:=\inf\Big\{s\geq 0:\:\int_{0}^{s} \dd u~\mathbbm{1}_{\{H(\rho_u)> \uptau(W_u) \}} > t\Big\},\]
and we consider the process $(\rho'_{t})_{t\geq 0}$ taking values in $\mathcal{M}_{f}(\mathbb{R}_{+})$ defined,   for any bounded measurable function $f: \mathbb{R}_+ \to \mathbb{R}_+$, by the relation:
\begin{equation}\label{def:rho:D}
\langle  \rho'_{t}, f \rangle :=\int \rho_{\sigma_{t}}(\dd h)f\big(h-\uptau(W_{\sigma_{t}})\big) \mathbbm{1}_{\{ h>\uptau(W_{\sigma_t}) \}}.
\end{equation}
Then, by Proposition 4.3.1 in \cite{DLG02},  $\rho'$ and $\rho$ have the same distribution under $\mathbb{P}_{0,y}$. In particular,  $\langle \rho' , 1 \rangle$ has the same law as the reflected Lévy process $X-I$ under $P$ and, by \eqref{temps:local:I:p:s}, there exists a continuous process  $\ell'= (\ell'(t) : t \geq 0 )$ such that for every $M \geq 0$,  we  have 
     \begin{equation} \label{equation:convtloc}
      \lim_{k \rightarrow \infty}  \sup_{t \leq M} \Big|  \frac{1}{\epsilon_k} \int_0^t \dd s ~\mathbbm{1}_{\{  0   < H(\rho'_s) <  \epsilon_k \}} -  \ell'(t)  \Big|  = 0, \quad \mathbb{P}_{0,y} \text{ a.s.}
     \end{equation}
 Now, we define a continuous non-decreasing function $L' = (L'_t)_{t \geq 0}$ by setting
 \begin{equation*}
        L'_t = \ell'\Big( \int_0^t \dd s \, \mathbbm{1}_{\{ H(\rho_s) > \uptau(W_s)\}}   \Big), \quad t \geq 0 
    \end{equation*}
    and we next check that \eqref{equation:convL} holds with $L$ replaced by $L'$.  To simplify notation, we  write   $\widetilde{V}_t := \int_0^t \dd s \, \mathbbm{1}_{\{ H(\rho_s) > \uptau(W_s) \}}$ for $t \geq 0$ and  observe that 
     \begin{align*}
          \sup_{t \leq M} \Big|  \frac{1}{\epsilon_k} \int_0^t \dd s~ \mathbbm{1}_{\{ \uptau(W_s) < H(\rho_s) < \uptau(W_s) + \epsilon_k \}} - L'_t \Big| 
         &= 
         \sup_{t \leq M} \Big|  \frac{1}{\epsilon_k} \int_0^{\widetilde{V}_t} \dd s \,  \mathbbm{1}_{\{ \uptau(W_{\sigma_s}) < H(\rho_{\sigma_s}) < \uptau(W_{\sigma_s}) + \epsilon_k \}} -  \ell' ( \widetilde{V}_t ) \Big| \\
          &= 
          \sup_{t \leq \widetilde{V}_M} \Big|  \frac{1}{\epsilon_k} \int_0^t \dd s \,  \mathbbm{1}_{\{  0   < H(\rho'_s) <  \epsilon_k \}}   -  \ell'(t) \Big|,
     \end{align*} 
     where in the last equality we used that if $\uptau(W_{\sigma_s}) <  H(\rho_{\sigma_s})$, we have $H(\upvarrho_{{\sigma_s}}) = \uptau(W_{{\sigma_s}}) + H(\rho'_s)$. Now, the expression in  last display converges a.s. to $0$ by \eqref{equation:convtloc}, completing the proof. The statement under $\mathbb{N}_y( \, \cdot \,  | \,  \epsilon > 0)$ now easily follows by working under $\mathbb{P}_{0,y}$ and by considering the first excursion of $(\rho , W)$ away from $(0,y)$ with duration $\sigma > \epsilon$ and by applying the previous result, we skip the details. 
 \end{proof}
We conclude this section by   observing that for $(\upvarrho , \omega)\in \mathbb{D}(\mathbb{R}_+ , \mathcal{M}_f(\mathbb{R}_+) \times \mathcal{W}_{E} )$, the process   $L(\upvarrho , \omega)$ is not defined in the time-scale of $\mathrm{tr}(\upvarrho , \omega)$, but rather in the one of  $(\upvarrho , \omega)$. For this reason, we will sometimes work with the  time-changed version       $L_{\Gamma_t(\upvarrho, \omega)}(\upvarrho , \omega),$ $t \geq 0.$

\subsection{Debut points}\label{sub:sect:debut}

In this section we work with a fixed deterministic element $(\upvarrho , \omega) \in \mathcal{S}_x$. We work within $\mathcal{S}_x$ rather than in the general space  $\mathbb{D}(\mathbb{R}_+ , \mathcal{M}_f(\mathbb{R}_+) \times \mathcal{W}_{E} )$ to avoid pathological cases that can only occur on sets of null $\mathbb{N}_{x}$ and $\mathbb{P}_{0,x}$ measure. For definiteness, the notions defined below will be extended  to elements in $\mathbb{D}(\mathbb{R}_+, \mathcal{M}_f(\mathbb{R}_+) \times \mathcal{W}_E )$ which are not in $\mathcal{S}_x$ in a trivial way. Recall  that on $\mathcal{S}_x$, the process $\zeta(\omega)$ is continuous, null at $0$, and we have   $H(\upvarrho)=\zeta(\omega)$. Recall as well that we write $(\widehat{\omega}_a)_{ a \in \mathcal{T}_{H(\upvarrho)}}$ for the corresponding tree-indexed process.  
\par
Closely related versions of the  results that we now present have already been discussed  in \cite[Section 3]{ALG15} and we   rely on similar arguments. 

\begin{def1}\label{definition:debut} A point $u \in \mathcal{T}_{H(\upvarrho)}$ is called an excursion debut for $(\widehat{\omega}_a)_{ a\in \mathcal{T}_{H(\upvarrho)}}$ if the following properties hold: 
    \begin{enumerate}
        \item[\emph{(i)}] We have  $\widehat{\omega}_u = x$;
        \item[\emph{(ii)}] We can find $v \succeq u$ such that $\widehat{\omega}_a\neq x$ for every $a$ in  $\rrbracket u,v \llbracket$. 
    \end{enumerate}
    We denote the collection of excursion  debuts by $\mathcal{D}(\upvarrho, \omega)$. For every $u \in \mathcal{D}(\upvarrho, \omega)$,  we write  $\mathcal{C}_u(\upvarrho, \omega)$ for the subset of points $v \in \mathcal{T}_{H(\upvarrho)}$ fulfilling that $v \succeq u$ with  $\widehat{\omega}_a \neq x$ for every $a \in \, \rrbracket u,v \llbracket$ and we set
\begin{equation*}
    \mathcal{C}^\circ_u(\upvarrho, \omega) := \mathcal{C}_u(\upvarrho, \omega) \cap \{ a \in \mathcal{T}_{ H(\upvarrho) } : \widehat{\omega}_a \neq x \}. 
\end{equation*}
\end{def1}
 For definiteness, when $(\upvarrho , \omega)$ is not in $\mathcal{S}_x$ we set $\mathcal{D}(\upvarrho , \omega)$ as $\emptyset$ and  to avoid degenerate cases we henceforth assume that $\mathcal{D}(\upvarrho, \omega)$ is non-empty.  Let us briefly  discuss some basic properties of the components $\mathcal{C}_u(\upvarrho, \omega)$, for $u\in\mathcal{D}(\upvarrho, \omega)$. First, note that by definition, if  $u$ is a debut point, then it belongs to $\mathcal{C}_u(\upvarrho, \omega)$. Moreover, since for every point $a \in \mathcal{C}_u(\upvarrho, \omega)$ we have that $\llbracket u,a \rrbracket \subset \mathcal{C}_u(\upvarrho,\omega)$, the subset $\mathcal{C}_u(\upvarrho, \omega)$ is path-wise connected. Hence,  $\mathcal{C}_u(\upvarrho, \omega)$  is a tree on its own right that we root at $u$, and it is not difficult to check from our definitions that it is a closed subset of $\mathcal{T}_{H(\upvarrho)}$.  Using that the mapping $a \mapsto \widehat{\omega}_a$ is continuous, it is easy to check   that $\mathcal{D}(\upvarrho, \omega)$ is countable, and that for every $u \in \mathcal{D}(\upvarrho, \omega)$ the sets $\{ a \in \mathcal{T}_{H(\upvarrho)} : \widehat{\omega}_a \neq x \}$ and  $\mathcal{C}^\circ_u(\upvarrho, \omega)$  are open. Remark however that in general,  $\text{Int}(\mathcal{C}_u(\upvarrho, \omega))$ and  $\mathcal{C}^\circ_u(\upvarrho, \omega)$ may differ. Indeed,    the existence of an isolated point $b$ of the set $\mathcal{C}_u(\upvarrho, \omega) \cap \{ a \in \mathcal{T}_{H(\upvarrho)}   : \widehat{\omega}_a= x  \}$ satisfying $b \in \text{Mult}_1( \mathcal{T}_{H(\upvarrho)} )$ would yield  both that    $b \in \text{Int}(\mathcal{C}_{u}( \upvarrho, \omega))$ and $\widehat{\omega}_b= x$. Note that    $\mathcal{C}^\circ_u(\upvarrho, \omega)$ is not necessarily path connected (consider a component $\mathcal{C}_u(\upvarrho, \omega)$ for an element $u \in \mathcal{D}(\upvarrho, \omega)$ which is a branching point). However,  as the next lemma shows, this property does hold as soon as this scenario has been ruled out. 
\begin{lem} \label{lemma:connectedcomponents} 
Suppose that  $(\widehat{\omega}_a)_{ a\in \mathcal{T}_{H(\upvarrho)}}$ does not take the value $x$ at branching points. Then, the family $(\mathcal{C}^\circ_u(\upvarrho, \omega))_{u \in \mathcal{D}(\upvarrho, \omega)}$ are the connected components of the open set 
     $\{ a \in \mathcal{T}_{H(\upvarrho)} : \widehat{\omega}_a \neq x \}$. 
\end{lem}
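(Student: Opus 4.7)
The plan is to decompose the proof into three claims: (1) each $\mathcal{C}_u^\circ(\upvarrho,\omega)$ is path-connected, (2) distinct sets $\mathcal{C}_u^\circ$ and $\mathcal{C}_{u'}^\circ$ are disjoint, and (3) $\bigcup_{u\in\mathcal{D}}\mathcal{C}_u^\circ = \{a : \widehat{\omega}_a\neq x\}$. Combined with the fact noted in the discussion preceding the lemma that each $\mathcal{C}_u^\circ$ is open and $\{a:\widehat{\omega}_a\neq x\}$ is open, these three claims yield the lemma by a standard topological argument: an open subset of an $\mathbb{R}$-tree is a disjoint union of its connected components, which are open and (path-)connected, so if nonempty connected open sets partition the ambient open set they must coincide with its connected components.

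For (1), I will take $a,b\in\mathcal{C}_u^\circ$ and argue that the whole geodesic $\llbracket a,b\rrbracket=\llbracket a, a\curlywedge b\rrbracket\cup\llbracket a\curlywedge b, b\rrbracket$ lies in $\mathcal{C}_u^\circ$. Since $u$ is an ancestor of both $a$ and $b$ we have $u\preceq a\curlywedge b$, and for any $c\in\llbracket a,b\rrbracket$ the open segment $\rrbracket u,c\llbracket$ is contained in $\rrbracket u,a\llbracket\cup\rrbracket u,b\llbracket$, which carries no $x$-label. This gives $c\in\mathcal{C}_u$. To rule out $\widehat{\omega}_c=x$, I split on cases: if $c\neq a\curlywedge b$, then $c$ sits strictly inside $\rrbracket u,a\llbracket$ or $\rrbracket u,b\llbracket$ and so $\widehat{\omega}_c\neq x$; if $c=a\curlywedge b$ with $c\notin\{a,b\}$, then three distinct branches emanate from $c$ (towards $a$, towards $b$, and along $\llbracket 0,c\llbracket$), so $c\in\mathrm{Mult}_3(\mathcal{T}_{H(\upvarrho)})\cup\mathrm{Mult}_\infty(\mathcal{T}_{H(\upvarrho)})$ is a branching point, and the hypothesis forces $\widehat{\omega}_c\neq x$. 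The same three-branch argument shows the meeting point cannot be $u$ itself (which would make $u$ a branching point of $x$-label), so $a\curlywedge b\succ u$ strictly.

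For (2), suppose $a\in\mathcal{C}_u^\circ\cap\mathcal{C}_{u'}^\circ$. Both $u,u'$ are ancestors of $a$, hence comparable in $\preceq$; say $u\preceq u'$. If $u\neq u'$, then $u'\in\rrbracket u,a\llbracket$ since $u'\neq a$ (because $\widehat{\omega}_{u'}=x\neq\widehat{\omega}_a$), but $\widehat{\omega}_{u'}=x$ contradicts $a\in\mathcal{C}_u$. For (3), given any $a$ with $\widehat{\omega}_a\neq x$, parameterize the ancestor arc $\llbracket 0,a\rrbracket$ by height $[0,H(\upvarrho)(a)]$; the set of $x$-labeled ancestors is nonempty (it contains the root) and closed by continuity of the map $v\mapsto\widehat{\omega}_v$ along the arc. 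Let $u$ be the $x$-labeled ancestor of maximal height; then $\widehat{\omega}_b\neq x$ on $\rrbracket u,a\rrbracket$, so $u\in\mathcal{D}(\upvarrho,\omega)$ (taking $v=a$ in Definition~\ref{definition:debut}) and $a\in\mathcal{C}_u^\circ$.

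The main obstacle is step (1), where the hypothesis must be used precisely at the meeting point of the two geodesic arcs. The key structural fact is that any interior meeting point of two geodesics within a rooted $\mathbb{R}$-tree automatically sees three distinct branches — towards each endpoint and towards the root — and the hypothesis was designed exactly to preclude an $x$-label at such a point.
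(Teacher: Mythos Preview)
Your proof is correct and follows essentially the same approach as the paper's: the same three-part decomposition (covering, connectedness via $\llbracket v_1,v_2\rrbracket\subset\mathcal{C}_u^\circ$, disjointness via ancestor comparability), the same use of the branching-point hypothesis precisely at the meeting point $v_1\curlywedge v_2$ to rule out $u=v_1\curlywedge v_2$, and the same ``last $x$-labelled ancestor'' construction for the covering. Your write-up is slightly more explicit about why $a\curlywedge b$ must be a branching point when it differs from $a,b,u$ (the three-branch observation), but the logic is identical to the paper's.
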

\begin{proof}
Let us first prove that 
$$
\cup_{u \in \mathcal{D}(\upvarrho, \omega)} \mathcal{C}^\circ_u(\upvarrho, \omega) =\{ a \in \mathcal{T}_{H(\upvarrho)} : \widehat{\omega}_a \neq x \}.
$$
The inclusion $\cup_{u \in \mathcal{D}(\upvarrho, \omega)} \mathcal{C}^\circ_u(\upvarrho, \omega) \subset\{ a \in \mathcal{T}_{H(\upvarrho)} : \widehat{\omega}_a \neq x \}$ follows  by definition, and to obtain the reverse one let us fix an arbitrary  $v \in \mathcal{T}_{H(\upvarrho)}$ such that $\widehat{\omega}_v \neq x$.  Since the set $\llbracket 0 , v \rrbracket \cap \{ a \in \mathcal{T}_{H(\upvarrho)} : \widehat{\omega}_a \neq x \}$ is non-empty,  the continuity of $(\widehat{\omega}_a)_{ a\in \mathcal{T}_{H(\upvarrho)}}$, combined with \eqref{equation:snakeproperty2} and  the fact that $\widehat{\omega}_0  = x$,  ensure that there exists a unique $u \in \llbracket 0 , v\rrbracket$ with  $\widehat{\omega}_u = x$ satisfying that   $\widehat{\omega}_a \neq x$ for every  $a \in \rrbracket u,v \rrbracket$. The continuity of $(\widehat{\omega}_a)_{ a\in \mathcal{T}_{H(\upvarrho)}}$ in $\llbracket 0, v\rrbracket$ and  the fact that $\widehat{\omega}_0  = x$  then ensure that there exists a unique $u \in \llbracket 0 , v\rrbracket$ with  $\widehat{\omega}_u = x$ satisfying that   $\widehat{\omega}_a \neq x$ for every  $a \in \rrbracket u,v \rrbracket$. It now follows from our definitions that $u$ is an excursion debut and that $v \in \mathcal{C}^\circ_u(\upvarrho, \omega)$, which proves the reverse inclusion.   It remains to prove that the sets $(\mathcal{C}^\circ_u(\upvarrho, \omega)$,  $u\in \mathcal{D}(\upvarrho, \omega))$, are disjoint and that each $\mathcal{C}^\circ_u(\upvarrho, \omega)$ is   connected.  For the latter, let us argue  that for any $v_1,v_2\in \mathcal{C}^\circ_u(\upvarrho, \omega)$,  we have 
$$
\llbracket v_1,v_2\rrbracket =  \llbracket v_1 \curlywedge v_2 ,v_1\rrbracket\cup \llbracket v_1 \curlywedge v_2 ,v_2\rrbracket\subset  \mathcal{C}^\circ_u(\upvarrho, \omega), 
$$
where we recall that the notation $v_1 \curlywedge v_2$ stands for the common ancestor.  The fact that both $v_1$ and $v_2$ are descendents of $u$  guarantees that $v_1 \curlywedge v_2 \in  \llbracket u , v_1 \rrbracket \cap \llbracket u , v_2 \rrbracket$, and the equality $u = v_1 \curlywedge v_2$ is ruled out since $u$ can not be a branching point by assumption. This ensures that $v_1 \curlywedge v_2$ is a strict descendent of $u$ and since there is no point with label $x$ in $\rrbracket u , u_1 \curlywedge u_2 \rrbracket$, we get that $u_1 \curlywedge u_2$ belongs to $\mathcal{C}_u^\circ(\upvarrho, \omega)$. The contention in the previous display now follows by definition of  $\mathcal{C}^\circ_u(\upvarrho, \omega)$.   Finally, let us check that if $u, u'$ are distinct debut points, then $\mathcal{C}_u^\circ(\upvarrho, \omega)$ and $\mathcal{C}^\circ_{u'}(\upvarrho, \omega)$ are disjoint. Arguing by contradiction,  if we had some $v \in \mathcal{C}^\circ_u(\upvarrho, \omega) \cap \mathcal{C}^\circ_{u'}(\upvarrho, \omega)$, then it must hold that $u \preceq u' \preceq v$ or $u' \preceq u \preceq v$. In any case, we get respectively that $u' \in \, \rrbracket u,v \llbracket $ with $\widehat{\omega}_{u^\prime}  = x$ and $u \in \,  \rrbracket u',v \llbracket $ with $\widehat{\omega}_u  = x$, in contradiction with the fact that  $v$ belongs to  $\mathcal{C}^\circ_u(\upvarrho, \omega)\cap \mathcal{C}^\circ_{u'}(\upvarrho, \omega)$. 
\end{proof}

For every $u\in\mathcal{D}(\upvarrho, \omega)$, we refer to $(\widehat{\omega}_a)_{a \in \mathcal{C}_u(\upvarrho, \omega) }$ as the excursion away from $x$ of $(\widehat{\omega}_a)_{a \in \mathcal{T}_{H(\upvarrho)}}$ associated with the component  $\mathcal{C}_u(\upvarrho, \omega)$. Each excursion away from $x$  is a tree indexed process on its own right,  and we shall now argue that it can be coded by a snake path.  In order to be more precise, we   recall the notation   $\mathfrak{g}(u)$ and $\mathfrak{d}(u)$ from \eqref{definition:gauchedroite} for respectively  the first time and the last time at which the exploration $p_{H(\upvarrho)}$ visits the component $\mathcal{C}_u(\upvarrho, \omega)$. Then, we set:
\begin{equation}\label{eq:rho:w:u:D:stemming}
    (\upvarrho^u , \omega^u  ) 
    :=    
       \mathscr{T}_{\mathfrak{g}(u),\mathfrak{d}(u)}(\upvarrho,\omega)  
\end{equation}
for the  subtrajectory 
associated with the interval $[\mathfrak{g}(u) , \mathfrak{d}(u)]$, where we recall the notation $ \mathscr{T}_{\mathfrak{g}(u),\mathfrak{d}(u)}(\upvarrho,\omega)$ from \eqref{eq:subtrajectory:a:b}. Informally, the snake path    $(\upvarrho^u , \omega^u  )$ encodes the restriction of $(\widehat{\omega}_a)_{ a\in \mathcal{T}_{H(\upvarrho)}}$ to  $\{v\in \mathcal{T}_{H(\upvarrho)}:~u\preceq v\}$,  the subtree stemming from $u$. For this reason, we refer to $(\upvarrho^u , \omega^u  )$  as the subtrajectory stemming from $u$. The  snake path coding the  excursion $(\widehat{\omega}_a)_{a \in \mathcal{C}_u(\upvarrho, \omega)}$ is then obtained by  pruning the paths of $(\upvarrho^u , \omega^u  )$ at its first return time to $x$, namely 
\begin{equation}\label{eq:rho:w:u:D}
    (\upvarrho^{u,*} , \omega^{u,*}  ) 
    :=    
     \mathrm{tr}   (\upvarrho^u,\omega^u)  .
\end{equation}
The process $(\upvarrho^{u,*} , \omega^{u,*}  )$   is an element of $\mathcal{S}_x$ with (strictly) positive  duration $\sigma(\omega^{u,*})$ which  encodes  $(\widehat{\omega}_a)_{a \in \mathcal{C}_u(\upvarrho, \omega)}$, in the sense that by construction the tree coded by $\zeta( \omega^{u,*}) = H(\upvarrho^{u,*})$ is   isometric to  the subset $\mathcal{C}_u(\upvarrho, \omega)$ by an isometry  preserving the root and the labels. 
 \begin{def1}\label{definition:excursion} 
The family $( ( \upvarrho^{u,*}, \omega^{u,*}) : u \in \mathcal{D}(\upvarrho, \omega) )$ is referred to as the family of excursions away from $x$ of the snake path $(\upvarrho , {\omega})$.
\end{def1}
 Recalling the functional $L$ introduced in \eqref{definition:exitMultiusos},  we interpret $L_\infty(\upvarrho^u,\omega^u)$ as the boundary size of $\mathcal{C}_u$.    We conclude this section with a discussion under the measures $\mathbb{N}_{x,0}$ and $\mathbb{P}_{0,x,0}$. We start by introducing  some  notations that will be used throughout this work. Under $\mathbb{N}_{x,0}$  and $\mathbb{P}_{0,x,0}$, the process $(\rho,\overline{W}):=(\rho, W,\Lambda)$ belongs a.e. to  $\overline{\mathcal{S}}_x$.   Therefore, $(\rho,W)$ is in $\mathcal{S}_x$ a.e. and we can then consider, outside of a 
 negligible set,  the sets $\mathcal{D}$ and $\mathcal{C}_u, \mathcal{C}_u^\circ$ for $u\in \mathcal{D}$ taken at  $(\rho, W)$. As usual, we omitted the dependence on  $(\rho, W)$ to simplify notation.  We write 
$$
(\rho^u,W^u)_{u\in \mathcal{D}}\quad \text{ and }\quad (\rho^{u,*},W^{u,*})_{u\in \mathcal{D}} 
$$
for the corresponding family of   subtrajectories stemming from $u \in \mathcal{D}$, as well as the family of excursions. Note that for $u \in \mathcal{D}$ we have  $p_{H(\rho)}^{-1}(\{u \}) = \{ \mathfrak{g}(u), \mathfrak{d}(u) \}$ since $u$ is not a branching point by \eqref{prelim:Branchingx}; in particular,    the statement of Lemma \ref{lemma:connectedcomponents} holds under $\mathbb{N}_{x}$  and $\mathbb{P}_{0,x}$.   The next lemma shows in particular that  under $\mathbb{N}_{x,0}$ and $\mathbb{P}_{0,x,0}$,  the local time process $\Lambda$  can be used to index 
the family of excursions away from $x$. 

\begin{lem} \label{lemma:contanteExcursion}
Under $\mathbb{N}_{x,0}$  and $\mathbb{P}_{0,x,0}$,   for every $u \in \mathcal{D}$ the process $(\widehat{\Lambda}_a)_{a \in \mathcal{T}_{H(\rho)}}$ is constant on $\mathcal{C}_u$ and its value  is given by $\widehat{\Lambda}_u$. Moreover, if $u'$ is another arbitrary element of $\mathcal{D}$ with $u \neq u'$, we have  $\widehat{\Lambda}_u \neq \widehat{\Lambda}_{u^\prime}$.  
\end{lem}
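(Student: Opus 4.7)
\textit{First assertion.} For $v \in \mathcal{C}_u$ with $v \neq u$, set $t_u = \mathfrak{g}(u)$ and $t_v = \mathfrak{g}(v)$. Since $u \preceq v$ gives $m_{H(\rho)}(t_u, t_v) = H(\rho_{t_u})$, the snake property applied to $\Lambda$ yields $\Lambda_{t_v}(H(\rho_{t_u})) = \Lambda_{t_u}(H(\rho_{t_u})) = \widehat{\Lambda}_u$. Conditionally on $\zeta_{t_v}$, the pair $(W_{t_v}, \Lambda_{t_v})$ has the law of $(\xi, \mathcal{L})$ under $\Pi_{x,0}$ restricted to $[0, \zeta_{t_v}]$, so $\mathrm{d}\Lambda_{t_v}$ is atomless with topological support contained in $\overline{\{h : W_{t_v}(h) = x\}}$. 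By \eqref{equation:snakeproperty2}, the assumption $v \in \mathcal{C}_u$ translates to $W_{t_v}(h) \neq x$ for $h \in (H(\rho_{t_u}), H(\rho_{t_v}))$, so this support meets $(H(\rho_{t_u}), H(\rho_{t_v})]$ in at most the single point $H(\rho_{t_v})$, which therefore carries no $\Lambda_{t_v}$-mass. Consequently $\Lambda_{t_v}(H(\rho_{t_v})) = \Lambda_{t_v}(H(\rho_{t_u}))$, i.e.\ $\widehat{\Lambda}_v = \widehat{\Lambda}_u$.

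\textit{Second assertion, comparable case.} Fix distinct $u, u' \in \mathcal{D}$ and suppose first $u \prec u'$. I first claim that a.s.\ $u' \notin \mathcal{C}_u$. Indeed, if $u' \in \mathcal{C}_u$, the open segment $(H(\rho_{\mathfrak{g}(u)}), H(\rho_{\mathfrak{g}(u')}))$ would be free of $x$-visits of the $\Pi_{x,0}$-distributed trajectory $W_{\mathfrak{g}(u')}$; combined with $u' \in \mathcal{D}$, the debut property of $u'$ would in turn force $(H(\rho_{\mathfrak{g}(u')}), H(\rho_{\mathfrak{g}(\tilde u')}))$ to be free of $x$-visits for some descendant $\tilde u' \succ u'$ lying on the continuation arc at $u'$ (as \eqref{prelim:Branchingx} rules out $u'$ being a branching point when $u' \neq 0$). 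The regularity of $x$ applied at the return to $x$ at height $H(\rho_{\mathfrak{g}(u')})$ then contradicts this scenario. Thus $u' \notin \mathcal{C}_u$, and so some $a \in \rrbracket u, u' \llbracket$ satisfies $\widehat{\omega}_a = x$. Applying the strong Markov property of $(\xi, \mathcal{L})$ at the corresponding hitting time $h_a \in (H(\rho_{\mathfrak{g}(u)}), H(\rho_{\mathfrak{g}(u')}))$ of $x$ yields $\Lambda_{\mathfrak{g}(u')}(h_a + \varepsilon) > \Lambda_{\mathfrak{g}(u')}(h_a)$ for every $\varepsilon > 0$, and the monotonicity of $\Lambda_{\mathfrak{g}(u')}$ gives $\widehat{\Lambda}_{u'} > \widehat{\Lambda}_u$.

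\textit{Second assertion, incomparable case.} Let $w = u \curlywedge u'$; this is a branching point of $\mathcal{T}_{H(\rho)}$, and the snake property gives $\Lambda_{\mathfrak{g}(u)}(H(\rho_w)) = \Lambda_{\mathfrak{g}(u')}(H(\rho_w)) =: \Lambda_w$. The branching property of the $(\psi, \overline{\Pi})$-Lévy snake at $w$ --- obtained from the Poissonian decomposition \eqref{PPPsobreinfimo} and the strong Markov property of $\rho$ at the jump of $X$ producing $w$ --- shows that, conditionally on the portion of $(\rho, \overline{W})$ explored up to the first visit of $p_{H(\rho)}$ to $w$, the two sub-snakes coding the distinct subtrees at $w$ containing $u$ and $u'$ are independent $(\psi, \overline{\Pi})$-Lévy snakes started from $(W_w, \Lambda_w)$. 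Consequently the random variables $\widehat{\Lambda}_u - \Lambda_w$ and $\widehat{\Lambda}_{u'} - \Lambda_w$ are conditionally independent, each with an atomless distribution (since under $\Pi_{y,0}$ the local time $\mathcal{L}_h$ at any fixed $h > 0$ is continuously distributed). Two independent atomless random variables differ almost surely, giving $\widehat{\Lambda}_u \neq \widehat{\Lambda}_{u'}$, and a union bound over the countable set of pairs in $\mathcal{D} \times \mathcal{D}$ concludes.

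\textit{Main obstacle.} The hardest point I anticipate is the incomparable case: one must invoke the branching property of the Lévy snake at $w$ (which is not a stopping time of the snake in the classical sense but corresponds to a jump time of the exploration process $\rho$) and verify the atomlessness of the conditional distribution of each local-time increment along the two independent sub-spines --- both natural features of the $(\psi, \overline{\Pi})$-Lévy snake, but requiring care to deploy cleanly.
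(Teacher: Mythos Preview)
Your first assertion follows the paper's line, but the justification ``conditionally on $\zeta_{t_v}$, the pair $(W_{t_v},\Lambda_{t_v})$ has the law of $(\xi,\mathcal{L})$ under $\Pi_{x,0}$'' is applied at the \emph{random} time $t_v=\mathfrak g(v)$, which is not legitimate. The paper avoids this by invoking Lemma~\ref{lemma:tequedasenThetax}: almost everywhere, $(\rho_s,\overline{W}_s)\in\overline{\Theta}_x$ for \emph{every} $s$, and condition~(ii) in the definition of $\overline{\Theta}_x$ is exactly the pathwise statement you need (support of $\dd\Lambda_s$ equals the closure of the zero set of $W_s$, and $\Lambda_s$ is continuous). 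The same remark applies to your comparable case: both the ``no isolated zero'' claim and the strict increase of the local time follow cleanly from $(\rho_{\mathfrak g(u')},\overline{W}_{\mathfrak g(u')})\in\overline{\Theta}_x$, not from strong Markov at a random time.

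The genuine gap is in the incomparable case. You claim a branching property at the random point $w=u\curlywedge u'$, citing \eqref{PPPsobreinfimo} and ``strong Markov at the jump of $X$ producing $w$''; neither applies directly, since $w$ depends on the random debuts $u,u'$. More seriously, the assertion that $\widehat\Lambda_u-\Lambda_w$ is atomless ``since $\mathcal L_h$ at fixed $h$ is continuously distributed'' is a non sequitur: $\widehat\Lambda_u$ is the local time at the start of an excursion of the spine, not at a fixed height. Finally, your union bound is over the \emph{random} set $\mathcal D\times\mathcal D$, which is not a valid countable union.

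The paper sidesteps all of this by fixing deterministic rationals $0<t<t'$. Conditionally on $H$ and on $\overline{W}_t(H_{t\curlywedge t'})$, the two continuations $(\overline{W}_t(H_{t\curlywedge t'}+\,\cdot\,))$ and $(\overline{W}_{t'}(H_{t\curlywedge t'}+\,\cdot\,))$ are independent $\overline\Pi$-paths. The values $\widehat\Lambda_u$ for debuts $u$ ancestral to $p_H(t)$ (resp.\ $p_H(t')$) above $p_H(t)\curlywedge p_H(t')$ are exactly the local times at the left endpoints of the excursion intervals of these paths away from $x$; for two independent copies these form the first coordinates of two independent Poisson point measures with intensity $\dd\ell\otimes\mathcal N$, hence are a.s.\ pairwise distinct. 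A countable union over rational pairs, together with the openness of each $\mathcal C_u^\circ$, then covers all debuts simultaneously. This is the mechanism your argument is missing.
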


\begin{proof} 
 We only prove the result under $\mathbb{N}_{x,0}$, the arguments under $\mathbb{P}_{0,x,0}$ are identical. To simplify notation we write  $H$ for the height process instead of $H(\rho)$. We start proving the first claim and in this direction,  recall from  Lemma~\ref{lemma:tequedasenThetax} that $\mathbb{N}_{x,0}$-a.e., the Lévy snake $(\rho , \overline{W})$ takes values in $\overline{\Theta}_x$. Therefore, we can consider a measurable subset $\Omega_0$ of full $\mathbb{N}_{x,0}$ measure at which the process  $(\rho_t , \overline{W}_t )_{t\geq 0}$  stays in  $\overline{\Theta}_x$,    and for the rest of the argument   we work on  $\Omega_0$.   Fix  $u \in \mathcal{D}$,  an arbitrary element  $a \in \mathcal{C}_u \setminus \{ u \}$,  and let us prove that $\widehat{\Lambda}_{u} = \widehat{\Lambda}_{a}$. Recall that by definition, we have   $\widehat{W}_v \neq x$ for every $v \in \rrbracket u , a \llbracket$. The condition $u \preceq a$ ensures that $ H_{\mathfrak{g}(u)} = \min_{[\mathfrak{g}(u), \mathfrak{g}(a)]}H \leq H_{\mathfrak{g}(a)}$, and  by   \eqref{equation:snakeproperty2} we get that 
$W_{\mathfrak{g}(a)}(h) \neq x$ for every $h \in (H_{\mathfrak{g}(u)}, H_{\mathfrak{g}(a)})$. Now, from the fact that   $(\rho_{\mathfrak{g}(a)}, \overline{W}_{\mathfrak{g}(a)}) \in \overline{\Theta}_x$   and the snake property we infer  $\widehat{\Lambda}_{\mathfrak{g}(a)} =  \Lambda_{\mathfrak{g}(a)}(H_{\mathfrak{g}(u)} ) = \widehat{\Lambda}_{\mathfrak{g}(u)}$. This entails that $\widehat{\Lambda}_u = \widehat{\Lambda}_a$,  and since $a$ is arbitrary, we conclude  that  $\widehat{\Lambda}_a$, $a\in \mathcal{C}_u$, is identically equal to $\widehat{\Lambda}_u$.
 \par 
 The proof of the second claim of the lemma  relies on the following elementary fact. Under $\Pi_{y,r}$ for $(y,r) \in E \times \mathbb{R}_+$  consider an independent copy $\xi^\prime$ of $\xi$ and write $\tau^\prime = \inf\{ t > 0 : \xi^\prime_t = x \}$. Then,  the excursion process of $(\xi_{\uptau + t})_{t \geq 0}$ and $(\xi^\prime_{\uptau^\prime + t})_{t \geq 0}$  are  independent Poisson point measures with intensity $\mathbbm{1}_{\{\ell>r\}}\mathrm{d}\ell~\mathcal{N}$ and therefore, the first entries in the atoms of these two measures are a.s. all pairwise distinct. Let us  infer from this the second statement of the lemma. To this end, fix two arbitrary rational numbers $0 < t<t^{\prime}$, write  $t \curlywedge t'$ for an arbitrary element of $p_H^{-1}( p_H(t)  \curlywedge p_H(t'))$ and denote the connected components  of $\{h \in[H_{t\curlywedge t^{\prime} },H_t]:~W_{t}(h)\neq x\}$ (resp. $\{h\in[H_{t\curlywedge t^{\prime} },H_{t^\prime}]:~W_{t^\prime}(h)\neq x\}$) that do not contain $H_{t\curlywedge t^{\prime}}$ by $(a_i,b_i)_{i\in I}$  (resp. $(a_i^\prime,b_i^\prime)_{i\in I^\prime}$). Note that both of these collections are  empty only when $p_H(t)$ and $p_H(t')$ belong to the same excursion component.  Conditionally on $H$ and $\overline{W}_t(H_{t\curlywedge t^{\prime} })$, the processes $(\overline{W}_t(H_{t\curlywedge t^{\prime} }+h))_{h\leq H_t-H_{t\curlywedge t^{\prime} }}$ and $(\overline{W}_{t^\prime}(H_{t\curlywedge t^{\prime} }+h))_{h\leq H_{t^\prime}-H_{t\curlywedge t^{\prime} }}$ are independent and  distributed as  $(\xi,\mathcal{L})$ under $\Pi_{\overline{W}_{t}(H_{t\curlywedge t^{\prime} })}$ stopped respectively at time $H_t-H_{t\curlywedge t^{\prime} }$ and $H_{t^\prime}-H_{t\curlywedge t^{\prime} }$. From the elementary observation given above we get that the quantities $(\Lambda_t(a_i))_{i\in I}$ and $(\Lambda_{t^\prime}(a_i^\prime))_{i\in I^\prime}$ are all distinct when $I,I' \neq \emptyset$. Since by Lemma \ref{lemma:connectedcomponents}   the sets $\mathcal{C}_u^\circ$ for $u \in \mathcal{D}$ are open, it is plain  by \eqref{equation:snakeproperty2} and the definition of debut points that this ensures   $\widehat{\Lambda}_u$, $\widehat{\Lambda}_{u'}$ a.s. differ  if $u, u'$ are distinct debut points. 
\end{proof}

For every $u \in \mathcal{D}$, the local time process $\widehat{\Lambda}$ is constant in the connected components $\mathcal{C}_u$,  but  this is no longer the case if we consider instead its restriction to the corresponding stemming subtree  
$\{v\in \mathcal{T}_{H(\rho)}:~u\preceq v\}$ containing $\mathcal{C}_u$. It will be in fact  crucial for our purposes to enrich the family $(\rho^u,W^u)_{u\in \mathcal{D}}$  by including the corresponding restriction of the local time $\Lambda$. 
\begin{def1} Under $\mathbb{N}_{x,0}$ and $\mathbb{P}_{0,x,0}$,  for every $u\in \mathcal{D}$, we shall write  $(\rho^u,\overline{W}^u)$ for the subtrajectory of $(\rho,\overline{W})$ associated with the interval $[\mathfrak{g}(u), \mathfrak{d}(u)]$. 
\end{def1}
\noindent Note that under $\mathbb{N}_{x,0}$ and $\mathbb{P}_{0,x,0}$, the lifetime of $W^u$ is  finite and given by $\mathfrak{d}(u) - \mathfrak{g}(u)$.  

\section{\texorpdfstring{The measures $\mathbf{N}_{x,r}$ and  $\mathbf{N}_x^*$}{}}\label{section:excursionmeasure}

Now that we have defined the notion of an excursion away from $x$ and the corresponding  subtrajectory stemming from $x$, the next step is to construct candidates for the corresponding (infinite) measures. This task requires some preliminary constructions and notations that we will now introduce. 
\par
We consider the family of measures  $\bar{R}_{a,b}( \overline{\w} , \dd \overline{\w}')$ for  $\overline{\w} \in \mathcal{W}_{\overline{E}}$,  $0 \leq a \leq \zeta_\w$ and $b \geq a$ introduced  in  Section \ref{secsnake}  associated with  $\overline{\Pi} :=  ({\Pi}_{y,r})_{(y,r) \in \overline{E}}$. In this  section  we work with a fixed $r\geq 0$ and  we recall the notation $\mathcal{N}_{x,r}$ for the measure introduced in Section \ref{section:framework}.   For every continuous function $h:\mathbb{R}_+\to \mathbb{R}_+$ and $t \geq 0$,   we write $\nu^h_t (\dd \overline{\w} )$ for the law of $( ({\xi}_s, \mathcal{L}_s) : 0 \leq s \leq h(t))$ under  $\mathcal{N}_{x,r}$. Note that $\nu_t^h$ is a measure on $\mathcal{W}_{\overline{E}}$;  the dependence on $r\geq 0$ was only omitted to simplify notation. The interest in the family $(\nu_t^h : t \geq  0)$ arises from the following result.
\begin{prop}
Fix a  continuous function $h:\mathbb{R}_+\to \mathbb{R}_+$ with $\sigma_h \in(0, \infty)$  and  $h(0) = h(\sigma_h) = 0$. There exists  a unique  measure  $\mathbf{Q}^h_{x,r}(\dd \overline{W})$ on $\mathcal{W}_{\overline{E}}^{\mathbb{R}_+}$ such that, for every  $n \geq 1$, $A_0, A_1,  \dots, A_n$ measurable subsets of $\mathcal{W}_{\overline{E}}$ and  $0 = t_0 < t_1 < t_2 \dots < t_n$, we have 
\begin{align}\label{equation:consistency}
      \mathbf{Q}^h_{x,r} \Big( &  \overline{W}_{t_0}\in A_0, \, \overline{W}_{t_1} \in A_1,  \dots, \overline{W}_{t_n}\in A_n\Big) \nonumber
   \\&= \mathbbm{1}_{A_0}(x,r)  \int_{A_1} \nu_{t_1}^h(\dd \overline{\mathrm{w}}_1) \int_{A_2\times \dots\times A_n} \bar{R}_{ m_{h}(t_1,t_2),h(t_2) }(\overline{\mathrm{w}}_1,\dd \overline{\mathrm{w}}_2) \dots \bar{R}_{ m_{h}(t_{n-1},t_n),h(t_n) }(\overline{\mathrm{w}}_{n-1},\dd \overline{\mathrm{w}}_n) .
\end{align}
\end{prop}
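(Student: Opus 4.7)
The plan is to invoke a $\sigma$-finite version of the Kolmogorov extension theorem. The classical probabilistic statement does not apply directly because $\nu_{t}^h$ is infinite for every $t\in(0,\sigma_h)$ with $h(t)>0$, being the pushforward of the excursion measure $\mathcal{N}_{x,r}$, which is $\sigma$-finite but infinite. Nonetheless, the prescribed family is locally finite in a useful sense: $\nu^h_t$ restricted to paths with $\uptau > \epsilon$ is finite for every $\epsilon>0$, and such sets exhaust $\mathcal{W}_{\overline{E}}$, which is what will make the extension go through.

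The first step is the consistency check: for any refinement of the finite set of times $0=t_0<t_1<\cdots<t_n$ by insertion of an extra time $s$, integrating out the coordinate at $s$ on the right-hand side of \eqref{equation:consistency} must return the unrefined expression. Inserting $s$ between $t_j$ and $t_{j+1}$ with $j\geq 1$ reduces to the semigroup identity
\[
\int \bar{R}_{m_h(t_j,s),h(s)}(\overline{\w},\dd\overline{\w}')\,\bar{R}_{m_h(s,t_{j+1}),h(t_{j+1})}(\overline{\w}',\cdot) = \bar{R}_{m_h(t_j,t_{j+1}),h(t_{j+1})}(\overline{\w},\cdot),
\]
which is precisely the identity underlying the definition \eqref{equation:fddsSnake} of $Q^h_{\w}$ and follows from the strong Markov property of $\overline{\xi}$ under $\overline{\Pi}$. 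Inserting $s$ between $0$ and $t_1$, on the other hand, reduces to
\[
\int \nu^h_s(\dd\overline{\w}_s)\,\bar{R}_{m_h(s,t_1),h(t_1)}(\overline{\w}_s,\cdot) = \nu^h_{t_1}(\cdot),
\]
which is exactly the Markov property \eqref{equation:markovExcursionquevive} of $\overline{\xi}$ under $\mathcal{N}_{x,r}$, applied to the restriction of paths to $[0,h(t_1)]$.

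With consistency in hand, I would construct $\mathbf{Q}^h_{x,r}$ by fixing a reference time $t_*\in(0,\sigma_h)$ with $h(t_*)>0$ and setting
\[
\mathbf{Q}^h_{x,r} := \int \nu^h_{t_*}(\dd\overline{\w}_*)\,P^{h,t_*}_{\overline{\w}_*},
\]
where $P^{h,t_*}_{\overline{\w}_*}$ is the probability measure on $\mathcal{W}_{\overline{E}}^{\mathbb{R}_+}$ obtained from the classical Kolmogorov extension applied to the finite-dimensional laws prescribed by \eqref{equation:consistency} after conditioning on $\overline{W}_{t_*}=\overline{\w}_*$ (on $[t_*,\infty)$ one runs the kernels $\bar{R}$ forward; on $[0,t_*]$ the snake's compatibility is again encoded through the kernels via the minimum $m_h$). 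The resulting $\mathbf{Q}^h_{x,r}$ is $\sigma$-finite, since $\nu^h_{t_*}$ is, and the integrand is a probability kernel.

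The main obstacle, and where the work really concentrates, is verifying that this construction is independent of $t_*$ and that the resulting finite-dimensional marginals coincide with \eqref{equation:consistency} for every choice of $0<t_1<\cdots<t_n$. Both reductions follow from the two consistency identities of the previous paragraph, combined with \eqref{equation:markovExcursionquevive}: integrating $\nu^h_{t_*}$ against chains of $\bar{R}$-kernels one compresses, via the semigroup identity, to $\nu^h_{t_1}$ on the first relevant time, recovering \eqref{equation:consistency}. Uniqueness is then a standard monotone-class argument, since cylinder sets form a $\pi$-system generating the Borel $\sigma$-algebra of $\mathcal{W}_{\overline{E}}^{\mathbb{R}_+}$ and the candidate measure is $\sigma$-finite on the restricting family $\{\overline{W}_{t_*} \in B\}$ with $\nu^h_{t_*}(B)<\infty$.
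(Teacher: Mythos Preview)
Your approach is viable but takes a genuinely different route from the paper, and the most delicate step is under-specified. The paper's proof avoids disintegration altogether: it fixes a time $s$ with $h(s)>0$, restricts each finite-dimensional formula to the event $\{\uptau(W_s)>\delta\}$ (so every $\mathfrak{Q}^\delta_{t_0,\ldots,t_n}$ is finite with total mass $\nu^h_s(\uptau>\delta)\leq \mathcal{N}_{x,r}(\uptau>\delta)<\infty$), applies the classical Kolmogorov theorem to obtain $\mathbf{Q}^{h,\delta}_{x,r}$, checks the monotone compatibility in $\delta$, and then sets $\mathbf{Q}^h_{x,r}:=\lim_{\delta\downarrow 0}\mathbf{Q}^{h,\delta}_{x,r}$. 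This requires only the forward entrance-law identity and the $\bar{R}$-semigroup, nothing else.

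Your construction, by contrast, hinges on specifying the conditional law of $(W_t)_{t\leq t_*}$ given $W_{t_*}=\overline{\w}_*$ and showing that $\int\nu^h_{t_*}(\dd\overline{\w}_*)P^{h,t_*}_{\overline{\w}_*}$ recovers \eqref{equation:consistency}. The natural candidate is the backward chain $\bar{R}_{m_h(t_{j-1},t_*),h(t_{j-1})}(\overline{\w}_*,\dd\overline{\w}_{j-1})\cdots\bar{R}_{m_h(t_1,t_2),h(t_1)}(\overline{\w}_2,\dd\overline{\w}_1)$, but verifying that this mixture against $\nu^h_{t_*}$ equals the \emph{forward} chain $\nu^h_{t_1}(\dd\overline{\w}_1)\bar{R}_{m_h(t_1,t_2),h(t_2)}(\overline{\w}_1,\dd\overline{\w}_2)\cdots$ is not the ``semigroup identity'' you invoke: it is a reversibility statement, namely
\[
\nu^h_s(\dd\overline{\w}_s)\,\bar{R}_{m_h(s,t),h(t)}(\overline{\w}_s,\dd\overline{\w}_t)=\nu^h_t(\dd\overline{\w}_t)\,\bar{R}_{m_h(s,t),h(s)}(\overline{\w}_t,\dd\overline{\w}_s),
\]
applied iteratively. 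This identity does follow from \eqref{equation:markovExcursionquevive} (both sides describe $\overline{\xi}$ under $\mathcal{N}_{x,r}$ restricted to $[0,m_h(s,t)]$ with two conditionally independent $\overline{\Pi}$-extensions), but it is precisely the computation the paper carries out later as \eqref{equation:predualidad} in the proof of Lemma~\ref{prop:Q:mathcal:N}(i). So your route is correct once this duality is stated and proved, but it front-loads that argument rather than bypassing it; the paper's restriction-and-limit method is more economical here because it needs only the forward direction.
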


 \begin{proof}
 First,  remark that for every  $0 < s < t$ and every bounded measurable function $f:\mathcal{W}_{\overline{E}}\to \mathbb{R}_+$, we have
 \begin{equation*}
 \int_{\mathcal{W}_{\overline{E}}} \nu_s^h (\dd \overline{\w}) \int_{\mathcal{W}_{\overline{E}}} \bar{R}_{m_h(s,t) , h(t) }(\overline{\w} , \dd \overline{\w}') f(\overline{\w}') 
        = \mathcal{N}_{x,r}\big( f(\overline{\xi}_u : 0 \leq u \leq h(t)) \big) = \nu^h_t( f ), 
\end{equation*}
 where  in the first equality we applied the Markov property \eqref{equation:markovExcursionquevive} at time $m_h(s,t)$. In other terms, it holds that:
$$ 
\nu^h_t\big(\mathrm{d} \overline{\mathrm{w}}^\prime\big)=\int_{\mathcal{W}_{\overline{E}}}  \nu^h_s\big(\mathrm{d}\overline{\mathrm{w}}\big) \bar{R}_{m_h(s,t),h(t)}( \overline{\w} ,\mathrm{d}\overline{\w}^\prime).
$$
If we write $\mathfrak{Q}_{t_0, t_1, \dots t_n}$ for the measure in $\mathcal{W}_{\overline{E}}^{n+1}$ defined by the right-hand side of  \eqref{equation:consistency},  our previous observation entails that the family of measures
$\mathfrak{Q}_{t_0, t_1,\dots, t_n }$, for $n\geq 1$ and  $0=t_0<t_1<\dots<t_n$ satisfies    Kolmogorov's  consistency criterion. The statement of the proposition will now follow  by a straightforward argument which involves Kolmogorov’s extension theorem. Note that since the measures   $\mathfrak{Q}_{t_0, t_1, \dots t_n}$ are infinite, we must proceed with some care. In this direction, fix $s>0$ such that $h(s)>0$ as well as an arbitrary   $\delta>0$. For any collection of  times  $0=t_0< t_1< \dots <t_n$,  we let  $\mathfrak{Q}_{t_0, t_1,\dots, t_n }^\delta$ be the measure in $\mathcal{W}_{\overline{E}}^{n+1}$ defined at each measurable set $A_0 \times A_1 \times \dots \times A_n$ as: 
\begin{equation*}
    \mathfrak{Q}_{t_0, t_1,\dots, t_j, \dots ,  t_n } \big(A_0 \times A_1, \times \dots \times \big(A_j \cap \{\uptau(\w) > \delta \} \big)\times \dots A_n \big) 
\end{equation*}
if $s = t_j$ for some $1 \leq j \leq n$,   as 
\begin{equation*}
    \mathfrak{Q}_{t_0, t_1,\dots, t_j, s, t_{j+1}, \dots ,  t_n } \big(A_0 \times A_1, \times \dots A_j \times  \{\uptau(\w) > \delta \}\times A_{j+1} \times \dots \times A_n \big) 
\end{equation*}
if $t_j < s < t_{j+1}$ for some $0 \leq j \leq n-1$, and  as 
\begin{equation*}
    \mathfrak{Q}_{t_0, t_1,\dots, t_n, s} \big(A_0 \times A_1, \times \dots A_n \times  \{\uptau(\w) > \delta \} \big)   
\end{equation*}
if  $t_n < s$.  The family $\mathfrak{Q}_{t_0, t_1,\dots, t_n }^\delta$ for $n\geq 1$ and  $0=t_0<t_1<\dots<t_n$ 
 still  satisfies    Kolmogorov's  consistency criterion,  and each one of these measures  has finite total mass given by $\nu_s^h(\uptau(\w)>\delta)\leq \mathcal{N}_{x,r}(\uptau(\xi)>\delta)<\infty$. Therefore, Kolmogorov's  extension theorem ensures the existence of a unique measure $\mathbf{Q}^{h,\delta}_{x,r}$ in the product space $\mathcal{W}_{\overline{E}}^{\mathbb{R}_+}$  with total mass $\nu_s^h(\uptau(\w)>\delta)$ and having for finite-dimensional marginal distributions  the measures $\mathfrak{Q}_{t_0, t_1,\dots, t_n }^\delta$, for $n\geq 1$ and  $0=t_0<t_1<\dots<t_n$. By construction, for every $0 < \delta_1 < \delta_2$, the measure $\mathbf{Q}_{x,r}^{h,\delta_2}$ is the restriction of $\mathbf{Q}_{x,r}^{h,\delta_1}$ to $\{\uptau(W_s)>\delta_2\}$. Hence,   we can consider the measure $\mathbf{Q}^{h}_{x,r}$  in $\mathcal{W}^{\mathbb{R}_+}_{\overline{E}}$  defined at every measurable subset $\mathscr{C}$ by 
\begin{equation*}
    \mathbf{Q}^{h}_{x,r}(\mathscr{C}):= \lim_{\delta \rightarrow 0}\mathbf{Q}^{h,\delta}_{x,r}(\mathscr{C}). 
\end{equation*}
Using that $\nu_{s}^{h}(\uptau(\w)=0)=0$, we infer by monotone convergence that $\mathbf{Q}^{h}_{x,r}$ satisfies \eqref{equation:consistency}. Finally, the uniqueness comes from the fact that if $\mathbf{Q}^\prime$ also verifies  \eqref{equation:consistency},   by  uniqueness of  $\mathbf{Q}^{h,\delta}_{x,r}$,  for each $\delta>0$ we get that the restriction of $\mathbf{Q}^\prime$ to $\{\uptau(W_s)>\delta\}$ must be $\mathbf{Q}^{h,\delta}_{x,r}$.  Since  $\{\uptau(W_s)=0\}$ is still a $\mathbf{Q}^\prime$-null set, it now follows by monotone convergence\footnote{The uniqueness follows also easily from the monotone class theorem.}  that $\mathbf{Q}^{h}_{x,r} = \mathbf{Q}^\prime$.
\end{proof} 
By construction, for any fixed time $t \geq 0$,   the variable $\overline{W}_t$ under ${\textbf{Q}}^h_{x,r}$ is distributed as  $\overline{\xi}$ under $\mathcal{N}_{x,r}$ restricted to $[0,h(t)]$.   For this reason, the canonical process $\overline{W}$ under $\textbf{Q}_{x,r}^h$ can be interpreted as the snake driven by $h$ with spatial motion distributed according to  $\mathcal{N}_{x,r}$. We stress that, in contrast to the framework presented in Section \ref{secsnake}, the measure $\mathcal{N}_{x,r}$ is an infinite measure, and as a result, $\mathbf{Q}^h_{x,r}$ is as well an infinite measure.

\begin{lem}\label{prop:Q:mathcal:N} Fix a   continuous function $h:\mathbb{R}_+\to \mathbb{R}_+$ satisfying  $\sigma_h \in(0, \infty)$, with $h(0) = h(\sigma_h) = 0$ and set  $h_{\trev}:= (h((\sigma_h - t)\vee 0) : t\geq 0)$. The following properties hold: 
\begin{enumerate}
    \item[\emph{(i)}] The distribution of $(\overline{W}_{(\sigma_h - t)\vee 0} :  t \geq 0 )$ under $\mathbf{Q}^h_{x,r}$ coincides with the law of  
    $(\overline{W}_{t} : t\geq 0)$ under $\mathbf{Q}^{h_{\trev}}_{x,r}$.  
    \item[\emph{(ii)}] Let $q>0$ be as in \ref{continuity_snake_2}.  If   $h$ is $\theta$-Hölder-continuous  with $\theta q > 1$,   the canonical process $\overline{W}$ under $\mathbf{Q}_{x,r}^h$ possesses a continuous modification. 
\end{enumerate}     
\end{lem}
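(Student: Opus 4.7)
For part (i), my plan is to verify the equality of finite-dimensional marginals using the characterisation \eqref{equation:consistency}. Fix $0<s_1<\cdots<s_n\leq\sigma_h$ and set $t_i:=\sigma_h-s_{n+1-i}$, so that $t_1<\cdots<t_n$. A direct computation, based on the symmetry of the minimum and on the identity $h(\sigma_h-\cdot)=h_{\trev}(\cdot)$ on $[0,\sigma_h]$, gives $h(t_i)=h_{\trev}(s_{n+1-i})$ and $m_h(t_i,t_{i+1})=m_{h_{\trev}}(s_{n-i},s_{n+1-i})$. Since the right-hand side of \eqref{equation:consistency} expresses the finite-dimensional marginal of $\overline{W}$ at ordered times purely in terms of the lifetimes $h(t_i)$ and the successive minima $m_h(t_i,t_{i+1})$, part~(i) reduces to showing that the joint law of $(\overline{W}_{s_1},\ldots,\overline{W}_{s_n})$ under $\mathbf{Q}^{h_{\trev}}_{x,r}$ coincides with the joint law of $(\overline{W}_{t_n},\overline{W}_{t_{n-1}},\ldots,\overline{W}_{t_1})$ under $\mathbf{Q}^h_{x,r}$; that is, that the formula \eqref{equation:consistency} is invariant under reversal of the coordinate order.

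To prove this reversal invariance I would establish the symmetry identity
\begin{equation*}
\nu^h_t(\dd\overline{\w})\,\bar{R}_{m,h(t')}(\overline{\w},\dd\overline{\w}') \;=\; \nu^h_{t'}(\dd\overline{\w}')\,\bar{R}_{m,h(t)}(\overline{\w}',\dd\overline{\w})
\end{equation*}
valid for any $t<t'$ and any $m\leq h(t)\wedge h(t')$. Both sides describe the same joint law on $\mathcal{W}_{\overline{E}}^2$: sample the common prefix $\overline{\xi}|_{[0,m]}$ under $\mathcal{N}_{x,r}$ and then extend it by two conditionally independent copies of $\overline{\Pi}$ of respective lengths $h(t)-m$ and $h(t')-m$. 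This equivalence is precisely the Markov property \eqref{equation:markovExcursionquevive} of $\overline{\xi}$ under $\mathcal{N}_{x,r}$. Iterating this identity through the product in \eqref{equation:consistency}, applied at each successive pair $(t_i,t_{i+1})$ with $m=m_h(t_i,t_{i+1})$, rewrites the entrance law at $t_1$ into the one at $t_n$ and reverses every transition, thereby yielding~(i).

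For part (ii), my plan is to apply a Kolmogorov continuity criterion on the localisations $\mathbf{Q}^{h,\delta}_{x,r}$ constructed in the proof of existence, which are finite measures. For $s<t$ the snake property forces $\overline{W}_s(r)=\overline{W}_t(r)$ on $[0,m_h(s,t)]$, so that $d_{\mathcal{W}_{\overline{E}}}(\overline{W}_s,\overline{W}_t)$ is bounded by $|h(s)-h(t)|$ plus the oscillation of $\overline{W}_s$ on $[m_h(s,t),h(s)]$ and the oscillation of $\overline{W}_t$ on $[m_h(s,t),h(t)]$. Under $\mathbf{Q}^{h,\delta}_{x,r}$, conditionally on the common prefix, these two tips are independent pieces of $\overline{\Pi}$ of durations $h(s)-m_h(s,t)$ and $h(t)-m_h(s,t)$, respectively. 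Combining the moment estimate from \ref{continuity_snake_2} with the $\theta$-Hölder regularity of $h$ then yields, locally uniformly in $(s,t)$, a bound of the form $\mathbf{Q}^{h,\delta}_{x,r}(d_{\mathcal{W}_{\overline{E}}}(\overline{W}_s,\overline{W}_t)^p)\leq C\,|s-t|^{\theta q}$, with $\theta q>1$. Kolmogorov's criterion provides a continuous modification of $\overline{W}$ under each $\mathbf{Q}^{h,\delta}_{x,r}$, and the fact that $\mathbf{Q}^{h,\delta_2}_{x,r}$ is the restriction of $\mathbf{Q}^{h,\delta_1}_{x,r}$ for $\delta_1<\delta_2$ lets these modifications be glued into a continuous modification of $\overline{W}$ under $\mathbf{Q}^h_{x,r}$.

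The main obstacle I anticipate lies in part~(i): carefully tracking the iteration of the pairwise symmetry so that all the non-adjacent minima $m_h(t_i,t_j)$ implicit in the snake structure of \eqref{equation:consistency} are simultaneously matched under time reversal. This should be resolved by the structural observation that $m_h(t_i,t_j)=\min_{i\leq k<j}m_h(t_k,t_{k+1})$ depends only on the unordered pair $\{i,j\}$, and is therefore transparently preserved under $t\mapsto\sigma_h-t$, so that the tree-of-paths construction underlying \eqref{equation:consistency} depends only on the unlabelled tree shape.
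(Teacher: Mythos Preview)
Your argument for part~(i) is correct and coincides with the paper's: both establish the two--point reversal identity by reading off the joint law of $(\overline{W}_t,\overline{W}_{t'})$ as ``common prefix under $\mathcal{N}_{x,r}$ plus two conditionally independent $\overline{\Pi}$-extensions'', and then iterate. The paper writes this as $\nu^{h}_{\sigma_h-t}f_2\,P^h_{\sigma_h-t,\sigma_h-s}f_1=\nu^{h_{\trev}}_{s}f_1\,P^{h_{\trev}}_{s,t}f_2$ and inducts; your version reverses coordinate order within a fixed driving function and then matches lifetimes and minima. These are the same idea.

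For part~(ii) the two approaches diverge. The paper does not attempt a direct Kolmogorov estimate under $\mathbf{Q}^{h,\delta}_{x,r}$; instead it applies the simple Markov property of $\mathbf{Q}^h_{x,r}$ at time $\sigma_h/3$ to reduce to the ordinary snake $Q^{h'}_{\overline{W}_{\sigma_h/3}}$ (a genuine probability kernel to which \cite[Proposition~2.2]{2024structure} applies), and then uses part~(i) to cover $[0,2\sigma_h/3)$. Your route can be made to work but the key step needs care: the sentence ``under $\mathbf{Q}^{h,\delta}_{x,r}$, conditionally on the common prefix, these two tips are independent pieces of $\overline{\Pi}$'' is not quite accurate, because the localisation event $\{\uptau(W_s)>\delta\}$ is tied to the fixed reference time $s$ and can overlap one of the tips. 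Concretely, if $t_1<s<t_2$ with $m=m_h(t_1,t_2)=m_h(t_1,s)$, then $W_{t_2}|_{[m,\,m_h(s,t_2)]}$ coincides with $W_s$ and is therefore biased by the conditioning. The moment bound still goes through, but only after a small case analysis on the tree spanned by $\{t_1,t_2,s\}$: in the non-straddling cases both tips are genuinely independent of $W_s$ given the prefix, while in the straddling case $m\ge\min_{[s-\varepsilon,\,s+\varepsilon]}h>0$ for $|t_1-t_2|<\varepsilon$, which makes $\mathcal{N}_{x,r}(\mathbbm{1}_{\{\uptau>\delta\}}\,\text{osc}^p)$ controllable. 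The paper's Markov-plus-duality argument sidesteps this bookkeeping entirely.
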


\begin{proof} 
    For $0 \leq s<t \leq \sigma_h$, we shall write $P^h_{s,t}$ and $P_{s, t }^{h_{\trev }}$ for the transition semigroup from time $s$ to time $t$ of the snake with spatial motion $\overline{\Pi}$ with driving function $h$ and $h_{\trev}$ respectively. 
    Note that its explicit expressions can be easily derived from \eqref{equation:fddsSnake}.  Starting with (i),  observe that the result will follow as soon as we establish that
    \begin{equation} \label{equation:predualidad}
        \nu_{\sigma_h  - t}^h f_2P_{\sigma_h - t , \sigma_h - s }^h f_1 
        =
        \nu_{s}^{h_{ \trev }} f_1 P_{s, t }^{h_{\trev }} f_2
    \end{equation}
    for every $0 < s  < t < \sigma_h$ and non-negative measurable functions $f_1$ and  $f_2$ on $\mathcal{W}_{\overline{E}}$. 
    Indeed, if  the previous identity  holds,  by inductively applying \eqref{equation:predualidad} and noting that $\nu_{t}^{h_{\trev}} = \nu_{\sigma_h - t}^{h}$,  we infer  that   for every $0 < t_1 < \dots < t_k< \sigma_h$ and  non-negative  measurable functions $f_1, \dots,  f_k$  on $\mathcal{W}_{\overline{E}}$, 
    \begin{align*}
        \nu^h_{\sigma_h  - t_{k}} f_k P_{\sigma_h - t_{k} , \sigma_h - t_{k-1} }^h f_{k-1} \dots  P_{\sigma_h - t_2 , \sigma_h - t_1 }^h f_1 
        =
        \nu^{h_{\sigma_h \shortminus \hspace{0,2mm}  \bigcdot} }_{t_{1}} f_1 P_{t_{1} ,   t_{2} }^{h_{\trev}} f_2 \dots P_{t_{k-1} ,  t_k}^{h_{ \trev}} f_k. 
    \end{align*}
     Since for $t\geq \sigma_h$ we plainly have  $W_{0}=W_{t}=(x,r)$, the  equality in the last display yields that  $(\overline{W}_{(\sigma_h - t)\vee 0} : t\geq 0)$ under $\textbf{Q}^h_{x,r}$  and   
    $(\overline{W}_{t} : t\geq 0)$ under $\textbf{Q}^{h_{\trev}}_{x,r}$  have the same finite-dimensional distributions, proving  (i). Let us then establish \eqref{equation:predualidad}. In this direction, note that  the left-hand side of \eqref{equation:predualidad} is given by 
    \begin{align*}
        \nu_{\sigma_h  - t}^h f_2P_{\sigma_h - t , \sigma_h - s }^h f_1  
        =
        \int_{\mathcal{W}_{\overline{E}}} \nu_{\sigma_h - t}^h (\dd \overline{\w} ) f_2( \overline{\w} ) \int_{\mathcal{W}_{\overline{E}}} \bar{R}_{m_{h}( \sigma_h - t, \sigma_h - s) , h(\sigma_h - s) }( \overline{\w} , \dd \overline{\w}') f_1( \overline{\w}'). 
    \end{align*}
    Said otherwise, the law of  the pair $(\overline{W}_{\sigma_h - t}, \overline{W}_{\sigma_h - s})$  under $\textbf{Q}^{h}_{x,r}$ is characterised by the following two properties: 
\begin{itemize}
    \item Up to time $m_h(\sigma_h - t , \sigma_h - s)$, the paths $\overline{W}_{\sigma_h - t}$ and $\overline{W}_{\sigma_h - s}$ coincide and are distributed as the canonical process $(\xi , \mathcal{L})$  under $\mathcal{N}_{x,r}$ restricted to the time-interval    $[0,m_h(\sigma_h - t , \sigma_h - s)]$. 
    \item Conditionally on $\big( \overline{W}_{\sigma_h-t}(u) : u \in[0,  m_h(\sigma_h - t , \sigma_h - s)] \big)$,  the restrictions     
    \begin{equation*}
        \overline{W}_{\sigma_h - t} \big(m_h(\sigma_h - t , \sigma_h - s) + u \big), \quad  u \in \big[0, h(\sigma_h - t) - m_h(\sigma_h - t , \sigma_h - s) \big],  
    \end{equation*}
    and 
    \begin{equation*}
          \overline{W}_{\sigma_h - s} \big(m_h(\sigma_h - t , \sigma_h - s) + u\big), \quad  u \in \big[0, h(\sigma_h - s) - m_h(\sigma_h - t , \sigma_h - s) \big],
    \end{equation*}
    are independent with respective distributions $(\xi, \mathcal{L})$ under  $\overline{\Pi}_y$ taken at $y = {\overline{W}_{\sigma_h -t}( m_h(\sigma_h - t , \sigma_h -s) )}$, and  stopped at time $h(\sigma_h  - t) - m_h(\sigma_h - t , \sigma_h - s)$ and $h( \sigma_h - s) - m_h(\sigma_h - t , \sigma_h - s)$ respectively. 
\end{itemize}
    The first identity in distribution in the last point is a consequence of the Markov property \eqref{equation:markovExcursionquevive} under $\mathcal{N}_{x,r}$ while the second follows by definition of the measures $(\bar{R}_{a,b}( \overline{\w} , \dd \overline{\w}') : \overline{\w} \in \mathcal{W}_{\overline{E}}, 0 \leq a \leq \zeta_{\overline{\w}} \text{ and } b \geq a )$. A similar inspection to the right-hand side of \eqref{equation:predualidad} gives that this is precisely the law of $(\overline{W}_{s} , \overline{W}_{t} )$ under $\textbf{Q}_{x,r}^{h_{\trev }}$, and  concludes the proof of (i). 
    \par 
    Let us now turn our attention to (ii). Recall that we are working under  \ref{continuity_snake_2} and  that  $(h, \overline{\Pi})$ satisfies the regularity  conditions (i) and (ii) of Section \ref{secsnake}. In particular,  the shifted function $h' := (h_{\sigma_h/3 \,  + t} : t \geq 0)$  is still $\theta$-Hölder continuous and  the pair $(h', \overline{\Pi})$ still satisfies conditions (i) and (ii) of Section \ref{secsnake}. For $\overline{\w}\in \mathcal{W}_{\overline{E}}$, let us write $Q_{\overline{\w}}^{h'}$ for the law of the snake driven by $h'$ with spatial motion $\overline{\Pi}$ and recall that  under our assumptions, the process $\overline{W}$ under $Q_{\overline{\w}}^{h'}$ possesses a continuous modification. Noting from \eqref{equation:consistency}  that conditionally on $\overline{W}_{\sigma_h/3}$, the distribution of $(\overline{W}_{\sigma_h/3 + t}: t \geq 0)$ is ${Q}^{h'}_{\overline{W}_{\sigma_h/3}}$, we infer that under    $\textbf{Q}_{x,r}^h$ the process $\overline{W}$ possesses a continuous modification on $( \sigma _h/3,\infty)$. To extend the modification to the non-negative real line we make use of (i). More precisely,  by (i) we have the equality in distribution 
    \begin{equation*}
        \Big((\overline{W}_s : \sigma_h/3 \leq s \leq  \sigma_h) \text{ under }  \textbf{Q}^{h_{\trev}}_{x,r} \Big)
        \overset{(d)}{=} 
        \Big((\overline{W}_{\sigma_h - s} : \sigma_h /3\leq s \leq  \sigma_h) \text{ under }  \textbf{Q}_{x,r}^{h}\Big),
    \end{equation*}
and  applying the previous argument to the process in the left hand side yields    that $\overline{W}$ under $\textbf{Q}_{x,r}^h$ possesses as well  a continuous modification in $[0, 2\sigma_h/3 )$, and therefore in  $\mathbb{R}_+$.
\end{proof}
Now, we randomise the driving  function $h$ by setting: 
\begin{equation*} 
    \mathbf{N}_{x,r}(\dd \rho ,  \dd \overline{W}) := N(\dd \rho) \textbf{Q}^{H(\rho)}_{x,r}(\dd \overline{W}).  
\end{equation*}
Since under our standing assumption \ref{continuity_snake_2}   the process 
  $H(\rho)$ under $N(\dd \rho)$ is a.e. $\theta$-Hölder continuous for some $\theta$ satisfying $\theta q > 1$, by Lemma \ref{prop:Q:mathcal:N}-(ii) the process $\overline{W}$ under $\mathbf{N}_{x,r}$  possesses a continuous modification. Therefore, arguing as in Section \ref{section:snake} we infer that the  measure $\mathbf{N}_{x,r}$ is well defined in the canonical space $\mathbb{D}(\mathbb{R}_+ , \mathcal{W}_f(\mathbb{R}_+) \times \mathcal{W}_{\overline{E}} )$ and supported on $\overline{\mathcal{S}}_{x,r}$. From now on, it will be implicitly assumed that $\mathbf{N}_{x,r}$ is a measure on $\mathbb{D}(\mathbb{R}_+ , \mathcal{W}_f(\mathbb{R}_+) \times \mathcal{W}_{\overline{E}} )$.   The canonical process $(\rho , \overline{W})$ under $\mathbf{N}_{x,r}$  can be interpreted as the $\psi$-Lévy snake with spatial motion distributed according to  $\mathcal{N}_{x,r}$, where we recall that this description is   informal since $\mathcal{N}_{x,r}$ is an infinite measure.  Since for any $r \geq 0$, the process $(\rho , W, \Lambda - r)$ under $\mathbf{N}_{x,r}$ is distributed $\textbf{N}_{x,0}$, it is often only necessary to prove statements holding under $\textbf{N}_{x,r}$ in the case $r = 0$, this will be used frequently in the sequel. Note  that $\mathbf{N}_{x,r}$ is a sigma-finite measure.   For example, it gives finite mass to events of the form  $\{    \uptau(W_\epsilon) \wedge H(\rho_\epsilon) > \epsilon \}$ for $\varepsilon>0$,  since by construction:
  \begin{equation*}
      N\big(  \mathbbm{1}_{\{ H(\rho_{\epsilon}) > \epsilon \}} \mathbf{Q}^{H(\rho)}_{x,r}(  \uptau(W_\epsilon) > \epsilon  ) \big)  = N(H(\rho_{\epsilon}) > \epsilon) \mathcal{N}_{x,r}(\uptau(\xi)> \epsilon) < \infty.
  \end{equation*}
  Let us now establish some important properties of $\mathbf{N}_{x,r}$. First, note that since under $N$ the process $\eta$ is a functional of $\rho$, we can consider the triplet $(\rho,\eta,\overline{W})$ under $\mathbf{N}_{x,r}$.   The definition of $\mathbf{N}_{x,r}$, paired with Lemma \ref{prop:Q:mathcal:N}-(i) and \eqref{dualidad:etaRho},  allows us   to recover the  duality property \eqref{dualidad:etaRhoW} of the Lévy snake under $\mathbf{N}_{x,r}$.
\begin{cor}\label{corollary:dualidadNblack}
    Under $\mathbf{N}_{x,r}$, the processes $((\rho_{s},\eta_s, \overline{W}_{s}) : 0 \leq s \leq \sigma  )$ and $((\eta_{ (\sigma - s)-}, \rho_{ (\sigma - s)-},\overline{W}_{\sigma - s}) : 0 \leq s \leq \sigma  )$  have the same distribution. 
\end{cor}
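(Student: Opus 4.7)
The plan is to combine three ingredients: the conditional disintegration $\mathbf{N}_{x,r}(\dd \rho, \dd\overline{W}) = N(\dd\rho)\,\mathbf{Q}^{H(\rho)}_{x,r}(\dd\overline{W})$, the duality \eqref{dualidad:etaRho} for the pair $(\rho,\eta)$ under $N$, and the time-reversal identity established in Lemma~\ref{prop:Q:mathcal:N}\,(i). The key conceptual point is that, while $\eta$ is a deterministic functional of $\rho$ under $N$, it is $H(\rho)$ (which coincides with $H(\eta)$) that parametrises the conditional law of $\overline{W}$, and this height process is unchanged (up to the usual left-limit caveat) by the time-reversal transformation.

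First I would describe explicitly the two joint laws to be compared. Writing $J$ for the joint law of $(\rho,\eta,\overline{W})$ under $\mathbf{N}_{x,r}$ and $\widetilde{J}$ for the joint law of $\big(\eta_{(\sigma-\cdot)-}, \rho_{(\sigma-\cdot)-}, \overline{W}_{\sigma-\cdot}\big)$, the definition of $\mathbf{N}_{x,r}$ gives immediately the disintegration
\[
J(\dd\rho,\dd\eta,\dd\overline{W}) \;=\; N(\dd\rho,\dd\eta)\,\mathbf{Q}^{H(\rho)}_{x,r}(\dd\overline{W}),
\]
since $\eta$ is a measurable functional of $\rho$. The goal is to obtain the analogous disintegration for $\widetilde{J}$ and conclude.

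For the marginal in the first two coordinates, by \eqref{dualidad:etaRho} the pair $\big(\eta_{(\sigma-\cdot)-}, \rho_{(\sigma-\cdot)-}\big)$ has the same law as $(\rho,\eta)$ under $N$. To handle the third coordinate, I would condition on $\rho$ (equivalently on $(\rho,\eta)$) and invoke Lemma~\ref{prop:Q:mathcal:N}\,(i) with $h = H(\rho)$: this gives that, $N$-a.e., the conditional law of $(\overline{W}_{(\sigma-s)\vee 0})_{s\geq 0}$ under $\mathbf{Q}^{H(\rho)}_{x,r}$ coincides with $\mathbf{Q}^{H(\rho)_{\trev}}_{x,r}$. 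The hypotheses of the lemma (namely $\sigma_h \in (0,\infty)$ and $h(0) = h(\sigma_h) = 0$) are satisfied $N$-a.e. by $h = H(\rho)$ thanks to the continuity of the height process and the identities recalled in \eqref{equation:zeros}.

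Now comes the small bookkeeping that I would treat carefully: by continuity of $H(\rho)$, one has $H(\rho)(\sigma-s) = H(\rho_{\sigma-s}) = H(\rho_{(\sigma-s)-})$ for $0 < s \leq \sigma$, and since $H(\rho) = H(\eta)$ under $N$, the driving function $H(\rho)_{\trev}$ agrees with $s \mapsto H(\eta_{(\sigma-s)-})$ $N$-a.e. Consequently, under the reversed disintegration, conditionally on $\big(\eta_{(\sigma-\cdot)-}, \rho_{(\sigma-\cdot)-}\big)$ the process $\overline{W}_{\sigma-\cdot}$ has the law $\mathbf{Q}^{H(\eta_{(\sigma-\cdot)-})}_{x,r}$. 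Substituting the image distribution of $\big(\eta_{(\sigma-\cdot)-}, \rho_{(\sigma-\cdot)-}\big)$ under the change of variables given by \eqref{dualidad:etaRho}, one obtains
\[
\widetilde{J}(\dd\rho',\dd\eta',\dd\overline{W}') \;=\; N(\dd\rho',\dd\eta')\,\mathbf{Q}^{H(\rho')}_{x,r}(\dd\overline{W}') \;=\; J(\dd\rho',\dd\eta',\dd\overline{W}'),
\]
which is the claim. The main (minor) obstacle I anticipate is purely notational: being precise about the identification of $H(\rho)_{\trev}$ with $H(\rho_{(\sigma-\cdot)-})$ and about the fact that the disintegration argument is legitimate despite $\mathbf{N}_{x,r}$ being infinite (which is fine because $N$ is $\sigma$-finite and $\mathbf{Q}^{H(\rho)}_{x,r}$ is a kernel).
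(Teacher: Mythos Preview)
Your proposal is correct and follows exactly the approach indicated in the paper: combine the disintegration $\mathbf{N}_{x,r}(\dd\rho,\dd\overline{W}) = N(\dd\rho)\,\mathbf{Q}^{H(\rho)}_{x,r}(\dd\overline{W})$ with the duality \eqref{dualidad:etaRho} for $(\rho,\eta)$ under $N$ and the time-reversal identity of Lemma~\ref{prop:Q:mathcal:N}\,(i). The paper states this as a one-line remark preceding the corollary, and your write-up simply fills in the bookkeeping (including the identification of $H(\rho)_{\trev}$ with $H(\eta_{(\sigma-\cdot)-})$) that the paper leaves implicit.
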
  
It is not difficult to check from our definitions that  $(\rho,\overline{W})$ under $\mathbf{N}_{x,r}$ satisfies the following  simple Markov property. For every $t>0$ and conditionally on $\overline{\mathcal{F}}_t$, the shifted process $(\rho_{t+s},\overline{W}_{t+s})_{s\geq 0}$ is distributed according to $\mathbb{P}_{\rho_t,\overline{W}_t}^{\dag}$. In the following proposition, we strengthen this result by proving that $(\rho , \overline{W})$ satisfies the strong Markov property under $\mathbf{N}_{x,r}$.  
\begin{prop}\label{eq:strong:Markov:N} Let $T$ be a $(\overline{\mathcal{F}}_{t+})$-stopping time such that  $T > 0$, $\mathbf{N}_{x,r}$-a.e. For every non-negative $\overline{\mathcal{F}}_{T+}$-measurable function $\Phi$  and non-negative measurable function $F$ on $\mathbb{D}(\mathbb{R}_+, \mathcal{M}_{f}(\mathbb{R}_+)\times \mathcal{W}_{\overline{E}})$, we have 
\begin{equation} \label{equation:strongMarkov}
    \mathbf{N}_{x,r} \big( \mathbbm{1}_{\{T<\infty\}} \Phi \cdot  F \big( (\rho_{T + s }, \overline{W}_{T + s } : s \geq 0  ) \big)  \big) 
    = 
    \mathbf{N}_{x,r} \big( \mathbbm{1}_{\{T<\infty\}}  \Phi \cdot  \mathbb{E}^{\dag}_{\rho_T, \overline{W}_T} [ F ]   \big).  
\end{equation}
\end{prop}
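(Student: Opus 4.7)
The plan is to extend the simple Markov property (stated just above the proposition) to the strong Markov property at $T$ via the classical dyadic approximation, adapting the argument used under $\mathbb{N}_y$ in \cite[Theorem 4.1.2]{DLG02} to accommodate the infinite total mass of $\mathbf{N}_{x,r}$. By a monotone class argument, it suffices to treat $F$ of the product form $F(\uprho,\omega)=\prod_{i=1}^{k}f_i(\uprho_{s_i},\omega_{s_i})$ with fixed $0<s_1<\cdots<s_k$ and bounded continuous $f_i$, and $\Phi$ bounded. Moreover, since $\mathbf{N}_{x,r}$ is sigma-finite---the sets $\{H(\rho_\varepsilon)\wedge \uptau(W_\varepsilon)>\varepsilon\}$ have finite mass for each $\varepsilon>0$ and exhaust $\{\sigma>0\}\supseteq\{T>0\}$ as $\varepsilon\downarrow 0$---I may further assume that $\Phi\,\mathbbm{1}_{\{T<\infty\}}$ is supported on a set of finite $\mathbf{N}_{x,r}$-mass. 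This is the only place where the infiniteness of the measure intervenes.

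Set $T_n:=2^{-n}\lceil 2^n T\rceil$ on $\{T<\infty\}$ and $T_n=+\infty$ otherwise. Each $T_n$ is a $(\overline{\mathcal{F}}_{t+})$-stopping time with at most countably many values, $T_n\downarrow T$, and $\Phi$ remains $\overline{\mathcal{F}}_{T_n+}$-measurable because $T\leq T_n$. Decomposing $\{T_n<\infty\}=\bigsqcup_{k\geq 1}\{T_n=k2^{-n}\}$ and applying the simple Markov property at the deterministic time $k2^{-n}$ to the $\overline{\mathcal{F}}_{k2^{-n}+}$-measurable random variable $\Phi\mathbbm{1}_{\{T_n=k2^{-n}\}}$, then summing over $k$, yields
\begin{equation*}
\mathbf{N}_{x,r}\big(\mathbbm{1}_{\{T_n<\infty\}}\,\Phi\cdot F(\rho_{T_n+\cdot},\overline{W}_{T_n+\cdot})\big)=\mathbf{N}_{x,r}\big(\mathbbm{1}_{\{T_n<\infty\}}\,\Phi\cdot\mathbb{E}^{\dag}_{\rho_{T_n},\overline{W}_{T_n}}[F]\big),
\end{equation*}
which is the desired identity at $T_n$.

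To conclude, I let $n\to\infty$. Since $T_n+s_i\downarrow T+s_i$ and the canonical process is right-continuous in $\mathbb{D}(\mathbb{R}_+,\mathcal{M}_f(\mathbb{R}_+)\times \mathcal{W}_{\overline{E}})$, continuity of the $f_i$ gives pointwise convergence of $F(\rho_{T_n+\cdot},\overline{W}_{T_n+\cdot})$ to $F(\rho_{T+\cdot},\overline{W}_{T+\cdot})$, and dominated convergence applies on the left-hand side thanks to the finite-measure localization and the boundedness of $\Phi$ and $F$. For the right-hand side, the same right-continuity gives $(\rho_{T_n},\overline{W}_{T_n})\to(\rho_T,\overline{W}_T)$, and this combines with the continuity of the map $(\mu,\overline{\w})\mapsto \mathbb{E}^{\dag}_{\mu,\overline{\w}}[F]$ for such product $F$---a consequence of the Feller property of the $(\psi,\overline{\Pi})$-Lévy snake under $\mathbb{P}^{\dag}$, itself inherited from the Feller property of the exploration process \cite[Theorem 2.1]{Abraham-Delmas} together with the spatial regularity assumption $(\mathbf{H}_0')$---to deliver the desired convergence on the right.

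The main obstacle is precisely the infinite total mass of $\mathbf{N}_{x,r}$, which rules out a direct global application of dominated convergence. This is circumvented by the sigma-finite localization at the start, after which the argument reduces to a straightforward adaptation of the classical passage from the simple to the strong Markov property as carried out for $\mathbb{P}_{\mu,\overline{\w}}$ in \cite[Theorem 4.1.2]{DLG02}. A secondary technical point is the systematic use of the right-continuous filtration $(\overline{\mathcal{F}}_{t+})$, needed both to ensure that each $T_n$ is a stopping time and to guarantee $\overline{\mathcal{F}}_{T+}\subseteq \overline{\mathcal{F}}_{T_n+}$, so that $\Phi$ remains measurable at every stage of the approximation.
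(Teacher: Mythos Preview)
Your overall strategy---dyadic approximation of $T$ and passage to the limit from the simple Markov property---is exactly the approach taken in the paper (which in turn follows \cite[Section~4.1.3]{DLG02}). The handling of the infinite mass via sigma-finite localization and the left-side limit via right-continuity are also the same.

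The gap is in your justification of the right-side limit. You invoke a ``Feller property of the $(\psi,\overline{\Pi})$-L\'evy snake under $\mathbb{P}^\dag$,'' but \cite[Theorem~2.1]{Abraham-Delmas} only establishes the Feller property of the exploration process $\rho$, not of the pair $(\rho,\overline{W})$. The inference you suggest---that it is ``inherited'' from the Feller property of $\rho$ together with $(\mathbf{H}_0')$---is not obvious: weak convergence $\mu_n\to\mu$ does not in general imply $H(\mu_n)\to H(\mu)$, so continuity of the snake transition kernels in the full product topology of $\mathcal{M}_f(\mathbb{R}_+)\times\mathcal{W}_{\overline{E}}$ does not follow directly, and you give no argument for it. The paper avoids this issue by not proving Feller continuity at all. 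Instead, it observes that for $n$ large, the state $(\rho_{T_n},\overline{W}_{T_n})$ lies in a specific neighborhood $\mathcal{V}_\epsilon(\rho_T,\overline{W}_T)$ consisting of states that agree with $(\rho_T,\overline{W}_T)$ after pruning mass $<\epsilon$ from the tip (see \eqref{equation:Vepsilon}). The required uniform continuity $\sup_{(\mu',\overline{\w}')\in\mathcal{V}_\epsilon(\mu,\overline{\w})}|\texttt{P}_tf(\mu,\overline{\w})-\texttt{P}_tf(\mu',\overline{\w}')|\to 0$ as $\epsilon\to 0$ is then obtained via the coupling argument of \cite[Lemma~4.1.3]{DLG02}, which exploits the explicit form \eqref{equation:LeyExploration} of the exploration process started from a given measure. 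This is the substantive technical step that your proposal elides.
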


\begin{proof}  
Our arguments  are taken from Section 4.1.3 of \cite{DLG02}. First, notice that
 it suffices to prove the result for an arbitrary bounded stopping  time $T$ taking values in $(0,\infty)$, that we fix from now on.  For  $t \geq 0$, we let $\lceil t \rceil$ be the smallest integer satisfying   $t < \lceil t \rceil$  and for every $n\geq 1$, we set $T^{(n)}:= \lceil Tn \rceil/n $.  We write $d_{\text{P}}$ for the Prokhorov metric on $\mathcal{M}_f(\mathbb{R}_+)$.  Next, consider $f: \mathcal{M}_f(\mathbb{R}_+) \times \mathcal{W}_{\overline{E}} \mapsto \mathbb{R}_+$  a bounded non-negative Lipschitz-continuous function with respect to the product metric $d_{\text{P}}\wedge 1 + d_{\mathcal{W}_{\overline{E}}} \wedge 1$ as well as  a bounded non-negative $\overline{\mathcal{F}}_{T+}$-measurable function  $\Phi$, non-null on a set with finite $\mathbf{N}_{x,r}$ measure. If we let $(\texttt{P}_t)_{t \geq 0}$   be the transition 
 semigroup of the  $ (\psi, \overline{\Pi} )$-Lévy snake, the statement of the proposition reduces to  show that  
\begin{equation*}
    \mathbf{N}_{x,r} \big(  \Phi \cdot  f(\rho_{T + t }, \overline{W}_{T + t }   )   \big) 
    = 
    \mathbf{N}_{x,r} \big(  \Phi \cdot  \texttt{P}_t f(\rho_T, \overline{W}_T)   \big).  
\end{equation*}
In this direction, for every $n \geq 1$,  by the simple Markov property  we have 
\begin{align}
    \mathbf{N}_x( \Phi \cdot  f(\rho_{T^{(n)}+t}, \overline{W}_{T^{(n)}+t} ) )
    &= \sum_{k=1}^\infty \mathbf{N}_{x,r}( \Phi \cdot\mathbbm{1}_{ \{ T^{(n)} = \frac{k}{n} \} }f(\rho_{T^{(n)}+t}, \overline{W}_{T^{(n)}+t} ) ) \nonumber \\
    &= \sum_{k=1}^\infty \mathbf{N}_{x,r}\big( \Phi \cdot  \mathbbm{1}_{\{ T^{(n)} =\frac{k}{n} \} } \texttt{P}_t f( \rho_{\frac{k}{n}} , \overline{W}_{\frac{k}{n}}) \big) \nonumber \\
    &=  \mathbf{N}_{x,r}\big( \Phi  \cdot  \texttt{P}_t f( \rho_{T^{(n)}} , \overline{W}_{T^{(n)}}) \big).   \label{equation:SMPeq1}
\end{align}
Since  $(\rho , \overline{W})$ under $\mathbf{N}_{x,r}$ is right-continuous, it holds that $\lim_{n \rightarrow \infty} f(\rho_{T^{(n)}+t}, \overline{W}_{T^{(n)}+t} ) = f(\rho_{T+t}, \overline{W}_{T+t} )$,  and to conclude it suffices to prove  $\limsup_{n \rightarrow \infty} | \texttt{P}_t f( \rho_{T}, \overline{W}_{T} ) -  \texttt{P}_t  f( \rho_{T^{(n)}}, \overline{W}_{T^{(n)}} )   |=0$, $\mathbf{N}_{x,r}$-a.e. To this end, for every $\epsilon > 0$ and $(\mu , \overline{\w} ) \in \overline{\Theta}$,  set  
 \begin{align}\label{equation:Vepsilon}
     \mathcal{V}_{\epsilon}  (\mu, \overline{\w} ) 
     := & \Big\{  (\mu', \overline{\w}') \in \overline{\Theta} : \,   \exists \,  \epsilon_0, \epsilon_1 \in [0,\epsilon) \text{ such that }  \kappa_{\epsilon_0} \mu = \kappa_{\epsilon_1}\mu' , \nonumber \\ 
     & \quad  \text{ and } \big(\overline{\w}(h) : 0 \leq h \leq H( \kappa_{\epsilon_0} \mu) \big) =  
     \big(\overline{\w}'(h) : 0 \leq h \leq H(\kappa_{\epsilon_1} \mu' ) \big) 
     \Big\},
 \end{align}
 where we recall that $\kappa_a$ for $a \geq 1$ is the pruning operation defined in \eqref{definition:pruning}.  We  introduce this set since for every $\varepsilon>0$, it is plain that $\mathbf{N}_{x,r}$-a.e., the pair $(\rho_{T+s} , \overline{W}_{T+s})$ belongs to $\mathcal{V}_\epsilon(\rho_T, \overline{W}_T)$ for $s$ small enough.  Therefore, $\mathbf{N}_{x,r}$-a.e., 
 \begin{equation*}
     \limsup_{n \rightarrow \infty} \big| \texttt{P}_t  f( \rho_{T}, \overline{W}_{T} ) -  \texttt{P}_t  f( \rho_{T^{(n)}}, \overline{W}_{T^{(n)}} )   \big| 
     \leq \sup_{(\mu', \overline{\w}') \in \mathcal{V}_{\epsilon}(\rho_T, \overline{W}_T )} 
     \big| \texttt{P}_t  f( \rho_{T}, \overline{W}_{T} ) -  \texttt{P}_t  f(\mu', \overline{\w}') \big|.
 \end{equation*}
To conclude, we rely on results from \cite{DLG02}. Namely, it was established in the proof of \cite[Lemma 4.1.3]{DLG02} by means of a coupling argument\footnote{Lemma 4.1.3 in \cite{DLG02} works on a  more general space than $\mathcal{W}_{\overline{E}}$, namely the space of killed rcll  paths. Despite this difference, the same coupling argument applies, leading to the conclusion in \eqref{equation:uniformconv}.}   that for every $(\mu , \overline{\w}) \in \overline{\Theta}$, we have 
\begin{equation} \label{equation:uniformconv}
    \lim_{\epsilon \rightarrow 0} \sup_{(\mu', \overline{\w}') \in \mathcal{V}_{\epsilon}(\mu, \overline{\w} )} 
     | \texttt{P}_t  f( \mu , \overline{\w} ) -  \texttt{P}_t f(\mu', \overline{\w}') | = 0.
\end{equation}
This completes the proof of the proposition.
\end{proof}
\noindent We conclude this section by   introducing our candidate measure for the law of the excursions away from $x$.  
\begin{def1}\label{definition:N*} We write   $\mathbf{N}_x^*$ for the law in $\mathbb{D}(\mathbb{R}_+, \mathcal{M}_f(\mathbb{R}_+)\times \mathcal{W}_E)$ of  $\mathrm{tr}(\rho , W) \text{ under }\mathbf{N}_{x,r}.$  
\end{def1}
Observe that this definition of $\mathbf{N}_x^*$  does not depend on  $r$. Therefore, the measure $\mathbf{N}_x^*$  is well-defined, and we refer to it as the excursion measure\footnote{The terminology might be slightly misleading, the measure $\mathbf{N}_x^*$ should not be confused with   $\mathbb{N}_{x,0}$.} away from $x$ of the $\Pi$-Markov process indexed by the  $\psi$-Lévy tree. Note that by construction, under  $\mathbf{N}_{x,r}$, the pair $(\rho,\overline{W})$ belongs to $\overline{\mathcal{S}}_x$, whereas under $\mathbf{N}_x^*$ we have  $(\rho,W) \in \mathcal{S}_x$.  In particular, in both cases we can consider the corresponding tree-indexed process.  We stress that,  while under $\mathbf{N}_{x,r}$ the process $\rho$ can be understood\footnote{ Under $\mathbf{N}_{x,r}$, the conditional distribution of $W$ given $\rho$ is an infinite measure, and consequently this statement must be handled with care} as the exploration process of a Lévy tree (and the corresponding tree $\mathcal{T}_{H(\rho)}$   as a Lévy tree),  this is no longer the case under $\mathbf{N}_x^*$. This makes the measure $\mathbf{N}_{x,r}$ much simpler to work with, as one can naturally extend classical results on Lévy snakes under $\mathbf{N}_{x,r}$ by making use of the  machinery developed in \cite{DLG02, 2024structure}.  In order to study $\mathbf{N}_x^*$, it is often simpler to start working under $\mathbf{N}_{x,r}$, since one can then extend the analysis to $\mathbf{N}_x^*$ through the identity $\mathbf{N}^*_{x} = \mathbf{N}_{x,r}( \text{tr}(\rho , W) \in \, \cdot \,  )$.

\section{Spinal decompositions}\label{section:MainSpinalDecomp}

On the forthcoming sections, the study of spatial properties of the Lévy snake and its excursions away from $x$ rely in a precise description of the law of $(\rho, \eta, \overline{W})$  at typical times taken under different measures. In order to state our results  we first need to introduce some notations. In this section we still work with an  arbitrary fixed $r\geq 0$. 
\par 
We start by introducing a pointed version of   $\mathbb{N}_{x,r}$. Namely, let $\mathbb{N}_{x,r}^\bullet$ be the  measure on  $\mathbb{D}(\mathbb{R}_+, \mathcal{M}_f^2(\mathbb{R}_+)\times \mathcal{W}_E)  \times \mathbb{R}_+$ defined by $\mathbb{N}_{x,r}^\bullet := \mathbb{N}_{x,r}(\dd \rho  , \dd \overline{W})  \dd s \, \mathbbm{1}_{\{ s \leq \sigma \}}$. Recall that under $\mathbb{N}_{x,r}$ the process $\eta$ is a functional of $\rho$, so that $\eta$ is well defined under 
 $\mathbb{N}^\bullet_{x,r}$. If  we write  $U : \mathbb{R}_+ \mapsto \mathbb{R}_+$ for the identity function $U(s) = s$ for $s \geq 0$, the law of  $(\rho , \eta,  \overline{W}, U)$ under $\mathbb{N}_{x,r}^\bullet$ is then  characterised by the relation 
    \begin{equation*}
        \mathbb{N}_{x,r}^\bullet \big( \Phi( \rho , \eta , \overline{W} , U ) \big) 
 = \mathbb{N}_{x,r}\Big( \int_0^{\sigma} \dd s \, \Phi( \rho  , \eta, \overline{W} , s )\Big). 
\end{equation*}
    Roughly speaking $U$ is a point taken, conditionally on $(\rho , \eta, \overline{W})$,  uniformly at random with respect to the Lebesgue measure on $[0,\sigma]$. Our goal now is to describe the law of $(\rho_U , \eta_U, \overline{W}_U)$ under  $\mathbb{N}_{x,r}^\bullet$.  To this end,  in an auxiliary probability space $(\Omega_0, \mathcal{F}_0, P^0)$,  we consider a $2$-dimensional subordinator $(U^{(1)}, U^{(2)} )$ with Laplace exponent given by:
\begin{equation} \label{identity:exponenteSubord}
    - \log E^0  \Big[ \exp \big( - \lambda_1 U^{(1)}_1 - \lambda_2 U^{(2)}_1 \big) \Big] := 
    \begin{cases}
    \frac{\psi(\lambda_1) - \psi(\lambda_2)}{\lambda_1 - \lambda_2} -\alpha\quad \quad \text{if } \lambda_1 \neq  \lambda_2,  \\
    \psi'(\lambda_1) - \alpha    \quad \quad \hspace{8mm} \text{if } \lambda_1 = \lambda_2.  
    \end{cases}
 \end{equation}
 In particular, we have the identity in law   $(U^{(1)},U^{(2)}) \overset{(d)}{=}(U^{(2)},U^{(1)})$ and note that both subordinators $U^{(1)}$ and $U^{(2)}$  have Laplace exponent $\psi(\lambda)/\lambda - \alpha$, for $\lambda \geq 0$. Still under $P^0$ and for $a \in (0,\infty]$, we write $(\mathcal{J}_a , \widecheck{\mathcal{J}}_a)$ for the measure in $\mathbb{R}_+^2$ defined by the relation
\begin{equation*}
   (\mathcal{J}_a , \widecheck{\mathcal{J}}_a)  := \big(\mathbbm{1}_{[0,a]}(t)~ \dd U^{(1)}_t , \mathbbm{1}_{[0,a]}(t) ~\dd U^{(2)}_t \big)
\end{equation*}
 with the usual convention $[0,\infty] := [0,\infty)$.  Our interest behind these measures comes from the fact that, by formula (18) in \cite{DLG02}, we have: 
 \begin{equation}\label{eq:many:to:one:N}
     N\Big( \int_0^{\sigma} \dd s \, \Phi \big( \rho_s, \eta_s   \big) \Big)   
    = \int_0^\infty \dd a \, \exp\big(- \alpha a\big)\cdot  
    E^0 \big(  \Phi( \mathcal{J}_a , \widecheck{\mathcal{J}}_a ) \big),
    \end{equation} 
     where $\Phi$ is a non-negative measurable function on $\mathbb{D}(\mathbb{R}_+,  \mathcal{M}^ 2_f(\mathbb{R}_+ ))$, and where we recall from Section \ref{section:MPonLT} that $N$ is the excursion measure of the  underlying reflected Lévy process. The previous formula can be easily enriched with the process $\overline{W}$ and takes the form:
\begin{equation}\label{eq:many:to:one:N:x}
     \mathbb{N}_{x,r}^{\bullet} \big(  \Phi\big( \rho_U,\eta_U, \overline{W}_U\big)  \big)   
    = \int_0^\infty \dd a \, \exp\big(- \alpha a\big)\cdot  
    E^0 \otimes \Pi_{x,r} \Big( \Phi\big( \mathcal{J}_a , \widecheck{\mathcal{J}}_a  , (\overline{\xi}_t : t \leq a)   \big) \Big),
\end{equation} 
we refer to \cite[Lemma 2.4]{2024structure} for a proof. \par 
    We will now apply the same treatment to the Lévy snake under pointed versions of the measures $\mathbf{N}_{x,r}$. In this direction, we introduce the pointed measure 
 \begin{equation}\label{N:bullet:black:def}
 \mathbf{N}^{\bullet}_{x,r}:= \mathbf{N}_{x,r}(\dd \rho ,   \dd \overline{W})  \dd s \, \mathbbm{1}_{\{ s \leq \sigma \}},
 \end{equation}
 and  recall the definition of the measure  
 $\mathcal{N}_{x,r}$ given in  Section \ref{section:framework}. 
\begin{lem} \label{proposition:only:spineN*}    For every non-negative measurable function $\Phi$  on $\mathcal{M}^2_f(\mathbb{R}_+) \times \mathcal{W}_{\overline{E}}$  we have: 
\begin{equation*} \label{equation:spineNblack}
 \mathbf{N}_{x,r}^{\bullet} \big(  \Phi\big( \rho_U,\eta_U, \overline{W}_U\big)  \big)  
    =  
     E^0 \otimes \mathcal{N}_{x,r} \Big( \int_0^\infty \dd a \, \exp (- \alpha a) \cdot  \Phi \big( \mathcal{J}_a , \widecheck{\mathcal{J}}_a  , (\overline{\xi}_t : t \leq a)  \big) \Big).  
\end{equation*}
\end{lem}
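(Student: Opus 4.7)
The plan is to reduce the claim to the already-known many-to-one formula \eqref{eq:many:to:one:N:x}, which is precisely the analogous identity for $\mathbb{N}_{x,r}^{\bullet}$ where the spatial motion is $\Pi_{x,r}$ instead of $\mathcal{N}_{x,r}$. The heart of the argument is to disintegrate $\mathbf{N}_{x,r}$ as $N(\dd\rho)\,\mathbf{Q}^{H(\rho)}_{x,r}(\dd\overline{W})$ and to identify the one-dimensional marginal of $\overline{W}$ under $\mathbf{Q}^{H(\rho)}_{x,r}$.

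First I would unfold the definition of $\mathbf{N}_{x,r}^{\bullet}$ (using Tonelli, which is legitimate since $\Phi \geq 0$) to write
\begin{equation*}
\mathbf{N}_{x,r}^{\bullet}\big(\Phi(\rho_U,\eta_U,\overline{W}_U)\big) = \int_{0}^{\infty} \dd s\; N\Big( \mathbbm{1}_{\{s\leq \sigma\}}\, \mathbf{Q}^{H(\rho)}_{x,r}\big( \Phi(\rho_s,\eta_s,\overline{W}_s) \big)\Big).
\end{equation*}
Taking $n=1$ and $t_1=s$ in \eqref{equation:consistency} identifies the marginal law of $\overline{W}_s$ under $\mathbf{Q}^{H(\rho)}_{x,r}$ as $\nu^{H(\rho)}_s$, the push-forward of $\mathcal{N}_{x,r}$ under the restriction map $\overline{\xi}\mapsto (\overline{\xi}_t : t\leq H(\rho_s))$. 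Substituting this and applying Tonelli once more in the product $N\otimes\mathcal{N}_{x,r}$, we obtain
\begin{equation*}
\mathbf{N}_{x,r}^{\bullet}\big(\Phi(\rho_U,\eta_U,\overline{W}_U)\big) = \mathcal{N}_{x,r}\Big( N\Big(\int_0^{\sigma} \dd s\; \Phi\big(\rho_s,\eta_s,(\overline{\xi}_t : t\leq H(\rho_s))\big)\Big)\Big).
\end{equation*}

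Next I would freeze the trajectory $\overline{\xi}$ sampled under $\mathcal{N}_{x,r}$ and apply the Lévy-tree many-to-one formula \eqref{eq:many:to:one:N} to the inner $N$-integral with the $\overline{\xi}$-dependent test function $\Psi_{\overline{\xi}}(\mu,\nu) := \Phi(\mu,\nu,(\overline{\xi}_t : t\leq H(\mu)))$. This yields
\begin{equation*}
N\Big(\int_0^{\sigma}\dd s\; \Psi_{\overline{\xi}}(\rho_s,\eta_s)\Big) = \int_0^{\infty} \dd a\, e^{-\alpha a}\, E^{0}\big(\Psi_{\overline{\xi}}(\mathcal{J}_a,\widecheck{\mathcal{J}}_a)\big) = \int_0^{\infty} \dd a\, e^{-\alpha a}\, E^{0}\big(\Phi(\mathcal{J}_a,\widecheck{\mathcal{J}}_a,(\overline{\xi}_t : t\leq a))\big),
\end{equation*}
where the second equality uses that $H(\mathcal{J}_a)=a$ for $a$-a.e.\ $a\geq 0$, $P^{0}$-a.s.\ — a fact already implicit in \eqref{eq:many:to:one:N:x} and which I would justify in one line by noting that the subordinator $U^{(1)}$ has Laplace exponent $\psi(\lambda)/\lambda - \alpha$, whose Lévy measure $\overline{\pi}(y)\dd y$ is infinite (or whose drift $\beta>0$ is strictly positive), so the support of $\mathcal{J}_a$ reaches up to $a$. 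Re-integrating against $\mathcal{N}_{x,r}(\dd \overline{\xi})$ and applying Tonelli one final time to bring $\mathcal{N}_{x,r}$ inside the $a$-integral gives exactly the right-hand side of the lemma.

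The argument is essentially bookkeeping once one isolates the single nontrivial ingredient, namely the one-dimensional marginal of $\mathbf{Q}^{H(\rho)}_{x,r}$. The only mild subtlety is sigma-finiteness: since $\mathcal{N}_{x,r}$ is an infinite measure, each application of Fubini must be done at the level of non-negative integrands so that Tonelli applies, and the test function $\Psi_{\overline{\xi}}$ in the application of \eqref{eq:many:to:one:N} is a measurable family indexed by $\overline{\xi}$, so joint measurability needs a brief remark. Neither of these presents a genuine obstacle.
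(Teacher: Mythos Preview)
Your proposal is correct and follows essentially the same route as the paper's proof: both disintegrate $\mathbf{N}_{x,r}$ as $N(\dd\rho)\,\mathbf{Q}^{H(\rho)}_{x,r}(\dd\overline{W})$, identify the one-dimensional marginal of $\overline{W}_s$ as the restriction of $\overline{\xi}$ under $\mathcal{N}_{x,r}$ to $[0,H(\rho_s)]$, apply the many-to-one formula \eqref{eq:many:to:one:N} under $N$, and use $H(\mathcal{J}_a)=a$ (justified via the infinite L\'evy measure of $\psi(\lambda)/\lambda-\alpha$). The paper's write-up is slightly more compact but the ingredients and their order are the same.
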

\begin{proof}

Fix a  non-negative measurable function  
 $\Phi$  on $\mathcal{M}^2_f(\mathbb{R}_+) \times  \mathcal{W}_{\overline{E}}$.
By definition of $\mathbf{N}_{x,r}$,  for every fixed $s\geq 0$, under $\mathbf{N}_{x,r}$ and conditionally on $(\rho, \eta)$,  the variable $\overline{W}_s$ is distributed as  $(\xi_t , \mathcal{L}_t)_{t \geq 0}$ under $\mathcal{N}_{x,r}$ restricted to $[0,H(\rho_s)]$. 
Hence, 
$$ 
\int_0^{\infty} \dd s \, \mathbf{N}_{x,r} \Big( \mathbbm{1}_{\{s \leq \sigma \}} \Phi\big(\rho_s , \eta_s,  \overline{W}_s \big) \Big)
=
 \int_{0}^{\infty} \dd s  \, N \Big(  \mathbbm{1}_{\{s \leq \sigma \}} \mathcal{N}_{x,r}  \Big[ \Phi\big(\rho_s , \eta_s,  \big( \overline{\xi}_t: t\leq H(\rho_s)\big) \big)\Big] \Big), 
$$
where now $\mathcal{N}_{x,r} \big[ \Phi\big(\rho_s, \eta_s , (\overline{\xi}_t: t \leq H(\rho_s))\big) \big]$ is a function of $(\rho_s, \eta_s)$.   An  application of Tonelli's theorem combined with \eqref{eq:many:to:one:N}    now entails the statement of the lemma, noting that for $a > 0$, we have  $H(J_a) = a$. This identity immediately follows from the fact the Lévy measure of $\psi(\lambda)/\lambda - \alpha$ is the measure in $\mathbb{R}_+$ given by  $\mathbbm{1}_{\mathbb{R}_+}(\ell)\dd \ell \, \pi([\ell ,\infty))$, and therefore it has infinite mass. We refer to  \cite[Section 1.1.2]{DLG02}  for details. 
\end{proof} 
    The formulations in  \eqref{eq:many:to:one:N:x}  and Lemma \ref{proposition:only:spineN*} are not strong enough for our purposes. Namely, it will be crucial for the sequel to also keep track of  the distribution of the pieces of trajectory before and after time $U$,   in both  \eqref{eq:many:to:one:N:x} and Lemma \ref{proposition:only:spineN*}. To this end,  for each  $(\upvarrho, \overline{\omega})\in \mathbb{D}(\mathbb{R}_+, \mathcal{M}_f(\mathbb{R}_+)\times \mathcal{W}_{\overline{E}})$,  we introduce for every $s\geq 0$, the notation
 \begin{equation*}
     \big(\upvarrho, \overline{\omega}\big)^{s,\leftarrow}:=(\upvarrho_{ ( 0 \vee  (s - t))-},\overline{\omega}_{ ( 0 \vee (s - t))-})_{t \geq 0}\quad\text{ and }\quad\big(\upvarrho, \overline{\omega}\big)^{s,\rightarrow}:=(\upvarrho_{s+t}, \overline{\omega}_{s+t})_{t\geq 0},
 \end{equation*}
 for the pieces of path coming before and after time $s$.  
  As usual, we use the convention $(\upvarrho_{0-},\overline{\omega}_{0-}):=(\upvarrho_{0},\overline{\omega}_{0})$. For $s\geq 0$, we shall refer to the pair
 \begin{equation}\label{equation:spinals}
     \big( \big(\upvarrho, \overline{\omega}\big)^{s,\leftarrow},\big(\upvarrho, \overline{\omega}\big)^{s,\rightarrow}\big) 
 \end{equation}
 as the spinal decomposition of $(\upvarrho, \overline{\omega})$ at time $s$.
 Observe that for every fixed $s\geq 0$, the process $(\upvarrho,\overline{\omega})$ can be easily    recovered from  \eqref{equation:spinals}  by time reversing $\big(\upvarrho, \overline{\omega}\big)^{s,\leftarrow}$
 and concatenating the resulting piece of path  with $\big(\upvarrho, \overline{\omega}\big)^{s,\rightarrow}$. 
 \par 
 Our goal now is to characterise  the law of $(\rho,\overline{W})^{U,\leftarrow}$ and $(\rho,\overline{W})^{U,\rightarrow}$ under the measures $\mathbb{N}^{\bullet}_{x,r}$ and $\mathbf{N}^{\bullet}_{x,r}$. In this direction, recall that $\mathbb{P}_{\mu,\overline{\mathrm{w}}}$ stands for the law of $(\rho, \overline{W})$ started from $(\mu,\overline{\mathrm{w}})$. Characterising the distribution of $(\rho,\overline{W})^{U,\rightarrow}$ in terms of the measures $(\mathbb{P}_{\mu , \overline{\mathrm{w}}}, (\mu , \overline{\w}) \in \overline{\Theta}_x)$  can be easily done by an application of the Markov property, but the one of  $(\rho,\overline{W})^{U,\leftarrow}$ is more delicate. Indeed, the latter  relies on the duality property \eqref{dualidad:etaRhoW},  and 
 in a description of the law of $(\rho , \eta , \overline{W})$ started from an arbitrary element $(\mu , \nu , \overline{\w})$ of $\mathcal{M}^2_f(\mathbb{R}_+)\times \mathcal{W}_{\overline{E}}$. Since the conditional distribution of $\overline{W}$   given $(\rho , \eta)$ only depends on  $H(\rho)$, we only need to  introduce the law of the pair $(\rho, \eta)$ started from $(\mu , \nu)$ - we recall that by   \cite[Proposition 3.1.2.]{DLG02}  the pair  $(\rho , \eta )$ is a Markov process. To do so, we first   extend the pruning and concatenation operations, introduced in Section 
 \ref{subsection:explorationprocess} for elements of $\mathcal{M}_{f}(\mathbb{R}_+)$, to arbitrary elements  $(\mu,\nu)$ in $\mathcal{M}^2_{f}(\mathbb{R}_+)$. Our presentation follows \cite[Section 3.1]{DLG02}.  First, for every $a\geq 0$, we write: 
 $$
 \kappa_a(\mu ,\nu ):=(\tilde{\mu} ,\tilde{\nu} ),
 $$ 
 for $\tilde{\mu} :=\kappa_a\mu $, and where the measure $\tilde{\nu}$ is the unique  element of $\mathcal{M}_{f}(\mathbb{R}_+)$ such that $(\mu +\nu )|_{[0, H(\tilde{\mu} )]}=\tilde{\mu} +\tilde{\nu}$. Notice that $\tilde{\nu}$ and $\nu|_{[0, H(\tilde{\mu})]}$ may only differ at the point $H(\tilde{\mu})$. Next, we define the concatenation of two elements $(\mu_1,\nu_1),(\mu_2,\nu_2)\in\mathcal{M}^2_{f}(\mathbb{R}_+)$, with $H(\mu_1) < \infty$, by setting:
 \begin{equation*}
     [(\mu_1,\nu_1),(\mu_2,\nu_2)] := \big([\mu_1,\mu_2],\theta\big) 
 \end{equation*}
  where $\theta$ is the element of $\mathcal{M}_f(\mathbb{R}_+)$ defined by $\langle \theta,f \rangle:=\int \nu_1(\mathrm{d} s) \mathbbm{1}_{[0,H(\mu_1)]}(s)f(s)+ \int \nu_2(\mathrm{d} s ) f(H(\mu_1)+ s)$, for any non-negative measurable function $f$ in $\mathbb{R_+}$. Note that in general  $\theta$ does not coincide with $[\mu_2, \nu_2]$. 
 Then, for every $(\mu,\nu)\in \mathcal{M}_{f}^2(\mathbb{R}_+)$, we let $\mathbf{P}_{\mu,\nu}$ be the distribution under $\mathbf{P}_0$ in $\mathbb{D}(\mathbb{R}_+, \mathcal{M}_f^2(\mathbb{R}_+))$ of the process 
 \begin{equation}\label{equation:leyetarho}
     \big[\kappa_{-I_t}(\mu,\nu ), (\rho_t,\eta_t)\big],  \quad \text{ for }{t\geq 0}. 
 \end{equation}
 The probability measure $\mathbf{P}_{\mu,\nu}$ is the law of the strong Markov process $(\rho , \eta)$ started from $(\mu, \nu)$, we refer to \cite[Section 3.1]{DLG02} for a detailed account. Since under $\mathbf{P}_0$ the processes $\eta$ and $(I_t)_{t\geq 0}$ are functionals of $\rho$, it  follows that  $\eta$ is also a functional of $\rho$ under $\mathbf{P}_{\mu,\nu}$. For $(\mu,\overline{\w})\in \overline{\Theta}$ and $\nu\in \mathcal{M}_{f}(\mathbb{R}_+)$, we define a probability measure in  $\mathbb{D}(\mathbb{R}_+, \mathcal{M}^2_f(\mathbb{R}_+)\times \mathcal{W}_{\overline{E}})$  by setting  
\begin{equation}\label{equation:pmunu}
\mathbb{P}_{\mu,\nu,\overline{\text{w}}}(\dd \rho,\dd \eta,\dd \overline{W}):=\textbf{P}_{\mu,\nu}(\dd \rho, \dd \eta)\:Q^{H(\rho)}_{\overline{\w}}(\dd \overline{W}). 
\end{equation} 
We write $\mathbb{P}^\dag_{\mu,\nu,\overline{\w}}$ for the law of $(\rho,\eta,\overline{W})$  under $\mathbb{P}_{\mu,\nu,\overline{\w}}$  stopped at  time $\inf\{t\geq 0:~\langle \rho_t,1\rangle=0\}$.

\begin{prop} \label{manytoone-lebesgue}  Under $\mathbb{N}_{x,r}^\bullet$ and  $\mathbf{N}_{x,r}^\bullet$, conditionally on $(\rho_U,\eta_U, \overline{W}_U)$, the
processes $(\rho,\overline{W})^{U,\leftarrow}$ and  $(\rho,\overline{W})^{U,\rightarrow}$ are independent and their 
 respective   conditional laws are as follows:
\begin{itemize}
\item the process $(\rho,\overline{W})^{U,\leftarrow}$ is distributed as  $(\eta, \overline{W})$ under $\mathbb{P}_{\eta_U, \rho_U,\overline{W}_U}^\dagger$; 
\item the process $(\rho,\overline{W})^{U,\rightarrow}$ is distributed as  $(\rho, \overline{W})$ under $\mathbb{P}_{\rho_U,\overline{W}_U}^\dagger$.
\end{itemize}
\end{prop}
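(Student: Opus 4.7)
The strategy is to prove simultaneously the conditional independence and the two marginal conditional laws through a single integrated identity: for every triple of non-negative measurable functions $F,G,\Psi$,
\begin{align*}
&\mathbb{N}_{x,r}^{\bullet}\Big[F\big((\rho,\overline{W})^{U,\leftarrow}\big)\,G\big((\rho,\overline{W})^{U,\rightarrow}\big)\,\Psi(\rho_U,\eta_U,\overline{W}_U)\Big]\\
&\qquad = \mathbb{N}_{x,r}^{\bullet}\Big[\mathbb{E}^{\dagger}_{\eta_U,\rho_U,\overline{W}_U}[F(\eta,\overline{W})]\cdot \mathbb{E}^{\dagger}_{\rho_U,\overline{W}_U}[G(\rho,\overline{W})]\cdot \Psi(\rho_U,\eta_U,\overline{W}_U)\Big],
\end{align*}
and similarly for $\mathbf{N}_{x,r}^{\bullet}$. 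The two ingredients are the simple Markov property of the triplet $(\rho,\eta,\overline{W})$ under $\mathbb{N}_{x,r}$ (respectively the strong Markov property of Proposition~\ref{eq:strong:Markov:N} under $\mathbf{N}_{x,r}$) and the duality of Corollary~\ref{corollary:dualidadNblack}.

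Using the disintegration $\mathbb{N}_{x,r}^{\bullet}=\mathbb{N}_{x,r}\otimes \dd s\,\mathbbm{1}_{[0,\sigma]}$, the forward trajectory is handled first: for each fixed $s$, the quantities $F((\rho,\overline{W})^{s,\leftarrow})$, $\Psi(\rho_s,\eta_s,\overline{W}_s)$ and $\mathbbm{1}_{\{s\leq \sigma\}}$ are all $\overline{\mathcal{F}}_s$-measurable, so the Markov property gives that $(\rho_{s+\cdot},\eta_{s+\cdot},\overline{W}_{s+\cdot})$ has conditional law $\mathbb{P}^{\dagger}_{\rho_s,\eta_s,\overline{W}_s}$; by \eqref{equation:pmunu} the marginal law of $(\rho,\overline{W})$ under $\mathbb{P}_{\mu,\nu,\overline{\w}}$ does not depend on $\nu$, so the $G$-factor collapses to $\mathbb{E}^{\dagger}_{\rho_s,\overline{W}_s}[G(\rho,\overline{W})]$. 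The backward trajectory is then treated via duality: after the substitution $s=\sigma-u$, Corollary~\ref{corollary:dualidadNblack} allows us to replace $(\rho,\eta,\overline{W})$ by the time-reversed triplet $(\hat\rho_t,\hat\eta_t,\hat{\overline{W}}_t):=(\eta_{(\sigma-t)-},\rho_{(\sigma-t)-},\overline{W}_{\sigma-t})$. A short direct computation, valid for Lebesgue a.e.\ $u\in(0,\sigma)$ by the rcll property of $\rho$ and $\eta$, identifies $(\hat\rho,\hat{\overline{W}})^{\sigma-u,\leftarrow}_t=(\eta_{u+t},\overline{W}_{u+t})=(\eta,\overline{W})^{u,\rightarrow}_t$ and $(\hat\rho_{\sigma-u},\hat\eta_{\sigma-u},\hat{\overline{W}}_{\sigma-u})=(\eta_u,\rho_u,\overline{W}_u)$. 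A second application of the Markov property, now at time $u$, then shows that, conditional on $(\rho_u,\eta_u,\overline{W}_u)$, the process $(\eta,\overline{W})^{u,\rightarrow}$ is distributed as $(\eta,\overline{W})$ under $\mathbb{P}^{\dagger}_{\rho_u,\eta_u,\overline{W}_u}$, producing the factor $\mathbb{E}^{\dagger}_{\rho_u,\eta_u,\overline{W}_u}[F(\eta,\overline{W})]$.

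At this stage the integrand is a deterministic function of $(\rho_u,\eta_u,\overline{W}_u)$, but with the arguments of $\Psi$ and of the conditional expectations swapped with respect to the target. A final application of Corollary~\ref{corollary:dualidadNblack} via the change of variable $u=\sigma-s$ symmetrises $\rho\leftrightarrow\eta$ globally and, once the $\dd s$-integral is recast as $\mathbb{N}_{x,r}^{\bullet}$, delivers exactly the right-hand side of the target identity. The only delicate point throughout is the identification $(\hat\rho,\hat{\overline{W}})^{\sigma-u,\leftarrow}\equiv(\eta,\overline{W})^{u,\rightarrow}$, which mixes left limits of $\rho$ and $\eta$ with right values; since both processes are rcll, the discrepancy is supported on a countable set of $u$'s and is harmless under the Lebesgue integral. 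The case of $\mathbf{N}_{x,r}^{\bullet}$ is word-for-word the same, using Proposition~\ref{eq:strong:Markov:N} in place of the Markov property of the $\psi$-Lévy snake under $\mathbb{N}_{x,r}$.
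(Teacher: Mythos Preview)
Your proposal is correct and follows essentially the same approach as the paper: apply the Markov property to integrate out the forward piece, change variables $s\mapsfrom\sigma-s$ and invoke duality to turn the backward piece into a forward piece of $(\eta,\overline{W})$, apply the Markov property again, and undo the swap by a second change of variables plus duality. The paper likewise notes that the left-limit discrepancy in the duality step is harmless because the jump-times of $(\rho,\eta)$ form a Lebesgue-null set, and handles $\mathbf{N}_{x,r}^\bullet$ by substituting Proposition~\ref{eq:strong:Markov:N} and Corollary~\ref{corollary:dualidadNblack} for their $\mathbb{N}_{x,r}$ counterparts.
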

Observe that \eqref{eq:many:to:one:N:x} and  Lemma \ref{proposition:only:spineN*} paired with Proposition \ref{manytoone-lebesgue} completely characterise the law of the spinal decomposition at time $U$ of $(\rho , \overline{W})$ under $\mathbb{N}_{x,r}^\bullet$ and  $\mathbf{N}_{x,r}^\bullet$. 
\begin{proof} 
The proof is a straightforward consequence of the Markov property and the duality  \eqref{dualidad:etaRhoW}.   We start proving the result under $\mathbb{N}_{x,r}^\bullet$. We will then briefly explain how to extend the proof to the measure  $\mathbf{N}_{x,r}^\bullet$. Let  $\Phi_1$ be a  non-negative measurable function on $\mathcal{M}^2_f(\mathbb{R}_+) \times \mathcal{W}_{\overline{E}}$ and  $\Phi_2, \Phi_3$ two non-negative measurable functions on $\mathbb{D}(\mathbb{R}_+, \mathcal{M}_f(\mathbb{R}_+)\times \mathcal{W}_{\overline{E}})$. The statement of the proposition is equivalent to proving that 
 \begin{align}\label{equation:spinal1}
 &\mathbb{N}_{x,r}^\bullet \Big( \Phi_1\big(\rho_U, \eta_U,  \overline{W}_U \big) \Phi_2((\rho,\overline{W})^{U,\leftarrow}) \Phi_3((\rho,\overline{W})^{U,\rightarrow}) \Big) \nonumber \\
  &= \mathbb{N}_{x,r}^\bullet \Big( \Phi_1\big(\rho_U, \eta_U,  \overline{W}_U \big) \mathbb{E}_{\eta_{U} , \rho_U, \overline{W}_{U}}^\dagger\big[\Phi_2 (\eta,\overline{W}) \big] \mathbb{E}_{\rho_{U} , \overline{W}_{U}}^\dagger\big[\Phi_3(\rho,\overline{W})\big]   \Big) .
  \end{align}
In this direction, notice that  the left hand side in the last display is given by 
\begin{align*}
 \int_0^\infty \mathrm{d } s~  \mathbb{N}_{x,r} \Big(  \mathbbm{1}_{\{s\leq \sigma\} }\, \Phi_1\big(\rho_s , \eta_s,  \overline{W}_s \big) \Phi_2((\rho,\overline{W})^{s,\leftarrow}) \Phi_3((\rho,\overline{W})^{s,\rightarrow}) \Big).
 \end{align*} 
The simple Markov property under $\mathbb{N}_{x,r}$ at time $s$ followed by the  change of variable $s\mapsfrom \sigma-s$  gives that the  previous display writes 
$$  
\int_0^{\infty} \dd s~  \mathbb{N}_{x,r} \Big(\mathbbm{1}_{\{ s\leq \sigma\} }\, \Phi_1\big(\rho_{\sigma - s } , \eta_{\sigma - s},  \overline{W}_{\sigma - s} \big) \Phi_2((\rho,\overline{W})^{\sigma-s,\leftarrow}) \mathbb{E}_{\rho_{\sigma-s} , \overline{W}_{\sigma-s}}^\dagger\big[\Phi_3(\rho,\overline{W})\big]  \Big).  
$$
By an application of the duality identity \eqref{dualidad:etaRhoW}   the last display equals 
\begin{align*}
\int_0^\infty \mathrm{d } s~  \mathbb{N}_{x,r} \Big(  \mathbbm{1}_{\{ s\leq \sigma\}}\, \Phi_1\big(\eta_s , \rho_s,  \overline{W}_s \big)  \Phi_2\big((\eta_{s+t},\overline{W}_{s+t}:~t\geq 0)\big) \mathbb{E}_{\eta_{s} , \overline{W}_{s}}^\dagger\big[\Phi_3(\rho,\overline{W})\big] \Big), 
 \end{align*} 
and we can now use again the simple Markov property    to get by analogous arguments that the previous expression is given by  
\begin{equation*}
    \mathbb{N}_{x,r} \Big(  \int_0^\sigma \mathrm{d} s~\Phi_1\big(\eta_s , \rho_s,  \overline{W}_s \big) 
 \mathbb{E}_{\rho_{s} , \eta_s, \overline{W}_{s}}^\dagger\big[\Phi_2\big(\eta,\overline{W}\big)\big] \mathbb{E}_{\eta_{s} , \overline{W}_{s}}^\dagger\big[\Phi_3(\rho,\overline{W})\big]  \Big).
\end{equation*}
When applying \eqref{dualidad:etaRhoW} we used the fact that, since $(\rho, \eta)$ under $\mathbb{N}_{x,r}$ is a rcll process,  the set of its jump-times has null Lebesgue measure $\mathbb{N}_{x,r}$-a.e.  Finally,  the change of variable $s \mapsfrom \sigma - s$ and the duality property applied as before yields \eqref{equation:spinal1}, which proves the desired result under $\mathbb{N}_{x,r}^\bullet$. The same arguments apply under $\mathbf{N}_{x,r}^\bullet$, making use of Corollary \ref{corollary:dualidadNblack} and the  Markov property \eqref{equation:strongMarkov} instead. We skip the details. 
\end{proof}

\section{The excursion theory}\label{section:excursiontheory}

In this section we present the main results of the excursion theory. It is divided in tree sections. We start by recalling in Section \ref{section:additivefuncionals} some properties of the local time at $x$ of $(\widehat{W}_t: t \geq 0)$ that will be used frequently in the upcoming sections. Section \ref{subsection:exitformula} is devoted to the proofs of the exit formulas , and finally, in Section \ref{section:Poisson} we prove that the excursion process of the Lévy snake is a Poisson point measure.

\subsection{\texorpdfstring{The local time at $x$}{}} \label{section:additivefuncionals}
In this section,  we recall some important properties of an additive functional $(A_t)_{t\geq 0}$ of the Lévy snake  called  the local time at $x$ of $(\widehat{W}_t:~t\geq 0)$. This process was recently introduced in \cite{2024structure} and plays a role analogous to that of the classical local time for $\mathbb{R}_+$-indexed processes.  Roughly speaking, at each time $t \geq 0$, the variable $A_t$  measures how much time  $\widehat{W}$ has spent at $x$ up to time $t$. Let us stress that, although closely related (see \eqref{equation:soporteAditiva} below), the processes $(A_t)_{t \geq 0}$ and $(\Lambda_t)_{t \geq 0}$ are drastically different. Recall that vaguely,  for each fixed $t \geq 0$, the path $\Lambda_t$  codes the local time at $x$ of  the Markov process $\Pi$ along the ancestral path of $p_{H(\rho)}(t)$ in the tree $\mathcal{T}_{H(\rho)}$. 
\par  
For each $z \geq  0$ and $\overline{\w}\in \mathcal{W}_{\overline{E}}$, we set:
\begin{equation*}
    \tau_z(\overline{\w}) := \inf \{  h \geq 0 :~\overline{\w}(h)=(x,z)\},
\end{equation*}
with the usual convention $\inf \emptyset=\infty$. Under $\mathbb{P}_{0, x,0}$ and $\mathbb{N}_{x,0}$, the continuous, non-decreasing process $A = (A_t)_{t \in \mathbb{R}_+}$  is  defined by the relation 
\begin{equation}
 \label{equation:aproximacionA}
  A_t = \lim_{\varepsilon \downarrow 0} \frac{1}{\varepsilon}  \int_0^t \dd u  \int_{\mathbb{R}_+} \dd z \,  \mathbbm{1}_{\{ \tau_z(\overline{W}_u ) <  H(\rho_u) < \tau_z(\overline{W}_u) + \varepsilon \}},\quad t\geq 0, 
\end{equation} 
 the  convergence  holding uniformly on compact intervals  in measure under $\mathbb{P}_{0,x,0}$ and $\mathbb{N}_{x,0}( \, \cdot \, \cap \{ \sigma > \delta \} )$  for every   $\delta >0$. As a consequence, we can find a decreasing sequence
$(\varepsilon_k^\prime)_{k\geq 0}$ of positive real numbers converging to $0$ along which the convergence \eqref{equation:aproximacionA} holds uniformly in compact intervals under   $\mathbb{P}_{0,x,0}$ and $\mathbb{N}_{x,0}$ outside of a 
 negligible set. It will be convenient to define $A_t(\upvarrho,\overline{\omega})$ for every $(\upvarrho , \overline{\omega})\in \mathbb{D}(\mathbb{R}_+ , \mathcal{M}_f(\mathbb{R}_+) \times \mathcal{W}_{\overline{E}} )$ at $t \geq 0$ by setting
\begin{equation}
 \label{equation:aproximacionA:def}A_t(\upvarrho,\overline{\omega})= \liminf_{k\to\infty}  \frac{1}{\varepsilon_k^\prime}\int_0^{t}\dd u  \int_{\mathbb{R}_+} \dd z \,  \mathbbm{1}_{\{ \tau_z(\overline{\omega}_u ) < H(\upvarrho_u) < \tau_z(\overline{\omega}_u) + \varepsilon_k \}}.\end{equation}
This definition is consistent with \eqref{equation:aproximacionA} under $\mathbb{P}_{0,x,0}$ and $\mathbb{N}_{x,0}$ up to an negligible set.  We shall now argue that one can make use of this process to index the excursions away from $x$, and that this indexing is compatible with the order induced by time $t \geq 0$.   To this end, we recall that Theorem 4.19 in \cite{2024structure} states that the support  of the Stieltjes measure $\dd A$,  denoted by $\text{supp } \dd A$, satisfies:
\begin{equation}\label{equation:soporteAditiva}
     \text{supp } \dd A = [0,\sigma] \setminus \mathcal{C}^*\subset  \{ t \in [0,\sigma] : \widehat{W}_t = x \}, \quad \mathbb{P}_{0,x,0 } \text{ and } \mathbb{N}_{x,0} \text{ a.e.}~,
\end{equation}
where we write $\mathcal{C}^*$ for the subset of  all times $t \geq 0$ at 
 which  $\widehat{\Lambda}$ is constant on  some open neighborhood  of~$t$. 
The following decomposition in excursions of the functional $A$ will be used frequently in our computations. Under $\mathbb{P}_{0,x,0}$, let $(\rho^i, \overline{W}^i)_{i \in \mathbb{N}}$ be the subtrajectories associated to the connected components of $\{ t \geq 0 :  H(\rho_t) >  0  \}$. Then,
\begin{equation} \label{equation:aditiva1}
    A_\sigma = \sum_{i \in \mathbb{N}} A_{\sigma_i}(\rho^i , \overline{W}^i), 
     \quad \mathbb{P}_{0,x,0}  \text{-a.s.}
\end{equation}
We refer to the end of Section 4.2 in \cite{2024structure} for other  related results. 
 Recalling that by Lemma \ref{lemma:contanteExcursion}, the variables $\widehat{\Lambda}_{\mathfrak{g}(u)}$, $u\in \mathcal{D}$ under $\mathbb{N}_{x,0}$ and $\mathbb{\mathbb{P}}_{0,x,0}$ are a.e.  distinct,  it follows 
from  \eqref{equation:soporteAditiva} and the continuity of the map $t\mapsto \widehat{\Lambda}_t$ that:
\begin{cor} \label{cor:A:distinct}
Under $\mathbb{P}_{0,x,0}$ and $\mathbb{N}_{x,0}$,  all the values $A_{\mathfrak{g}(u)}$, $u\in \mathcal{D}$, are distinct. 
\end{cor}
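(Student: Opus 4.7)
\textbf{Plan for proving Corollary \ref{cor:A:distinct}.} The strategy is a short contradiction argument that directly implements the hint given just before the statement: combine Lemma \ref{lemma:contanteExcursion} with the structure of the support of $\dd A$ described in \eqref{equation:soporteAditiva}. I would work on the full-measure event on which both Lemma \ref{lemma:contanteExcursion} holds (so that $\widehat{\Lambda}_u \neq \widehat{\Lambda}_{u'}$ for distinct $u, u' \in \mathcal{D}$) and the equality \eqref{equation:soporteAditiva} holds.

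Assume for contradiction that there exist $u \neq u'$ in $\mathcal{D}$ with $A_{\mathfrak{g}(u)} = A_{\mathfrak{g}(u')}$; without loss of generality take $\mathfrak{g}(u) < \mathfrak{g}(u')$. Since $A$ is non-decreasing, this equality forces the Stieltjes measure $\dd A$ to vanish on $[\mathfrak{g}(u), \mathfrak{g}(u')]$, so the open interval $I := (\mathfrak{g}(u), \mathfrak{g}(u'))$ is disjoint from $\mathrm{supp}\, \dd A$. By \eqref{equation:soporteAditiva} we then have $I \subset \mathcal{C}^*$, i.e.\ every $t \in I$ admits an open neighborhood on which $\widehat{\Lambda}$ is constant.

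Next I would promote this local constancy to global constancy of $\widehat{\Lambda}$ on $I$ by a standard connectedness argument: the equivalence classes of $I$ under ``$s \sim t$ iff $\widehat{\Lambda}$ is constant on an interval containing both'' are open and partition the connected set $I$, hence there is a single class. By continuity of $t \mapsto \widehat{\Lambda}_t$, this constant value extends to the closed interval $[\mathfrak{g}(u), \mathfrak{g}(u')]$, and in particular $\widehat{\Lambda}_{\mathfrak{g}(u)} = \widehat{\Lambda}_{\mathfrak{g}(u')}$. Since $\widehat{\Lambda}_{\mathfrak{g}(u)} = \widehat{\Lambda}_u$ and $\widehat{\Lambda}_{\mathfrak{g}(u')} = \widehat{\Lambda}_{u'}$ by definition of the tree-indexed process, this contradicts the second assertion of Lemma \ref{lemma:contanteExcursion}.

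The only non-routine point is the passage from ``locally constant'' to ``globally constant'' on $I$, but this is a standard topological fact on an interval. Everything else is a direct assembly of the cited results, which explains why the author introduces the corollary as an immediate consequence. Since $\mathcal{D}$ is countable and the statements of Lemma \ref{lemma:contanteExcursion} and \eqref{equation:soporteAditiva} hold outside a single negligible set, no additional measure-theoretic care is needed. The same argument applies verbatim under both $\mathbb{P}_{0,x,0}$ and $\mathbb{N}_{x,0}$.
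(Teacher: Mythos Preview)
Your proposal is correct and follows essentially the same approach as the paper: the paper's one-sentence argument preceding the corollary invokes exactly Lemma \ref{lemma:contanteExcursion}, the support identity \eqref{equation:soporteAditiva}, and the continuity of $t\mapsto \widehat{\Lambda}_t$, and you have simply spelled out the connectedness step that makes ``locally constant on $(\mathfrak{g}(u),\mathfrak{g}(u'))$'' into ``globally constant'' there.
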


 It might be worth mentioning that,   in stark contrast with $(A_{\mathfrak{g}(u)}, u \in \mathcal{D})$,  the  ordering induced by $(\widehat{\Lambda}_{\mathfrak{g}(u)}:u \in \mathcal{D})$ in $\mathcal{D}$ is not compatible with the order induced by time since $(\widehat{\Lambda}_t)_{t \geq 0}$ is not monotone.   \par 

We conclude this short section establishing a  spinal decomposition in the same vein as in Section \ref{section:MainSpinalDecomp}  but at a point sampled according  to the measure $\dd A$. This description will be used when establishing 
the relationship between the  $(\psi, \overline{\Pi})$-Lévy snake under $\mathbb{N}_{x,0}, \mathbb{P}_{0,x,0}$, and the measures $(\mathbf{N}_{x,r}$, $r\geq 0)$ in Section \ref{subsection:exitformula}.  To this end, 
we introduce the pointed measure 
\begin{equation*}
    \mathbb{N}^{\bullet, A}_{x,r}:= \mathbb{N}_{x,r}(\dd \rho ,   \dd \overline{W})  \dd A_s  
\end{equation*}
 in $\mathbb{D}(\mathbb{R}_+, \mathcal{M}_f^2(\mathbb{R}_+)\times \mathcal{W}_{\overline{E}})  \times \mathbb{R}_+$. 
With a slight abuse of notation, we extend the definition of $\tau_z$, $z\geq 0$, under $\Pi_{x,0}$ by setting $\tau_z(\overline{\xi}):=\inf\{h\geq 0:~\overline{\xi}_h=(x,z)\}$, which coincides with the first time at which  the local time $\mathcal{L}$ takes the value $z$. When there is no risk of  confusion we will drop the dependence on $\overline{\xi}$, and to simplify notation we write $\bar{\xi}^{\tau_z}:=\big(\bar{\xi}_t:~  t\in [0,\tau_z]\big)$.

\begin{prop} \label{manytoone-lebesgue-Addi}
Fix $r \geq 0$. For every non-negative measurable function $\Phi$ on $\mathcal{M}^2_f(\mathbb{R}_+) \times \mathcal{W}_{\overline{E}}$, we have:
\begin{align}\label{eq:prop:many:A}
     & \mathbb{N}_{x,r}^{ \bullet, A } \big(  \Phi\big( \rho_U,\eta_U, \overline{W}_U\big)  \big)  
  =   E^0 \otimes \Pi_{x,r} \Big(   \int_r^\infty \dd a \, \exp\big(- \alpha \tau_{a}\big)\Phi( \mathcal{J}_{\tau_{a}}, \widecheck{\mathcal{J}}_{\tau_{a}} , \bar{\xi}^{\tau_{a}} )  \Big).
\end{align}
Furthermore, under $\mathbb{N}_{x,r}^{ \bullet, A }$, conditionally on $(\rho_U,\eta_U, \overline{W}_U)$, the
processes $(\rho,W)^{U,\leftarrow}$ and  $(\rho,W)^{U,\rightarrow}$ are independent and their  conditional laws are as follows:
\begin{itemize}
\item the process $(\rho,W)^{U,\leftarrow}$ is distributed as  $(\eta, \overline{W})$ under $\mathbb{P}_{\eta_U, \rho_U,\overline{W}_U}^\dagger$; 
\item the process $(\rho,W)^{U,\rightarrow}$ is distributed as  $(\rho, \overline{W})$ under $\mathbb{P}_{\rho_U,\overline{W}_U}^\dagger$.
\end{itemize}
\end{prop}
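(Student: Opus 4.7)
The plan is to derive both the marginal formula and the spinal decomposition from their Lebesgue-sampling counterparts, namely \eqref{eq:many:to:one:N:x} and Proposition \ref{manytoone-lebesgue}, by exploiting the approximation \eqref{equation:aproximacionA:def} of the additive functional $A$.

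For \eqref{eq:prop:many:A}, I would begin from
\[
 \mathbb{N}_{x,r}^{\bullet, A}\bigl(\Phi(\rho_U, \eta_U, \overline{W}_U)\bigr) = \mathbb{N}_{x,r}\Big(\int_0^\sigma \Phi(\rho_s, \eta_s, \overline{W}_s)\, \dd A_s\Big)
\]
and replace the Stieltjes measure $\dd A_s$ by its $\varepsilon_k^\prime$-approximation from \eqref{equation:aproximacionA:def}, which reduces the right-hand side to $(\varepsilon_k^\prime)^{-1} \int_0^\infty \dd z\, \mathbb{N}_{x,r}^\bullet\bigl(\Phi \cdot \mathbbm{1}_{\{\tau_z(\overline{W}_U) < H(\rho_U) < \tau_z(\overline{W}_U) + \varepsilon_k^\prime\}}\bigr)$. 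For each fixed $(z, \varepsilon_k^\prime)$ I may now invoke the spine formula \eqref{eq:many:to:one:N:x} and swap the $a$- and $z$-integrals by Fubini. Using that under $\Pi_{x,r}$ the map $z \mapsto \tau_z$ is the right inverse of $\mathcal{L}$, the $z$-integral collapses via $\int_0^\infty \dd z\, \mathbbm{1}_{\{a - \varepsilon_k^\prime < \tau_z(\overline{\xi}) \leq a\}} = \mathcal{L}_a - \mathcal{L}_{a - \varepsilon_k^\prime}$. Passing to the limit $k \to \infty$ converts this into an integral of $e^{-\alpha a}\Phi(\mathcal{J}_a, \widecheck{\mathcal{J}}_a, \overline{\xi}^{a})$ against $\dd \mathcal{L}_a$ under $E^0 \otimes \Pi_{x,r}$, and the change of variable $a = \tau_b$ (so $\dd \mathcal{L}_{\tau_b} = \dd b$, with range $b \in [r, \infty)$ since $\mathcal{L}_0 = r$ under $\Pi_{x,r}$) produces exactly the right-hand side of \eqref{eq:prop:many:A}.

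For the spinal decomposition, the argument parallels the proof of Proposition \ref{manytoone-lebesgue}. At any deterministic time $s$ the strong Markov property \eqref{equation:strongMarkov} factors out the future $\Phi_3((\rho,\overline{W})^{s,\rightarrow})$ as $\mathbb{E}^\dagger_{\rho_s, \overline{W}_s}[\Phi_3(\rho, \overline{W})]$, and the duality identity \eqref{dualidad:etaRhoW}, which extends to $\mathbb{N}_{x,r}$ via the disintegration $\mathbb{N}_{x,r}(\dd \rho, \dd \overline{W}) = N(\dd \rho)\, Q^{H(\rho)}_{x,r}(\dd \overline{W})$, combined with a further strong Markov step, lets us factor out the past. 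The new ingredient compared to Proposition \ref{manytoone-lebesgue} is that the Stieltjes measure $\dd A$ must itself be time-reversal invariant: this follows immediately from \eqref{equation:aproximacionA:def} by the substitution $v = \sigma - u$, which yields $A_t \circ (\mathrm{reversal}) = A_\sigma - A_{\sigma - t}$ and ensures that $\dd A_s$ is preserved under the duality.

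The main technical obstacle is the limit interchange in the first step, since $\Phi(\rho_s, \eta_s, \overline{W}_s)$ is only rcll in $s$ and the weak convergence $(\varepsilon_k^\prime)^{-1}(\mathcal{L}_a - \mathcal{L}_{a - \varepsilon_k^\prime})\, \dd a \rightharpoonup \dd \mathcal{L}_a$ does not immediately pass through an arbitrary integrand. I would handle this by first establishing \eqref{eq:prop:many:A} on a class of bounded continuous cylindrical functionals on which dominated convergence applies, and then extending to arbitrary non-negative measurable $\Phi$ via the monotone class theorem.
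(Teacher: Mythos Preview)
Your proposal is correct and, for the spinal decomposition, follows the paper's proof essentially verbatim: apply the Markov property to factor out the future, extend the duality \eqref{dualidad:etaRhoW} to the quadruple $(\rho,\eta,\overline{W},A)$ via the approximation \eqref{equation:aproximacionA}, perform the change of variable $s\mapsfrom\sigma-s$, and apply the Markov property once more.

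The genuine difference lies in the marginal formula \eqref{eq:prop:many:A}. The paper does not prove it here at all: it simply quotes it as \cite[Lemma~4.11]{2024structure} from the companion paper. Your route---pass from $\dd A_s$ to its $\varepsilon_k'$-regularisation, apply the Lebesgue spine formula \eqref{eq:many:to:one:N:x}, collapse the $z$-integral to $\mathcal{L}_a-\mathcal{L}_{a-\varepsilon_k'}$ via the inverse relation between $\tau_\cdot$ and $\mathcal{L}$, and recognise the limit as $\int e^{-\alpha\tau_b}\Phi(\mathcal{J}_{\tau_b},\widecheck{\mathcal{J}}_{\tau_b},\overline{\xi}^{\tau_b})\,\dd b$---is exactly how that external lemma is established, so you are reconstructing the missing piece rather than taking a different path. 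The approach buys self-containment; the paper's citation buys brevity. Your acknowledgement that the limit interchange requires a continuity/monotone-class reduction is on point and is the only place where care is genuinely needed.
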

In particular, the only difference with Proposition \ref{manytoone-lebesgue} lies in  the distribution of $ (\rho_U,\eta_U, \overline{W}_U)$. 

\begin{proof}
The proof is similar to the one of Proposition \ref{manytoone-lebesgue} so we will be brief. First, by \cite[Lemma 4.11]{2024structure}, we already know that:
\begin{equation}\label{eq:1:prop:spinal:A}
     \mathbb{N}_{x,r} \Big( \int_0^{\sigma} \dd A_s \, \Phi \big( \rho_s, \eta_s , \overline{W}_s  \big) \Big)   
    =   
    E^0 \otimes \Pi_{x,r} \Big( \int_r^\infty \dd a \, \exp\big(- \alpha \tau_a\big) \Phi \big( \mathcal{J}_{\tau_a} , \widecheck{\mathcal{J}}_{\tau_a}  , \bar{\xi}^{\tau_{a}}   \big) \Big).
\end{equation}
Furthermore, if  $\Phi_2,\Phi_3$ are non-negative measurable functions on  $\mathbb{D}(\mathbb{R}_+, \mathcal{M}_f(\mathbb{R}_+)\times \mathcal{W}_{\overline{E}})$, by strong Markov property we get that:
\begin{align}\label{eq:2:prop:spinal:A}
   & \mathbb{N}_{x,r} \Big( \int_0^{\sigma} \dd A_s \, \Phi\big(\rho_s, \eta_s,  \overline{W}_s \big) \Phi_2((\rho,\overline{W})^{s,\leftarrow}) \Phi_3((\rho,\overline{W})^{s,\rightarrow}) \Big) \nonumber \\
 &=  \mathbb{N}_{x,r} \Big( \int_0^{\sigma} \dd A_s \, \Phi\big(\rho_s , \eta_s,  \overline{W}_s \big) \Phi_2((\rho,\overline{W})^{s,\leftarrow}) \mathbb{E}_{\rho_s , \overline{W}_s}^\dagger\big[\Phi_3(\rho,\overline{W})\big] \Big).
 \end{align} 
We can now use the duality identity \eqref{dualidad:etaRhoW} paired  with \eqref{equation:aproximacionA} to infer that
\begin{equation*}
     \Big( \big(\rho_t, \eta_t, \overline{W}_t, A_t\big) : t \in [0,\sigma] \Big) \overset{(d)}{=} \Big( \big(\eta_{(\sigma -t )-}, \rho_{(\sigma -t )-} , \overline{W}_{\sigma -t },      A_{\sigma} - A_{\sigma-t} \big) :t \in [0,\sigma] \Big), \quad \text{ under } \mathbb{N}_{x,r}.
\end{equation*}
Therefore,  as in the proof of Proposition  \ref{manytoone-lebesgue}, by performing the change of variable $s \mapsfrom \sigma-s$ and applying again the strong Markov property, we obtain that \eqref{eq:2:prop:spinal:A} equals:
$$  \mathbb{N}_{x,r} \Big( \int_0^{\sigma} \dd A_s \, \Phi\big(\rho_s , \eta_s,  \overline{W}_s \big)  \mathbb{E}_{\eta_s ,\rho_s, \overline{W}_s}^\dagger\big[\Phi_2(\eta,\overline{W})\big] \mathbb{E}_{\rho_s , \overline{W}_s}^\dagger\big[\Phi_3(\rho,\overline{W})\big] \Big).$$
The statement of the proposition  follows by combining the previous display with \eqref{eq:1:prop:spinal:A}.
\end{proof}

\subsection{The exit formula}\label{subsection:exitformula}

In this section we establish the connection between the family of measures $(\mathbf{N}_{x,r})_{r\geq 0}$ introduced in Section~\ref{section:excursionmeasure} and the $(\psi, \overline{\Pi})$-Lévy snake, through the  \textit{exit formula} stated in Theorem \ref{theorem:exit}. Let us first introduce some notation.  Under $\mathbb{N}_{x,0}$ and $\mathbb{P}_{0,x,0}$, for each $u  \in \mathcal{D}$ we write $(\mathrm{T}^u_t)_{t \geq 0}$ for the right-inverse of the functional 
\begin{equation}\label{equation:trimming}
    \int_0^t \dd s \, \mathbbm{1}_{  s \,  \in \,  (\mathfrak{g}(u),\mathfrak{d}(u) )  }, \quad t \geq 0. 
\end{equation}
It is plain that the time-changed process $\texttt{T}_u (\rho,\overline{W}) := (\rho_{\mathrm{T}_t^{u}}, W_{\mathrm{T}_t^{u}})_{t \geq 0}$ is an element of $\overline{\mathcal{S}}_x$ which roughly speaking, is obtained from gluing the two pieces of path $( (\rho_t, \overline{W}_t) : 0 \leq t < \mathfrak{g}(u))$ and $( (\rho_t, \overline{W}_t) :  \mathfrak{d}(u) \leq t)$  so that the subtrajectory $(\rho^u, \overline{W}^u)$ has been ``trimmed" from   $(\rho, \overline{W})$. Recall that since the image under $p_{H(\rho)}$ of $[\mathfrak{g}(u), \mathfrak{d}(u)]$ is the subtree stemming from $u$,  in terms of labelled trees, the subtrajectory $(\rho^u, \overline{W}^u)$ can be  thought of as  coding the labels on $\{v \in \mathcal{T}_{H(\rho)} : u \preceq v  \}$ and therefore   $\texttt{T}_u (\rho,\overline{W})$ as coding  the labels on its complement $\mathcal{T}_{H(\rho)} \setminus \{v \in \mathcal{T}_{H(\rho)} : u \preceq v  \}$.
\begin{theo}\label{theorem:exit}
For every  non-negative measurable functions $F,G$ on $\mathbb{D}(\mathbb{R}_+, \mathcal{M}_f(\mathbb{R}_+)\times \mathcal{W}_{\overline{E}})\times \mathbb{R}$ and 
 $\mathbb{D}(\mathbb{R}_+, \mathcal{M}_f(\mathbb{R}_+)\times \mathcal{W}_{\overline{E}})$, we have 
    \begin{equation}\label{eq:theorem:exit}
    \mathbb{N}_{x,0}\Big(  \sum_{u \in \mathcal{D}} F\big( \,  \emph{\texttt{T}}_u (\rho,\overline{W}), \mathfrak{g}(u) \big) \cdot G(\rho^u , \overline{W}^u)  \Big)  = 
    \mathbb{N}_{x,0}\Big( \int_0^\sigma \mathrm{d} A_s  ~F(\rho,\overline{W},s ) \cdot \mathbf{N}_{x, \widehat{\Lambda}_s}\big(G(\rho,\overline{W})\big)\Big).
\end{equation}
\end{theo}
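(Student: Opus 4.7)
My plan is to establish the exit formula by rewriting both sides as integrals against a common ``spine'' decomposition, one that can be made explicit via Proposition \ref{manytoone-lebesgue-Addi}. At a high level, the heuristic is that each debut $u\in\mathcal{D}$ is singled out by its ancestral path $\llbracket 0,u\rrbracket$, which is precisely the information coded by $(\rho_{\mathfrak{g}(u)},\eta_{\mathfrak{g}(u)},\overline{W}_{\mathfrak{g}(u)})$, and this ancestral path ends at a point with label $x$; hence summing over debuts should be dual to integrating along times where $\widehat{W}=x$, i.e.\ against $\dd A$, up to gluing the subtrajectory stemming from $u$.

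For the right-hand side, I would first apply Proposition \ref{manytoone-lebesgue-Addi}: under $\mathbb{N}_{x,0}^{\bullet,A}$, the triplet $(\rho_U,\eta_U,\overline{W}_U)$ is distributed according to $E^{0}\otimes\Pi_{x,0}\bigl(\int_{0}^{\infty}\dd a\,e^{-\alpha\tau_a}\,\cdot\,\bigr)$ evaluated at $(\mathcal{J}_{\tau_a},\widecheck{\mathcal{J}}_{\tau_a},\bar\xi^{\tau_a})$, while the pre-$U$ and post-$U$ processes are conditionally independent with laws $\mathbb{P}_{\eta_U,\rho_U,\overline{W}_U}^\dagger$ and $\mathbb{P}_{\rho_U,\overline{W}_U}^\dagger$. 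Since $\widehat{\Lambda}_U=\mathcal{L}_{\tau_a}=a$, the factor $\mathbf{N}_{x,\widehat{\Lambda}_U}(G)=\mathbf{N}_{x,a}(G)$ inserts an independent Lévy snake whose spatial motion is an $\mathcal{N}_{x,a}$-excursion. Reading $F(\rho,\overline{W},s)$ on the concatenation of the pre-spine, the spine and the post-spine yields an explicit formula for the RHS in terms of the auxiliary subordinator $(U^{(1)},U^{(2)})$, the spatial motion $\overline{\xi}$ under $\Pi_{x,0}$, and two independent snake excursions starting from the spine endpoints.

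For the left-hand side, the key is a Markov-type identification: conditionally on $\bigl(\texttt{T}_u(\rho,\overline{W}),\mathfrak{g}(u),(\rho_{\mathfrak{g}(u)},\overline{W}_{\mathfrak{g}(u)})\bigr)$, the subtrajectory $(\rho^u,\overline{W}^u)$ is distributed as $\mathbf{N}_{x,\widehat{\Lambda}_u}$. Granted this, summing over $u\in\mathcal{D}$ and applying Bismut-type identities for the Lévy snake (valid under $\mathbb{N}_{x,0}$) decomposes the contribution of each debut into: the ancestral path to $u$, whose coding $(\rho_{\mathfrak{g}(u)},\eta_{\mathfrak{g}(u)},\overline{W}_{\mathfrak{g}(u)})$ has precisely the spinal law coming from $\int_{0}^{\infty}\dd a\, e^{-\alpha\tau_a}$ against $E^{0}\otimes\Pi_{x,0}$ (note $\widehat{W}_{\mathfrak{g}(u)}=x$ forces the spine to end at $x$); the two dual halves of $\texttt{T}_u(\rho,\overline{W})$, whose laws are the $\mathbb{P}^\dagger$-processes appearing in Proposition \ref{manytoone-lebesgue-Addi}; and the independent subtrajectory $(\rho^u,\overline{W}^u)$ with law $\mathbf{N}_{x,\widehat{\Lambda}_u}$. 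Matching piece by piece with the expansion obtained for the RHS gives the equality.

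The main obstacle is the conditional identification of the subtrajectory $(\rho^u,\overline{W}^u)$ as distributed according to $\mathbf{N}_{x,\widehat{\Lambda}_u}$, since $\mathfrak{g}(u)$ is not a priori an $(\overline{\mathcal{F}}_{t+})$-stopping time for an individual $u\in\mathcal{D}$. To bypass this, I would use an $\varepsilon$-discretization: for each $\varepsilon>0$, enumerate the successive $(\overline{\mathcal{F}}_{t+})$-stopping times at which $\widehat{W}$ first returns to $x$ after having exited the $\varepsilon$-ball around $x$, apply the strong Markov property (Proposition \ref{eq:strong:Markov:N}) at each such time to compute the conditional law of the subsequent snake evolution via the finite-dimensional characterisation \eqref{equation:consistency} of $\mathbf{Q}^{H(\rho)}_{x,\widehat{\Lambda}_\cdot}$, and then pass to the limit $\varepsilon\downarrow0$, using Corollary \ref{cor:A:distinct} to ensure that distinct debuts are not collapsed in the limit and that the approximating stopping times exhaust $\mathcal{D}$. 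The outer regularity of $\mathbf{N}_{x,r}$ against $\{\uptau(W_s)\wedge H(\rho_s)>\varepsilon\}$ justifies the interchange of limit and expectation, and matching the resulting expression with the RHS yields \eqref{eq:theorem:exit}.
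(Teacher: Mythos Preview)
Your proposal has a genuine gap at the crucial step. The ``Markov-type identification'' you assume --- that conditionally on $\texttt{T}_u(\rho,\overline{W})$ and $\mathfrak{g}(u)$, the subtrajectory $(\rho^u,\overline{W}^u)$ has law $\mathbf{N}_{x,\widehat{\Lambda}_u}$ --- is essentially a reformulation of the exit formula itself, so invoking it is circular. Your $\varepsilon$-discretization does not resolve this. The successive return times $T^\varepsilon_k$ to $x$ after an $\varepsilon$-exit are stopping times, but the strong Markov property at $T^\varepsilon_k$ yields the law $\mathbb{P}^\dagger_{\rho_{T^\varepsilon_k},\overline{W}_{T^\varepsilon_k}}$ of the \emph{post-$T^\varepsilon_k$} process, not the law of a subtrajectory $\mathscr{T}_{\mathfrak{g}(u),\mathfrak{d}(u)}(\rho,\overline{W})$ stemming from a debut. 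Indeed, $T^\varepsilon_k$ typically lands at a boundary point of some $\mathcal{C}_u$ \emph{other} than $u$ (which need not even be a debut), and in any case $T^\varepsilon_k>\mathfrak{g}(u)$, so part of the subtree stemming from $u$ has already been explored. There is no mechanism in your outline to recover $(\rho^u,\overline{W}^u)$ from the post-$T^\varepsilon_k$ evolution, nor to sum over the uncountably many boundary points of each $\mathcal{C}_u$ in a way that singles out the debut.

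The paper avoids stopping times entirely via a weighting trick: replacing $F,G$ by $F/\sigma(\mathrm{tr}(\omega))$ and $G/\sigma(\mathrm{tr}(\omega))$, the sum $\sum_{u\in\mathcal{D}}$ on the LHS becomes $\int_0^{\sigma}\dd s$ over times $s$ with $p_{H(\rho)}(s)\in\mathcal{C}^\circ_{u(s)}$ for a unique debut $u(s)$ (using Lemma~\ref{lemma:connectedcomponents} and \eqref{equation:Lebx}). One then applies the Lebesgue-pointed spinal decomposition (Proposition~\ref{manytoone-lebesgue} under $\mathbb{N}_{x,0}^\bullet$, formula~\eqref{eq:many:to:one:N:x}) and splits the spine $(\rho_s,\eta_s,\overline{W}_s)$ at height $\varsigma_x(W_s)=\sup\{h\le\zeta_s:W_s(h)=x\}$: the lower piece encodes $\texttt{T}_{u(s)}(\rho,\overline{W})$ and the upper piece encodes $(\rho^{u(s)},\overline{W}^{u(s)})$. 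The classical excursion decomposition of $\xi$ under $\Pi_{x,0}$ (compensation formula for \eqref{eq:mea:xi=*}) then factorises the spine law, matching Proposition~\ref{manytoone-lebesgue-Addi} on the RHS with Lemma~\ref{proposition:only:spineN*} for the $\mathbf{N}_{x,\widehat{\Lambda}_s}$-piece. This is the missing idea: weighting by the excursion duration converts the intractable sum over debuts into a tractable Lebesgue integral where the debut is read off the ancestral line.
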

This formula shares strong similarities with the so-called exit formula of Maisonneuve \cite[Theorem 4.1]{Maisonneuve} and, with a slight abuse of terminology, we shall refer to it in this way. Most of this section is devoted to the proof of Theorem \ref{theorem:exit}. We will then derive a version of this result holding under the probability measure $\mathbb{P}_{0,x,0}$ that will be needed in Section \ref{section:Poisson}.  It will be used at multiple instances in our arguments the fact that by   \ref{Asssumption_3} and \eqref{eq:many:to:one:N:x}, it holds that 
\begin{equation}\label{equation:Lebx}
    \text{Leb}\big( \{ s \geq 0 : \widehat{W}_s = x \} \big) = 0, \quad \mathbb{N}_{x,0} - \text{a.e.}
\end{equation}
\begin{proof}
The proof can be divided in two main steps. We first start by rewriting the left and right-hand side of \eqref{eq:theorem:exit} in terms of three  spinal decompositions. We will then check that the two sides coincide by  making use of  \eqref{eq:many:to:one:N:x}, and of Propositions \ref{manytoone-lebesgue} and  \ref{manytoone-lebesgue-Addi}. Let us  start with the former task. In this direction, remark that by a simple continuity argument, under $\mathbb{N}_{x,0}$ and for every $u\in \mathcal{D}$, we have $0 < \sigma(\mathrm{tr}(W^u)) < \infty$ a.e., and that 
 under $\mathbf{N}_{x,0}$, we have $0 < \sigma(\mathrm{tr}(W)) < \infty$ a.e.
Therefore, by replacing the functions $F(\upvarrho, \overline{\omega}), G(\upvarrho, \overline{\omega})$ by $F(\upvarrho, \overline{\omega} ) / \sigma( \mathrm{tr}(\omega) )$ and $G(\upvarrho, \overline{\omega} ) / \sigma( \mathrm{tr}(\omega) )$ respectively, we get that to obtain \eqref{eq:theorem:exit} it suffices to prove  
    \begin{equation}\label{proof:theorem:exit:1:formula}
    \mathbb{N}_{x,0}\Big(  \sum_{u \in \mathcal{D}} F\big( \, \texttt{T}_u (\rho,\overline{W}), \mathfrak{g}(u)\big) \sigma(\mathrm{tr}(W^u))G\big(\rho^u , \overline{W}^u\big)  \Big)  = 
    \mathbb{N}_{x,0}\Big( \int_0^\sigma \mathrm{d} A_s  ~F\big(\rho,\overline{W},s \big) \cdot \mathbf{N}_{x, \widehat{\Lambda}_s}\big(\sigma(\mathrm{tr}(W))G\big(\rho,\overline{W})\big)\Big). 
\end{equation}
Let us now rewrite  the right-hand side of   \eqref{proof:theorem:exit:1:formula}.  First, recalling  the definition of $\uptau$ given in \eqref{equation:taudef} and the form of the time change $\Gamma$ appearing in the definition of  $\text{tr}(\upvarrho , \omega)$ in \eqref{definition:troncature*}, the right-hand side of   \eqref{proof:theorem:exit:1:formula} can be written in the following form:
\begin{equation}\label{equation:exiteq0}
\mathbb{N}_{x,0}\Big( \int_0^\sigma \mathrm{d} A_s  ~F\big(\rho,\overline{W},s \big) \cdot  \mathbf{N}_{x, \widehat{\Lambda}_s}\Big(\int_{0}^{\sigma(W)} \dd t \,  \mathbbm{1}_{\{ \uptau(W_t)=\infty \} }G\big(\rho,\overline{W})\Big)\Big). 
\end{equation}
Next, under $\mathbb{N}_{x,0}$ and $\mathbf{N}_{x,r}$ for $r\geq 0$, we introduce the notation:
$$
\texttt{Sp}_s (\rho, \overline{W}) : = \big((\rho,\overline{W})^{s,\leftarrow}, (\rho,\overline{W})^{s,\rightarrow} \big)
$$
for every  $s \geq 0$. Observe that for any $s \in (0,\sigma(W))$, we can recover the  process $(\rho,\overline{W})$  by considering $(\rho,\overline{W})^{s,\leftarrow}$ run backwards in time,  shifted at the last time at which it takes the value $(0,x,0) \in \mathcal{M}_f(\mathbb{R}_+)\times \mathcal{W}_{\overline{E}}$, and by  concatenating the resulting process with  $(\rho,\overline{W})^{s,\rightarrow}$. This operation on the pair $\texttt{Sp}_s (\rho, \overline{W})$ does not depend on $s$, and we denote it by $\mathfrak{R}$.   With this notation in hand, we get that  \eqref{equation:exiteq0} equals:
\begin{equation}\label{equation:exitproofEq3}
    \mathbb{N}_{x,0}\Big( \int_0^\sigma \mathrm{d} A_s  ~F\big(\mathfrak{R} \circ \texttt{Sp}_s (\rho, \overline{W}) , s\big) \cdot  \mathbf{N}_{x, \widehat{\Lambda}_s}\Big(\int_{0}^{\sigma(W)} \dd t \,  \mathbbm{1}_{\{ \uptau(W_t)=\infty \}}G\big(\mathfrak{R}\circ \texttt{Sp}_t (\rho, \overline{W}) \big) \Big) \Big).
\end{equation}
Let us now apply a similar treatment to the left-hand  side  of \eqref{proof:theorem:exit:1:formula}. In this direction we notice that, under $\mathbb{N}_{x,0}$, for each $u \in \mathcal{D}$ the positive variable $\sigma(\mathrm{tr}(W^u))$ is precisely the Lebesgue measure of $\{s\in[0,\sigma(W)]:~ p_{H(\rho)}(s)\in \mathcal{C}_u\}$.  It now follows   that the left-hand side of \eqref{proof:theorem:exit:1:formula} writes 
\begin{align*}
    \mathbb{N}_{x,0}\Big( \sum_{u \in \mathcal{D}}  \int_0^{\sigma(W)} \dd s~\mathbbm{1}_{ \{ p_H(s)\in \,  \mathcal{C}_u \} } F\big( \, \texttt{T}_u (\rho,\overline{W}), \mathfrak{g}(u)\big) G(\rho^u, \overline{W}^u) \Big).
\end{align*}
By Lemma \ref{lemma:connectedcomponents} and \eqref{equation:Lebx},  for Lebesgue almost every $s\in[0,\sigma(W)]$,   there is a unique $u(s)\in \mathcal{D}$ such that $p_{H(\rho)}(s)$ belongs to $\mathcal{C}_{u(s)}$, and by convention we set $u(s)=0$ when the latter fails. Recalling \eqref{equation:Lebx}, we can then rewrite the previous display in the following form
\begin{equation}\label{equation:exiteq2}
    \mathbb{N}_{x,0}\Big(  \int_0^{\sigma(W)} \dd s~ F\big( \, \texttt{T}_{u(s)} (\rho,\overline{W}), \mathfrak{g}(u(s)) \big) \cdot  G(\rho^{u(s)} , \overline{W}^{u(s)})  \Big). 
\end{equation}
In the same vein as before, the idea now is to express  $ \mathrm{T}_{u(s)} (\rho,\overline{W})$, $\mathfrak{g}(u(s))$ and $(\rho^{u(s)} , \overline{W}^{u(s)})$ in terms of $\texttt{Sp}_s$ and  $\mathfrak{
R}$. To this end, setting  $T^{\rightarrow}_s:=\inf\{t \geq 0:~H(\rho_{s+t})=\varsigma_x(W_s)\}$ with $\varsigma_x(\w) = \sup \{ 0 \leq s \leq  \zeta_\w : \w(s) = x \}$,  we ``cut''  $\big(\rho, \overline{W}\big)^{\rightarrow,s}$ in the two following  processes:
\begin{align*}
\big(\rho, \overline{W}\big)^{\rightarrow,s,\prime}_t &:=\big(\rho_{s+ T_s^{\rightarrow}+t}, \overline{W}_{s+ T_s^{\rightarrow}+t}\big),\quad t\geq 0,  \\
\big(\rho, \overline{W}\big)^{\rightarrow,s,\prime\prime}_t &:=\uptheta_{\varsigma_x (W_s)}\big(\rho_{(s+t)\wedge (s+T_s^{\rightarrow})}, \overline{W}_{(s+t)\wedge (s+T_s^{\rightarrow})}\big),\quad t\geq 0,    
\end{align*}
where  $\uptheta$ is the operation defined  in \eqref{operacion:translacion}. In the last display and for the remainder of the proof, when $\uptheta$ is applied to a tuple, we apply it coordinate-wise. We now proceed similarly with  the process $\big(\rho, \overline{W}\big)^{\leftarrow,s}$. Namely, we set $T^{\leftarrow}_s:=\inf\{t \geq 0:~H(\rho_{0 \vee(s-t)})=\varsigma_x(W_s)\}$   and we consider the following pair
\begin{align*}
    \big(\rho, \overline{W}\big)^{\leftarrow,s,\prime}_t & :=\big(\rho_{ {\, (0 \vee } (s- T_s^{\leftarrow}-t))-}, \overline{W}_{{ (0 \vee }(s- T_s^{\leftarrow}-t))-}\big), \quad t \geq 0, \\
    \big(\rho, \overline{W}\big)^{\leftarrow ,s,\prime \prime}_t &:=\uptheta_{\varsigma_x (W_s)}\big(\rho_{((s-t)\vee (s-T_s^{\leftarrow}))-}, \overline{W}_{((s-t)\vee (s-T_s^{\leftarrow}))-}\big), \quad t \geq 0.
\end{align*}
To simplify notation, for every $s \geq 0$, we set 
$$
\texttt{Sp}^\prime_s (\rho, \overline{W}) : = \big( (\rho,\overline{W})^{s,\leftarrow,\prime}, (\rho,\overline{W})^{s,\rightarrow,\prime} \big)\quad \text{ and } \quad \texttt{Sp}^{\prime\prime}_s (\rho, \overline{W}) : = \big( (\rho,\overline{W})^{s,\leftarrow,\prime\prime}, (\rho,\overline{W})^{s,\rightarrow,\prime\prime} \big).$$
Let us also denote by  $\Sigma_s^\prime$  the first time at which  $\big(\rho, \overline{W}\big)^{\leftarrow,s,\prime}$ takes the value $(0,x,0)$. The key now is that by \eqref{equation:Lebx},  under $\mathbb{N}_{x,0}$, a.e.  for Lebesgue-almost every $s \in [0,\sigma(W)]$ we have   
$$
\mathfrak{R}\circ  \texttt{Sp}^\prime_s (\rho, \overline{W}) = \texttt{T}_{u(s)}(\rho, \overline{W}), \quad \quad  \mathfrak{R}\circ \texttt{Sp}^{\prime\prime}_s (\rho, \overline{W}) = (\rho^{u(s)}, \overline{W}^{u(s)}) 
$$
and $\Sigma_s^{\prime}= \mathfrak{g}(u(s))$. The formula in   \eqref{equation:exiteq2} can now be written in terms of these identities, and recalling that the right-hand side of \eqref{proof:theorem:exit:1:formula} is given by  \eqref{equation:exitproofEq3}, we deduce that the desired identity \eqref{proof:theorem:exit:1:formula} can be re-written as follows:
\begin{align}\label{equation:exitformulareducida}
&\mathbb{N}_{x,0}\Big(  \int_0^{\sigma(W)} \dd s~ F\big( \mathfrak{R}\circ\texttt{Sp}^\prime_s (\rho, \overline{W}), \Sigma_s^\prime\big) \cdot  G\big(\mathfrak{R}\circ \texttt{Sp}^{\prime\prime}_s (\rho, \overline{W}) \big)  \Big)  \nonumber \\
&=
\mathbb{N}_{x,0}\Big( \int_0^{\sigma(W)} \mathrm{d} A_s  ~F\big(\mathfrak{R} \circ \texttt{Sp}_s (\rho, \overline{W}) , s\big) \cdot  \mathbf{N}_{x, \widehat{\Lambda}_s}\Big(\int_{0}^{\sigma(W)} \dd t \,  \mathbbm{1}_{\{ \uptau(W_t)=\infty \}}G\big(\mathfrak{R}\circ \texttt{Sp}_t (\rho, \overline{W}) \big) \Big) \Big). 
\end{align}
Observe that both $\Sigma_s'$ and $s$ are just  the hitting time of $(0,x,0)$ of respectively  the first element in    $\texttt{Sp}'_s (\rho, \overline{W})$ and $\texttt{Sp}_s (\rho, \overline{W})$. Hence, to establish \eqref{equation:exitformulareducida} it suffices to prove that for every non-negative measurable functions $\Phi, \Psi$   on  triples,  we have 
\begin{align}\label{equation:exitformulareducida-2}
&\mathbb{N}_{x,0}\Big(  \int_0^{\sigma(W)} \dd s~ \Phi\big(\texttt{Sp}^\prime_s (\rho, \overline{W})\big) \cdot  \Psi\big( \texttt{Sp}^{\prime\prime}_s (\rho, \overline{W}) \big)  \Big)  \nonumber \\
&=
\mathbb{N}_{x,0}\Big( \int_0^{\sigma(W)} \mathrm{d} A_s  ~\Phi\big( \texttt{Sp}_s (\rho, \overline{W}) \big) \cdot  \mathbf{N}_{x, \widehat{\Lambda}_s}\Big(\int_{0}^{\sigma(W)} \dd t \,  \mathbbm{1}_{\{ \uptau(W_t)=\infty \}} \Psi\big( \texttt{Sp}_t (\rho, \overline{W}) \big) \Big) \Big) . 
\end{align}
We will now simplify this identity  by exploiting the simple nature of the conditional law of the spine at time $U$ under the pointed measures $\mathbb{N}_{x,0}^\bullet$, $\mathbb{N}_{x,0}^{\bullet,A}$ and  $\mathbf{N}_{x,r}^\bullet$. First, under $\mathbb{N}_{x,0}$ and for every $s\in [0,\sigma(W)]$, set 
\begin{align*}
    & \big(\rho^\prime_s,\eta^\prime_s, \overline{W}_s^\prime\big):=\Big(\mathbbm{1}_{\{ h\leq \varsigma_x(W_s)  \}} \rho_s(\mathrm{d} h), \mathbbm{1}_{ \{ h\leq \varsigma_x(W_s) \} } \eta_s(\mathrm{d} h), \big(\overline{W}_s(t):~t\in [0, \varsigma_x(W_s) ]\big)\Big), \\
    & \big(\rho^{\prime\prime}_s, \eta^{\prime\prime}_s, \overline{W}_s^{\prime\prime}\big):= \uptheta_{\varsigma_x (W_s)}\big(\rho_s, \eta_s,  \overline{W}_s\big).     
\end{align*}
where again,  $\uptheta$  is applied coordinatewise.  Recall from Proposition \ref{manytoone-lebesgue}  that, under $\mathbb{N}_{x,0}^\bullet$ and conditionally on $(\rho_U,\eta_U,\overline{W}_U)$,  the processes $(\rho,\overline{W})^{\leftarrow, U}$ and $(\rho,\overline{W})^{\rightarrow, U}$ are  distributed according to $\mathbb{P}_{\eta_U,\rho_U, \overline{W}_U}^\dagger$ and $\mathbb{P}_{\rho_U,\overline{W}_U}^\dagger$ respectively. It  readily follows  from the previous construction, the Markov property, and the definition of the Lévy snake  that, under $\mathbb{N}_{x,0}^\bullet$ and conditionally on $(\rho_U,\eta_U,\overline{W}_U)$, the processes  $\big(\rho, \overline{W}\big)^{\leftarrow,U,\prime}$,  $\big(\rho, \overline{W}\big)^{\rightarrow,U,\prime}$,  $\big(\rho, \overline{W}\big)^{\leftarrow ,U,\prime \prime}$ and $\big(\rho, \overline{W}\big)^{\rightarrow,U,\prime\prime}$  are independent and  with respective distributions  $\mathbb{P}_{\eta_U^\prime,\rho_U^\prime, \overline{W}_U^\prime}^\dagger$, $\mathbb{P}_{\rho_U^\prime,\overline{W}_U^\prime}^\dagger$, $\mathbb{P}_{\eta_U^{\prime\prime},\rho_U^{\prime\prime}, \overline{W}_U^{\prime\prime}}^\dagger$ and $\mathbb{P}_{\rho_U^{\prime\prime},\overline{W}_U^{\prime\prime}}^\dagger$.  This fact applied to the left-hand side of \eqref{equation:exitformulareducida-2}, paired with an application of Propositions \ref{manytoone-lebesgue} and \ref{manytoone-lebesgue-Addi} 
 to the right-hand side of \eqref{equation:exitformulareducida-2} yields that the proof of \eqref{equation:exitformulareducida-2}  can be reduced  to establishing that:
\begin{align}\label{eq:interm:exit:system}
& \mathbb{N}_{x,0}\Big(\int_{0}^{\sigma(W)}\dd s~\Phi\big(\rho_s^\prime,\eta_s^\prime, \overline{W}_s^\prime\big) \cdot  \Psi\big(\rho_s^{\prime\prime},\eta_s^{\prime\prime}, \overline{W}_s^{\prime\prime}\big) \Big) \nonumber \\
&= \mathbb{N}_{x,0}\Big(\int_0^{\sigma(W)} \mathrm{d} A_s ~\Phi\big(\rho_s,\eta_s,\overline{W}_s\big) \cdot \mathbf{N}_{x,\widehat{\Lambda}_s}\Big(\int_0^{\sigma(W)}\dd t~  \mathbbm{1}_{\{ \uptau(W_t)=\infty \}} \Psi(\rho_t,\eta_t,\overline{W}_t)\Big) \Big),  
\end{align}
where now $\Phi, \Psi$ are non-negative measurable functions on $\mathcal{M}^2_f(\mathbb{R}_+) \times \mathcal{W}_{\overline{E}}$. 
By \eqref{eq:many:to:one:N:x}, the left-hand side in the last display is given by 
\begin{equation}\label{equation:pruebaExit}
    \int_0^\infty \dd a \, \exp(- \alpha a) \, \Pi_{x,0}\otimes E^0\Big( \Phi \big( \mathcal{J}_{\varsigma_x(\xi^a)}, \widecheck{\mathcal{J}}_{\varsigma_x(\xi^a)}, (\overline{\xi}_t : 0 \leq t \leq  {\varsigma_x(\xi^a)} )  \big) \cdot \Psi \big(  \uptheta_{\varsigma_x(\xi^a)}(\mathcal{J}_{a}, \widecheck{\mathcal{J}}_{a}, \overline{\xi^a})  \big) \Big)  
\end{equation}
where to simplify notation, we write $\xi^a, \overline{\xi^a}$ for $(\xi_t : 0 \leq t \leq a)$ and $(\overline{\xi}_t : 0 \leq t \leq a)$ respectively. This expression can now be simplified by making use of excursion theory for  $\overline{\xi}$ under $\Pi_{x,0}$. Under $\Pi_{x,0}$ and   for $r \geq 0$, let $\tau_r^+$ be the right limit at $r \geq 0$ of the process $(\tau_t)_{t \geq 0}$ defined before Proposition \ref{manytoone-lebesgue-Addi}. In particular, $(\tau_t^+)_{t \geq 0}$ is a subordinator and it is the right-inverse of $\mathcal{L}$.  We write $(r_j)_{j \in \mathbb{N}}$ for the jump-times of  $(\tau_{t}^+)_{t \geq 0}$. If for each $j \in \mathbb{N}$ we let $\xi^j$ be the excursion away from $x$ of $\xi$ corresponding to the excursion interval $( \tau_{r_j}, \tau_{r_j}^+)$,  the measure $\sum_{j \in \mathbb{N}}\delta_{(r_j, \xi^j)}$ is a Poisson point measure with intensity $\mathbbm{1}_{\mathbb{R}_+}(t)\dd t \otimes \mathcal{N}$. Since $\mathcal{L}$ is constant and identically equal to $r_j$ on the excursion intervals  $(\tau_{r_j} , \tau^+_{r_j})$, we can express  \eqref{equation:pruebaExit} as follows: 
\begin{align*}
     & \Pi_{x,0}\otimes E^0\Big( \sum_{j \in \mathbb{N}} \exp\big(- \alpha \sum_{r_i < r_j } \uptau(\xi^{i}) \big)\Phi \big( \mathcal{J}_{\tau_{r_j}}, \widecheck{\mathcal{J}}_{\tau_{r_j}}, \overline{\xi^{ \, \tau_{r_j}}} \,  \big) \\  & \hspace{20mm} \cdot  \int_0^{\uptau(\xi^{j})} \dd a \, \exp \big(- \alpha a \big) \Psi \Big(  \uptheta_{\tau_{r_j}}\big(\mathcal{J}_{\tau_{r_j} +a}, \widecheck{\mathcal{J}}_{\tau_{r_j}+a}\big) , \big((\xi_t^{j}, r_j) : 0 \leq t \leq a )\big)  \Big).
\end{align*}
An application of the compensation formula entails that the previous display writes: 
\begin{align*}
      \int_0^\infty \dd r \, 
   E^0 \otimes \Pi_{x,0}  \Big(
    \exp \big(- \alpha \tau_r \big)  \Phi \big( \mathcal{J}_{\tau_{r}}, \widecheck{\mathcal{J}}_{\tau_r}, \overline{\xi^{\tau_r}}  \big)   \Big) E^0 \otimes \mathcal{N}_{x,r} \Big(  \int_0^{\uptau(\xi)} \dd a \, \exp (- \alpha a )  \Psi(  \mathcal{J}_a, \widecheck{\mathcal{J}}_a  , \overline{\xi^a}  ) \Big). 
\end{align*}
Making now use of Proposition \ref{manytoone-lebesgue-Addi} and Lemma \ref{proposition:only:spineN*}, we infer that the formula in the previous display coincides with the right-hand side of \eqref{eq:interm:exit:system}. This concludes the proof of the theorem.
\end{proof}

We conclude the section deriving a version of Theorem \ref{theorem:exit} under the probability measure $\mathbb{P}_{0,x,0}$. 

\begin{cor}\label{corollary:exitPx}
For every  non-negative measurable functions $F,G$ on $\mathbb{D}(\mathbb{R}_+, \mathcal{M}_f(\mathbb{R}_+)\times \mathcal{W}_{\overline{E}}) \times \mathbb{R}$ and $\mathbb{D}(\mathbb{R}_+, \mathcal{M}_f(\mathbb{R}_+)\times \mathcal{W}_{\overline{E}})$ we have 
\begin{equation}\label{eq:cor:exit:formula}
    \mathbb{E}_{0,x,0}\Big[   \sum_{u \in \mathcal{D}} F\big( \,  \emph{\texttt{T}}_u (\rho,\overline{W}), \mathfrak{g}(u) \big) G(\rho^u , \overline{W}^u) \Big]  = 
    \mathbb{E}_{0,x,0}\Big[ \int_0^\infty \mathrm{d} A_s  ~F(\rho,\overline{W}, s ) \mathbf{N}_{x, \widehat{\Lambda}_s}\big(G(\rho,\overline{W})\big)\Big].
\end{equation}
\end{cor}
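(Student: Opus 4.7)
The strategy is to apply Theorem \ref{theorem:exit} inside each excursion of $(\rho, \overline{W})$ away from $(0,x,0)$, exploiting the Poissonian decomposition of paths under $\mathbb{P}_{0,x,0}$. Recall from \eqref{PPPsobreinfimo} that if $(a_i, b_i)_{i \in \mathbb{N}}$ denote the excursion intervals of $(\rho, \overline{W})$ away from $(0,x,0)$ and $(\rho^i, \overline{W}^i)$ the associated subtrajectories, then $\mathcal{M}^{\star} := \sum_{i} \delta_{(-I_{a_i}, \rho^i, \overline{W}^i)}$ is a Poisson point measure with intensity $\mathbbm{1}_{\mathbb{R}_+}(\ell)\dd \ell \otimes \mathbb{N}_{x,0}$. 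The set of debut points decomposes as a disjoint union $\mathcal{D} = \bigsqcup_i \mathcal{D}_i$, where $\mathcal{D}_i$ gathers the debuts lying inside the $i$-th excursion; for every $u \in \mathcal{D}_i$ one has $\mathfrak{g}(u) = a_i + \mathfrak{g}_i(u)$ (with $\mathfrak{g}_i(u)$ the first-visit time to $u$ in the intrinsic time of excursion $i$), the subtrajectory $(\rho^u, \overline{W}^u)$ coincides with the subtrajectory stemming from $u$ computed inside $(\rho^i, \overline{W}^i)$, and $\texttt{T}_u(\rho, \overline{W})$ is the process reconstructed from the modified atom configuration obtained by replacing $(-I_{a_i}, \rho^i, \overline{W}^i)$ with $(-I_{a_i}, \texttt{T}^i_u(\rho^i, \overline{W}^i))$ in $\mathcal{M}^{\star}$, where $\texttt{T}^i_u$ denotes the trimming operation applied within the $i$-th excursion. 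By \eqref{equation:aditiva1} and Lemma \ref{lemma:contanteExcursion}, the Stieltjes measure also decomposes as $\int_0^\infty \dd A_s = \sum_i \int_0^{\sigma_i} \dd A_s(\rho^i, \overline{W}^i)$, and $\widehat{\Lambda}_{a_i + s} = \widehat{\Lambda}_s(\rho^i, \overline{W}^i)$ for every $s \in [0, \sigma_i]$ since $\widehat{\Lambda}_{a_i} = 0$ under $\mathbb{P}_{0,x,0}$.

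Next, for $\omega \in \mathbb{D}(\mathbb{R}_+, \mathcal{M}_f(\mathbb{R}_+) \times \mathcal{W}_{\overline{E}})$ and $\ell > 0$, set $T_\ell(\omega) := \inf\{t \geq 0 : -I_t(\omega) = \ell\}$, and for a snake path $(\upvarrho', \overline{\omega}')$ of finite duration $\sigma'$ denote by $\mathrm{Ins}_\ell(\omega, (\upvarrho', \overline{\omega}'))$ the process obtained by concatenating $\omega|_{[0, T_\ell(\omega)]}$, $(\upvarrho', \overline{\omega}')$ and the shifted tail $(\omega_{T_\ell(\omega)+t})_{t \geq 0}$. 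One then introduces the non-negative measurable functional
\begin{equation*}
\widetilde{F}_{\omega, \ell}(\upvarrho', \overline{\omega}', s') := F\bigl(\mathrm{Ins}_\ell(\omega, (\upvarrho', \overline{\omega}')),\; T_\ell(\omega) + s'\bigr).
\end{equation*}
Applying the Slivnyak--Mecke identity to $\mathcal{M}^{\star}$ -- and observing that removing the $i$-th atom from $\mathcal{M}^{\star}$ does not affect the excursions at local time strictly below $-I_{a_i}$, so that the time to reach local time $-I_{a_i}$ in the reconstructed process from $\mathcal{M}^{\star} - \delta_{(-I_{a_i}, \rho^i, \overline{W}^i)}$ remains $a_i$ -- the left-hand side of \eqref{eq:cor:exit:formula} becomes
\begin{equation*}
\int_0^\infty \dd \ell \; \mathbb{E}_{0,x,0}\!\left[\mathbb{N}_{x,0}\!\left(\sum_{u \in \mathcal{D}} \widetilde{F}_{(\rho, \overline{W}), \ell}\bigl(\texttt{T}_u(\rho, \overline{W}), \mathfrak{g}(u)\bigr)\; G(\rho^u, \overline{W}^u)\right)\right],
\end{equation*}
where inside the $\mathbb{N}_{x,0}$-integrand $(\rho, \overline{W})$ stands for its canonical process, whereas outside the expectation $(\rho, \overline{W})$ denotes the canonical process under $\mathbb{P}_{0,x,0}$.

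A symmetric Slivnyak reduction applied to the right-hand side of \eqref{eq:cor:exit:formula} yields the analogous expression in which the inner sum is replaced by $\int_0^\sigma \dd A_{s'}\, \widetilde{F}_{(\rho, \overline{W}), \ell}(\rho, \overline{W}, s')\, \mathbf{N}_{x, \widehat{\Lambda}_{s'}}(G)$. Equality of the two expressions then follows immediately by applying Theorem \ref{theorem:exit} to the inner $\mathbb{N}_{x,0}$-expectation, with $\widetilde{F}_{(\rho, \overline{W}), \ell}$ playing the role of $F$ -- which is legitimate since $(\rho, \overline{W})$ from the outer measure and $\ell$ are held fixed within that expectation. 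The main technical obstacle I anticipate is the rigorous treatment of the Slivnyak--Mecke reduction under the sigma-finite intensity $\dd \ell \otimes \mathbb{N}_{x,0}$ together with the measurability of the insertion map $\mathrm{Ins}_\ell$; this will be handled by restricting both integrands to the truncation $\{\sigma' > \delta\} \cap \{T_\ell \leq M\}$, applying the standard finite-measure Palm formula, and passing to the limit $\delta \to 0$, $M \to \infty$ via monotone convergence.
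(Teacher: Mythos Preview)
Your proposal is correct and follows essentially the same route as the paper: both arguments decompose $(\rho,\overline{W})$ under $\mathbb{P}_{0,x,0}$ into its excursions away from $(0,x,0)$ via the Poisson point measure \eqref{PPPsobreinfimo}, identify debuts in the full process with debuts inside each excursion, apply Mecke's formula together with Theorem~\ref{theorem:exit} to the isolated excursion, and then reconstruct the full trajectory through an insertion/concatenation operation (your $\mathrm{Ins}_\ell$ plays the same role as the paper's operation $\mathfrak{G}$). The only cosmetic differences are that the paper works from the right-hand side towards the left while you go the other way, and that the paper phrases the reconstruction as a concatenation of three pieces rather than as an insertion at local time $\ell$.
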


\begin{proof}
This corollary is  a consequence of Theorem \ref{eq:theorem:exit} and standard Poissonian calculus. In this proof we argue under $\mathbb{P}_{0,x,0}$  and for simplicity we replace the function $F$ in the statement by a non-negative measurable function of the form $F(\upvarrho, \overline{\omega})f(t)$ for $(\upvarrho , \overline{\omega}, t) \in \mathbb{D}( \mathbb{R}_+,  \mathcal{M}_f(\mathbb{R}_+) \times \mathcal{W}_{\overline{E}} )\times \mathbb{R}_+$. Recall that $(0,x,0)$  is regular and  instantaneous  for $(\rho, \overline{W})$ and that the reflected running infimum $-I$ of the underlying Lévy process is a local time for $(\rho , \overline{W})$ at $(0, x,0 )$. Let $(a_i,b_i)_{i \in \mathbb{N}}$ be the connected components of the complement of $\{ t\geq 0 : (\rho_t,\overline{W}_t) = (0,x,0)   \}$ and for every $i \in \mathbb{N}$, write $(\rho^i, \overline{W}^i)$ for the subtrajectory associated with the interval $[a_i , b_i]$. Setting  $\ell_i:=-I_{a_i}$ for $i \in \mathbb{N}$, as discussed in  \eqref{PPPsobreinfimo}  the measure
\begin{equation*}
  \mathcal{N} =   \sum_{i \in \mathbb{N}} \delta_{ (\ell_i, \rho^i, \overline{W}^i )} 
\end{equation*}
is a Poisson point measure with intensity $\mathbbm{1}_{\mathbb{R}_+}(\ell) \dd \ell  \, \mathbb{N}_{x,0}(\dd \rho , \dd \overline{W})$, and it is classic that we can recover $(\rho,\overline{W})$ from  $\mathcal{N}$. Let us be more precise. Consider the process 
\begin{equation*}
    \Sigma_\ell:=\sum \limits_{i\in \mathbb{N}, \,  \ell_i \leq \ell}\sigma(W^i), \quad \text{ for }\ell \geq 0, 
\end{equation*}
 which is clearly nondecreasing  and right-continuous.  Note that if $\ell$ is a discontinuity time for $\Sigma$,  there exists a  unique $i \in \mathbb{N}$ such that $\ell_{i}=\ell$ and $\Sigma_{\ell_i} -\Sigma_{\ell_i-}=\sigma(\overline{W}^i)$. Since $\Sigma_\ell\to \infty $, as $\ell\to \infty$, for every fixed $t\geq 0$ there exists a unique $\ell \geq 0$ such that $\Sigma_{\ell-}\leq t\leq \Sigma_\ell$. Still for $t$ and  $\ell$ as before,  if there exists $i\in \mathbb{N}$ such that $\ell_i=\ell $ we can write: 
\begin{equation*}
    (\rho_t,\overline{W}_t)= \big(\rho^i_{t-\Sigma_{\ell_i-}}, \overline{W}^i_{t-\Sigma_{\ell_i-}}\big), \quad \quad A_t(\rho,\overline{W})= \sum_{\ell_j< \ell_i} A_{\sigma(\overline{W}^j)}(\rho^j,\overline{W}^j)+ A_{t-\Sigma_{\ell_i-}}(\rho^i,\overline{W}^i),
\end{equation*}
and otherwise, we have $(\rho_t,\overline{W}_t)=(0,x,0)$ as well as  $A_t= \sum_{\ell_j< -I_t} A_{\sigma(\overline{W}^i)}(\rho^j,\overline{W}^j)$. We stress that we used the fact that the measure $\dd A$ does not charge the set $\{ t \geq 0 : (\rho , \overline{W}) = (0,x,0) \}$ by \eqref{equation:aditiva1}   as well as its additive property.  
Let us now infer from these facts  identity \eqref{eq:cor:exit:formula}. First, observe that by \eqref{equation:aditiva1} we can rewrite the right-hand side of \eqref{eq:cor:exit:formula} in the following  form:
\begin{equation}\label{equation:displayExitPx}
\mathbb{E}_{0,x,0}\Big[\sum_{i\in \mathbb{N}} \int_0^{\sigma(W^i)} \mathrm{d} A^i_s  ~F(\rho,\overline{W}) f(  \Sigma_{\ell_i-}+s ) \cdot \mathbf{N}_{x, \widehat{\Lambda}^i_s}\big(G(\rho,\overline{W})\big)\Big],   
\end{equation}
where $A^i$ stands for the process $A(\rho^i, \overline{W}^i)$. Next, remark that for any $i \in \mathbb{N}$, we can recover $(\rho,\overline{W})$ by   concatenating consecutively the three following paths  
\begin{equation}\label{equation:tripletMecke}
    \Big( \big(\rho,\overline{W}\big)_{\cdot\wedge \Sigma_{\ell_i-}}, (\rho^i,\overline{W}^i),  (\rho,\overline{W})_{ \, \cdot +\Sigma_{\ell_i}} \Big) 
\end{equation}
where at each step we concatenate at the first time at which the initial process remains identically equal to $(0,x,0)$. This operations does not depend on $i \in \mathbb{N}$ and for simplicity we denote it  by $\mathfrak{G}$. With this notation in hand,  we get that \eqref{equation:displayExitPx}  equals
\begin{equation*}
\mathbb{E}_{0,x,0}\Big[\sum_{i\in \mathbb{N}} \int_0^{\sigma(W^i)} \mathrm{d} A_s^i  ~F \circ  \mathfrak{G}\big( (\rho,\overline{W}\big)_{\cdot\wedge \Sigma_{\ell_i-}}, (\rho^i,\overline{W}^i), (\rho,\overline{W})_{\cdot +\Sigma_{\ell_i}} \big) f(\Sigma_{\ell_i-}+s) \cdot  \mathbf{N}_{x, \widehat{\Lambda}^i_s}\big(G(\rho,\overline{W})\big)\Big].
\end{equation*}
For each  $i \in \mathbb{N}$, the first and third processes in \eqref{equation:tripletMecke}  can be expressed in terms of the point measure $ \mathcal{N} \setminus (\ell_i, \rho^i, \overline{W}^i)$ and the atom $(\ell_i, \rho^i, \overline{W}^i)$. More precisely, they can  be  recovered respectively from the pair of point measures $\mathbbm{1}_{(0,\ell_i)}(\ell)\mathcal{N}(\mathrm{d }\ell,\mathrm{d }\upvarrho, \mathrm{d }\omega)$, and  $\mathbbm{1}_{(\ell_i, \infty)}(\ell)\mathcal{N}(\mathrm{d }\ell,\mathrm{d }\upvarrho, \mathrm{d }\omega)$  shifted by $-\ell_i$ on its first  coordinate,  by making use of the construction recalled above. Thus,  Mecke's formula and Theorem \ref{theorem:exit} yield that the previous display is given by: \medskip \\ 
\begin{equation}\label{eq:fin:cor:exit}
     \mathbb{E}_{0,x,0}\Big[\sum_{i\in \mathbb{N}} \sum \limits_{u\in \mathcal{D}(\rho^i,\overline{W}^i)}~F \circ  \mathfrak{G}\big( (\rho,\overline{W})_{\cdot\wedge \Sigma_{\ell_i-}}, \texttt{T}_u(\rho^i,\overline{W}^i), (\rho,\overline{W})_{\cdot +\Sigma_{\ell_i}} \big) f\big(\Sigma_{\ell_i-}+\mathfrak{g}_i(u)\big) \cdot  G(\rho^{i,u},\overline{W}^{i,u})\Big],
  \end{equation}
 where for every $u\in \mathcal{D}(\rho^i,\overline{W}^i)$,   $\texttt{T}_u(\rho^i,\overline{W}^i)$ and $(\rho^{i,u},\overline{W}^{i,u})$ stand respectively for  the subtrajectory stemming from $u$ and the excursion of $(\rho^i,\overline{W}^i)$ corresponding with such debut, while  
 $\mathfrak{g}_i(u)$ is the first time at which $(p_{H(\rho^i)}(t): t \geq 0)$ visits the debut $u$. Finally, notice that for every element $u$ of 
$\cup_{i\geq 1} \mathcal{D}(\rho^i,\overline{W}^i)$ we can  canonically associate a unique debut $u'$ of $(\rho,\overline{W})$ in such a way that
$$
\mathfrak{g}(u')=\Sigma_{\ell_i-}+\mathfrak{g}_i(u),
 \quad \text{ and }
 \quad (\rho^{u'},\overline{W}^{u'})=(\rho^{i,u},\overline{W}^{i,u}).
$$
The correspondence $u \leftrightarrow u'$ is clearly one-to-one, and for any pair $u,u'$  in correspondence through this bijection we have 
\begin{equation*}
    \texttt{T}_{u'}(\rho,\overline{W})
=
 \mathfrak{G}\big( (\rho,\overline{W})_{\cdot\wedge \Sigma_{\ell_i-}}, \texttt{T}_u(\rho^i,\overline{W}^i), (\rho,\overline{W})_{\cdot +\Sigma_{\ell_i}} \big).  
\end{equation*}
We infer from these identities that \eqref{eq:fin:cor:exit} and the left-hand side of \eqref{eq:cor:exit:formula} coincide, 
which gives the desired result.
\end{proof}
 
\subsection{The excursion process}\label{section:Poisson}

For every element $(\upvarrho,\overline{\omega}):=(\upvarrho,\omega, \lambdaC)$ in $\mathbb{D}(\mathbb{R}_+, \mathcal{M}_f(\mathbb{R}_+) \times \mathcal{W}_{\overline{E}})$, recall the notation $\mathcal{D}(\upvarrho , \omega)$ and $(\upvarrho^{u,*}, \omega^{u,*})$ for $u \in \mathcal{D}(\upvarrho , \omega)$ for respectively the set of debuts of $(\upvarrho , \omega)$ and the family of excursion away from $x$  in the sense of Section \ref{sub:sect:debut}. For every such element of $\mathbb{D}(\mathbb{R}_+, \mathcal{M}_f(\mathbb{R}_+) \times \mathcal{W}_{\overline{E}})$ we define a point measure in $\mathbb{R}_+ \times \mathbb{D}(\mathbb{R}_+, \mathcal{M}_f(\mathbb{R}_+)\times \mathcal{W}_E)$ by setting 
\begin{equation*}
    \mathcal{E}(\upvarrho, \overline{\omega}):= \sum_{u \in \mathcal{D}(\upvarrho,\omega)} \delta_{(A_{\mathfrak{g}(u)}(\upvarrho,\overline{\omega}), (\upvarrho^{u,*},\omega^{u,*}))}. 
\end{equation*}
 Recall that by convention, if $(\upvarrho, \omega)$ is not in $\mathcal{S}_x$, the set $\mathcal{D}(\upvarrho, \omega)$ is  empty and therefore  $\mathcal{E}(\upvarrho , \overline{\omega})$ is the null measure. In analogy to the nomenclature used in classical excursion theory, the measure in the last display will be referred to as the excursion  process of $(\upvarrho , \overline{\omega})$. When working under $\mathbb{P}_{0,x,0}$ and when there is no risk of confusion, we simply write $\mathcal{E}$ for $\mathcal{E}(\rho, \overline{W})$.  The  section is devoted to proving the following theorem. 
\begin{theo}\label{theorem:excursionPPP}
 Under $\mathbb{P}_{0 , x ,0 }$, the measure $\mathcal{E}$  is a Poisson measure on $\mathbb{R}_+ \times  \mathbb{D}(\mathbb{R}_+, \mathcal{M}_f(\mathbb{R}_+) \times \mathcal{W}_E)$ with intensity $\mathbbm{1}_{\mathbb{R}_+}(t)\dd t \otimes \mathbf{N}_x^*(\dd \upvarrho, \dd \omega)$.  
\end{theo}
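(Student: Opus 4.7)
The plan is to prove the Poisson point measure identity via the Laplace functional characterisation: it suffices to show, for every bounded non-negative measurable function $\Phi$ on $\mathbb{R}_+ \times \mathbb{D}(\mathbb{R}_+, \mathcal{M}_f(\mathbb{R}_+)\times \mathcal{W}_E)$, that
\[
\mathbb{E}_{0,x,0}\big[\exp(-\langle \mathcal{E}, \Phi\rangle)\big] = \exp\Big(-\int_0^\infty \dd t \int \mathbf{N}_x^*(\dd y)\,(1-e^{-\Phi(t,y)})\Big).
\]
The two ingredients I would rely on are the exit formula (Corollary \ref{corollary:exitPx}) and the special Markov property of the Lévy snake \cite[Theorem 3.7]{2024structure}, together with the elementary observation, direct from Definition \ref{definition:N*}, that $\mathbf{N}_{x,r}(G_0 \circ \mathrm{tr}) = \mathbf{N}_x^*(G_0)$ for every non-negative measurable $G_0$, independently of $r\geq 0$.

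I would first verify the first-moment (intensity) identity as a warm-up, illustrating the mechanism. Noting that the trimming operator preserves the pre-$u$ portion of the path, so that $A_{\mathfrak{g}(u)}$ is measurable with respect to $\texttt{T}_u(\rho, \overline{W})$, the exit formula applied with $F(\upvarrho, \overline{\omega}, s) = f(A_s(\upvarrho, \overline{\omega}))$ and $G = G_0 \circ \mathrm{tr}$ yields
\[
\mathbb{E}_{0,x,0}\Big[\sum_{u\in\mathcal{D}} f(A_{\mathfrak{g}(u)})\,G_0(\rho^{u,*},W^{u,*})\Big] = \mathbf{N}_x^*(G_0)\cdot \mathbb{E}_{0,x,0}\Big[\int_0^\infty f(A_s)\,\dd A_s\Big].
\]
Since $A$ is continuous and non-decreasing, the pathwise change of variable gives $\int_0^\infty f(A_s)\,\dd A_s = \int_0^{A_\infty} f(u)\,\dd u$, and under the recurrence assumption \ref{Asssumption_2} one checks that $A_\infty = +\infty$ almost surely under $\mathbb{P}_{0,x,0}$, so the right-hand side reduces to $\int_0^\infty f(t)\,\dd t \cdot \mathbf{N}_x^*(G_0)$; by a standard monotone-class argument, this identifies the intensity of $\mathcal{E}$.

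To promote the intensity identity to the full Laplace functional, I would invoke the special Markov property of the Lévy snake. Applied at visits of $x$ by the labels $\widehat{W}$, it provides a conditional decomposition of the labelled tree: conditionally on the trimmed sub-snake that encodes the portion of $(\rho, \overline{W})$ away from the $x$-visits, the family of sub-snakes $(\rho^u, \overline{W}^u)_{u\in\mathcal{D}}$ stemming from these visits behaves as an independent collection of Lévy snakes with respective laws $\mathbf{N}_{x, \widehat{\Lambda}_{\mathfrak{g}(u)}}$. Combined with the exit formula and a truncation argument (restricting first to excursions with $\sigma(W^{u,*}) > \delta$, which are locally finite on any bounded $A$-time interval, and then passing $\delta \downarrow 0$ by monotone convergence), this factorises $\mathbb{E}_{0,x,0}[e^{-\langle \mathcal{E}, \Phi\rangle}]$ into the exponential-of-an-integral form characteristic of a Poisson point measure.

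The principal obstacle is that Mecke's characterisation of Poisson point measures does not follow directly from the exit formula: the trimming operator $\texttt{T}_u$ removes not merely the single atom $(A_{\mathfrak{g}(u)}, \rho^{u,*}, W^{u,*})$ but the entire subtree stemming from $u$, including all nested excursions inside it, so that the excursion process of $\texttt{T}_u(\rho, \overline{W})$ differs from $\mathcal{E} - \delta_{(A_{\mathfrak{g}(u)}, \rho^{u,*}, W^{u,*})}$ by all excursions descending from $u$. The special Markov property is precisely what overcomes this difficulty: it tells us that the sub-snake $(\rho^u, \overline{W}^u)$, and in particular the point measure of its own excursions away from $x$, is conditionally independent of the trimmed snake outside the subtree rooted at $u$, which allows the induction on the nesting depth (handled rigorously by the truncation to $\sigma(W^{u,*}) > \delta$) to close and yields the full Poissonian structure of $\mathcal{E}$.
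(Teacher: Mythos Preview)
Your intensity computation via the exit formula (Corollary~\ref{corollary:exitPx}) is correct and is exactly how the paper establishes point~(i). The paper, however, does \emph{not} proceed via the Laplace functional. Instead it checks the three standard conditions: correct intensity, simplicity (using Corollary~\ref{cor:A:distinct}), and stationary independent increments. The last point is the heart of the matter and is proved by a \emph{restart argument}: fix $r>0$, apply the strong Markov property at the stopping time $A_r^{-1}$, decompose the post-$A_r^{-1}$ snake along the spine via the Poisson measure \eqref{equation:PoissonH}, apply the special Markov property \cite[Theorem~3.7]{2024structure} to each atom (which now starts from a point $\neq x$), and reconcatenate to obtain a process $(\rho',\overline{W}')\sim\mathbb{P}_{0,x,0}$ independent of $((\rho_t,\overline{W}_t):t\le A_r^{-1})$ together with the truncated subtrajectories; finally one checks that the shifted excursion process $\mathcal{E}'$ coincides with $\mathcal{E}(\rho',\overline{W}')$.

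The gap in your proposal is the step where you write ``conditionally on the trimmed sub-snake\ldots the family of sub-snakes $(\rho^u,\overline{W}^u)_{u\in\mathcal{D}}$ behaves as an independent collection of L\'evy snakes with respective laws $\mathbf{N}_{x,\widehat{\Lambda}_{\mathfrak{g}(u)}}$''. This is not a direct consequence of the special Markov property as stated. First, under $\mathbb{P}_{0,x,0}$ the snake starts at $x$, so there is no well-defined truncation of $(\rho,\overline{W})$ at first return to $x$: the ``trimmed sub-snake encoding the portion away from the $x$-visits'' is not a meaningful object here. Second, the special Markov property gives a Poisson structure for subtrajectories exiting a domain $D\ni y$, indexed by the exit local time; it does not directly yield conditional independence of the \emph{nested} family $(\rho^u,\overline{W}^u)_{u\in\mathcal{D}}$, since distinct debut points $u\preceq u'$ give subtrajectories with $(\rho^{u'},\overline{W}^{u'})$ contained in $(\rho^u,\overline{W}^u)$. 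Your proposed ``induction on nesting depth, handled by truncation to $\sigma(W^{u,*})>\delta$'' does not resolve this: restricting to large excursions does not bound the nesting depth, and you have not explained what the inductive hypothesis is or how the exit formula or the special Markov property would close the induction. In effect, the conditional independence you assert is close to the conclusion of the theorem itself (compare Corollary~\ref{corolary:condlaw}, which is proved \emph{after} and \emph{using} Theorem~\ref{theorem:excursionPPP}). The paper's restart-at-$A_r^{-1}$ argument avoids this circularity by reducing to a situation where the special Markov property applies cleanly.
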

\begin{proof}
Since $\mathbf{N}_x^*$ is a sigma-finite measure and $\mathbb{D}(\mathbb{R}_+, \mathcal{M}_f(\mathbb{R}_+) \times \mathcal{W}_E)$ is a Polish space  that we  equip with its  Borel sigma-field $\mathcal{B}(\mathbb{D})$, by standard characterisation results on Poisson point measures it suffices to show that:
\begin{itemize}
\item[(i)] For every $\mathcal{U}\in \mathcal{B}(\mathbb{D})$ such that $\mathbf{N}^*_x(\mathcal{U})<\infty$, we have 
\begin{equation*}
    \mathbb{E}_{0,x,0}\big[ \mathcal{E}\big( [0,1] \times \mathcal{U}   \big)\big]= \mathbf{N}^*_x(\mathcal{U}). 
\end{equation*}
\item[(ii)]  $\mathbb{P}_{0,x,0}$-a.s.,   for every $\mathcal{U}\in \mathcal{B}(\mathbb{D})$ and $t \geq 0$, it holds that $\mathcal{E}\big( \{t\} \times  \mathcal{U} \big)\in \{0,1\}$  and  $\mathcal{E}\big( \{0\} \times   \mathcal{U}  \big)=0$.
\item[(iii)] For every $n\geq 1$ and  $\mathcal{U}_1,\dots , \mathcal{U}_n\in \mathcal{B}(\mathbb{D})$ satisfying 
$\mathbf{N}_x^*( \mathcal{U}_j  )<\infty$  for every $1\leq j\leq n$, the process
\begin{equation*}
    \big(\mathcal{E}\big( [0,t] \times \mathcal{U}_1   \big),\dots, \mathcal{E}\big( [0,t] \times \mathcal{U}_n   \big) \big), \quad  \text{ for } t\geq 0, 
\end{equation*}
has stationary and independent increments.
\end{itemize}
Let us start by deducing point (i) from Corollary \ref{corollary:exitPx}.  First, note that for each $u \in \mathcal{D}$, the  process $\texttt{T}_u(\rho, \overline{W})$ restricted to the interval $[0,\mathfrak{g}(u)]$ is just $\big( (\rho_t , \overline{W}_t), 0 \leq t \leq \mathfrak{g}(u)\big)$. Hence, using the approximation \eqref{equation:aproximacionA}  we infer that $A_{\mathfrak{g(u)}}$ is a measurable function of the pair $\texttt{T}_u(\rho , \overline{W})$, $\mathfrak{g}(u)$. It now readily follows from this fact and Corollary \ref{eq:cor:exit:formula}  that 
\begin{equation*}
    \mathbb{E}_{0,x,0}\Big[   \sum_{u \in \mathcal{D}} f( A_{\mathfrak{g}(u)}) \cdot G\circ \mathrm{tr}( \rho^u , {W}^u )  \Big]  = 
    \mathbb{E}_{0,x,0}\Big[ \int_0^\infty \mathrm{d}s  ~ f(s) \cdot  \mathbf{N}_{x, \widehat{\Lambda}_{A^{-1}_s}}\big(G \circ \mathrm{tr}( \rho , {W} )\big)\Big],
\end{equation*}
for every non-negative measurable functions $f,G$ on $\mathbb{R}$ and $\mathbb{D}(\mathbb{R}_+, \mathcal{M}_f(\mathbb{R}_+) \times \mathcal{W}_E)$ respectively.  Since the distribution of $\mathrm{tr}(\rho,W)$ under $\mathbf{N}_{x,r}$ for any $r \geq 0$ is precisely $\mathbf{N}^*_x$,  point (i) follows.    
\par 
The first claim of point (ii) is an immediate consequence of Corollary \ref{cor:A:distinct}, which states that under  $\mathbb{P}_{0,x,0}$,   the  variables $A_{\mathfrak{g}(u)}$, for $u\in \mathcal{D}$,  are distinct. Further, the measure $\mathcal{E}$ can not possess an atom having $0$ as first coordinate since under $\mathbb{P}_{0,x,0}$, the point $0$ belongs to the support of $\dd A$ \cite[Lemma 4.15]{2024structure}, and it is easy to check that $p_{H(\rho)}( 0 )$ is  not a debut.  This proves the second claim in point (ii). 
\par 
It remains to establish point (iii) and to do so, we  rephrase the problem in a slightly different form.  Under $\mathbb{P}_{0,x,0}$ and for an arbitrary $r \geq 0$ that we fix for the rest of the proof,     we consider the following pair of measures
\begin{equation*}
 \mathbbm{1}_{(0,r]}\mathcal{E}
    :=  \sum_{u \in \mathcal{D} , A_{\mathfrak{g}(u)} \leq r } \delta_{(A_{\mathfrak{g}(u)} , (\rho^{u,*} , W^{u,*}))} \quad  \quad \text{and }\quad   \quad \mathcal{E}^{\prime } 
    :=
     \sum_{u \in \mathcal{D} , A_{\mathfrak{g}(u)} >  r } \delta_{(A_{\mathfrak{g}(u)} -  r, (\rho^{u,*} , W^{u,*}))}.  
\end{equation*}
Noting that for any $\mathcal{U} \in \mathcal{B}(\mathbb{D})$, with $\mathbf{N}_x^*( \mathcal{U}) < \infty$, we can write 
\begin{equation*}
    \mathcal{E}\big( (0,r+h] \times \mathcal{U}   \big)  - \mathcal{E}\big( (0,r] \times \mathcal{U}   \big)
    =  \mathcal{E}\big( (r,r+h] \times \mathcal{U}   \big)
    =  \mathcal{E}' \big( [0,h]\times \mathcal{U}\big),  \quad \text{ for }h \geq 0,  
\end{equation*}
it is clear that to obtain point (iii) it suffices to establish that:
\begin{itemize}
    \item[(iii')] The pair of measures $\mathbbm{1}_{(0,r]}\mathcal{E}$ and $\mathcal{E}^{\prime}$ 
are independent, and    $\mathcal{E}^{\prime}$ is distributed as $\mathcal{E}$ under $\mathbb{P}_{0,x,0}$. 
\end{itemize}
In short, the idea is to encode both measures $\mathbbm{1}_{(0,r]}\mathcal{E}$ and $ \mathcal{E}'$ in terms of pieces of path of $(\rho , \overline{W})$ that are independent and of more tractable nature. 
 \par 
Starting with the measure $\mathbbm{1}_{(0,r]}\mathcal{E}$,  we consider  $(\mathscr{I}_s:~s \geq A_{r}^{-1})$, the running infimum of 
\begin{equation}\label{equation:shiftedLP}
    \langle \rho_{s} , 1 \rangle- \langle \rho_{A_r^{-1}} , 1 \rangle, \quad \text{ for } s \geq A_{r}^{-1},  
\end{equation}
and we set $T := \inf\{ t \geq A^{-1}_{r} : \langle \rho_s ,1 \rangle = 0\}$. Denote the excursion intervals of \eqref{equation:shiftedLP} over its running infimum  occurring before time $T$  by $(a_i, b_i)_{i \in \mathcal{I}}$ and write $(\rho^i, \overline{W}^i)_{i \in \mathcal{I}}$ for the the corresponding subtrajectories. Now, we claim that  $\mathbbm{1}_{(0,r]}\mathcal{E}$ can be recovered from the following pair  
\begin{equation}\label{eq:Poisson:proof:1}
\big( (\rho_t,\overline{W}_t), \, 0 \leq t\leq A_r^{-1}\big) \quad  \quad \text{ and }\quad  \quad  \sum_{i \in \mathcal{I}}\delta_{(-\mathscr{I}_{a_i},\mathrm{tr}(\rho^{i}, \overline{W}^i)) }, 
\end{equation}
where the truncation operator \eqref{definition:troncature*}  is defined on elements of $\mathbb{D}(\mathbb{R}_+, \mathcal{M}_f(\mathbb{R}_+) \times \mathcal{W}_{\overline{E}})$ in an obvious way.  This essentially follows from our definitions but let us sketch a short argument. Fix an arbitrary atom  $(A_{\mathfrak{g}(u)} , \rho^{u,*} , W^{u,*} )$ of $\mathbbm{1}_{(0,r]}\mathcal{E}$ and note that the condition  $A_{\mathfrak{g}(u)} \leq r$ ensures that  $\mathfrak{g}(u) < A^{-1}_{r}$,  the inequality being strict since $A^{-1}_r$  can not coincide with $\mathfrak{g}(u)$ a.s. (indeed, noting that $\rho_{A^{-1}_r}(\{H(\rho_{A^{-1}_r})\}) = 0$ by Lemma \ref{lemma:tequedasenThetax},  the fact that the point $0$ is regular and instantaneous for   the underlying Lévy process and the strong Markov property  of the exploration process gives  that a.s. the point $A^{-1}_r$ is not a point of right-increase for $H(\rho)$). If it further holds that $\mathfrak{d}(u) \leq A^{-1}_{r}$, it is plain that the triple $(A_{\mathfrak{g}(u)} , \rho^{u,*} , W^{u,*} )$ only depends on  $((\rho_t , \overline{W}_t) : 0 \leq t \leq A^{-1}_{r})$. On the other hand, if $\mathfrak{g}(u) < A^{-1}_{r} < \mathfrak{d}(u)$,  the excursion $(\rho^{u,*}, W^{u,*})$   can be recovered from the piece of path  $( (\rho_t , W_t) : 0 \leq  t \leq A^{-1}_{r} )$ and the atoms of  the point measure in \eqref{eq:Poisson:proof:1} satisfying $\Lambda^{i}_0 = \widehat{\Lambda}_{\mathfrak{g}(u)}$,  by making use of the snake property and the definition of subtrajectory. Here, we use the fact that the value of $\widehat{\Lambda}$ on each excursion component is a.s. distinct by Lemma \ref{lemma:contanteExcursion}. We leave the details to the reader.  
\par  
The previous discussion yields that proving the independence between $\mathbbm{1}_{(0,r]}\mathcal{E}$ and $ \mathcal{E}'$ can be reduced to showing that $\mathcal{E}^{\prime}$ is independent from  the pair  \eqref{eq:Poisson:proof:1}. 
To this end, we next show that $\mathcal{E}'$ is the excursion process of a Lévy snake distributed  $\mathbb{P}_{0,x,0}$ which is independent from the pair \eqref{eq:Poisson:proof:1}. In this direction  and with the same notations as before, we set 
\begin{equation} \label{equation:exitlocalx}
    \mathfrak{L}_t:=\sum\limits_{i\in \mathcal{I}} {L}_{t\wedge b_i-t\wedge a_i}(\rho^{i}, {W}^i),\quad t\geq 0.   
\end{equation}
The process $(\mathfrak{L}_t)_{t \geq 0}$ can be used to index the family of subtrajectories  escaping $\overline{E}\setminus ( \{x\}\times \mathbb{R}_+)$ of the collection $(\rho^{i}, W^{i},\Lambda^{i}-\Lambda^i_0)$, for $i\in \mathcal{I}$. Specifically, for each  $i \in \mathcal{I}$,  denote  the connected components of the open set $\big\{t\geq 0:\:\uptau(W_{t}^i)< \zeta_t(W^i) \big\} $ by  $\big((s_{k},t_{k}) : k\in \mathcal{K}_i \big)$, 
where  $\mathcal{K}_i $ is an indexing set that might be empty, and  for $k \in \mathcal{K}_i$   let $(\rho^{i,k},W^{i,k},\Lambda^{i,k})$ be the subtrajectory of $(\rho^i,W^i,\Lambda^{i})$ associated with the interval $[s_{k},t_{k}]$ in the sense of Section \ref{section:snake}, translated by $-(0,0, \Lambda^i_0)$. In particular, every such  subtrajectory is an element of $\overline{\mathcal{S}}_x$ with $\Lambda^{i,k}_0 = 0$. Note that in the time scale of $((\rho_{t},\overline{W}_{t}):t\geq 0)$, the  triple   $(\rho^{i,k}, W^{i,k},\Lambda^{i,k}+\Lambda_0^i)$ is the subtrajectory associated with  the interval $[a_{i,k},b_{i,k}]$,
where $a_{i,k}:=a_i+s_{k}$ and $b_{i,k}:=b_i + t_{k}$. The strong Markov property at time $A^{-1}_r$ followed by the   special Markov property of Lévy snakes established in \cite[Theorem 3.7]{2024structure} entails that, conditionally on $\mathfrak{L}_\infty$, the point measure 
\begin{equation}
    \label{definition:medidaordenadatilde2_2}
\sum\limits_{i\in \mathcal{I}, k\in \mathcal{K}_i} \delta_{(\mathfrak{L}_{s_{k}},\rho^{i,k},\overline{W}^{i,k})}
\end{equation}
is a Poisson point measure with intensity $\mathbbm{1}_{[0,\mathfrak{L}_\infty]}(\ell)\dd \ell ~\mathbb{N}_{x,0}(\dd\rho,\dd\overline{W})$ independent of  the pair  \eqref{eq:Poisson:proof:1}. Observe that  $\mathfrak{L}_{\infty}<\infty$ a.s.  since otherwise, $T = \inf \{t \geq A^{-1}_{r} : \langle \rho_t , 1 \rangle = 0  \}$ would be infinite with positive probability.   Further, by  the strong Markov property, the process $((\rho_{T+t}, W_{T+t}) : t \geq 0)$ is distributed $\mathbb{P}_{0,x,0}$ and is independent from $\overline{\mathcal{F}}_T$; and   in particular, it is independent of \eqref{eq:Poisson:proof:1} and from the measure in the previous display. If we write  $(c_j,d_j)_{j \in \mathcal{J}}$  for the excursion intervals away from $0$ of $( \rho_{t},  \,  t \geq T)$, we let $(\rho^j,\overline{W}^j)_{j \in \mathcal{J}}$ be  the associated subtrajectories and $(\ell_j)_{j\in \mathcal{J}}$  the corresponding value of the local time of $( \rho_{T+t},  \,  t \geq 0)$ at $0$  at the start of each  excursion,   it follows from our previous discussion   that    the measure 
\begin{equation}
    \label{definition:medidaordenadatilde2_2_2}
\mathcal{M}' = \sum\limits_{i\in \mathcal{I}, k\in \mathcal{K}_i} \delta_{(\mathfrak{L}_{s_{k}},\rho^{i,k},\overline{W}^{i,k})}+ \sum_{j \in \mathcal{J}} \delta_{( \mathfrak{L}_\infty +\ell_j , \rho^j , \overline{W}^j )} 
\end{equation}
is a Poisson point measure,  independent of \eqref{eq:Poisson:proof:1}, and with intensity $\mathbbm{1}_{\mathbb{R}_+}(\ell) \dd \ell \otimes \mathbb{N}_{x,0}$. Since $\mathbb{N}_{x,0}$ is precisely the excursion measure away from $(0,x,0)$ of $(\rho , \overline{W})$, by the same classic arguments detailed in the proof of Corollary \ref{corollary:exitPx},  we can concatenate the atoms of  $\mathcal{M}'$ according to the order induced by $(\mathfrak{L}_t)_{t \geq 0}$    to obtain a process $(\rho^\prime,\overline{W}^\prime)$ independent from \eqref{eq:Poisson:proof:1} and  distributed $\mathbb{P}_{0,x,0}$. 
\par 
Let us now  argue that $\mathcal{E}'$ is precisely the excursion point  process of $(\rho', \overline{W}')$. This would show both, that $\mathcal{E}'$  is  distributed as $\mathcal{E}$ under $\mathbb{P}_{0,x,0}$, and by our previous discussion that it is independent from the pair \eqref{eq:Poisson:proof:1}. Note that this will  conclude the proof of point (iii').  Let  $\mathcal{D}^{ \prime}  := \mathcal{D}(\rho', W')$ be the collection of debut times of $(\rho',\overline{W}')$,   and write  $(\rho'^{u'}, W'^{u'})_{u' \in \mathcal{D}^{\prime}}$ for the corresponding family of excursions. We set 
\begin{equation*}
    \mathcal{E}(\rho', \overline{W}')  = 
        \sum_{u' \in \mathcal{D}^{ \prime}} \delta_{(A_{g(u')}(\rho', W'), \rho'^{u'} , W'^{u'})}
    \end{equation*}
for the associated excursion process. Since any debut such that $\mathfrak{g}(u) > A^{-1}_r$ is necessarily an excursion away from $x$ for either, a subtrajectory $(\rho^{i,k}, W^{i,k})$ for some $i \in \mathcal{I}$, $k \in \mathcal{K}_i$, or for some $(\rho^j, W^j)$ for some $j \in \mathcal{J}$,   from our construction of the process $(\rho', \overline{W}')$ it follows that there exists  a bijection between $\{ u \in \mathcal{D} : \mathfrak{g}(u) > A^{-1}_{r} \}$ and $\mathcal{D}^{\prime}$. More precisely,   for every debut point $u$ in the set  $\{u \in \mathcal{D}  : \mathfrak{g}(u) > A^{-1}_{r} \}$, we can find $u' \in \mathcal{D}^{\prime}$ with $W^{u} = W'^{u'}$. Moreover, as we shall now briefly sketch, for such a pair  $u, u'$ it holds that  
\begin{equation}\label{equation:Aditivaigual}
    A_{\mathfrak{g}(u)}(\rho , \overline{W}) - r = A_{\mathfrak{g}(u')}(\rho', \overline{W}'),  
\end{equation}
proving that $\mathcal{E}'$ is precisely $\mathcal{E}(\rho', \overline{W}')$. 
We stress however that the processes  $(A_{A^{-1}_{r} + t }-r: t \geq 0)$ and $A(\rho',\overline{W}')$ do differ. Let us briefly explain why the identity in the last display holds. On one hand, by  equations (4.29)  and    equation  (4.30) in \cite{2024structure} we can write 
\begin{equation}\label{equation:decompA}
    A_t(\rho , \overline{W} ) -r =  \sum\limits_{i\in \mathcal{I}, k \in \mathcal{K}_i} A_{t\wedge b_{i,k}-t\wedge a_{i,k}}(\rho^{i,k}, {W}^{i,k}) + \sum\limits_{j\in \mathcal{J}} A_{t\wedge d_j-t\wedge c_j}(\rho^{i}, {W}^i) ,\quad \text{ for } t\geq A^{-1}_r.
\end{equation}
The process $A(\rho', \overline{W}')$ can be decomposed as well in a similar form. More precisely, since by constriction  the collection $(\rho^{i,k},\overline{W}^{i,k})_{i \in \mathcal{I}, k \in \mathcal{K}_i}$, $(\rho^j, \overline{W}^{j})_{j \in \mathcal{J}}$ are precisely the excursions away from $(0,x,0)$ of $(\rho', \overline{W}')$, by making use of  \eqref{equation:aditiva1}    we can as well write an analogous decomposition for $A(\rho', \overline{W}')$ in terms of this family of subtrajectories. A direct comparison between  the latter and \eqref{equation:decompA} readily 
 yields identity \eqref{equation:Aditivaigual}.  
\end{proof}

\section{\texorpdfstring{The exit local time under $\mathbf{N}_{x,r}$ and $\mathbf{N}_{x}^*$}{} }\label{section:L}

We work with an arbitrary fixed $r\geq 0$. In this section we aim to develop a theory analogous to that of exit local times under the measures   $\mathbf{N}_{x,r}$ and $\mathbf{N}_{x}^*$. We start in Section \ref{sect:unif:conv:special:N:black} by studying the process $L$ introduced in Section \ref{section:truncationboundary} under $\mathbf{N}_{x,r}$,  and we establish a special Markov property,  similar to the one proved in Theorem 3.7 of \cite{2024structure}. In Section \ref{sub:inside:approx}, we  extend this theory to the excursion measure $\mathbf{N}_x^*$. Finally, in Section \ref{sec:Spinal:L} we derive a spinal decomposition under $\mathbf{N}_{x,r}$ with respect to a point sampled uniformly according to the measure $\dd L$. As an application, we identify the distribution of $L_\sigma$ under $\mathbf{N}_{x,r}$. 
\par
Let us start by introducing some notation and  by recalling standard results on  Lévy snakes  that  will be used frequently in our reasoning. In this direction,  remark that for every  $\mu\in \mathcal{M}_f(\mathbb{R}_+)$,  by property (iii)  of Section \ref{subsection:explorationprocess} and by the definition of $\mathbf{P}_{\mu}$ given in  \eqref{equation:LevyPmu},  we have:

\begin{equation}\label{Lebesgue:no:quiere:verlo}
    \text{Leb}\big( \{s \geq 0 : H(\rho_s)=\inf_{[0,s]} H(\rho)\}\big) = 0,\quad \mathbf{P}_{\mu}-\text{a.s.}
\end{equation}
 Next, fix an arbitrary element $(\mu , \overline{\w})$ of $\overline{\Theta}_x$. Under $\mathbb{P}_{\mu , \overline{\w}}^\dag$,   write  $(a_i,b_i)_{i\in \mathcal{I}}$ for the connected components of $\{s \geq 0 : H(\rho_s)-\inf_{[0,s]} H(\rho) > 0\}$ and for every $i\in \mathcal{I}$,  let $(\rho^i,\overline{W}^i):=(\rho^i,W^i,\Lambda^i)$ be the  subtrajectory associated with the interval $[a_i, b_i]$. It was shown in  \cite[Lemma 4.2.4]{DLG02} that the measure 
\begin{equation}\label{equation:PoissonH}
    \sum \limits_{i\in \mathcal{I}} \delta_{(H(\rho_{a_i}), \rho^i,\overline{W}^i)} 
\end{equation}
is a Poisson point measure with intensity $\mu(\dd h) \mathbb{N}_{\overline{\w}(h)}(\dd \rho , \dd \overline{W})$. By elementary properties of Poisson point measures,   the condition $(\mu , \overline{\w}) \in \overline{\Theta}_x$ ensures that $\mathbb{P}_{\mu , \overline{\w}}^\dag$-a.s. we have  $W_0^{i} \neq x$, for every $i \in \mathcal{I}$. 
 Finally, for latter use, we note that equation  (4.11) in \cite{DLG02} and   Lemma \ref{lem:equa:convL} entail that,  under $\mathbb{N}_{y}$ for $y\neq x$, the process  $L$ is the exit local time of $(\rho,W)$ from the domain $E\setminus\{x\}$. In particular, the first moment formula  \cite[Proposition 4.3.2]{DLG02} gives: 
\begin{equation}\label{equation:mediaexit}
    \mathbb{N}_{y}(L_\sigma) = \Pi_y\big(\exp(-\alpha \uptau) \big) , \quad \text{ for all } y \in E \setminus \{x \}.
\end{equation}

\subsection{The special Markov property}\label{sect:unif:conv:special:N:black}
Recall 
 the notation  $(\varepsilon_k)_{k\geq 0}$ for the sequence used in the definition \eqref{definition:exitMultiusos} of the functional $L$. The goal  of the section is to prove the following result.
\begin{prop} \label{proposition:L*} Under $\mathbf{N}_{x,r}$, the following convergence  holds     \begin{equation} \label{equation:aproximacionL*Pmu}
        \lim_{k\to\infty} \sup_{t\geq 0}\Big|L_{t}(\rho,W)- \frac{1}{\epsilon_k} \int_0^t \dd s \,  \mathbbm{1}_{\{ \uptau(W_s) <H(\rho_s) < \uptau(W_s) + \epsilon_k \}} \Big|=0, \quad \text{a.e.} 
    \end{equation}
    Furthermore, if under $\mathbf{N}_{x,r}$  we write $(s_i,t_i)_{i\in \mathcal{I}}$ for the connected components of $\{H(\rho_s)>\uptau(W_s):~s\geq 0\}$ and we let $(\rho^i,\overline{W}^i)_{i \in \mathcal{I}}$ be the associated subtrajectories, conditionally on $L_\sigma(\rho,W)$,  the point measure 
         \begin{equation}\label{equation:SpecialNblack}
             \sum_{i \in \mathcal{I}}\delta_{(L_{s_i}(\rho,W), \rho^i, \overline{W}^i )}
         \end{equation}
        is a Poisson point measure with intensity $\mathbbm{1}_{[0,L_\sigma(\rho,W)]}(\ell) \dd \ell \,  \mathbb{N}_{x,0}(\dd \rho , \dd \overline{W})$, independent of $\mathrm{tr}(\rho,W)$. 
\end{prop}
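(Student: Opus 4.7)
The overall strategy is to exploit the disintegration $\mathbf{N}_{x,r}(\dd \rho, \dd \overline{W}) = N(\dd \rho)\mathbf{Q}^{H(\rho)}_{x,r}(\dd \overline{W})$ together with the structure of $\mathcal{N}_{x,r}$ as the law of the concatenation of an $\mathcal{N}$-excursion $\xi^*$ (of length $\uptau^*$) with an independent $\Pi_{x,r}$-process, and to transfer the special Markov property \cite[Theorem 3.7]{2024structure} from the standard Lévy snake setting to $\mathbf{N}_{x,r}$. A first structural observation sets up the rest: for each $s_i$, left endpoint of a connected component of $\{s : H(\rho_s) > \uptau(W_s)\}$, one has $H(\rho_{s_i}) = \uptau(W_{s_i})$, so by the strong snake property the tip $\widehat{W}_{s_i}$ equals $x$. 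A spinal decomposition in the spirit of \eqref{equation:PoissonH} then identifies the subtrajectories $(\rho^i, \overline{W}^i)_{i \in \mathcal{I}}$ as Lévy snake excursions rooted at points of $\mathcal{T}_{H(\rho)}$ carrying the label $x$; conditional independence is dictated by the excursion decomposition of the Lévy snake along its spine, and each $(\rho^i, \overline{W}^i)$ is distributed according to $\mathbb{N}_{x,0}$.

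To establish the uniform convergence \eqref{equation:aproximacionL*Pmu}, I would first note that the approximating integrand vanishes on $\{H(\rho_s) \leq \uptau(W_s)\}$, whence
\begin{equation*}
\frac{1}{\epsilon_k}\int_0^t \dd s\, \mathbbm{1}_{\{\uptau(W_s) < H(\rho_s) < \uptau(W_s)+\epsilon_k\}} = \sum_{i \in \mathcal{I}} \frac{1}{\epsilon_k}\int_0^{(t\wedge t_i) - (t\wedge s_i)} \dd s\, \mathbbm{1}_{\{\uptau(W^i_s) < H(\rho^i_s) < \uptau(W^i_s)+\epsilon_k\}}.
\end{equation*}
Applying Lemma \ref{lem:equa:convL} to each $(\rho^i, W^i)$ (distributed according to $\mathbb{N}_{x,0}$) yields the uniform convergence on each piece to $L_{(t\wedge t_i)-(t\wedge s_i)}(\rho^i, W^i)$. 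The first-moment formula \eqref{equation:mediaexit} together with the Poissonian structure (to be established in the second part) gives $\mathbf{N}_{x,r}$-a.e.\ summability of $\sum_i L_\sigma(\rho^i, W^i)$, and a standard argument then upgrades the piecewise convergence to uniform convergence on $\mathbb{R}_+$. One can bootstrap this by first establishing the Poisson structure on a slightly weaker version of $L$ (e.g.\ defined as a $\liminf$), then upgrading to the genuine uniform limit.

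For the Poisson structure of \eqref{equation:SpecialNblack}, the process $L$ plays the role of an exit local time from the domain $\{(y,\ell)\in \overline{E}: y\neq x\}$, completely analogous to the functional $\mathfrak{L}$ appearing in \eqref{equation:exitlocalx} during the proof of Theorem \ref{theorem:excursionPPP}. The special Markov property of the Lévy snake \cite[Theorem 3.7]{2024structure}, transferred to $\mathbf{N}_{x,r}$ via the disintegration $N(\dd\rho)\mathbf{Q}^{H(\rho)}_{x,r}(\dd \overline{W})$, then yields that conditionally on $L_\sigma$ the subtrajectories form a Poisson point measure with intensity $\mathbbm{1}_{[0,L_\sigma]}(\ell)\dd\ell \otimes \mathbb{N}_{x,0}$, independent of $\mathrm{tr}(\rho, W)$. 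The independence is intuitive: $\mathrm{tr}(\rho, W)$ encodes exactly the portion of the snake lying below the first return to $x$ (hence carrying $\xi^*$ and, beyond $\uptau^*$, the subsequent $\Pi_{x,r}$-motion), while the subtrajectories form the ``bushy'' part of the tree growing above these returns.

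\textbf{Main obstacle.} The principal technical difficulty is adapting the special Markov property from its standard formulation under $\mathbb{N}_y$ or $\mathbb{P}_{\mu,\overline{\w}}$ to the infinite measure $\mathbf{N}_{x,r}$, which is not itself an excursion measure of a Markov process in the classical sense but is built from the infinite measure $\mathcal{N}_{x,r}$. One must carefully identify the exit times and the associated exit local time, handle the infinite-mass nature of $\mathbf{N}_{x,r}$ (as was done in the proofs of the preceding sections), and verify that the Poisson intensity indeed comes out to be $\mathbb{N}_{x,0}$ rather than some measure shifted by the local time $r$---a consequence of the fact that local time is reset to $0$ when viewing the detached subtrees relative to their own roots.
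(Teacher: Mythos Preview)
Your decomposition for the approximating integral is incorrect. On each interval $(s_i,t_i)$ the snake property forces $W_s$ to agree with $W_{s_i}$ up to height $H(\rho_{s_i})=\uptau(W_{s_i})$, so $\uptau(W_s)=H(\rho_{s_i})$ is \emph{constant} on $(s_i,t_i)$. Consequently
\[
\mathbbm{1}_{\{\uptau(W_s)<H(\rho_s)<\uptau(W_s)+\epsilon_k\}}
=\mathbbm{1}_{\{0<H(\rho^i_{s-s_i})<\epsilon_k\}},
\]
which after normalisation approximates the local time of $H(\rho^i)$ at $0$, not the exit local time $L(\rho^i,W^i)$ as you claim. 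Moreover Lemma~\ref{lem:equa:convL} is stated for $y\neq x$ and does not apply to snakes started at $x$. So the ``piecewise Lemma~\ref{lem:equa:convL} plus summability'' scheme breaks down at its first step, and the circularity you flag (needing the Poisson structure of the $(\rho^i,\overline{W}^i)$ to justify convergence, while the convergence is an ingredient for the Poisson structure) is not resolved by your bootstrap suggestion.

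The paper avoids both issues by working under $\mathbb{P}^\dag_{\mu,\overline{\w}}$ for $(\mu,\overline{\w})\in\overline{\Theta}_x$ and decomposing along the excursions of $H(\rho)$ above its running infimum (not along $\{H(\rho)>\uptau(W)\}$). Those subtrajectories form a Poisson measure with intensity $\mu(\dd h)\,\mathbb{N}_{\overline{\w}(h)}$, and since $\mu$ does not charge $\{h:\w(h)=x\}$ one can legitimately invoke Lemma~\ref{lem:equa:convL} on each piece; a domination argument (via the special Markov property \cite[Theorem~3.7]{2024structure} applied under $\mathbb{N}_{\w(h)}$) handles the sum. This yields uniform convergence under $\mathbb{P}^\dag_{\mu,\overline{\w}}$. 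The transfer to $\mathbf{N}_{x,r}$ then uses the simple Markov property of Proposition~\ref{eq:strong:Markov:N} at a fixed time $s>0$ (giving convergence on $[s,\infty)$) together with the duality of Corollary~\ref{corollary:dualidadNblack} (giving convergence on $[0,\sigma-s]$). For the Poisson statement the paper likewise applies the Markov property at time $t>0$, invokes \cite[Theorem~3.8]{2024structure} on the post-$t$ process, and lets $t\downarrow 0$ using the just-established continuity $L_t\to L_0=0$. Your direct-transfer idea via the disintegration $N(\dd\rho)\mathbf{Q}^{H(\rho)}_{x,r}(\dd\overline{W})$ is not wrong in spirit, but the companion paper's special Markov property is formulated under $\mathbb{N}_y$ and $\mathbb{P}_{\mu,\overline{\w}}$, and the ``Markov at $t$ then $t\downarrow 0$'' device is precisely how one reduces to those measures.
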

The first statement of the proposition is an extension of Lemma \ref{lem:equa:convL} under $\mathbf{N}_{x,r}$. We refer to the second statement as the special Markov property under $\mathbf{N}_{x,r}$, by analogy with the special Markov property of Lévy snakes \cite[Theorem 3.7]{2024structure}. In Section \ref{sub:inside:approx}, we will strengthen this result by showing that $L_\sigma(\rho,W)$ is a measurable function of $\mathrm{tr}(\rho,W)$; its proof is postponed to Section \ref{sub:inside:approx} since it requires additional  estimates. 
\par Note that Theorem \ref{theorem:exit} combined with Lemma \ref{lemma:tequedasenThetax} ensures that under $\mathbf{N}_{x,r}$, the process $(\rho,\overline{W})$ takes values in $\overline{\Theta}_x$. Proposition \ref{proposition:L*}  will easily follow by combining this remark with the following technical lemma.
\begin{lem}\label{lem:L:under:P:mu:w}
Let $(\mu,\overline{\w}):=(\mu,\w,\lambdaC)\in \overline{\Theta}_x$. Under $\mathbb{P}^\dag_{\mu , \overline{\w}}$, the convergence \eqref{equation:aproximacionL*Pmu} holds. Moreover, if under $\mathbb{P}^\dag_{\mu , \overline{\w}}$ we consider $(a_i,b_i)_{i\in \mathcal{I}}$ the connected components of $\{s \geq 0 : H(\rho_s)- \inf_{[0,s]}H(\rho)>0\}$ and we write $(\rho^i, W^i,\Lambda^i)_{i\in \mathcal{I}}$ for the associated subtrajectories, then a.s. we have: 
\begin{equation}\label{equation:Lpmu}
    L_t(\rho,W)=\sum \limits_{i\in \mathcal{I}} \mathbbm{1}_{ H(\rho_{a_i})<\uptau(\w)} L_{t\wedge b_i-t\wedge a_i}(\rho^i,W^i),\quad t\geq 0.
\end{equation}
\end{lem}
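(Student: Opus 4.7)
The plan is to exploit the Poisson structure \eqref{equation:PoissonH} to reduce the statement, excursion by excursion, to Lemma \ref{lem:equa:convL}. Under $\mathbb{P}^\dag_{\mu, \overline{\w}}$, on each excursion interval $(a_i, b_i)$ of $H(\rho)$ above its running infimum, the snake property together with $H(\rho_{a_i}) = \min_{[a_i, s]} H(\rho)$ yields that $W_s$ equals $\w$ on $[0, H(\rho_{a_i})]$ and equals a vertical translate of $W^i_{s - a_i}$ above $H(\rho_{a_i})$. Consequently $\uptau(W_s) = \uptau(\w)$ throughout $(a_i, b_i)$ when $\uptau(\w) \leq H(\rho_{a_i})$, whereas $\uptau(W_s) = H(\rho_{a_i}) + \uptau(W^i_{s - a_i})$ when $\uptau(\w) > H(\rho_{a_i})$. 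Combining this with \eqref{Lebesgue:no:quiere:verlo} (which makes the complement of $\bigcup_i (a_i, b_i)$ Lebesgue-negligible) gives the pathwise decomposition
\begin{equation*}
\frac{1}{\epsilon} \int_0^t \mathbbm{1}_{\{\uptau(W_s) < H(\rho_s) < \uptau(W_s) + \epsilon\}}\,\mathrm{d}s \;=\; A_t^\epsilon \;+\; R_t^\epsilon,
\end{equation*}
with $A_t^\epsilon = \sum_{i:\,H(\rho_{a_i}) < \uptau(\w)} F^{\epsilon, i}_{t \wedge b_i - t \wedge a_i}$, where $F^{\epsilon, i}_r$ is the $\epsilon$-approximation of $L_r(\rho^i, W^i)$, and with $R_t^\epsilon$ supported on excursions satisfying $H(\rho_{a_i}) \in [\uptau(\w), \uptau(\w) + \epsilon)$.

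The next step is to show that $R_t^\epsilon \to 0$. Continuity of $\w$ gives $\w(\uptau(\w)) = x$ whenever $\uptau(\w) < \infty$, so condition (i) of $\overline{\Theta}_x$ yields $\mu(\{\uptau(\w)\}) = 0$, and in particular $\mu\bigl((\uptau(\w), \uptau(\w) + \epsilon)\bigr) \to 0$ as $\epsilon \to 0$. Combining this with the Poisson intensity \eqref{equation:PoissonH} and the first moment formula $\mathbb{N}_y\bigl(\int_0^\sigma \mathbbm{1}_{\{0 < H(\rho_s) < \delta\}}\,\mathrm{d}s\bigr) = \int_0^\delta e^{-\alpha h}\,\mathrm{d}h \leq \delta$ that follows from \eqref{eq:many:to:one:N}, a routine computation gives $\mathbb{E}[R_\infty^\epsilon] \leq \mu\bigl((\uptau(\w), \uptau(\w) + \epsilon)\bigr) \to 0$.

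It remains to analyse the main term $A_t^\epsilon$. Conditionally on $H(\rho_{a_i})$, the Poisson structure gives that $(\rho^i, W^i)$ has law $\mathbb{N}_{\overline{\w}(H(\rho_{a_i}))}$ with $\overline{\w}(H(\rho_{a_i})) \neq x$ almost surely (by condition (i) of $\overline{\Theta}_x$), so Lemma \ref{lem:equa:convL} delivers $\sup_r |F^{\epsilon_k, i}_r - L_r(\rho^i, W^i)| \to 0$ almost surely for each $i$. To handle the infinite sum, a direct computation using \eqref{eq:many:to:one:N} gives
\begin{equation*}
\mathbb{N}_y(F_\sigma^\epsilon) \;=\; \frac{1}{\epsilon}\,\Pi_y\!\Big(\!\int_\uptau^{\uptau + \epsilon} e^{-\alpha h}\,\mathrm{d}h\Big) \;\leq\; \Pi_y\bigl(e^{-\alpha \uptau}\bigr) \;=\; \mathbb{N}_y(L_\sigma) \;\leq\; 1,
\end{equation*}
so by Scheff\'e's lemma (a.s.\ convergence $F_\sigma^{\epsilon_k} \to L_\sigma$ together with matching means) one gets $L^1$-convergence and then a Fatou-type argument yields $\mathbb{N}_y\bigl(\sup_r |F_r^{\epsilon_k} - L_r|\bigr) \to 0$; bounded convergence in $h$ then delivers the $L^1$-bound
\begin{equation*}
\mathbb{E}\!\left[\sum_{i:\,H(\rho_{a_i}) < \uptau(\w)} \sup_r \bigl|F_r^{\epsilon_k, i} - L_r(\rho^i, W^i)\bigr|\right] \;\xrightarrow[k \to \infty]{}\; 0.
\end{equation*}
This identifies the candidate limit $L_t^\ast := \sum_{i:\,H(\rho_{a_i}) < \uptau(\w)} L_{t \wedge b_i - t \wedge a_i}(\rho^i, W^i)$, which is finite and continuous in $t$ thanks to \eqref{equation:mediaexit}, and it establishes $\sup_t |L_t^{\epsilon_k} - L_t^\ast| \to 0$ in $L^1(\mathbb{P}^\dag_{\mu, \overline{\w}})$.

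The main obstacle is to upgrade the previous $L^1$-convergence to the almost sure uniform convergence stated in \eqref{equation:aproximacionL*Pmu} along the full sequence $\epsilon_k$ rather than along a subsequence; the plan is to combine the monotonicity of $L_t^{\epsilon_k}$ in $t$ with the continuity of the limit $L_t^\ast$, so that a Dini-type lemma for non-decreasing functions promotes pointwise almost sure convergence on a dense set to uniform convergence on compacts, while the tail of the Poisson sum is controlled uniformly in $k$ by separating excursions with $\sigma^i > \eta$ (finitely many, each handled by Lemma \ref{lem:equa:convL}) from those with $\sigma^i \leq \eta$ (controlled by the moment formula above). Once \eqref{equation:aproximacionL*Pmu} holds, the limit $L_t = L_t^\ast$ is precisely the right-hand side of \eqref{equation:Lpmu}, which completes the proof.
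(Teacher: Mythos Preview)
Your decomposition into the main term $A_t^\epsilon$ and the remainder $R_t^\epsilon$ is correct and matches the paper, but the proof has a genuine gap precisely where you flag ``the main obstacle'': you only establish $L^1$-convergence along $(\epsilon_k)$ and never complete the upgrade to almost sure convergence along that \emph{specific} sequence. Both of your proposed remedies fall short. The Dini argument requires pointwise almost sure convergence at a dense set of times, which $L^1$-convergence does not provide along $(\epsilon_k)$. The large/small excursion split fails because for the small excursions you need $\sup_k \sum_{\sigma^i \leq \eta} F^{\epsilon_k,i}_\sigma$ to be almost surely small as $\eta \to 0$, and the moment bound $\mathbb{N}_y(F^\epsilon_\sigma) \leq 1$ only controls expectations, not almost sure suprema over $k$. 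The same issue affects your treatment of $R^\epsilon$: showing $\mathbb{E}[R^\epsilon_\infty] \to 0$ gives a.s.\ convergence only along a subsequence.

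The paper avoids this entirely by working almost surely throughout. The crucial input you are missing is the almost sure approximation \eqref{temps:local:I:p:s} of the local time at $0$ of the reflected L\'evy process, packaged as the a.s.\ limit $\epsilon_k^{-1} \sum_j \int \mathbbm{1}_{\{0 < H(\rho^j_s) < \epsilon_k\}}\,\dd s \to C$ for a Poisson measure of intensity $C \cdot N$ (this is \eqref{eq:C:poisson:H}). This is used twice: first to show $R^{\epsilon_k}_\infty \to \mu([\uptau(\w), \uptau(\w)+\delta])$ a.s.\ for each fixed $\delta$, and then --- after a further decomposition of the main term via the special Markov property of \cite[Theorem 3.7]{2024structure} --- to show $\sum_i F^{\epsilon_k,i}_\sigma \to \sum_i L_\sigma(\rho^i,W^i)$ almost surely. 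This last convergence provides an a.s.\ convergent dominating sequence, so the generalised dominated convergence theorem delivers a.s.\ uniform convergence of the full sum directly, without any $L^1$ detour.
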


\begin{proof}
Fix an initial condition $(\mu , \overline{\w}):=(\mu,\w,\lambdaC)$ in $\overline{\Theta}_x$ and let  $T = \inf\{ t \geq 0 : H(\rho_t) < \uptau(\w) \}$, with the usual convention $\inf \emptyset=\infty$. The statement of the lemma will immediately follow from establishing that, under $\mathbb{P}^{\dag}_{\mu, \overline{\w}}$, the two following convergences hold  a.s.
\begin{align*}
    &\mathrm{(i)} \,  \lim_{k\to \infty }\epsilon_k^{-1} \int_0^T \dd s \,  \mathbbm{1}_{\{ \uptau(W_s)  < H(\rho_s)  < \uptau(W_s) + \epsilon_k  \}}   = 0;  \\
    &\mathrm{(ii)}   \lim_{k\to \infty } \sup_{t \geq 0}\Big| \epsilon_k^{-1}  \int_{T}^{t \wedge T} \dd s \,  \mathbbm{1}_{\{ \uptau(W_s)  < H(\rho_s)  < \uptau(W_s) + \epsilon_k  \}} -  \sum \limits_{i\in \mathcal{I}} \mathbbm{1}_{ H(\rho_{a_i})< \uptau(\w)} L_{t\wedge b_i-t\wedge a_i}(\rho^i,W^i)\Big| = 0.
\end{align*}
We start with some simplifications, and in this direction the following fact will be used  repeatedly in our arguments.  If $\sum_{j\in \mathcal{J}}\delta_{\rho^j}$ is a Poisson point measure with intensity $C\cdot N(\dd \rho)$ for some constant $C \geq 0$, then  
\begin{equation}\label{eq:C:poisson:H}
\lim \limits_{k\to \infty} \varepsilon_k^{-1} \sum_{j\in \mathcal{J}} \int_{0}^\infty \dd t~ \mathbbm{1}_{\{ 0<H(\rho^j_t)<\varepsilon_k \}}=C,\quad \text{a.s.}
\end{equation}
This convergence is an simple consequence of \eqref{temps:local:I:p:s} and \eqref{PPPsobreinfimo}.  
In what follows,  we argue under  $\mathbb{P}^\dag_{\mu , \overline{\w}}$ 
and we start with some simplifications. First  remark that, outside of a negligible set, by \eqref{Lebesgue:no:quiere:verlo} we can write 
\begin{equation*}
    \int_{0}^{t } \dd s \,  \mathbbm{1}_{\{ \uptau(W_s) < H(\rho_s) < \uptau(W_s) + \epsilon_k \}}
    =
    \sum_{i\in \mathcal{I}} \int_{a_i\wedge t}^{b_i\wedge t} \dd s \, \mathbbm{1}_{\{ \uptau(W_s) < H(\rho_s) < \uptau(W_s) + \epsilon_k \}}, \quad \text{ for } t \geq 0, 
\end{equation*}
and notice that $\mathcal{I}^{\prime}:= \{i\in \mathcal{I}:~H(\rho_{a_i}) < \uptau(\w)\}$   coincides with $\{i\in \mathcal{I}:~a_i > T\}$. For any $i \in \mathcal{I}$ and $s \in [a_i,b_i]$, it holds that $H(\rho_s)=H(\rho_{a_i})+ H(\rho^i_{s-a_i})$ and since the process  $(\rho,\overline{W})$ is a.s. in  $\overline{\mathcal{S}}$, we have  as well  
$$
\uptau(W_s)=\uptau(\w), \text{ if } i \in\mathcal{I}\setminus \mathcal{I}^\prime, \quad \text{ and } \quad \uptau(W_s)=H(\rho_{a_i})+\uptau(W_{s-a_i}^i), \text{ if } i \in\mathcal{I}^\prime. 
$$
With these remarks in hand, it is plain that in order to derive the points (i) and (ii) it suffices to prove respectively that a.s. we have  
\begin{align*}
    & \text{({i}')} \,   \lim_{k\to \infty}\varepsilon_k^{-1} \sum_{i\in \mathcal{I}\setminus \mathcal{I}^\prime} \mathbbm{1}_{H(\rho_{a_i})\leq \uptau(\w)+\varepsilon_k} \int_{0}^{\infty} \dd s \,  \mathbbm{1}_{\{ 0 < H(\rho_s^i) <  \epsilon_k \}}=0;   \\
    & \text{({ii}')}  \lim_{k\to \infty} \sum_{i\in \mathcal{I}^\prime}   \sup_{t\geq 0} \Big|L_t(\rho^i,W^i)-\varepsilon_k^{-1}\int_{0}^{t} \dd s \,  \mathbbm{1}_{\{ \uptau(W_s^i) < H(\rho^i_s) < \uptau(W_s^i) + \epsilon_k \}}\Big|=0.  
\end{align*}
 We stress that  $\mathcal{I}\setminus \mathcal{I}'$ or $\mathcal{I}'$ might be empty depending on the choice of the initial condition $(\mu , \overline{\w})$. Let us start by proving (i'). Fix $\delta>0$ and note that, since  \eqref{equation:PoissonH} is a Poisson point measure with intensity $\mu(\dd h ) \mathbb{N}_{ \overline{\w}(h)}(\dd \rho, \dd W)$, an application of  \eqref{eq:C:poisson:H} gives
 \begin{align*}
 &\limsup_{k\to \infty}\varepsilon_k^{-1} \sum_{i\in \mathcal{I}\setminus \mathcal{I}^\prime} \mathbbm{1}_{H(\rho_{a_i})\leq \uptau(\w)+ \varepsilon_k} \int_{0}^{\infty} \dd s \, \mathbbm{1}_{\{0< H(\rho_s^i) <  \epsilon_k \}}\\
 &\leq \limsup_{k\to \infty}\varepsilon_k^{-1} \sum_{i\in \mathcal{I}\setminus \mathcal{I}^\prime} \mathbbm{1}_{H(\rho_{a_i})\leq \uptau(\w)+\delta} \int_{0}^{\infty} \dd s \,  \mathbbm{1}_{\{ 0 < H(\rho_s^i) <  \epsilon_k \}}= \mu(  [  \uptau(\w),\uptau(\w)+\delta]),   \quad \text{a.s.}
 \end{align*}
 with the convention that $[\uptau(\w), \uptau(\w) + \delta]$ is empty if  $\uptau(\w) = \infty$. The condition $(\mu,\overline{\w})\in \overline{\Theta}_x$ ensures  $\mu(\{ \uptau(\w) \})=0$, and  (i') follows by taking the limit as $\delta \to 0$. We now  turn our attention to (ii'). Observe that by Lemma \ref{lem:equa:convL}, we have the a.s. convergence:  
\begin{equation}\label{eq:L:P:mu:w:x:c:d:g:1}
\lim_{k\to \infty} \sup_{t\geq 0} \Big|L_t(\rho^i,W^i)-\varepsilon_k^{-1}\int_{0}^{t} \dd s \,  \mathbbm{1}_{\{ \uptau(W_s^i) < H(\rho^i_s) < \uptau(W_s^i) + \epsilon_k \}}\Big|=0, \quad \text{ for } i\in \mathcal{I}^\prime,
\end{equation}
and to deduce  (ii') we need a domination argument. To this end, remark that:
\begin{equation}\label{eq:L:P:mu:w:x:c:d:g:3}\sup_{t\geq 0} \Big|L_t(\rho^i,W^i)-\varepsilon_k^{-1}\int_{0}^{t} \dd s \,  \mathbbm{1}_{\{ \uptau(W_s^i) < H(\rho^i_s) < \uptau(W_s^i) + \epsilon_k \}}\Big|\leq L_\infty(\rho^i,W^i) + \epsilon_k^{-1} \int_{0}^{\infty} \dd s \,  \mathbbm{1}_{\{ \uptau(W_s^i) < H(\rho^i_s) < \uptau(W_s^i) + \epsilon_k \}}.
\end{equation}
Recalling that the intensity measure of the Poisson point measure  \eqref{equation:PoissonH} is $\mu(\dd h)\mathbb{N}_{\overline{\w}(h)}(\dd \rho, \,  \dd \overline{W})$, using  \eqref{equation:mediaexit} we obtain the bound   $\mathbb{E}_{\mu,\w}^\dagger\big[\sum_{i \in \mathcal{I}^\prime} L_\infty(\rho^i,W^i)\big]\leq  \langle \mu , 1 \rangle <\infty$. Hence, the variable $\sum_{i\in \mathcal{I}^\prime} L_\infty(\rho^i,W^i)$ is finite a.s.  and  we claim that 
\begin{equation}\label{eq:L:P:mu:w:x:c:d:g:2}
\lim_{k\to \infty}\sum_{i\in\mathcal{I}^\prime} \varepsilon_k^{-1}\int_{0}^{\infty} \dd s  \, \mathbbm{1}_{\{ \uptau(W_s^i) < H(\rho^i_s) < \uptau(W_s^i) + \epsilon_k \}}= \sum_{i\in \mathcal{I}^\prime} L_\infty(\rho^i,W^i), \quad \text{ a.s.}
\end{equation}
Note that if this convergence holds,  the points  \eqref{eq:L:P:mu:w:x:c:d:g:1}, \eqref{eq:L:P:mu:w:x:c:d:g:3}, \eqref{eq:L:P:mu:w:x:c:d:g:2}  and an application of the general dominated convergence theorem \cite[Theorem 19, Chapter 4]{royden} immediately give (ii'). Let us then prove  \eqref{eq:L:P:mu:w:x:c:d:g:2}. For every   $i \in \mathcal{I}^\prime$,  denote  the connected components of the open set $\big\{t\geq 0:\:\uptau(W_{t}^i)< H(\rho^i_t) \big\}$  by  $\big((s_{j},t_{j}) : j\in \mathcal{K}_i \big)$, 
where  $\mathcal{K}_i $ is an indexing set that might be empty. For $j \in \mathcal{K}_i$  let $(\rho^{i,j},W^{i,j},\Lambda^{i,j})$ be the subtrajectory of $(\rho^i,W^i,\Lambda^{i})$ associated with the interval $[s_{j},t_{j}]$ in the sense of Section \ref{section:snake}, translated by $-(0,0, \Lambda^i_0)$. If we let $\mathfrak{L}_\infty^\prime$ be the right-hand side of  \eqref{eq:L:P:mu:w:x:c:d:g:2},  Theorem 3.7 in \cite{2024structure} entails that, conditionally on $\mathfrak{L}_\infty^\prime$,  the point measure 
\begin{equation}
    \label{definition:medidaordenadatilde2_2_l}
\sum\limits_{i\in \mathcal{I}^\prime, j\in \mathcal{K}_i} \delta_{(\rho^{i,j},\overline{W}^{i,j})}
\end{equation}
is a Poisson point measure with intensity $\mathfrak{L}_\infty^\prime \cdot \mathbb{N}_{x,0}(\dd\rho,\dd\overline{W})$. By   similar arguments  as before,   we can write  
\begin{equation*}
    \sum \limits_{i\in \mathcal{I}^\prime}\varepsilon_k^{-1}\int_{0}^{\infty} \dd s  \, \mathbbm{1}_{\{ \uptau(W_s^i) < H(\rho^i_s) < \uptau(W_s^i) + \epsilon_k \}}= \sum\limits_{i\in \mathcal{I}^\prime, j\in \mathcal{K}_i} \varepsilon_k^{-1}\int_{0}^{\infty} \dd s \,  \mathbbm{1}_{\{0< H(\rho^{i,j}_s) <  \epsilon_k \}}
\end{equation*}
and since $\sum \limits_{i\in \mathcal{I}^\prime}L_{\infty}(\rho^i,W^i)<\infty$ a.s.  it follows from \eqref{eq:C:poisson:H} that
\begin{equation*}
    \lim \limits_{k\to \infty} \sum\limits_{i\in \mathcal{I}^\prime, j\in \mathcal{K}_i} \varepsilon_k^{-1}\int_{0}^{\infty} \dd s \,  \mathbbm{1}_{\{0< H(\rho^{i,j}_s) <  \epsilon_k \}}=\sum \limits_{i\in \mathcal{I}^\prime}L_{\infty}(\rho^i,W^i), \quad \text{a.s.}
\end{equation*}
This completes the proof of the lemma.
 \end{proof}

We can now proceed with the proof of Proposition \ref{proposition:L*}.

\begin{proof}[Proof of Proposition \ref{proposition:L*}]
Recall that under $\mathbf{N}_{x,r}$, the process $(\rho,\overline{W})$ takes values in $\overline{\Theta}_x$. Therefore,  by the Markov property under $\mathbf{N}_{x,r}$ proved in Proposition \ref{eq:strong:Markov:N} and  Lemma \ref{lem:L:under:P:mu:w},  it follows that under $\mathbf{N}_{x,r}$ and for every fixed $s> 0$, the process
$$
\frac{1}{\epsilon_k} \int_s^t \dd u \,  \mathbbm{1}_{\{ \uptau(W_u) < H(\rho_u) < \uptau(W_u) + \epsilon_k \} },\quad t\geq s,  
$$
  converges uniformly as $k\to \infty$ towards a continuous function.
  This fact paired with the duality property established in Corollary \ref{corollary:dualidadNblack} yields that under $\mathbf{N}_{x,r}$,  for every fixed  $s> 0$, the process
\begin{align*}
    \frac{1}{\epsilon_k} \int_{(\sigma-s)\vee 0}^{(\sigma-t)\vee 0} \dd u \,  \mathbbm{1}_{\{ \uptau(W_u) < H(\rho_u) < \uptau(W_u) + \epsilon_k \}},\quad t\geq s,  
\end{align*}
 converges also uniformly as $k\to \infty$  towards a continuous function. These two facts combined prove the a.e. convergence \eqref{equation:aproximacionL*Pmu}  under $\mathbf{N}_{x,r}$. Finally,  the special Markov property under  $\mathbf{N}_{x,r}$ follows by the Markov property and   \cite[Theorem 3.8]{2024structure}  by similar arguments to the ones employed in the proof of Theorem \ref{theorem:excursionPPP}. For this reason we only sketch the main steps. An application of the  Markov property of Proposition \ref{eq:strong:Markov:N} under $\mathbf{N}_{x,r}$ at time $t > 0$ followed  with the special Markov property \cite[Theorem 3.8]{2024structure} yield that, on the event $  \{\sigma > t\}$ and conditionally on $(L_t(\rho,W),L_\sigma(\rho,W))$, 
the measure:
\begin{equation}
    \label{definition:medidaordenadatilde2_4}
\sum\limits_{i\in \mathcal{I}} \mathbbm{1}_{\{ L_{s_i}(\rho,W) > L_t(\rho,W) \}} \delta_{(L_{s_i}(\rho,W),\rho^{i},\overline{W}^{i})}
\end{equation}
is a Poisson point measure with intensity $\mathbbm{1}_{[L_{t}(\rho,W),L_\sigma(\rho,W)]}(\ell)\dd \ell \,  \mathbb{N}_{x,r}(\dd\rho,\dd\overline{W})$ independent of $\mathrm{tr}(\rho , W)$.  The result now readily follows by taking the limit as $t \to 0$, noting that the first statement of the proposition ensures that $\lim_{t\to 0}L_t(\rho,W)=L_0(\rho,W) = 0$ a.e. 
\end{proof}

\subsection{An auxiliary approximation of the exit local time}\label{sub:inside:approx}

The purpose  of this section is to extend the notion of exit local time under the excursion measure $\mathbf{N}_x^*$. Note  that since under $\mathbf{N}^*_x$ the set $\{s \geq 0 : \uptau(W_s) < H(\rho_s) \}$   is empty, the functional  $(L_t)_{t \geq 0}$  introduced in \eqref{definition:exitMultiusos} under $\mathbf{N}^*_x$ is  identically equal to $0$, and therefore  not suited for our purposes. 
At this point, we recall from the discussion at the end of  Section \ref{section:truncationboundary} that  for any given  $(\upvarrho , \omega)\in \mathbb{D}(\mathbb{R}_+ , \mathcal{M}_f(\mathbb{R}_+) \times \mathcal{W}_{E} )$ we interpret the process $L_{\Gamma_t(\upvarrho, \omega)}(\upvarrho , \omega),$ $t \geq 0$ as $L(\upvarrho , \omega)$  in the time-scale of $\mathrm{tr}(\upvarrho , \omega)$.  Since $\mathbf{N}_{x}^*$ is the law of $\mathrm{tr}(\rho , W)$ under $\mathbf{N}_{x,0}$, 
 extending the notion of exit local time under $\textbf{N}^*_x$  boils down to finding under $\mathbf{N}_{x,r}$  a continuous non-decreasing functional  of $\mathrm{tr}(\rho , W)$  which coincides with   $L_{\Gamma}(\rho , W)$.  To this end we rely on an  alternative approximation of $L$ under $\mathbf{N}_{x,0}$ given in Proposition \ref{prop:inside:L}. Specifically, if for  every $\w\in \mathcal{W}_E$ and $\delta,\upsilon\geq 0$, we set:
\begin{equation*}
    \uptau_{\delta,\upsilon}(\w):=\inf\{t\in[\delta,\zeta_\w]:~ d_{E}(\w(t),x)\leq \upsilon \}
\end{equation*}
with the usual convention $\inf \varnothing=\infty$, then we have: 

\begin{prop}\label{prop:inside:L}
 There exists
  sequences $(\delta_k)_{k \geq 0}$,  $(\upsilon_k)_{k \geq 0}$ and   $(\epsilon_k^{\prime\prime})_{k \geq 0}$  of positive numbers  converging towards $0$  as $k \to \infty$ such that,  under $\mathbf{N}_{x,r}$, the following convergence  holds a.e.
\begin{equation}\label{equation:aproximationInside}
    \lim \limits_{k\to \infty} \sup_{t\geq 0}  \Big| L_t(\rho,W)-\frac{1}{\epsilon_{k}^{\prime \prime}}\int_{0}^{t} \dd s \, \mathbbm{1}_{\{\uptau_{\delta_k, \upsilon_k}(W_s)<H(\rho_s)<\uptau_{\delta_k, \upsilon_k}(W_s)+\varepsilon_{k}^{\prime\prime}<\uptau(W_s)\}}\Big|=0. \quad 
\end{equation}
\end{prop}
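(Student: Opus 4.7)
The primary motivation for this proposition is to produce an approximation of $L_t$ that depends only on $\mathrm{tr}(\rho,W)$. This measurability is built into the statement: the condition $\uptau_{\delta_k,\upsilon_k}(W_s)+\varepsilon_k'' < \uptau(W_s)$ combined with $H(\rho_s) < \uptau_{\delta_k,\upsilon_k}(W_s)+\varepsilon_k''$ forces $H(\rho_s) < \uptau(W_s)$, so $(\rho_s, W_s)$ lies in the region preserved by the truncation; therefore $\uptau_{\delta_k,\upsilon_k}(W_s)$ and $\uptau(W_s)$ (the latter being the lifetime of the relevant truncated path) can be read off $\mathrm{tr}(\rho,W)$ through the time change $\Gamma$.

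The plan is to proceed in two nested layers and conclude by a diagonal extraction. For fixed parameters $\delta,\upsilon,\varepsilon > 0$, I introduce
\begin{equation*}
    \mathsf{L}_t^{\delta,\upsilon,\varepsilon,*} := \frac{1}{\varepsilon}\int_0^t \dd s \, \mathbbm{1}_{\{\uptau_{\delta,\upsilon}(W_s) < H(\rho_s) < \uptau_{\delta,\upsilon}(W_s)+\varepsilon < \uptau(W_s)\}}.
\end{equation*}
The first step would be to show that, for fixed $(\delta,\upsilon)$, the process $\mathsf{L}^{\delta,\upsilon,\varepsilon,*}$ converges as $\varepsilon\to 0$ towards a continuous non-decreasing process $\mathsf{L}^{\delta,\upsilon,*}$, uniformly on compacts in $\mathbf{N}_{x,r}$-measure (restricted to events of finite measure). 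This fits the exit local time framework of \cite[Section 3.1]{2024structure}: the arguments used in the proof of Lemma \ref{lem:L:under:P:mu:w} carry over with $\uptau_{\delta,\upsilon}$ replacing $\uptau$. Using the Markov property from Proposition \ref{eq:strong:Markov:N}, one reduces to a statement under $\mathbb{P}^\dag_{\mu,\overline{\w}}$, where the Poissonian decomposition \eqref{equation:PoissonH} combined with the basic convergence \eqref{eq:C:poisson:H} delivers the result.

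The second step is to identify $\mathsf{L}^{\delta,\upsilon,*}_t$ with $L_t$ as $(\delta,\upsilon)\to 0$. At the level of first moments, the spinal formula of Lemma \ref{proposition:only:spineN*} yields
\begin{equation*}
\mathbf{N}_{x,r}\bigl(\mathsf{L}^{\delta,\upsilon,\varepsilon,*}_\sigma\bigr)
= \int_0^\infty \!\!\dd a\, e^{-\alpha a}\,\tfrac{1}{\varepsilon}\mathcal{N}_{x,r}\bigl(a-\varepsilon<\uptau_{\delta,\upsilon}<a<\uptau\bigr) \xrightarrow[\varepsilon\to 0]{} \mathcal{N}_{x,r}\bigl(e^{-\alpha\uptau_{\delta,\upsilon}}\mathbbm{1}_{\{\uptau_{\delta,\upsilon}<\uptau\}}\bigr),
\end{equation*}
and the right-hand side converges, by dominated convergence as $(\delta,\upsilon)\to 0$, to $\mathcal{N}_{x,r}(e^{-\alpha\uptau})$, which matches $\mathbf{N}_{x,r}(L_\sigma)$ obtained by the same spinal computation applied to the original $\varepsilon$-approximation \eqref{equation:aproximacionL*Pmu} of $L$. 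To promote this identity from total masses to full functional convergence I would repeat the computation at arbitrary times $t$ via the Markov property and exploit the monotonicity of $\{\mathsf{L}^{\delta,\upsilon,*}\}$ in $(\delta,\upsilon)$.

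The principal obstacle is to fuse these two in-measure convergences into a single a.e.\ uniform-in-$t$ limit along a carefully chosen sequence $(\delta_k,\upsilon_k,\varepsilon_k'')$. Granted the two in-measure convergences, a standard fast-decaying subsequence extraction produces almost-sure convergence at every fixed $t$; uniform-in-$t$ convergence is then obtained via a Dini-type argument relying on the continuity of $L_t$ and the monotonicity in $(\delta,\upsilon)$ of the limit process $\mathsf{L}^{\delta,\upsilon,*}$. Uniformity on all of $\mathbb{R}_+$ rather than on compacts follows from the fact that the integrand vanishes outside $[0,\sigma]$ and $\sigma<\infty$, $\mathbf{N}_{x,r}$-a.e.
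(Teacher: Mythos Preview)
Your two-layer plan (first let $\varepsilon\to 0$ for fixed $(\delta,\upsilon)$ to produce an intermediate functional $\mathsf{L}^{\delta,\upsilon,*}$, then let $(\delta,\upsilon)\to 0$) differs substantially from the paper's route, and in its current form has two genuine gaps.

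First, the claim that the arguments of Lemma~\ref{lem:L:under:P:mu:w} ``carry over with $\uptau_{\delta,\upsilon}$ replacing $\uptau$'' is not justified. Under $\mathbf{N}_{x,r}$ every path $W_s$ starts at $x$, so $\uptau_{\delta,\upsilon}$ is \emph{not} an exit time from a domain containing the starting point; the $\delta$-shift is essential and prevents a direct appeal to the exit local time machinery of \cite[Section~3.1]{2024structure}. Concretely, when you apply the Markov property and the Poissonian decomposition \eqref{equation:PoissonH}, atoms $(\rho^i,\overline{W}^i)$ rooted at heights $H(\rho_{a_i})<\delta$ see a shifted first-entrance time $\uptau_{\delta-H(\rho_{a_i}),\upsilon}$, and there is no a~priori control on these contributions. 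Second, the ``monotonicity of $\{\mathsf{L}^{\delta,\upsilon,*}\}$ in $(\delta,\upsilon)$'' on which your Dini argument relies is asserted without proof and is not at all clear: $\uptau_{\delta,\upsilon}$ is increasing in $\delta$ but decreasing in $\upsilon$, and the extra constraint $\uptau_{\delta,\upsilon}+\varepsilon<\uptau$ further complicates any comparison.

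The paper sidesteps both issues. It works under the pointed measure $\mathbf{N}_{x,r}^\bullet$, uses the duality of Corollary~\ref{corollary:dualidadNblack} to reduce the two-sided sup to a one-sided sup $\texttt{I}_2$ over $[U,\sigma]$, and then bounds $\mathbf{N}_{x,r}^\bullet(\mathbbm{1}_{\Omega_K}\texttt{I}_2)$ directly in $L^1$ via the Poisson decomposition after time $U$. The key technical input---which your outline omits---is Lemma~\ref{lem:eq:inside:L:N}, formulated under $\mathbb{N}_y$ for $y\neq x$: part~(i) gives a uniform bound $J_{\delta,\upsilon,\varepsilon}(y)\le 2$ used to control atoms at height $\le\delta$ (their total contribution is $\lesssim\rho_U([0,\delta])\to 0$), while part~(ii) handles atoms at height $>\delta$ by comparing with the genuine exit local time $L^\upsilon$ from $\{d_E(\cdot,x)>\upsilon\}$ and letting $\upsilon\to 0$. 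Only after this $L^1$ control does the paper extract an a.e.\ converging subsequence by a diagonal argument over the finite-mass events $\Omega_K$. The height-splitting device is precisely what disposes of the $\delta$-shift difficulty you glossed over.
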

Before proving this result let us discuss  its main implications. First, note that the approximation  \eqref{equation:aproximationInside} does not depend on $\Lambda$ or on $r\geq 0$. Hence, the same result holds under $\mathbf{N}_{x,r^\prime}$ for every $r^\prime\geq 0$ for the same  sequences $(\delta_k)_{k \geq 0}$,  $(\upsilon_k)_{k \geq 0}$ and   $(\epsilon_k^{\prime\prime})_{k \geq 0}$. These are fixed from now on. With this remark in mind, for $(\upvarrho, \omega) \in \mathbb{D}( \mathbb{R}_+, \mathcal{M}_f(\mathbb{R}_+) \times \mathcal{W}_E)$ we define a functional $L^*(\upvarrho, \omega) = (L^*_t(\upvarrho, \omega): t \geq 0)$,  by setting
\begin{equation*}
    L^*_t(\upvarrho,\omega):= \liminf_{k\to\infty}\frac{1}{\epsilon_{k}^{\prime \prime}}\int_{0}^{t} \dd s \,  \mathbbm{1}_{\{ \uptau_{\delta_k, \upsilon_k}(\omega_s)<H(\upvarrho_s)<\uptau_{\delta_k, \upsilon_k}(\omega_s)+\varepsilon_{k}^{\prime\prime}<\uptau(\omega_s) \}}, \quad  t \geq 0. 
\end{equation*}
As a first consequence of Proposition \ref{prop:inside:L} we obtain the following result.

\begin{cor}\label{corollary:L*sirvepatodo}
    We can find a measurable subset of full   $\mathbf{N}_{x,r}$ measure at which we have
     \begin{equation}\label{definition:L*}
    L^*_t(\rho, W)= L_t(\rho , W), \quad t \geq 0,  
\end{equation}
and
    \begin{equation}\label{definition:L*:?}
    L^*_t(\emph{tr}(\rho, W))= L_{\Gamma_t(\rho,W)}(\rho , W), \quad t \geq 0.   
\end{equation}
In particular,    in a  measurable subset of full $\mathbf{N}^*_x$  measure, the process $(L^*_t: t \geq 0)$ is  continuous, non-decreasing and null at $0$. 
\end{cor}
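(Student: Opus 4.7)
My approach is to deduce all three claims from Proposition~\ref{prop:inside:L} together with a change of variables relating the time scales of $(\rho,W)$ and $\mathrm{tr}(\rho,W)$.

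Identity \eqref{definition:L*} is essentially free: Proposition~\ref{prop:inside:L} provides a measurable set $\Omega_0$ of full $\mathbf{N}_{x,r}$-measure on which the approximations in \eqref{equation:aproximationInside} converge uniformly in $t$, so on $\Omega_0$ the liminf defining $L^*_t(\rho,W)$ is actually a uniform limit and coincides with $L_t(\rho,W)$.

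For identity \eqref{definition:L*:?}, I would abbreviate
\begin{equation*}
A_k(u) := \mathbbm{1}_{\{\uptau_{\delta_k,\upsilon_k}(W_u) < H(\rho_u) < \uptau_{\delta_k,\upsilon_k}(W_u) + \varepsilon_{k}^{\prime\prime} < \uptau(W_u)\}}
\end{equation*}
and observe that $A_k(u)=1$ forces $H(\rho_u) < \uptau(W_u)$, whence $\mathrm{d}V_u = \mathrm{d}u$ on $\{A_k = 1\}$, where $V_u = \int_0^u \mathbbm{1}_{\{H(\rho_s) \leq \uptau(W_s)\}}\mathrm{d}s$ is the functional introduced in \eqref{definition:V:star}. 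Since $\Gamma(\rho,W)$ is the right-continuous right-inverse of the continuous function $V$, the standard change-of-variables for right-inverses gives $\int_0^t A_k(\Gamma_s)\,\mathrm{d}s = \int_0^{\Gamma_t} A_k(u)\,\mathrm{d}u$. Noting that $\mathrm{tr}(\rho,W)_s = (\rho_{\Gamma_s},W_{\Gamma_s})$, the left-hand side is precisely the pre-liminf integrand in the definition of $L^*_t(\mathrm{tr}(\rho,W))$, while on $\Omega_0$
\begin{equation*}
\sup_{t \geq 0}\Big| \tfrac{1}{\varepsilon_{k}^{\prime\prime}}\int_0^{\Gamma_t} A_k(u)\,\mathrm{d}u - L_{\Gamma_t(\rho,W)}(\rho,W) \Big| \leq \sup_{u \geq 0} \Big| \tfrac{1}{\varepsilon_{k}^{\prime\prime}}\int_0^u A_k(u')\,\mathrm{d}u' - L_u(\rho,W) \Big| \xrightarrow[k\to\infty]{}0
\end{equation*}
by Proposition~\ref{prop:inside:L}. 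Thus the liminf defining $L^*_t(\mathrm{tr}(\rho,W))$ upgrades to a uniform limit equal to $L_{\Gamma_t(\rho,W)}(\rho,W)$, yielding \eqref{definition:L*:?}.

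For the final assertion, I would use that $\mathbf{N}_x^*$ is by construction the pushforward of $\mathbf{N}_{x,0}$ under the map $(\rho,W)\mapsto \mathrm{tr}(\rho,W)$, so it suffices to verify the stated properties $\mathbf{N}_{x,0}$-almost everywhere for the process $t\mapsto L^*_t(\mathrm{tr}(\rho,W))$. By the previous display applied with $r=0$, on $\Omega_0$ this process is the uniform-in-$t$ limit of the continuous, non-decreasing functions $\phi_k(t):=\tfrac{1}{\varepsilon_{k}^{\prime\prime}}\int_0^t A_k(\Gamma_s)\,\mathrm{d}s$, each vanishing at $0$, and these three properties are preserved under uniform convergence. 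The main technical point requiring care will be the justification of the change-of-variables identity $\int_0^t A_k(\Gamma_s)\,\mathrm{d}s = \int_0^{\Gamma_t} A_k(u)\,\mathrm{d}u$ and the continuity of $V$, but these reduce to routine manipulations once one exploits the inclusion $\{A_k=1\}\subset\{H(\rho)<\uptau(W)\}$, on which the plateaus of $V$ contribute nothing.
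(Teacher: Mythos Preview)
Your proof is correct and follows essentially the same approach as the paper's own proof, which invokes Proposition~\ref{prop:inside:L} for \eqref{definition:L*}, appeals to ``a change of variable'' for \eqref{definition:L*:?}, and deduces the final claim from \eqref{definition:L*:?} together with the pushforward definition of $\mathbf{N}_x^*$. You have simply spelled out the change-of-variables argument (via the key inclusion $\{A_k=1\}\subset\{H(\rho)<\uptau(W)\}$) and the continuity reasoning that the paper leaves implicit.
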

\begin{proof}
The identity \eqref{definition:L*} is an  
 immediate consequence of the approximation  \eqref{equation:aproximationInside}, while   \eqref{definition:L*:?} plainly follows    from  \eqref{equation:aproximationInside}  
 and a change of variable. The last claim is a consequence of   \eqref{definition:L*:?}  and the fact that $\mathbf{N}_x^*$ is  the distribution of $\mathrm{tr}(\rho , W)$ under $\mathbf{N}_{x,r}$.
\end{proof}
In other words, Corollary \ref{corollary:L*sirvepatodo}  entails that for  $\mathbf{N}_{x,r}$-a.e. $(\upvarrho , \omega)$,  the functional $L^*(\upvarrho, \omega)$ agrees with $L(\upvarrho, \omega)$ on $(\upvarrho , \omega)$, and when taken at  $\mathrm{tr}(\upvarrho , \omega)$ coincides  with $L_{\Gamma}(\upvarrho, \omega)$.  These relationships translate as follows when working with the excursions away from $x$. 
\begin{cor}\label{corollary:identificacionL*}
Under $\mathbb{N}_{x,0}$ and $\mathbb{P}_{0,x,0}$, for every $u \in \mathcal{D}$, we have 
\begin{equation} \label{equation:L*=lu}
    ( L^*_t(\rho^{u,*} ,  W^{u,*}) : t \geq 0 ) = ( L_{\Gamma_t}( \rho^u,W^u ) : t \geq 0). 
\end{equation}
\end{cor}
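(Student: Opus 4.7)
The plan is to leverage identity \eqref{definition:L*:?} in Corollary~\ref{corollary:L*sirvepatodo} and transfer it from the canonical process under the measures $(\mathbf{N}_{x,r} : r \geq 0)$ to each subtrajectory $(\rho^u, W^u)$, $u \in \mathcal{D}$, by means of the exit formulas established in Theorem~\ref{theorem:exit} and Corollary~\ref{corollary:exitPx}. The key observation is that the sequences $(\delta_k), (\upsilon_k), (\epsilon_k'')$ appearing in the definition of $L^*$ do not depend on $r$, so $L^*$ is a deterministic functional on $\mathbb{D}(\mathbb{R}_+, \mathcal{M}_f(\mathbb{R}_+)\times \mathcal{W}_E)$ that is available uniformly in $r$.

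First, I would introduce the measurable ``bad set''
\begin{equation*}
\mathcal{B} := \Big\{ (\upvarrho,\omega) \in \mathbb{D}(\mathbb{R}_+, \mathcal{M}_f(\mathbb{R}_+)\times \mathcal{W}_E) : \exists\, t \geq 0 \text{ with } L^*_t(\mathrm{tr}(\upvarrho,\omega)) \neq L_{\Gamma_t(\upvarrho,\omega)}(\upvarrho,\omega) \Big\},
\end{equation*}
and lift it trivially to the product space by $\widetilde{\mathcal{B}}:= \{(\upvarrho, \overline{\omega}) :(\upvarrho, \omega) \in \mathcal{B}\}$. By identity \eqref{definition:L*:?} of Corollary~\ref{corollary:L*sirvepatodo}, we have $\mathbf{N}_{x,r}(\widetilde{\mathcal{B}}) = 0$ for \emph{every} $r \geq 0$.

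Next, I would apply Corollary~\ref{corollary:exitPx} with $F \equiv 1$ and $G = \mathbbm{1}_{\widetilde{\mathcal{B}}}$, which yields
\begin{equation*}
\mathbb{E}_{0,x,0}\Big[ \sum_{u \in \mathcal{D}} \mathbbm{1}_{\widetilde{\mathcal{B}}}(\rho^u, \overline{W}^u) \Big] = \mathbb{E}_{0,x,0}\Big[ \int_0^\infty \mathrm{d} A_s \, \mathbf{N}_{x,\widehat{\Lambda}_s}(\widetilde{\mathcal{B}}) \Big] = 0.
\end{equation*}
Hence $\mathbb{P}_{0,x,0}$-a.s., for every $u \in \mathcal{D}$, the pair $(\rho^u,\overline{W}^u)$ lies outside $\widetilde{\mathcal{B}}$, which is exactly the identity \eqref{equation:L*=lu} since $(\rho^{u,*},W^{u,*}) = \mathrm{tr}(\rho^u,W^u)$ by definition. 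The statement under $\mathbb{N}_{x,0}$ is obtained by the identical argument using Theorem~\ref{theorem:exit} in place of Corollary~\ref{corollary:exitPx}.

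I do not anticipate any serious obstacle: the result is essentially a Poissonian transfer from $(\mathbf{N}_{x,r} : r \geq 0)$ to the excursion debuts. The only point deserving care is measurability of $\widetilde{\mathcal{B}}$, which follows from the fact that both $L^*_t$ and $L_{\Gamma_t}$ are defined in \eqref{definition:exitMultiusos}--\eqref{equation:aproximationInside} as (liminfs of) measurable functionals on $\mathbb{D}(\mathbb{R}_+, \mathcal{M}_f(\mathbb{R}_+)\times \mathcal{W}_E)$, and of the continuity in $t$ ensuring the ``for all $t \geq 0$'' can be reduced to a countable dense set.
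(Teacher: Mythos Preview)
Your proposal is correct and follows essentially the same approach as the paper: define the measurable bad set where \eqref{definition:L*:?} fails, note it is $\mathbf{N}_{x,r}$-null for every $r\geq 0$, and then apply the exit formulas (Theorem~\ref{theorem:exit} and Corollary~\ref{corollary:exitPx}) with $G$ equal to the indicator of this set to conclude that no $(\rho^u,\overline{W}^u)$ falls in it. The paper argues under $\mathbb{N}_{x,0}$ first and uses a test function $f(A_{\mathfrak{g}(u)})$ rather than $F\equiv 1$, but this is cosmetic.
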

\begin{proof}
We start proving the statement under $\mathbb{N}_{x,0}$, and  in this direction we consider the set $$\Omega_0:=\big\{(\upvarrho,\omega)\in  \mathbb{D}(\mathbb{R}_+, \mathcal{M}_f(\mathbb{R}_+)\times \mathcal{W}_E): ~L^{*}(\mathrm{tr}(\upvarrho,\omega))\neq L_{\Gamma( \upvarrho , \omega)}(\upvarrho,\omega)\big\}.$$
Recalling that by definition, for every $u\in \mathcal{D}$, the truncated path $\mathrm{tr}(\rho^u,W^u)$ is precisely $(\rho^{u,*},W^{u,*})$, we can write:
$$
\mathbb{N}_{x,0}\Big(\sum \limits_{u\in \mathcal{D}} f\big(A_{\mathfrak{g}(u)}\big)\mathbbm{1}_{L^*(\rho^{u,*},W^{u,*})\neq L_{\Gamma}(\rho^u,W^u)}\Big)=\mathbb{N}_{x,0}\Big(\sum \limits_{u\in \mathcal{D}} f\big(A_{\mathfrak{g}(u)}\big)\mathbbm{1}_{(\rho^u,W^u)\in\Omega_0}\Big), 
$$
where  $f$ is  an arbitrary non-negative measurable function on $\mathbb{R}$. Now an application of Theorem~\ref{theorem:exit} paired with \eqref{definition:L*:?} yields that the right hand side in the last display is null. This proves that   \eqref{equation:L*=lu} holds a.e. for every $u\in \mathcal{D}$  under $\mathbb{N}_{x,0}$. The same argument holds under $\mathbb{P}_{0,x,0}$ applying  Corollary~\ref{corollary:exitPx} instead of Theorem~\ref{theorem:exit}.  
\end{proof}
Finally, Corollary \ref{corollary:L*sirvepatodo}  enables  us to strengthen the special Markov property 
 stated in Proposition \ref{proposition:L*}  under $\mathbf{N}_{x,r}$.  More precisely, since $L_\sigma = L_\infty^*(\mathrm{tr}(\rho , W))$  under $\mathbf{N}_{x,r}$,    Proposition \ref{proposition:L*} and   Corollary \ref{corollary:L*sirvepatodo} yield the following. 
\begin{cor}\label{corollary:special2}
    Under $\mathbf{N}_{x,r}$ and conditionally on $\emph{tr}(\rho , W)$,  the point measure \eqref{equation:SpecialNblack} is a Poisson point measure with intensity $\mathbbm{1}_{[0,L_\sigma(\rho,W)]}(\ell) \dd \ell \,  \mathbb{N}_{x,r}(\dd \rho , \overline{W})$.  
\end{cor}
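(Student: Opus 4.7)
The plan is to obtain Corollary~\ref{corollary:special2} as an essentially immediate consequence of Proposition~\ref{proposition:L*} combined with the measurability statement of Corollary~\ref{corollary:L*sirvepatodo}. The crucial input is identity \eqref{definition:L*:?}, which yields $\mathbf{N}_{x,r}$-a.e.
\[
L_{\sigma}(\rho,W) \;=\; L^{*}_{\infty}\!\big(\mathrm{tr}(\rho,W)\big),
\]
so that $L_{\sigma}(\rho,W)$ is a measurable functional of $\mathrm{tr}(\rho,W)$. In particular, the $\sigma$-field generated by $\mathrm{tr}(\rho,W)$ contains the one generated by $L_{\sigma}(\rho,W)$, and conditioning on $\mathrm{tr}(\rho,W)$ is strictly stronger than conditioning on $L_{\sigma}$.

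With this in hand, I would combine Proposition~\ref{proposition:L*} with the tower property. Denote by $\mathcal{E}_L$ the point measure appearing in \eqref{equation:SpecialNblack}, and for $\ell \geq 0$ let $\Phi(\ell)$ be the integral of a bounded non-negative measurable functional $F$ against a Poisson point measure of intensity $\mathbbm{1}_{[0,\ell]}(r)\,\mathrm{d}r \otimes \mathbb{N}_{x,0}$. For any bounded non-negative measurable functional $G$ on $\mathcal{S}_x$ and any $\delta>0$, on the $\mathbf{N}_{x,r}$-finite event $\{\sigma > \delta\}$ I would write
\begin{align*}
\mathbf{N}_{x,r}\Big[\, F(\mathcal{E}_L)\, G(\mathrm{tr}(\rho,W))\,\mathbbm{1}_{\{\sigma>\delta\}}\Big]
&= \mathbf{N}_{x,r}\Big[\, \mathbf{N}_{x,r}\!\big[F(\mathcal{E}_L) \,\big|\, L_\sigma(\rho,W)\big]\, G(\mathrm{tr}(\rho,W))\,\mathbbm{1}_{\{\sigma>\delta\}}\Big] \\
&= \mathbf{N}_{x,r}\Big[\, \Phi\big(L^{*}_{\infty}(\mathrm{tr}(\rho,W))\big)\, G(\mathrm{tr}(\rho,W))\,\mathbbm{1}_{\{\sigma>\delta\}}\Big],
\end{align*}
where the first equality uses the independence of $\mathcal{E}_L$ from $\mathrm{tr}(\rho,W)$ conditionally on $L_{\sigma}$ supplied by Proposition~\ref{proposition:L*}, and the second uses the explicit conditional law from the same proposition together with the displayed identity $L_\sigma = L^*_\infty \circ \mathrm{tr}$. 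Letting $\delta \downarrow 0$ and appealing to monotone convergence extends the identity to the full measure $\mathbf{N}_{x,r}$, which is exactly the statement that, conditionally on $\mathrm{tr}(\rho,W)$, the point measure $\mathcal{E}_L$ is Poisson with intensity $\mathbbm{1}_{[0,L_\sigma(\rho,W)]}(\ell)\,\mathrm{d}\ell \otimes \mathbb{N}_{x,0}$.

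There is no genuine obstacle in this plan; the only mild technicality is that $\mathbf{N}_{x,r}$ is sigma-finite rather than a probability measure, so the regular conditional distributions invoked above must first be defined on restrictions to $\mathbf{N}_{x,r}$-finite events, such as $\{\sigma > \delta\}$ or $\{\uptau(W_{\varepsilon}) \wedge H(\rho_{\varepsilon}) > \varepsilon\}$ as in Section~\ref{section:excursionmeasure}, with the claim then recovered by monotone convergence as $\delta \downarrow 0$. Beyond this routine bookkeeping, the corollary is a direct strengthening of Proposition~\ref{proposition:L*} made possible by the fact that $L_\sigma$ has been identified as a measurable functional of the truncated snake.
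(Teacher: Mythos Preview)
Your approach is exactly the paper's: the corollary is obtained by combining Proposition~\ref{proposition:L*} with the identity $L_\sigma = L^*_\infty(\mathrm{tr}(\rho,W))$ from Corollary~\ref{corollary:L*sirvepatodo}, so that $L_\sigma$ is $\mathrm{tr}(\rho,W)$-measurable and the conditioning can be upgraded. One small caveat: the event $\{\sigma>\delta\}$ does \emph{not} have finite $\mathbf{N}_{x,r}$-measure (since $\mathbf{Q}^{H(\rho)}_{x,r}$ is an infinite measure for $\sigma>0$), so you should use your second option $\{\uptau(W_\varepsilon)\wedge H(\rho_\varepsilon)>\varepsilon\}$ or similar when localizing.
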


The rest of the section is devoted to the proof of Proposition \ref{prop:inside:L}. In this direction, we start with a technical lemma.

\begin{lem}\label{lem:eq:inside:L:N}
For every fixed $y \neq x$ and $\delta,\upsilon\geq 0$, $\varepsilon > 0$ set:
\begin{equation*}
    J_{\delta,\upsilon,\varepsilon}(y):=\mathbb{N}_{y}\Big(\sup_{t\geq 0}\Big|L_t(\rho,W)- \epsilon^{-1}\int_{0}^{t} \dd s \,  \mathbbm{1}_{\{\uptau_{\delta, \upsilon}(W_s)<H(\rho_s)<\uptau_{\delta, \upsilon}(W_s)+\varepsilon<\uptau(W_s) \}}\Big|\Big).
\end{equation*}
The following properties hold:
\begin{itemize}
\item[$\mathrm{(i)}$] $J_{\delta, \upsilon,\varepsilon}(y)\leq 2$;
\item[$\mathrm{(ii)}$] $ \limsup_{\upsilon \to 0} \limsup_{\varepsilon \to 0} J_{0,\upsilon,\varepsilon}(y)=0$. 
\end{itemize}
\end{lem}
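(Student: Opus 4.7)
The plan is as follows. For (i), I would use the bound $\sup_t |L_t(\rho,W) - \varepsilon^{-1}\int_0^t \mathbbm{1}_{A_s}\dd s| \leq L_\sigma(\rho,W) + \varepsilon^{-1}\int_0^\sigma \mathbbm{1}_{A_s}\dd s$, where $A_s = \{\uptau_{\delta,\upsilon}(W_s) < H(\rho_s) < \uptau_{\delta,\upsilon}(W_s) + \varepsilon < \uptau(W_s)\}$. The first term has $\mathbb{N}_y$-expectation equal to $\Pi_y(e^{-\alpha\uptau}) \leq 1$ by \eqref{equation:mediaexit}. For the second, the many-to-one formula \eqref{eq:many:to:one:N:x} (discarding the local-time component) reduces the expectation to
\[ \int_0^\infty \dd a\, e^{-\alpha a}\, \varepsilon^{-1}\, \Pi_y\big(\uptau_{\delta,\upsilon}(\xi) \in (a-\varepsilon, a),\ \uptau(\xi) > \uptau_{\delta,\upsilon}(\xi) + \varepsilon\big). \]
Interchanging orders of integration, the inner $\dd a$ integral collapses to $\varepsilon^{-1}\int_{\uptau_{\delta,\upsilon}}^{\uptau_{\delta,\upsilon}+\varepsilon} e^{-\alpha a}\dd a \leq e^{-\alpha\uptau_{\delta,\upsilon}}$, so the whole expression is bounded by $\Pi_y(e^{-\alpha\uptau_{\delta,\upsilon}}) \leq 1$, yielding (i).

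For (ii), I would insert the auxiliary approximation $L^{(\upsilon)}_\varepsilon(t) := \varepsilon^{-1}\int_0^t \mathbbm{1}_{\{\uptau_{0,\upsilon}(W_s) < H(\rho_s) < \uptau_{0,\upsilon}(W_s)+\varepsilon\}}\dd s$, obtained by dropping the constraint $\uptau_{0,\upsilon}(W_s)+\varepsilon < \uptau(W_s)$. Since $\uptau_{0,\upsilon}$ is the first exit time of $\xi$ from the open set $D_\upsilon := E \setminus \overline{B(x,\upsilon)}$, Lemma~\ref{lem:equa:convL} adapts with $\uptau$ replaced by $\uptau_{0,\upsilon}$ and produces, for $y \in D_\upsilon$, a continuous non-decreasing limit $L^{(\upsilon)}$ (the exit local time of the snake from $D_\upsilon$) such that $\sup_t|L^{(\upsilon)}_t - L^{(\upsilon)}_\varepsilon(t)|\to 0$ a.s.\ along $(\varepsilon_k)$ and, by the analog of (i), in $L^1(\mathbb{N}_y)$. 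Denoting by $F_\varepsilon$ the approximation defining $J_{0,\upsilon,\varepsilon}(y)$, the triangle inequality then gives
\[ J_{0,\upsilon,\varepsilon}(y) \leq \mathbb{N}_y\big(\sup_t|L_t - L^{(\upsilon)}_t|\big) + \mathbb{N}_y\big(\sup_t|L^{(\upsilon)}_t - L^{(\upsilon)}_\varepsilon(t)|\big) + \mathbb{N}_y\big(L^{(\upsilon)}_\varepsilon(\infty) - F_\varepsilon(\infty)\big). \]
A many-to-one calculation mirroring (i) bounds the third term by $\Pi_y(\uptau_{0,\upsilon}(\xi) < \uptau(\xi) \leq \uptau_{0,\upsilon}(\xi) + \varepsilon)$, which vanishes as $\varepsilon \to 0$ for each fixed $\upsilon$. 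Combined with the preceding step, this yields $\limsup_{\varepsilon\to 0} J_{0,\upsilon,\varepsilon}(y) \leq \mathbb{N}_y(\sup_t|L_t - L^{(\upsilon)}_t|)$.

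It remains to handle the first term, which is the main obstacle. The plan is to exploit the pointwise monotonicity $L^{(\upsilon)} \geq L$ (since $D_\upsilon \subseteq E \setminus \{x\}$, exits from the smaller domain occur more often) and the identity $\mathbb{N}_y(L^{(\upsilon)}_\sigma) = \Pi_y(e^{-\alpha \uptau_{0,\upsilon}})$ (analog of \eqref{equation:mediaexit}). Since $\uptau_{0,\upsilon}\uparrow \uptau$ $\Pi_y$-a.s.\ as $\upsilon \downarrow 0$ by continuity of $\xi$ and $\xi(\uptau) = x$ (with the convention $e^{-\alpha\infty}=0$), dominated convergence gives $\mathbb{N}_y(L^{(\upsilon)}_\sigma - L_\sigma) \to 0$. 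A Dini-type argument applied to the non-increasing family of continuous non-decreasing processes $(L^{(\upsilon)})_{\upsilon>0}$ with continuous pointwise limit $L$ then yields $\sup_t(L^{(\upsilon)}_t - L_t) \to 0$ a.s., and the $L^1(\mathbb{N}_y)$ convergence follows from dominated convergence with dominator $L^{(\upsilon_0)}_\sigma + L_\sigma$ for any fixed $\upsilon_0 \geq \upsilon$. The hard part is establishing rigorously the pointwise monotone comparison $L^{(\upsilon)}_t \geq L_t$ and the a.s.\ monotone convergence $L^{(\upsilon)}_t \downarrow L_t$, which I would carry out through the special Markov property of the snake applied at the exit from $D_\upsilon$, in the spirit of Proposition~\ref{proposition:L*}.
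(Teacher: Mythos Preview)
Your argument for (i) and the decomposition for (ii) into three terms are correct and match the paper's proof essentially line by line: the paper bounds the same two integrals via \eqref{equation:mediaexit} and the many-to-one formula, introduces the exit local time $L^{\upsilon}$ from $D_\upsilon$ via (4.11) in \cite{DLG02}, and reduces everything to showing $\mathbb{N}_y(\sup_t|L^{\upsilon}_t - L_t|)\to 0$ as $\upsilon\to 0$.

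The gap is in your treatment of this last convergence. The claimed pathwise monotonicity $L^{(\upsilon)}_t \geq L_t$ is \emph{false} in general. The measures $\dd L^{(\upsilon)}$ and $\dd L$ are supported on the disjoint sets $\{s:H(\rho_s)=\uptau_{0,\upsilon}(W_s)\}$ and $\{s:H(\rho_s)=\uptau(W_s)\}$, so no indicator-wise comparison is available. More decisively, in the Brownian case ($\psi(\lambda)=\lambda^2/2$, $\alpha=0$) one has $\mathbb{N}_y(L^{(\upsilon)}_\sigma)=\mathbb{N}_y(L_\sigma)=1$, while the special Markov property at $\partial D_\upsilon$ shows that, conditionally on $L^{(\upsilon)}_\sigma=\ell$, the variable $L_\sigma$ is a Poisson integral with mean $\ell$ but nonzero variance; hence $L_\sigma$ and $L^{(\upsilon)}_\sigma$ are not a.s.\ equal, and since they share the same mean neither can dominate the other almost surely. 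The same reasoning invalidates the claimed monotonicity in $\upsilon$, so the Dini argument cannot be run.

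The paper closes this step by invoking \cite[Lemma 3.6]{2024structure}, which states directly that for an increasing sequence of open domains $D_{\upsilon_k}\uparrow E\setminus\{x\}$ with $\overline{D_{\upsilon_k}}\subset E\setminus\{x\}$, one can extract a subsequence along which $\sup_t|L^{\upsilon_k'}_t - L_t|\to 0$, $\mathbb{N}_y$-a.e. The $L^1(\mathbb{N}_y)$ convergence then follows by the generalized dominated convergence theorem, using precisely the convergence of means $\mathbb{N}_y(L^{\upsilon}_\sigma)\to\mathbb{N}_y(L_\sigma)$ that you already identified.
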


\begin{proof} 
Both points are direct consequences of classic estimates on functionals of  Lévy snakes. For simplicity, we omit the dependence on $(\rho,W)$ in our functionals when there is no risk of ambiguity.  Fix $y\in E\setminus \{x\}$ as well as  $\delta,\upsilon\geq 0$, $\varepsilon>0$, and  remark that:
$$
J_{\delta,\upsilon,\varepsilon}(y)\leq \mathbb{N}_{y}(L_\sigma)+\epsilon^{-1}\mathbb{N}_{y}\big(\int_{0}^{\sigma} \dd s~ \mathbbm{1}_{\{\uptau_{\delta, \upsilon}(W_s)<H(\rho_s)<\uptau_{\delta, \upsilon}(W_s)+\varepsilon<\uptau(W_s) \}}\big).
$$
By \eqref{equation:mediaexit} we already know that $\mathbb{N}_{y}(L_\sigma)\leq 1$, and  to obtain an upper  bound for the second term in the right-hand side of the last display we recall from   \cite[Lemma 2.4]{2024structure}  that for every non-negative  measurable function $\Phi$ on $\mathcal{M}^2_f(\mathbb{R}_+) \times \mathcal{W}_{{E}}$,  we have
\begin{equation}\label{eq:many:to:one:N:y}
     \mathbb{N}_{y} \big(\int_0^\sigma \dd s~  \Phi\big( \rho_s,\eta_s, {W}_s\big)  \big)   
    = \int_0^\infty \dd a \, \exp\big(- \alpha a\big)\cdot  
    E^0 \otimes \Pi_{y} \Big( \Phi\big( \mathcal{J}_a , \widecheck{\mathcal{J}}_a  , ({\xi}_t : t \leq a)   \big) \Big). 
\end{equation} 
In particular,  we have
\begin{equation*}
    \varepsilon^{-1} \mathbb{N}_{y}\big(\int_{0}^{\sigma} \dd s~ \mathbbm{1}_{\{ \uptau_{\delta, \upsilon}(W_s)<H(\rho_s)<\uptau_{\delta, \upsilon}(W_s)+ \epsilon <\uptau(W_s)\}}\big)
 = 
   \varepsilon^{-1} \Pi_y\big( \int_{0}^{  \infty} \dd a \,  \exp(- \alpha a) \mathbbm{1}_{ \{\uptau_{\delta, \upsilon}(\xi) < a < \uptau_{\delta, \upsilon}(\xi) + \epsilon<\uptau(\xi^a) \} } \big) \leq 1,
\end{equation*}
letting $\uptau_{\delta,\upsilon}(\xi):=\inf\{t\geq \delta:~\dd_{E}(\xi,x)\leq \upsilon \}$ under $\Pi_y$.
This completes the proof of (i) and we now turn our attention to  (ii). Still for  fixed  $y\in E\setminus\{x\}$,   note that $J_{0, \upsilon,\varepsilon}(y)$ is bounded above by:
\begin{align*}
&\mathbb{N}_{y}\Big(\sup_{t\geq 0}\Big|L_t- \epsilon^{-1}\int_{0}^{t} \dd s \, \mathbbm{1}_{\{\uptau_{0, \upsilon}(W_s)<H(\rho_s)<\uptau_{0, \upsilon}(W_s)+\varepsilon \}}\Big|\Big) +\epsilon^{-1}\mathbb{N}_{y}\Big(\int_{0}^{\sigma} \dd s \, \mathbbm{1}_{H(\rho_s),  \uptau(W_s) \,  \in \,  (\uptau_{0, \upsilon}(W_s), \uptau_{0, \upsilon}(W_s)+\varepsilon ]}\Big).
\end{align*}
On one hand, the first moment formula \eqref{eq:many:to:one:N:y} gives:
\begin{align*}
\epsilon^{-1}\mathbb{N}_{y}\Big(\int_{0}^{\sigma} \dd s \,  \mathbbm{1}_{H(\rho_s),  \uptau(W_s) \,  \in \,  (\uptau_{0, \upsilon}(W_s), \uptau_{0, \upsilon}(W_s)+\varepsilon ]} \Big)&=\varepsilon^{-1} \Pi_y\big(\int_{0}^{\infty}\dd a \, \exp(-\alpha a) \mathbbm{1}_{ a,  \uptau(\xi^a) \,  \in \,  (\uptau_{0, \upsilon}(\xi) ,  \uptau_{0, \upsilon}(\xi) +\varepsilon]} \big) \\
&\leq \Pi_y\big(\uptau(\xi)\leq\uptau_{0, \upsilon}(\xi) +\varepsilon \big),
\end{align*}
and since $\xi$ has continuous sample paths,  this last  term  converges to $0$ as $\varepsilon\to 0$. On the other hand,  (4.11) in \cite{DLG02} yields  that for every $0 \leq \upsilon<\dd_{E}(x,y)$, there exists a continuous non-decreasing process $L^{\upsilon} = (L^\upsilon_t)_{t \geq 0}$ known as the exit local time from the open set $D_\upsilon:=\{z\in E:~\dd_{E}(x,z)>\upsilon\}$, such that
\begin{equation}\label{def:eq:approx:L:upsilon:}
\lim_{\varepsilon \to 0} \mathbb{N}_{y}\Big(\sup_{t\geq 0}\Big|L_t^{\upsilon}- \epsilon^{-1}\int_{0}^{t} \dd s \,  \mathbbm{1}_{\{\uptau_{0, \upsilon}(W_s)<H(\rho_s)<\uptau_{0, \upsilon}(W_s)+\varepsilon\}}\Big|\Big)=0.
\end{equation}
Hence, to derive (ii) it suffices to establish that
\begin{equation}\label{equation:Lv}
    \lim_{\upsilon \to 0} \mathbb{N}_{y}\big(\sup_{t\geq 0}|L_t^{\upsilon}-L_t|\big)=0.  
\end{equation}
It is enough to show that for every sequence $(\upsilon_{k})_{k\geq 0}$ of positive  real numbers smaller than $\mathrm{d}_E(x,y)$ and strictly  decreasing  towards $0$, we can find a subsequence $(\upsilon_{k}^\prime)_{k\geq 0}$ along which \eqref{equation:Lv} converges to $0$. Let us fix any such sequence $(\upsilon_{k})_{k\geq 0}$ and note that then,  $(D_{\upsilon_k})_{k\geq 0}$ is  a non-decreasing sequence of open sets  containing $y$ with   $\cup_{k\geq 0} D_{\upsilon_k}=E\setminus\{x\}$. Since the closure of $D_{\upsilon_k}$ is included in $E\setminus\{x\}$, by  \cite[Lemma 3.6]{2024structure} we can find a subsequence  $(\upsilon_{k}^\prime)_{k\geq 0}$ along which  $\sup_{t \geq 0} |L^{\upsilon_{k}^\prime}-L| \to 0$ as $k \to \infty$,   $\mathbb{N}_y$-a.e. and to conclude that this convergence holds in $L_1(\mathbb{N}_y)$ we need a domination argument. To this end, remark that the term $\sup_{t \geq 0}  \big|  L^{\upsilon_k^\prime}  - L  \big|$ is $\mathbb{N}_y$-a.e. bounded above by  $L^{\upsilon^\prime_k}_\sigma+ L_\sigma$,  which converges  towards $2 L_\sigma$ as $k\to \infty$.   Moreover, by continuity of $\xi$ under $\Pi_y$ and \eqref{equation:mediaexit}, we have  
\begin{align*}
\lim_{k \to \infty}\mathbb{N}_{y}(L_\sigma^{\upsilon_k^\prime})=\lim_{k \to \infty}\Pi_y\big(\exp(-\alpha \uptau_{0,\upsilon_k^\prime}(\xi))\big)=\Pi_y\big(\exp(-\alpha \uptau(\xi))\big) = \mathbb{N}_y(L_\sigma).
\end{align*}
Now, an application of the general dominated convergence theorem yields that $\mathbb{N}_{y}(\sup_{t\geq 0}|L_t^{\upsilon_k^\prime}-L_t|) \to 0$ as $k \to \infty$. 
\end{proof} 

We are now in position to prove  Proposition \ref{prop:inside:L}.
\begin{proof}[Proof of Proposition \ref{prop:inside:L}]
 Recall the definition of the pointed measure $\mathbf{N}_{x,r}^\bullet$ given in \eqref{N:bullet:black:def} and observe that,  by an absolute continuity argument,  it suffices to establish  \eqref{equation:aproximationInside} under $\mathbf{N}_{x,r}^\bullet$. In this direction, for $\delta ,\upsilon \geq 0$ and $\epsilon > 0$ we set: 
\begin{align*}
& \texttt{I}_1(\delta, \upsilon,\varepsilon):=\sup \limits_{t\in [0, U]} \Big|L_U(\rho,W)-L_t(\rho,W)-\epsilon^{-1}\int_{t}^{U} \dd s \,  \mathbbm{1}_{\{\uptau_{\delta, \upsilon}(W_s)<H(\rho_s)<\uptau_{\delta, \upsilon}(W_s)+\varepsilon< \uptau(W_s)\}}\Big|,  \\
& \texttt{I}_2(\delta, \upsilon,\varepsilon):=\sup \limits_{t\in [U,\sigma]} \Big|L_t(\rho,W)-L_U(\rho,W)-\epsilon^{-1}\int_{U}^{t} \dd s \, \mathbbm{1}_{\{\uptau_{\delta, \upsilon}(W_s)<H(\rho_s)<\uptau_{\delta, \upsilon}(W_s)+\varepsilon< \uptau(W_s)\}} \Big|.  
\end{align*}
Notice that under $\mathbf{N}_{x,r}^\bullet$, the variable
$$
\sup \limits_{t\geq 0} \Big|L_t(\rho,W)-\epsilon^{-1}\int_{0}^{t} \dd s \,  \mathbbm{1}_{\{\uptau_{\delta, \upsilon}(W_s)<H(\rho_s)<\uptau_{\delta, \upsilon}(W_s)+\varepsilon< \uptau(W_s) \}} \Big|
$$
is dominated by $2 \texttt{I}_1(\delta, \upsilon,\varepsilon) +  \texttt{I}_2(\delta, \upsilon,\varepsilon)$.   Moreover, by   Corollary \ref{corollary:dualidadNblack} we have under $\mathbf{N}_{x,r}^\bullet$ the following identity in distribution: 
\begin{equation*}
    \big(\rho_{U+t},\eta_{U+t}, \overline{W}_{U+t} :~t \geq 0 \big)\overset{(d)}{=} \big(\eta_{((U-t)\vee 0) _-  },\rho_{((U-t)\vee 0) _-  }, \overline{W}_{((U-t)\vee 0) _-  } :~ t \geq 0\big), 
\end{equation*}
with the usual convention that the left limit at time $0$ is just $0$. Thus, since under $\mathbf{N}_{x,r}^\bullet$  we have that  $H(\rho)=H(\eta)$ a.e., it is plain that $\texttt{I}_1(\delta, \upsilon,\varepsilon)$ and $\texttt{I}_2(\delta, \upsilon,\varepsilon)$ have the same distribution under $\mathbf{N}_{x,r}^\bullet$. Given our previous observations,  the  convergence \eqref{equation:aproximationInside} under $\mathbf{N}_{x,0}^{\bullet}$  can  be reduced to establishing that we can find $\big(\delta_k, \upsilon_k,\varepsilon_k^{\prime\prime}\big)_{k\geq 0}$ as in the statement of the proposition such that:
\begin{equation}\label{eq:L:dentro:proof:1}
\lim \limits_{k\to \infty}  \texttt{I}_2\big(\delta_k, \upsilon_k,\varepsilon_k^{\prime\prime}\big)=0,\quad \mathbf{N}_{x,r}^\bullet\text{--a.e.}
\end{equation}
We are going to deduce \eqref{eq:L:dentro:proof:1} thanks to the Markov property and Lemma \ref{lem:eq:inside:L:N}.  For each $K > 1$, let us write $\Omega_K$ for the event $\{ \uptau(W_U)>K^{-1}, \langle \rho_U,1\rangle< K\}$. In order to verify   \eqref{eq:L:dentro:proof:1}, we claim that it suffices to prove that for every $K>1$, we have
\begin{equation}\label{eq:L:dentro:proof:2}  \limsup \limits_{\delta\to 0} \limsup \limits_{\upsilon\to 0} \limsup \limits_{\varepsilon\to 0}\mathbf{N}_{x,0}^{\bullet}\big(\mathbbm{1}_{\Omega_K} \texttt{I}_2(\delta, \upsilon, \varepsilon)\big)=0. 
\end{equation}
Actually, \eqref{eq:L:dentro:proof:2}  implies that  on the event $\Omega_K$, the 
 convergence \eqref{eq:L:dentro:proof:1} holds along a subsequence that depends on $K$. Since this holds for arbitrarily large $K$, by a diagonal argument we can  find a deterministic subsequence   $(\delta_k,\upsilon_k,\varepsilon_{k}^{\prime\prime})$ converging towards $(0,0, 0)$ as $k \to \infty$ and  satisfying \eqref{eq:L:dentro:proof:1}  on the event $\{ \uptau(W_U)>0, \langle \rho_U,1\rangle< \infty\}$. It readily follows from   Lemma \ref{proposition:only:spineN*}   that  this event  is of full $\mathbf{N}_{x,r}^\bullet$ measure,  and thus  \eqref{eq:L:dentro:proof:1} holds. 
\par 
Let us then prove  \eqref{eq:L:dentro:proof:2}. Consider  the connected components $(a_i,b_i)_{i\in \mathcal{I}}$ of the set $\{t \geq 0 : \, H( \rho_{U+t})- \inf_{s\in [U,U+t]}H(\rho)>0\}$  and as usual write $(\rho^i, \overline{W}^i)_{i \in \mathcal{I}}$ for the associated subtrajectories. The Markov property paired with the discussion following \eqref{equation:PoissonH} yields that the point measure
\begin{equation*}
    \sum_{i \in  \mathcal{I} } \delta_{(H(\rho_{a_i}),  \rho^i ,   \overline{W}^i )}
\end{equation*}
 is a Poisson point measure with intensity $ \rho_U(\dd h) \, \mathbb{N}_{\overline{W}_U( h ) } ( \dd \rho ,  \dd \overline{W} )$. 
By Lemma \ref{lem:L:under:P:mu:w} combined with \eqref{Lebesgue:no:quiere:verlo} and the fact that  $\mathbf{N}_{x,r}^{\bullet}$ is supported on $\overline{\mathcal{S}}$, we infer that  under $\mathbf{N}_{x,r}^{\bullet}$ we  must have
\begin{equation*}
    \texttt{I}_2(\delta, \upsilon, \varepsilon)\leq \sum \limits_{i\in \mathcal{I}} \mathbbm{1}_{H(\rho_{a_i})\leq \uptau(W_U)}\sup_{t\geq 0} \Big|L_t(\rho^i, \overline{W}^i) -\epsilon^{-1}\int_{0}^{t} \dd s \, \mathbbm{1}_{\{\tau_{\delta_i, \upsilon}(W_s^i)<H(\rho_s^i)<\tau_{\delta_i, \upsilon}(W_s^i)+\varepsilon<\uptau(W_s^i)\}} \Big|, 
\end{equation*}
where we write $\delta_i:= \delta -H(\rho_{a_i})$, if $H(\rho_{a_i})\leq \delta$, and $\delta_i:=0$ otherwise. 
Thus, with the notation of Lemma~\ref{lem:eq:inside:L:N}, we get that the term $\mathbf{N}_{x,0}^{\bullet}\big(\mathbbm{1}_{\Omega_K} \texttt{I}_2(\delta, \upsilon, \varepsilon)\big)$ is bounded above by:
\begin{align}\label{eq:N:J:delta:n:epsilon}
\mathbf{N}_{x,0}^{\bullet}\Big(\mathbbm{1}_{\Omega_K}\cdot \Big(  \int_{0}^{\delta} \rho_U(\dd t) J_{\delta - t,\upsilon,\varepsilon}(W_U(t))+\int_{\delta}^{\uptau(W_U)} \rho_U(\dd t) J_{0,\upsilon,\varepsilon}(W_U(t))\Big)\Big).
\end{align}
Then, by Lemma \ref{lem:eq:inside:L:N}-(i) we deduce that  \eqref{eq:N:J:delta:n:epsilon} is dominated by:
\begin{equation*}
2 \, \mathbf{N}_{x,0}^{\bullet}\big(\mathbbm{1}_{\Omega_K} \cdot \rho_U([0,\delta])\big) 
+  
\mathbf{N}_{x,0}^{\bullet}\Big(\mathbbm{1}_{\Omega_K}\cdot  \int_{\delta}^{\uptau(\overline{W}_U)} \rho_U(\dd t)J_{0, \upsilon,\varepsilon}(W_U(t))\Big). 
\end{equation*}
Note that $\Omega_K$ has finite $\mathbf{N}^\bullet_{x,0}$ measure since by Lemma \ref{proposition:only:spineN*} we have $\mathbf{N}_{x,r}^\bullet (\uptau(W_U)>K^{-1} )<\infty$.  Now, the convergence \eqref{eq:L:dentro:proof:2} follows applying  the reverse  Fatou lemma and Lemma \ref{lem:eq:inside:L:N}, noting that Lemma \ref{proposition:only:spineN*} entails that    $\rho_U(\{0 \})=0$, $\mathbf{N}_{x,0}^{\bullet}$-a.e. This completes the proof of the proposition. 
\end{proof}

\subsection{The spinal decomposition with respect to the exit local time}\label{sec:Spinal:L}
In this section, we  characterise the law of the spinal decomposition  \eqref{equation:spinals} under a pointed version of $\mathbf{N}_{x,r}$ at a  point sampled 
 according to the measure $\dd L$.  In this direction, for each $r \geq 0$  we consider the  following pointed measure 
\begin{equation*}
    \mathbf{N}^{\bullet, L}_{x,r}:= \mathbf{N}_{x,r}(\dd \rho ,   \dd \overline{W})  \dd L_s
\end{equation*}
on $\mathbb{D}(\mathbb{R}_+, \mathcal{M}^2_f(\mathbb{R}_+)\times \mathcal{W}_{\overline{E}})  \times \mathbb{R}_+$.  Recall the notation $\mathbb{P}^\dag_{\mu,\nu,\overline{\w}}$ from Section \ref{section:MainSpinalDecomp} and that we write $U: \mathbb{R}_+ \to \mathbb{R}_+$ for the identity function.   Recall as well that $\uptau$ under $\mathcal{N}_{x,r}$ stands for the first return to $x$ of $\xi$. 

\begin{prop}\label{proposition:ManyToOneL*}
Fix $r\geq 0$.  For every non-negative measurable function $\Phi$ on $\mathcal{M}^2_f(\mathbb{R}_+) \times \mathcal{W}_{\overline{E}}$, we have:
\begin{align}\label{eq:prop:many:L}
     &  \mathbf{N}^{\bullet, L}_{x,r} \big(  \Phi\big( \rho_U,\eta_U, \overline{W}_U\big)  \big)  
  =   E^0 \otimes \mathcal{N}_{x,r} \Big(  \exp\big(- \alpha\cdot  \uptau \big)\Phi\big( \mathcal{J}_{\uptau}, \widecheck{\mathcal{J}}_{\uptau}, (\bar{\xi}_t:~t\in [0,\uptau] ) \big) \Big). 
\end{align}
Under $\mathbf{N}_{x,r}^{\bullet, L}$, conditionally on $(\rho_U,\eta_U, \overline{W}_U)$ the
processes $(\rho,\overline{W})^{U,\leftarrow}$ and  $(\rho,\overline{W})^{U,\rightarrow}$ are independent and their  conditional distributions are as follows:
\begin{itemize}
\item the process $(\rho,\overline{W})^{U,\leftarrow}$ is distributed as  $(\eta, \overline{W})$ under $\mathbb{P}_{\eta_U, \rho_U,\overline{W}_U}^\dagger$; 
\item the process $(\rho, \overline{W})^{U,\rightarrow}$ is distributed as  $(\rho, \overline{W})$ under $\mathbb{P}_{\rho_U,\overline{W}_U}^\dagger$.
\end{itemize}
\end{prop}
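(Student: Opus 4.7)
The plan is to mirror closely the proof of Proposition \ref{manytoone-lebesgue-Addi}, replacing the role of $\dd A$ by $\dd L$ and using the approximation \eqref{equation:aproximacionL*Pmu} to transfer the spinal description known for $\dd s$ in Lemma \ref{proposition:only:spineN*} into a spinal description under $\mathbf{N}^{\bullet,L}_{x,r}$. The three ingredients I will combine are (a) Proposition \ref{proposition:L*} which allows one to  replace $\dd L_s$ by the Lebesgue measure weighted by $\epsilon_k^{-1}\mathbbm{1}_{\{\uptau(W_s)<H(\rho_s)<\uptau(W_s)+\epsilon_k\}}$ uniformly in $s$, (b) the first moment formula of Lemma \ref{proposition:only:spineN*}, and (c) the strong Markov property of Proposition \ref{eq:strong:Markov:N} and the duality identity of Corollary \ref{corollary:dualidadNblack}.

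First I would derive \eqref{eq:prop:many:L}. For a bounded continuous nonnegative $\Phi$, the uniform convergence in Proposition \ref{proposition:L*} lets me write
\begin{equation*}
\mathbf{N}^{\bullet,L}_{x,r}\big(\Phi(\rho_U,\eta_U,\overline{W}_U)\big)=\lim_{k\to\infty}\epsilon_k^{-1}\,\mathbf{N}^{\bullet}_{x,r}\Big(\mathbbm{1}_{\{\uptau(W_U)<H(\rho_U)<\uptau(W_U)+\epsilon_k\}}\Phi(\rho_U,\eta_U,\overline{W}_U)\Big).
\end{equation*}
Plugging this into Lemma \ref{proposition:only:spineN*} and using that $\uptau(\xi^a)=\uptau$ on $\{\uptau\leq a\}$, the right-hand side becomes
\begin{equation*}
\lim_{k\to\infty}\epsilon_k^{-1}\,E^0\otimes\mathcal{N}_{x,r}\Big(\int_0^{\infty}\dd a\,e^{-\alpha a}\mathbbm{1}_{\{\uptau<a<\uptau+\epsilon_k\}}\Phi\big(\mathcal{J}_a,\widecheck{\mathcal{J}}_a,\bar\xi^a\big)\Big).
\end{equation*}
The change of variable $a=\uptau+h$ then reduces the integral to an average of $h\in[0,\epsilon_k)$, and the right-continuity of the càdlàg processes $\mathcal{J},\widecheck{\mathcal{J}}$ at the independent time $\uptau$ together with the continuity of $\bar\xi$ give \eqref{eq:prop:many:L} by dominated convergence. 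A standard class argument extends the identity to general nonnegative measurable $\Phi$.

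For the factorisation of the conditional laws of $(\rho,\overline{W})^{U,\leftarrow}$ and $(\rho,\overline{W})^{U,\rightarrow}$, I would follow the exact template of Proposition \ref{manytoone-lebesgue-Addi}. Given bounded measurable functionals $\Phi_2,\Phi_3$ on $\mathbb{D}(\mathbb{R}_+,\mathcal{M}_f(\mathbb{R}_+)\times\mathcal{W}_{\overline{E}})$ and a bounded measurable $\Phi$ on $\mathcal{M}_f^2(\mathbb{R}_+)\times\mathcal{W}_{\overline{E}}$, I first apply the strong Markov property of Proposition \ref{eq:strong:Markov:N} at time $s$ under the sigma-finite measure $\mathbf{N}_{x,r}(\dd\rho,\dd\overline{W})\dd L_s$ (this is legitimate since $L$ is an additive, $\overline{\mathcal{F}}_{t+}$-adapted functional) to replace $\Phi_3((\rho,\overline{W})^{s,\rightarrow})$ by $\mathbb{E}^{\dag}_{\rho_s,\overline{W}_s}[\Phi_3(\rho,\overline{W})]$. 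Next I use the duality of Corollary \ref{corollary:dualidadNblack}, combined with the key observation that the approximating integrand $\epsilon_k^{-1}\mathbbm{1}_{\{\uptau(W_s)<H(\rho_s)<\uptau(W_s)+\epsilon_k\}}$ only depends on $\rho$ through $H(\rho)=H(\eta)$, so that
\begin{equation*}
\big(\rho_t,\eta_t,\overline{W}_t,L_{\sigma}-L_{\sigma-t}\big)_{t\in[0,\sigma]}\overset{(d)}{=}\big(\eta_{(\sigma-t)-},\rho_{(\sigma-t)-},\overline{W}_{\sigma-t},L_{t}\big)_{t\in[0,\sigma]}
\end{equation*}
under $\mathbf{N}_{x,r}$. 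Performing the change of variable $s\mapsfrom\sigma-s$ inside $\int_0^{\sigma}\dd L_s(\cdots)$ and applying this identity converts $(\rho,\overline{W})^{s,\leftarrow}$ into $(\eta,\overline{W})^{s,\rightarrow}$; a second application of the strong Markov property then produces the factor $\mathbb{E}^{\dag}_{\eta_s,\rho_s,\overline{W}_s}[\Phi_2(\eta,\overline{W})]$, and a final change of variable together with the duality brings the whole expression back to an integral of the form $\int_0^{\sigma}\dd L_s\,\Phi(\rho_s,\eta_s,\overline{W}_s)\mathbb{E}^{\dag}_{\eta_s,\rho_s,\overline{W}_s}[\Phi_2(\eta,\overline{W})]\,\mathbb{E}^{\dag}_{\rho_s,\overline{W}_s}[\Phi_3(\rho,\overline{W})]$, which is exactly the conditional independence statement.

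The main obstacle I anticipate is the verification of the time-reversal invariance of the pair $((\rho,\eta,\overline{W}),\dd L)$ described above: since $L$ is defined only through a limit and is not directly produced by the duality of Corollary \ref{corollary:dualidadNblack}, I need to push the duality through the approximation. This is done by combining the uniform convergence of Proposition \ref{proposition:L*} with the identity $H(\rho)=H(\eta)$ (valid $\mathbf{N}_{x,r}$-a.e.), which makes the approximating integrand symmetric under the swap $(\rho,\eta)\leftrightarrow(\eta_{(\sigma-\cdot)-},\rho_{(\sigma-\cdot)-})$, and then passing to the limit; a parallel argument actually also yields the invariance of $\dd L$ under time reversal on its own, which can be recorded as a byproduct.
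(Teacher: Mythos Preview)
Your overall strategy matches the paper's: approximate $\dd L_s$ by $\epsilon_k^{-1}\mathbbm{1}_{\{\uptau(W_s)<H(\rho_s)<\uptau(W_s)+\epsilon_k\}}\dd s$, apply Lemma~\ref{proposition:only:spineN*} for each $k$, and pass to the limit; the spinal factorisation then follows by the Markov--duality argument of Proposition~\ref{manytoone-lebesgue-Addi}, exactly as you outline (including the time-reversal of $L$ via $H(\rho)=H(\eta)$). That second part is fine.

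There is, however, a genuine gap in your derivation of \eqref{eq:prop:many:L}. Both limits you take live under the \emph{infinite} measure $\mathbf{N}_{x,r}$ (equivalently $E^0\otimes\mathcal{N}_{x,r}$), and neither your first displayed identity nor the ``dominated convergence'' you invoke on the right-hand side is justified as written. Concretely, for $\Phi\le 1$ the right-hand integrand after your change of variable is bounded by $e^{-\alpha\uptau}$, but $\mathcal{N}_{x,r}(e^{-\alpha\uptau})=\infty$ (indeed $\mathcal{N}_{x,r}(\uptau\le\delta)=\infty$ for every $\delta>0$), so dominated convergence does not apply. On the left-hand side, the a.e.\ uniform convergence of Proposition~\ref{proposition:L*} gives only, via Fatou, the inequality $\mathbf{N}^{\bullet,L}_{x,r}(\Phi)\le\liminf_k\epsilon_k^{-1}\mathbf{N}^{\bullet}_{x,r}\big(\mathbbm{1}_{\{\cdots\}}\Phi\big)$; the reverse direction needs an extra argument.

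The paper fixes this in two steps that you are missing. First it inserts the cutoff $\mathbbm{1}_{\{\uptau(W_U)>\delta\}}$ (resp.\ $\mathbbm{1}_{\{\uptau>\delta\}}$) via monotone convergence, so that all integrals are taken against a finite measure; dominated convergence on the right-hand side is then legitimate. Second, to upgrade the a.e.\ convergence on the left-hand side to convergence of the $\mathbf{N}_{x,r}$-integrals, it uses the general dominated convergence theorem to reduce to $\Phi\equiv 1$, and for $\Phi\equiv 1$ it computes the approximating integral \emph{exactly} through the special Markov property of Proposition~\ref{proposition:L*}:
\[
\epsilon_k^{-1}\,\mathbf{N}_{x,r}\Big(\int_0^\sigma\dd s\,\mathbbm{1}_{\{\delta<\uptau(W_s)<H(\rho_s)<\uptau(W_s)+\epsilon_k\}}\Big)
=\mathbf{N}_{x,r}\Big(\int_0^\sigma\dd L_s\,\mathbbm{1}_{\{\uptau(W_s)>\delta\}}\Big)\cdot\frac{1-e^{-\alpha\epsilon_k}}{\alpha\epsilon_k},
\]
which visibly converges as $k\to\infty$. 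The $\delta$-truncation and this exact identity are the ingredients your sketch lacks.
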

\noindent Note that the only difference with Proposition \ref{manytoone-lebesgue} comes from  the law of $(\rho_U,\eta_U,\overline{W}_U)$.
\begin{proof}
The same arguments applied in the proof of  Proposition \ref{manytoone-lebesgue-Addi} allows to infer from \eqref{eq:prop:many:L} the second statement of the proposition - using the strong Markov property  [Proposition \ref{eq:strong:Markov:N}] and the duality established in  Corollary~\ref{corollary:dualidadNblack}. Since this type of arguments have already been detailed in the proof of Proposition \ref{manytoone-lebesgue-Addi},  we only prove \eqref{eq:prop:many:L}.  Without loss of generality, we may assume that $\Phi$ is continuous and bounded above by~$1$.  
Noting that $\uptau(W_U)>0$, $\mathbf{N}_{x,r}^{\bullet, L}$-a.e., and that  $\uptau>0$, $\mathcal{N}_{x,r}$-a.e., by the monotone convergence theorem it suffices to establish that  we have 
\begin{equation}\label{equation:manyd}
    \mathbf{N}_{x,r}^{\bullet, L} \big(  \mathbbm{1}_{\{ \uptau(W_U)>\delta\}}\Phi\big( \rho_U,\eta_U, \overline{W}_U\big)  \big)= E^0 \otimes \mathcal{N}_{x,r} \Big( \mathbbm{1}_{\{ \uptau>\delta \}} \exp (- \alpha \cdot \uptau )\Phi\big( \mathcal{J}_{\uptau}, \widecheck{\mathcal{J}}_{\uptau}, (\bar{\xi}_t:~t\in [0,\uptau] ) \big) \Big),
\end{equation}
for every $\delta > 0$.   To this end,  note first that by Proposition \ref{proposition:only:spineN*}, for every $k \geq 0$ we have 
\begin{align}\label{equation:manyLd2}
     &  \frac{1}{\epsilon_k}  \mathbf{N}_{x,r}\Big(\int_0^\sigma \dd s \, \mathbbm{1}_{\{ \delta<\uptau(W_s) < H(\rho_s) < \uptau(W_s) + \epsilon_k \}} \Phi(\rho_s ,\eta_s, \overline{W}_s ) \Big)  \nonumber \\
     &=
     \frac{1}{\epsilon_k} E^0 \otimes \mathcal{N}_{x,r}  \Big(\int_0^{\infty} \dd a \, e^{- \alpha a}
       \mathbbm{1}_{\{\delta< \uptau< a < \uptau + \epsilon_k \}}
     \Phi(\mathcal{J}_a, \widecheck{\mathcal{J}}_a, \big(\overline{\xi}_t:~t\in [0,a]) \big)  \Big).
\end{align}
Since $\mathcal{N}_{x,r}(\uptau>\delta)<\infty$, an application of the dominated convergence theorem  yields that  the term in the right-hand side of the  previous display converges towards the right-hand side of \eqref{equation:manyd}. 
 Now, it remains to show that   
\begin{equation}\label{eq:desired:L:spinal}
\mathbf{N}_{x,r}\Big( \int_0^\sigma \dd L_s  ~ \mathbbm{1}_{\{\uptau(W_s)>\delta \}} \Phi\big( \rho_s,\eta_s, \overline{W}_s\big)  \Big)= \lim_{k  \rightarrow \infty}  \frac{1}{\epsilon_k} \mathbf{N}_{x,r}\Big(\int_0^\sigma \dd s \, \mathbbm{1}_{\{ \delta<\uptau(W_s) < H(\rho_s) < \uptau(W_s) + \epsilon_k \}} \Phi\big( \rho_s,\eta_s, \overline{W}_s\big)  \Big).  
\end{equation}
Since  by Proposition \ref{proposition:L*} we have:
\begin{align}\label{equation:aproxL}
\int_0^\sigma \dd L_s  ~ \mathbbm{1}_{\{\uptau(W_s)>\delta \}}\Phi\big( \rho_s,\eta_s, \overline{W}_s\big)=\lim_{k  \rightarrow \infty} \frac{1}{\epsilon_k }\int_0^\sigma \dd s \, \mathbbm{1}_{\{ \delta< \uptau(W_s) < H(\rho_s) < \uptau(W_s) + \epsilon_k \}}\Phi(\rho_s ,\eta_s, \overline{W}_s ) 
   , \quad  \mathbf{N}_{x,r}\text{-a.e.} ,
\end{align}  
by the general dominated convergence theorem  it suffices to prove  \eqref{eq:desired:L:spinal} for $\Phi$ identically equal to $1$. Note that  Fatou's Lemma and  \eqref{equation:manyLd2}  already give that $\mathbf{N}_{x,r}\big( \int_0^\sigma \dd L_s  ~ \mathbbm{1}_{\{\uptau(W_s)>\delta \}}   \big)$ is bounded above by $\mathcal{N}_{x,r}(\uptau >\delta)<\infty$. Finally,     by the special Markov property of  Corollary~\ref{corollary:special2}, we have:
\begin{align*}
 \frac{1}{\epsilon_k} \mathbf{N}_{x,r}\Big(\int_0^\sigma \dd s \, \mathbbm{1}_{\{ \delta<\uptau(W_s) < H(\rho_s) < \uptau(W_s) + \epsilon_k \}}   \Big) 
&=  \epsilon_k^{-1} \cdot \mathbf{N}_{x,r}\Big(\int_0^\sigma \dd L_s\,\mathbbm{1}_{\{ \uptau(W_s)>\delta \}}\Big) \cdot N\Big( \int_0^\sigma\dd s \mathbbm{1}_{\{ 0< H(\rho_s) < \epsilon_k \}} \Big)  \\
&=  \mathbf{N}_{x,r}\Big(\int_0^\sigma \dd L_s\,\mathbbm{1}_{\{ \uptau(W_s)>\delta \}}\Big) \cdot  \frac{1-\exp(-\alpha\epsilon_k)}{\alpha\epsilon_k},
\end{align*} 
where in the second equality we used \eqref{eq:many:to:one:N} and the  convention $(1-\exp(-\alpha\epsilon_k))/(\alpha\epsilon_k)=1$ if $\alpha=0$.   The convergence \eqref{eq:desired:L:spinal} with $\Phi \equiv 1$ now follows taking the limit as $k\to \infty$ in the last display. 
\end{proof}
We will now make use of Proposition \ref{proposition:ManyToOneL*} to identify the distribution of $L_\sigma$ under $\mathbf{N}_{x,r}$. For $\lambda \geq 0$ and $y \neq x$  we set     $u_\lambda(y) := \mathbb{N}_{y,0}(1-\exp (- \lambda {L}_\sigma))$.  By \cite[Proposition 4.7]{2024structure}, the function 
\begin{equation} \label{definition:PhiTilde}
    \widetilde{\psi}(\lambda) := \mathcal{N}\Big( \int_0^\uptau\dd h ~ \psi\big(u_\lambda(\xi_h)\big)\Big), \quad \text{ for } \, \lambda \geq 0, 
\end{equation}
  is the characteristic exponent of a Lévy tree - in the sense that it is the Laplace exponent of a Lévy process satisfying conditions  (A1) -- (A4).  In particular, it is of  the form 
   \begin{equation}\label{equation:psitilde}
       \widetilde{\psi}(\lambda)=\widetilde{\alpha}\lambda+\widetilde{\beta} \lambda^2+ \int_{(0,\infty)}\widetilde{\pi}(\dd \ell)\big(\exp(-\lambda \ell)-1+\lambda \ell\big), \quad \lambda \geq 0,  
   \end{equation}
   for some constants $\widetilde{\alpha},\widetilde{\beta}\in \mathbb{R}_+$ and a Lévy measure $\widetilde{\pi}$ on $(0,\infty)$  satisfying $\int \widetilde{\pi}(\dd \ell) ~(\ell\wedge \ell^2)<\infty$. Moreover \cite[Corollary 4.9]{2024structure} gives that 
\begin{equation}\label{alpha:beta:tilde}
\widetilde{\beta}=0 \quad \quad \text{ and }\quad \quad \widetilde{\alpha} = \mathcal{N}(1-\exp(-\alpha \uptau)).
\end{equation}
Note that $\widetilde{\psi}$ does not have  Gaussian component.  
\begin{prop} \label{corollary:exponente}
The image measure  of $\mathbf{N}_{x,r}$ by  $L_\sigma$, restricted to $(0,\infty)$, is $\widetilde{\pi}(\dd \ell) $.
\end{prop}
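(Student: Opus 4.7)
The strategy is to identify the image measure by computing a well-chosen Laplace-type transform of $L_\sigma$ under $\mathbf{N}_{x,r}$ and matching it with the Lévy-Khintchine description \eqref{equation:psitilde} of $\widetilde{\psi}$. Since $\widetilde{\pi}$ is only assumed to satisfy $\int(\ell \wedge \ell^{2})\,\widetilde{\pi}(\dd \ell)<\infty$, both $\mathbf{N}_{x,r}(1-e^{-\lambda L_\sigma})$ and $\mathbf{N}_{x,r}(L_\sigma)$ may be infinite, so I would work instead with the finite quantity $\mathbf{N}_{x,r}(L_\sigma(1-e^{-\lambda L_\sigma}))$: by the convexity of $\widetilde{\psi}$ and a change of variables, the claim $\mathbf{N}_{x,r}(L_\sigma\in\cdot)|_{(0,\infty)}=\widetilde{\pi}$ is equivalent to the identity
\begin{equation*}
\mathbf{N}_{x,r}\bigl(L_\sigma(1-e^{-\lambda L_\sigma})\bigr)=\widetilde{\psi}'(\lambda)-\widetilde{\alpha},\qquad \lambda>0,
\end{equation*}
since $\int_{(0,\infty)} \ell(1-e^{-\lambda \ell})\,\widetilde{\pi}(\dd \ell) = \widetilde{\psi}'(\lambda)-\widetilde{\alpha}$ is finite for $\lambda>0$ and determines $\widetilde{\pi}$ uniquely.

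The first step is to write $L_\sigma(1-e^{-\lambda L_\sigma})=\int_0^\sigma \dd L_s\,(1-e^{-\lambda L_\sigma})$ and apply the spinal formula in Proposition~\ref{proposition:ManyToOneL*}, which gives
\begin{equation*}
\mathbf{N}_{x,r}\bigl(L_\sigma(1-e^{-\lambda L_\sigma})\bigr)=E^0\otimes \mathcal{N}_{x,r}\Bigl(e^{-\alpha \tau}\bigl(1-F_\lambda(\mathcal{J}_\tau,\widecheck{\mathcal{J}}_\tau,\bar{\xi}^{\tau})\bigr)\Bigr),
\end{equation*}
where $F_\lambda$ denotes the conditional Laplace transform of $e^{-\lambda L_\sigma}$ given the spine $(\rho_U,\eta_U,\overline{W}_U)$. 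Writing $L_\sigma=L_U+(L_\sigma-L_U)$, the second part of Proposition~\ref{proposition:ManyToOneL*} provides the conditional independence of the two halves, so $F_\lambda$ factorizes as the product of the Laplace transform of $L_\sigma$ under $\mathbb{P}^\dag_{\eta_U,\rho_U,\overline{W}_U}$ and under $\mathbb{P}^\dag_{\rho_U,\overline{W}_U}$. By Lemma~\ref{lem:L:under:P:mu:w} applied to the right piece and its extension to the three-argument measure for the left piece (combined with the duality which makes the excursion Poisson intensities symmetric in the two marginals $\rho_U,\eta_U$), I would obtain
\begin{equation*}
F_\lambda(\mathcal{J}_\tau,\widecheck{\mathcal{J}}_\tau,\bar{\xi}^{\tau})=\exp\Bigl(-\int_{[0,\tau)}(\mathcal{J}_\tau+\widecheck{\mathcal{J}}_\tau)(\dd h)\,u_\lambda(\bar{\xi}_h)\Bigr),
\end{equation*}
where $u_\lambda(y)=\mathbb{N}_{y,0}(1-e^{-\lambda L_\sigma})$. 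Integrating against the law of the $2$-dimensional subordinator $(U^{(1)},U^{(2)})$ via the Laplace exponent \eqref{identity:exponenteSubord}, with $\lambda_1=\lambda_2=u_\lambda(\bar{\xi}_s)$ so that $\Phi(u_\lambda,u_\lambda)=\psi'(u_\lambda)-\alpha$, yields
\begin{equation*}
\mathbf{N}_{x,r}\bigl(L_\sigma(1-e^{-\lambda L_\sigma})\bigr)=\mathcal{N}\Bigl(e^{-\alpha\sigma}-\exp\bigl(-\int_0^\sigma \psi'(u_\lambda(\xi_s))\,\dd s\bigr)\Bigr).
\end{equation*}

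The final step is to recognize the right-hand side as $\widetilde{\psi}'(\lambda)-\widetilde{\alpha}$. Differentiating \eqref{definition:PhiTilde} in $\lambda$ and applying the integral equation characterising $u_\lambda$ in Lévy-snake theory (the Feynman--Kac-type identity for the exit from $E\setminus\{x\}$, see \cite[Proposition 4.3.2]{DLG02}), one converts $\mathcal{N}(\int_0^\sigma \psi'(u_\lambda(\xi_s))\,\partial_\lambda u_\lambda(\xi_s)\,\dd s)$ into the displayed expression, using also $\widetilde{\alpha}=\mathcal{N}(1-e^{-\alpha\sigma})$ from \eqref{alpha:beta:tilde}. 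The main obstacle of the proof is precisely this last identification, which relies both on the proper formulation of the extended Poisson decomposition for $\mathbb{P}^\dag_{\mu,\nu,\overline{\w}}$ (governing the left half of the snake) and on carefully leveraging the integral equation for $u_\lambda$; once both are in place the match with $\widetilde{\psi}'(\lambda)-\widetilde{\alpha}$ becomes routine.
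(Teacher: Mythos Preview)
Your approach is correct in outline but takes a genuinely different route from the paper. The paper computes $\mathbf{N}_{x,r}(e^{-\lambda L_\sigma}-1+\lambda L_\sigma)$ rather than your $\mathbf{N}_{x,r}(L_\sigma(1-e^{-\lambda L_\sigma}))$, and the difference is structural: writing $e^{-\lambda L_\sigma}-1+\lambda L_\sigma=\lambda\int_0^\sigma \dd L_s\,(1-e^{-\lambda(L_\sigma-L_s)})$, the integrand depends only on the \emph{future} after time $s$, so the simple Markov property under $\mathbf{N}_{x,r}$ suffices and only the one-sided Poisson decomposition governed by $\rho_s$ is needed. This produces the exponent $\psi(u_\lambda)/u_\lambda$ (the Laplace exponent of $U^{(1)}$ alone) and the paper then lands directly on identity (4.21) of \cite{2024structure}, which reads $\lambda\,\mathcal{N}(1-\exp(-\int_0^\uptau \psi(u_\lambda(\xi_h))/u_\lambda(\xi_h)\,\dd h))=\widetilde{\psi}(\lambda)$; nothing further has to be proved.

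Your route, by contrast, keeps the full $L_\sigma$ in the integrand and therefore genuinely requires both halves of the spinal decomposition; this forces you through the diagonal exponent $\psi'(u_\lambda)-\alpha$ and leaves you with the identity $\mathcal{N}(1-\exp(-\int_0^\uptau \psi'(u_\lambda(\xi_s))\,\dd s))=\widetilde{\psi}'(\lambda)$. You call this ``routine'' but it is not immediate from what is available: it hinges on the Feynman--Kac relation $\partial_\lambda u_\lambda(y)=\Pi_y(\exp(-\int_0^\uptau \psi'(u_\lambda(\xi_s))\,\dd s))$, which you would have to establish (e.g.\ via the analogue of Proposition~\ref{proposition:ManyToOneL*} under $\mathbb{N}_y$, or by differentiating the integral equation for $u_\lambda$ and invoking Feynman--Kac). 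Once that relation is in hand, the Markov property under $\mathcal{N}$ and an integration by parts in $h$ do yield your identity, so the approach closes; but the paper's choice of test function avoids this extra layer entirely and also sidesteps checking that $(\eta_U,\overline{W}_U)\in\overline{\Theta}_x$ for the left half. Minor point: under $\mathcal{N}$ the excursion duration should be written $\uptau$, not $\sigma$.
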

In other words, the distribution of $L_\sigma$ under $\mathbf{N}_{x,r}(\cdot \cap \{L_\sigma>0\})$ is precisely $\widetilde{\pi}(\dd \ell)$. We stress that the proposition provides no information about the measure of the event  $\{L_\sigma =0\}$.    This set  might have positive  $\mathbf{N}_{x,r}$-measure in general.
\begin{proof} Since the Laplace exponent $\widetilde{\psi}$ has no Brownian component, this is equivalent to proving that
    \begin{equation}\label{equation:leyL}
        \mathbf{N}_{x,r}\big( \exp (- \lambda L_\sigma) - 1 + \lambda L_\sigma  \big) = \widetilde{\psi}(\lambda)-\widetilde{\alpha}\lambda, 
    \end{equation}
    for every $\lambda\geq 0$.  In this direction,  note that 
    \begin{align} \label{equation:Phitilde1}
       \mathbf{N}_{x,r}\big( \exp (- \lambda L_\sigma) - 1 + \lambda L_\sigma \big) 
        &= 
         \lambda \cdot \mathbf{N}_{x,r}\left(\int_0^\sigma \dd L_s \,  \Big(1 - \exp ( - \lambda \int_s^\sigma \dd L_u  ) \Big)  \right) \nonumber \\
        &=
        \lambda \cdot  \mathbf{N}_{x,r}\left(\int_0^\sigma \dd L_s \, \mathbb{E}_{\rho_s, \overline{W}_s}^{\dagger} \Big(1 - \exp ( - \lambda \int_0^\sigma \dd L_u  ) \Big)  \right), 
    \end{align}
    where in the last equality we used  the strong Markov property under $\mathbf{N}_{x,r}$. In order to compute  the last expression, recall that the point measure \eqref{equation:PoissonH} under $\mathbb{P}^\dag_{\mu , \overline{\w}}$ is a Poisson point measure with intensity $\mu(\dd h) \mathbb{N}_{\overline{\w}(h)}( \dd \rho , \dd \overline{W} )$.    Lemma  \ref{proposition:L*} and the formula for the Laplace transform of integrals with respect to Poisson point measures  yields that for any $(\mu , \overline{\w}) \in \overline{\Theta}_x$, 
    \begin{equation*}
        \mathbb{E}_{\mu, \overline{\w}}^{\dagger} \big[ \exp ( - \lambda  L_\sigma  )  \big] 
        { = } 
        \exp \Big( - \int_0^{\uptau(\w)} \mu(\dd h )\,  \mathbb{N}_{\w(h)} \big( 1- \exp (- \lambda L_\sigma) \big)    \Big) 
        { = }
        \exp \Big( - \int_0^{\uptau(\w)} \mu(\dd h )\,  u_\lambda\big(\w(h) \big)    \Big). 
    \end{equation*}
    Getting back to \eqref{equation:Phitilde1},  since $(\rho,\overline{W})$ under $\mathbf{N}_{x,0}$ takes values in $\overline{\Theta}_x$, we infer from the previous display and Proposition \ref{proposition:ManyToOneL*} that: 
 \begin{align*}
             \mathbf{N}_{x,r}\big( \exp (- \lambda L_\sigma) - 1 + \lambda L_\sigma  \big) 
        &=
        \lambda \cdot        \mathbf{N}_{x,r}\Big( \int_0^\sigma \dd L_s \, \Big( 1-  \exp \Big( - \int_0^{\uptau(W_s)} \rho_s(\dd h )\,  u_\lambda \big(W_s(h) \big)\Big) \Big)     \Big)\\
        &= \lambda \cdot   E^0\otimes \mathcal{N}_{x,r}  \Big( \exp (- \alpha \uptau ) \Big( 1-  \exp \big( - \int_0^\infty J_\uptau (\dd h )\,  u_\lambda (\xi_h)   \big) \Big)   \Big) \\
        &= \lambda \cdot   E^0\otimes \mathcal{N}_{x,r}  \Big(   \exp(- \alpha \uptau)-  \exp \big( - \int_0^\uptau \dd h \,  \psi (u_\lambda (\xi_h ) \big) / u_\lambda (\xi_h ) \big)     \Big), 
    \end{align*}
    where in the last equality we used that $J_\infty$ under $P^0$ is the jump measure of a subordinator with Laplace exponent $\psi(\lambda)/\lambda - \alpha$. Finally, noting that the last term in the previous display writes:
 \begin{equation*}
     \lambda \cdot  E^0\otimes \mathcal{N} \Big(   1-  \exp \Big( - \int_0^\uptau \dd h \, \psi (u_\lambda (\xi_h ) \big) / u_\lambda (\xi_h ) \Big)\Big)-\lambda \cdot  E^0\otimes \mathcal{N} \big( 1-  \exp(- \alpha \uptau) \big), 
 \end{equation*}
the desired  equality  \eqref{equation:leyL} now follows immediately by applying identity (4.21) in \cite{2024structure}. 
\end{proof}

\section{The excursion process and the  tree coded by the local time}
\label{section:joinlaw}

The goal of this section is to prove that the genealogy of the excursions  as well as their respective boundary sizes  can be   encoded in a random real tree. In the process, our study will shed  light on the law of the excursion process  $\mathcal{E}$ under $\mathbb{N}_{x,0}$,  thereby complementing the statement of Theorem \ref{theorem:excursionPPP} which solely concerned the law of $\mathcal{E}$ under $\mathbb{P}_{0,x,0}$. In this direction, recall 
the definition of $A$ given in \eqref{equation:aproximacionA:def} and for every element $(\upvarrho , \overline{\omega}) = (\upvarrho , \omega, \lambdaC )$ of $\mathbb{D}(\mathbb{R}_+, \mathcal{M}_f(\mathbb{R}_+)\times \mathcal{W}_{\overline{E}})$   write $\widetilde{H}(\upvarrho , \overline{\omega})$ for the process defined by the relation 
\begin{equation}\label{equation:Htildefuncional}
    \widetilde{H}_t(  \upvarrho , \overline{\omega} ) := 
    \widehat{\text{\lambdaC}}_{A^{-1}_t(\upvarrho , \overline{\omega})} , \quad t \geq 0. 
\end{equation}
By convention,  $\widetilde{H}(\upvarrho , \overline{\omega} )$ is set as the path identically  equal to $\widehat{\text{\lambdaC}}_0$ if the convergence in \eqref{equation:aproximacionA:def}
does not hold\footnote{When working under  measures associated to Lévy snakes, this will not occur outside of a negligible set, but we give a general definition in order to have a comprehensible framework.} uniformly  in compacts intervals (with a limit instead of a liminf).  As usual,  when there is no risk of  confusion we will drop the dependence on $(\upvarrho , \overline{\omega})$. The process $ \widetilde{H}$ was studied under $\mathbb{P}_{0,x,0}$ and $\mathbb{N}_{x,0}$ in \cite{2024structure} and for latter use we recall some of its main properties.
\par 
First, recall from \eqref{equation:soporteAditiva} that
\begin{equation}\label{equation:soporteAditiva:2}
     \text{supp } \dd A = [0,\sigma] \setminus \mathcal{C}^* , \quad \text{ under }\mathbb{P}_{0,x,0 } \text{ and } \mathbb{N}_{x,0},
\end{equation}
where $\mathcal{C}^*$ stands for the constancy intervals of $\widehat{\Lambda}$. As a consequence,  the process $\widetilde{H}$ under   $\mathbb{P}_{0,x,0}$ and $\mathbb{N}_{x,0}$ has  continuous sample paths, and   Theorem 5.1 in \cite{2024structure} (see as well the discussion right-after) states that  $\widetilde{H}$  is distributed as  the height process of  a $\widetilde{\psi}$-Lévy process and of a $\widetilde{\psi}$-Lévy excursion  respectively. Hence, under $\mathbb{P}_{0,x,0}$ and $\mathbb{N}_{x,0}$, the real tree $\mathcal{T}_{\widetilde{H}}$ is well-defined, and it is distributed  as a  forest of  $\widetilde{\psi}$-Lévy trees and  as a  $\widetilde{\psi}$-Lévy tree respectively, in the sense of Section \ref{subsection:height}.  Heuristically, since the constancy intervals of $A$ coincide precisely with those of   $\widehat{\Lambda}$, the   tree $\mathcal{T}_{\widetilde{H}}$ is obtained from  $\mathcal{T}_H$ by identifying each component $\mathcal{C}_u$ for $u \in \mathcal{D}$ in a single point, say $[u]$. The height (or distance to the root) of $[u]$ in $\mathcal{T}_{\widetilde{H}}$ is then given by $\widehat{\Lambda}_u$, the value of the local time in the corresponding excursion component. The tree $\mathcal{T}_{\widetilde{H}}$ codes the genealogy of the excursion components, in the sense that if $u \succeq u'$ in $\mathcal{T}_{H}$, we still have $[u] \succeq [u']$ in $\mathcal{T}_{\widetilde{H}}$. This heuristic discussion can be made precise  by making use of the notion of  ``subordination of trees by continuous non-decreasing functions'', introduced and studied in \cite{Subor}, when combined with results from  \cite[Section 5]{2024structure}. We refer to \cite{Subor} for related work and applications to Brownian geometry, as well as to \cite{BertoinLeGallLeJean} for connected results in the context of superprocesses. More precisely, 
we let $\widetilde{d}$ be the pseudo-distance in $\mathcal{T}_H$ defined  by the relation 
\begin{equation*}
    \widetilde{d}(a,b) := \widehat{\Lambda}_a + \widehat{\Lambda}_b - 2 \cdot \widehat{\Lambda}_{a \curlywedge b}, \quad a,b \in \mathcal{T}_H. 
\end{equation*}
For any $a,b \in \mathcal{T}_H$, we shall write  $a \approx b$ if and only if $\widetilde{d}(a,b) = 0$ or equivalently, if  $\widehat{\Lambda}_a = \widehat{\Lambda}_b =  \widehat{\Lambda}_{a \curlywedge b}$. Note that in particular,  for any $a,b \in \mathcal{C}_u$ we have that $a \approx b$ by Lemma \ref{lemma:contanteExcursion}. It readily follows that $\approx$ is an equivalence relation on $\mathcal{T}_H$,  and by  \cite[Theorem 5.1]{2024structure} and the discussion  therein   the metric space $ (\mathcal{T}_H / \approx, \tilde{d})$ pointed at the equivalence class of the root of  $\mathcal{T}_H$ is  a pointed  $\mathbb{R}$-tree  isometric to $(\mathcal{T}_{\widetilde{H}}, d_{\widetilde{H}},p_{\widetilde{H}}(0))$.\footnote{In the sense that there exists an isometry between both metric spaces preserving the roots.} 
\par   
 Before stating   our main result and its  implications we first need to introduce some notation. For every  $(\upvarrho, \overline{\omega}) = (\upvarrho, \omega , \lambdaC )$ in $\mathbb{D}(\mathbb{R}_+, \mathcal{M}_f(\mathbb{R}_+)\times \mathcal{W}_{\overline{E}})$, we partition the collection of debuts points $\mathcal{D}(\upvarrho , {\omega})$ in the two following sets 
\begin{equation*}
    \mathcal{D}_+(\upvarrho , {\omega}):=\{u\in \mathcal{D}(\upvarrho, \omega):~L_\sigma^*(\upvarrho^{u,*},\omega^{u,*})>0\},  \quad  \quad  \text{and} \quad  \quad \mathcal{D}_0(\upvarrho , {\omega}):=\{u\in \mathcal{D}(\upvarrho, \omega) : ~L_\sigma^*(\upvarrho^{u,*}, \omega^{u,*})=0\},  
\end{equation*}
  and  we introduce the point  measure:
  \begin{equation}\label{equation:jumpLevy}
       \widetilde{\mathcal{E}}(\upvarrho , \overline{\omega}) :=  \sum_{u \in \mathcal{D}_+(\upvarrho, \omega)} \delta_{A_{\mathfrak{g}(u)}(\upvarrho, \overline{\omega}) , \,  L_\sigma^*(\upvarrho^{u,*}, \omega^{u,*}) }.
    \end{equation}
 Recall that by  convention  $\mathcal{D}(\upvarrho, \omega)=\emptyset$ when $(\upvarrho, {\omega})$ does not belong to $\mathcal{S}_x$.  Theorem~\ref{theorem:excursionPPP}, Proposition~\ref{corollary:exponente}   and classic properties of Poisson measures yield that under $\mathbb{P}_{0,x,0}$, the measure $\widetilde{\mathcal{E}}$ is a Poisson point measure with intensity $\mathbbm{1}_{\mathbb{R}_+}(t) \dd t \widetilde{\pi}(\dd \ell)$, where we recall from Section~\ref{sec:Spinal:L} that $\widetilde{\pi}$ is the Lévy measure of $\widetilde{\psi}$. Recalling the form of $\widetilde{\psi}$ from \eqref{equation:psitilde} and the fact that  $\int \widetilde{\pi}(\dd \ell)~ (\ell \wedge  \ell^2)<\infty$,   under $\mathbb{P}_{0,x,0}$ we can  construct a $\widetilde{\psi}$-Lévy process by setting  
\begin{equation}\label{E:LevyIto}
\widetilde{X}_t \coloneqq \widetilde{\alpha} t + \int_{[0,t]\times \mathbb{R}_+} \widetilde{\mathcal{E}}^{(c)}( \dd  s,  \dd \ell)~ \ell,\quad t\geq 0,
\end{equation}
for $\widetilde{\alpha}$ as in  \eqref{alpha:beta:tilde} and where  $\widetilde{\mathcal{E}}^{(c)}$ stands for 
the compensated Poisson measure $\widetilde{\mathcal{E}}^{(c)}( \dd  s,  \dd \ell):=  \mathcal{E}(\dd  s,  \dd \ell)- \dd  s \widetilde{\pi}(\dd \ell).$  By considering the first excursion of $\rho$ with  duration greater than $\epsilon$ and the discussion at the end of the proof in \cite[Proposition 4.1]{2024structure}, classic arguments yield that the process $\widetilde{X}$ still makes sense under $\mathbb{N}_{x,0}$.
\begin{theo}\label{theo:X:H:tilde}
Under $\mathbb{N}_{x,0}$,
the process $\widetilde{X}$ is distributed as a $\widetilde{\psi}$-Lévy excursion. Moreover, under $\mathbb{P}_{0,x,0}$ and $\mathbb{N}_{x,0}$, the height process of $\widetilde{X}$ is precisely $\widetilde{H}$.
\end{theo}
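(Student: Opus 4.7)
The plan is to first establish both claims under the probability measure $\mathbb{P}_{0,x,0}$, and then transfer the statement to $\mathbb{N}_{x,0}$ by a standard excursion decomposition. Under $\mathbb{P}_{0,x,0}$, Theorem~\ref{theorem:excursionPPP} combined with Proposition~\ref{corollary:exponente} yields that $\widetilde{\mathcal{E}}$ is a Poisson point measure with intensity $\mathbbm{1}_{\mathbb{R}_+}(t)\dd t \otimes \widetilde{\pi}(\dd \ell)$. Since $\widetilde{\pi}$ integrates $\ell\wedge \ell^{2}$, the compensated integral in \eqref{E:LevyIto} is a well-defined càdlàg martingale, and the Lévy--Itô decomposition identifies $\widetilde{X}$ as a spectrally positive Lévy process with Laplace exponent $\widetilde{\psi}$.

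The next task is to identify $\widetilde{H}$ as the height process of $\widetilde{X}$ under $\mathbb{P}_{0,x,0}$. By \cite[Theorem~5.1]{2024structure}, $\widetilde{H}$ is already the height process of \emph{some} $\widetilde{\psi}$-Lévy process $Y$, and Lemma~\ref{coro:reconstruccionRhoX} provides $Y$ explicitly as a measurable functional of $\widetilde{H}$. Since $\widetilde{\psi}$ has no Brownian component, both $Y$ and $\widetilde{X}$ are pure-jump $\widetilde{\psi}$-Lévy processes with the same drift $\widetilde{\alpha}$, so it suffices to show that they share the same jump measure. By classical Lévy tree theory \cite[Theorem~4.7]{FractalAspectsofLevyTrees}, the jumps of $Y$ are in bijection with the infinite-multiplicity branching points of $\mathcal{T}_{\widetilde{H}}$: a jump of $Y$ at time $s$ of size $\Delta Y_{s}$ corresponds to a branching point $p_{\widetilde{H}}(s)$ whose ``mass'' equals $\Delta Y_{s}$. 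Under the subordination description of $\mathcal{T}_{\widetilde{H}}$ as the quotient $\mathcal{T}_{H}/\!\approx$, the candidate branching points are the classes $[u]$ for $u\in\mathcal{D}$, and the first visit time of $[u]$ by $p_{\widetilde{H}}$ is $A_{\mathfrak{g}(u)}$. The key geometric claim is then that $[u]$ is an infinite-multiplicity branching point of mass $\ell_{u}=L^{*}_{\sigma}(\rho^{u,*},W^{u,*})$ precisely when $u\in\mathcal{D}_{+}$, and is not a branching point when $u\in\mathcal{D}_{0}$. Granted this claim, the jump measure of $Y$ equals $\widetilde{\mathcal{E}}$ and hence $Y=\widetilde{X}$ almost surely.

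The natural route to the mass identification is through the special Markov property of Corollary~\ref{corollary:special2}: conditionally on $\mathrm{tr}(\rho^{u,*},W^{u,*})$, the subtrajectories in $(\rho^{u},\overline{W}^{u})$ growing above the excursion component $\mathcal{C}_{u}$ form a Poisson point measure with intensity $\mathbbm{1}_{[0,\ell_{u}]}(\ell)\dd \ell \otimes \mathbb{N}_{x,0}$. Passed through the subordination map $[\,\cdot\,]$, this Poisson structure realises exactly the approximating scheme used in \cite[Theorem~4.7]{FractalAspectsofLevyTrees} to define the mass of $[u]$ and yields the value $\ell_{u}$; for $u\in\mathcal{D}_{0}$ the corresponding intensity vanishes, so no branching occurs at $[u]$. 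Once the $\mathbb{P}_{0,x,0}$ statement is in hand, the transfer to $\mathbb{N}_{x,0}$ is routine: under $\mathbb{P}_{0,x,0}$ the excursions of $(\rho,\overline{W})$ away from $(0,x,0)$ form a Poisson point measure with intensity $\mathbbm{1}_{\mathbb{R}_{+}}(\ell)\dd \ell\otimes\mathbb{N}_{x,0}$, and by the additivity of $A$ in \eqref{equation:aditiva1} each such excursion corresponds to a single excursion of $\widetilde{X}$ above its running infimum. Classical excursion theory for $\widetilde{X}$ then identifies the pushforward of $\mathbb{N}_{x,0}$ under $(\rho,\overline{W})\mapsto\widetilde{X}$ with the $\widetilde{\psi}$-Lévy excursion measure, and the height-process identification under $\mathbb{N}_{x,0}$ is inherited from the $\mathbb{P}_{0,x,0}$ case. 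The most delicate step is the mass identification above: tying the abstract branching-point mass of \cite[Theorem~4.7]{FractalAspectsofLevyTrees} to the boundary size $\ell_{u}$ built from the exit local time $L^{*}$.
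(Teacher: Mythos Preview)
Your overall architecture matches the paper's: argue under $\mathbb{P}_{0,x,0}$ by comparing the jump processes of $\widetilde{X}$ and of the $\widetilde{\psi}$-Lévy process $Y$ reconstructed from $\widetilde{H}$ via Lemma~\ref{coro:reconstruccionRhoX}, invoke \cite[Theorem~4.7]{FractalAspectsofLevyTrees} to read jumps of $Y$ as fractal masses of branching points of $\mathcal{T}_{\widetilde{H}}$, and then pass to $\mathbb{N}_{x,0}$ by the excursion decomposition of $(\rho,\overline{W})$. The transfer to $\mathbb{N}_{x,0}$ that you sketch is essentially what the paper does.

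The gap is in the jump identification under $\mathbb{P}_{0,x,0}$. You aim for a direct two-sided matching: every branching point of $\mathcal{T}_{\widetilde{H}}$ is $[u]$ for some $u\in\mathcal{D}_+$, its mass is \emph{exactly} $\ell_u$, and the first visit time of $[u]$ is \emph{exactly} $A_{\mathfrak{g}(u)}$. None of these three claims is established; in particular you do not rule out branching points of $\mathcal{T}_{\widetilde{H}}$ not arising from some $\mathcal{C}_u$, and the hand-wave ``this Poisson structure realises exactly the approximating scheme'' does not by itself yield the exact mass, since the count $\widetilde{M}([u],\varepsilon)$ in \cite[Theorem~4.7]{FractalAspectsofLevyTrees} tallies \emph{all} components of $\mathcal{T}_{\widetilde{H}}\setminus\{[u]\}$, not only those inside the subtree above $[u]$.

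The paper sidesteps all three issues with a one-sided trick. It proves Lemma~\ref{lemma:convergenceL} under $\mathbf{N}_{x,r}$ (via the special Markov property, as you anticipated), transfers it to every $u\in\mathcal{D}$ through the exit formula of Corollary~\ref{corollary:exitPx}, and then uses only the trivial inequality $\widetilde{M}([u],\varepsilon)\geq \textbf{m}(\widetilde{H}^u,\varepsilon)$. This gives, for each $u\in\mathcal{D}_+$, a jump of $Y$ at some time $t_u\leq A_{\mathfrak{g}(u)}$ of size $\Delta Y_{t_u}\geq \ell_u=\Delta\widetilde{X}_{A_{\mathfrak{g}(u)}}$. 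Since $Y$ and $\widetilde{X}$ are both $\widetilde{\psi}$-Lévy processes, this stochastic one-sided domination of the jump point process forces the jump processes to coincide almost surely. That distributional-equality-plus-inequality argument is the missing idea in your proposal.
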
 
In particular,  Theorem \ref{theo:X:H:tilde} implies that under  $\mathbb{P}_{0,x,0}$ and $\mathbb{N}_{x,0}$, the height process $\widetilde{H}$ is a functional of $\mathcal{E}$. At the end of the section we will address the ``inverse''  problem, which consists in characterising the distribution of $\mathcal{E}$ conditionally on $\widetilde{H}$. 
\par 
Let us briefly discuss the main ingredients involved in 
 its proof. We start by  proving  Theorem \ref{theo:X:H:tilde} under $\mathbb{P}_{0,x,0}$, we then derive the results under $\mathbb{N}_{x,0}$ by standard excursion theory arguments. If under $\mathbb{P}_{0,x,0}$  we write $\widetilde{X}'$ for the $\widetilde{\psi}$-Lévy process  associated with $\widetilde{H}$ through Lemma \ref{coro:reconstruccionRhoX} and \eqref{temps:local:I:p:s},    proving the statement of Theorem \ref{theo:X:H:tilde} under $\mathbb{P}_{0,x,0}$ is equivalent to establishing that $\widetilde{X}'$ and $\widetilde{X}$ are indistinguishables under $\mathbb{P}_{0,x,0}$. Since  $\widetilde{\psi}$ has no  Brownian component,  $\widetilde{X}'$ is a purely discontinuous Lévy process and therefore it suffices to  show that  the respective jump processes $\Delta \widetilde{X}'_t$, $\Delta \widetilde{X}_t$ for $t \geq 0$  coincide. The key now is that since $\widetilde{H}$ is the height process of  $\widetilde{X}'$,  we know from \cite{FractalAspectsofLevyTrees}  that the jumps of $\widetilde{X}'$ are coded in the so-called fractal masses at the branching points of $\mathcal{T}_{\widetilde{H}}$, see \cite[Theorem 4.7]{FractalAspectsofLevyTrees}    and the discussion right after, while on the other hand,  the jumps of $\widetilde{X}$ are by construction given by the variables $L_\sigma(\rho^u, W^u)$, for $u \in \mathcal{D}_+$. We will show that these two   collections coincide by making use of  the exit formula of Corollary \ref{corollary:exitPx}.
\par 
These arguments rely in preliminary results under $\mathbf{N}_{x,r}$ for $r \geq 0$ that we will now address.  We start  with a brief study of the process $(A_t)_{t \geq 0}$ under the measure $\mathbf{N}_{x,r}$, for $r \geq 0$,  and in this direction the following identity will  be often used  in combination with \eqref{equation:soporteAditiva:2}:
\begin{equation}\label{eq:donde:se:anula:Lambda}
\{t\geq 0:~\widehat{\Lambda}_t=0\}= \{t\geq 0:~H(\rho_t)=0\}=\{t\geq 0:~\rho_t=0\}, \quad \text{ under }
\mathbb{P}_{0,x,0} \text{ and } \mathbb{N}_{x,0}. 
\end{equation}
The first equality is a consequence of   Lemma \ref{lemma:tequedasenThetax}  while the second  was already discussed in  \eqref{equation:zeros}.  Recall the notation $(\varepsilon'_k)_{k\geq 0}$ for the sequence used in the definition of $A$ given in \eqref{equation:aproximacionA:def}.  

\begin{lem}\label{lemma:subordinadoNblack}
Fix $r\geq 0$. Under $\mathbf{N}_{x,r}$ and outside of a negligible set, the  convergence 
\begin{equation}\label{equation:aproxAnblack}
    A_t(\rho,\overline{W})= \lim_{k\to \infty} \frac{1}{\varepsilon'_k}  \int_0^t \dd s \int_{r}^{\infty} \dd z \,  \mathbbm{1}_{\{ \tau_z(\overline{W}_s ) < H(\rho_s) < \tau_z(\overline{W}_s) + \varepsilon_k^\prime \}}
\end{equation}
 holds  uniformly in compacts intervals. In particular, the process $(A_t)_{t \geq 0}$ is continuous and non-decreasing. Furthermore, the support of the measure $\dd A$ coincides with the  complement of the constancy intervals of the process $\widehat{\Lambda}$.
\end{lem}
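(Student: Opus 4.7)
The plan is to mimic, step by step, the argument used for Proposition~\ref{proposition:L*}: first extend the defining approximation to initial conditions in $\overline{\Theta}_x$, then transfer the result to $\mathbf{N}_{x,r}$ via the strong Markov property [Proposition~\ref{eq:strong:Markov:N}] and the duality identity of Corollary~\ref{corollary:dualidadNblack}.   Before doing so, we reduce to the case $r=0$: since $\tau_z(\overline{\w}) = \tau_{z-r}(\w,\ell-r)$ for any $\overline{\w}=(\w,\ell)$, the change of variable $z \mapsfrom z+r$ in the right-hand side of \eqref{equation:aproxAnblack} shows that the functional on the right-hand side evaluated at $(\rho , W , \Lambda)$  under $\mathbf{N}_{x,r}$ coincides with the corresponding functional evaluated at $(\rho,W,\Lambda-r)$, whose law is $\mathbf{N}_{x,0}$.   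It suffices then to prove \eqref{equation:aproxAnblack} under $\mathbf{N}_{x,0}$ with lower integration bound $0$.

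First, I would establish an analogue of Lemma~\ref{lem:L:under:P:mu:w} for $A$: for every $(\mu,\overline{\w})\in \overline{\Theta}_x$, the approximation \eqref{equation:aproxAnblack} (with $r=0$) holds $\mathbb{P}^\dag_{\mu,\overline{\w}}$-a.s. and the decomposition
\[
A_t(\rho,\overline{W})=\sum_{i\in \mathcal{I}} A_{t\wedge b_i - t\wedge a_i}(\rho^i,\overline{W}^i),\qquad t\geq 0,
\]
holds $\mathbb{P}^\dag_{\mu,\overline{\w}}$-a.s., where $(a_i,b_i)_{i\in\mathcal{I}}$ are the excursion intervals of $H(\rho)$ above its running infimum and $(\rho^i,\overline{W}^i)_{i\in\mathcal{I}}$ the associated subtrajectories. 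Thanks to \eqref{equation:PoissonH}, the Poisson point measure $\sum_i \delta_{(H(\rho_{a_i}),\rho^i,\overline{W}^i)}$ has intensity $\mu(\dd h)\,\mathbb{N}_{\overline{\w}(h)}(\dd\rho,\dd\overline{W})$, and the defining convergence \eqref{equation:aproximacionA:def} is known to hold under $\mathbb{N}_{y,\ell}$ for every $(y,\ell)\in\overline{E}$ (when $y=x$ this follows from \eqref{equation:aproximacionA} after translating $\Lambda$ by $-\ell$; when $y\neq x$ it follows by the same argument as in Lemma~\ref{lem:equa:convL} combined with the first moment bound obtained through the many-to-one formula~\eqref{eq:many:to:one:N:y}, which also provides the domination needed to apply the generalised dominated convergence theorem exactly as in the proof of Lemma~\ref{lem:L:under:P:mu:w}). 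Summing the contributions of the countably many excursions and controlling the remainders via the finiteness of $\mathbb{E}^\dag_{\mu,\overline{\w}}[A_\infty]$, which can be bounded using $\Pi_{\overline{\w}(h)}(\mathcal{L}_\infty)$ together with condition~(ii) of $\overline{\Theta}_x$, yields the uniform convergence and the additive decomposition. The main obstacle here is the domination step for excursions starting at $\overline{\w}(h)=(y,\ell)$ with $y\neq x$, which requires a careful application of the many-to-one formula together with the identity $\mathcal{L}_\infty=\mathcal{L}_\tau$ under $\Pi_{y,\ell}$ stopped at $\sigma$.

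Next, I would transfer this to $\mathbf{N}_{x,0}$. Fix  $s>0$. By the strong Markov property [Proposition~\ref{eq:strong:Markov:N}] at time $s$ and the fact that $(\rho_s,\overline{W}_s)\in\overline{\Theta}_x$  $\mathbf{N}_{x,0}$-a.e. [Lemma~\ref{lemma:tequedasenThetax}], the previous step yields
\[
\lim_{k\to\infty} \sup_{t\geq s}\Bigl| A_t - A_s - \tfrac{1}{\varepsilon_k'}\int_s^t \dd u \int_0^\infty \dd z\,\mathbbm{1}_{\{\tau_z(\overline{W}_u)<H(\rho_u)<\tau_z(\overline{W}_u)+\varepsilon_k'\}}\Bigr|=0,
\]
$\mathbf{N}_{x,0}$-a.e. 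Applying the duality identity of Corollary~\ref{corollary:dualidadNblack}, which preserves the integrand $\mathbbm{1}_{\{\tau_z(\overline{W}_u)<H(\rho_u)<\tau_z(\overline{W}_u)+\varepsilon\}}$ up to time reversal and exchange of $\rho$ with $\eta$ (the latter being immaterial since only $H(\rho)=H(\eta)$ appears), I get the symmetric statement on $[0,(\sigma-s)\vee 0]$. Letting $s\downarrow 0$ along a countable sequence and combining both pieces yields \eqref{equation:aproxAnblack} uniformly on compact intervals under $\mathbf{N}_{x,0}$.

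Finally, continuity and monotonicity of $A$ follow immediately from being the uniform limit of continuous non-decreasing functions. For the last claim on the support of $\dd A$, I would argue as follows. On any open interval on which $\widehat{\Lambda}$ is constant, the snake property together with condition (ii) defining $\overline{\Theta}_x$ forces the integrand in \eqref{equation:aproxAnblack} to vanish for all $z$ in a neighborhood of the constant value of $\widehat{\Lambda}$, so $\dd A$ does not charge constancy intervals of $\widehat{\Lambda}$. Conversely, the additivity decomposition from the first step reduces the reverse inclusion to showing that $\dd A$ charges every neighborhood of a non-constancy point of $\widehat{\Lambda}$, and this is done by another application of the strong Markov property [Proposition~\ref{eq:strong:Markov:N}] combined with the corresponding statement under $\mathbb{P}^\dag_{\mu,\overline{\w}}$ that follows from its analogue under $\mathbb{N}_{y,\ell}$ and the Poisson description of excursions above the running infimum.
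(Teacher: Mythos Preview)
Your argument is workable, but it takes a genuinely different and longer route than the paper's. The paper does not redo the Markov-plus-duality machinery of Proposition~\ref{proposition:L*}. Instead it observes that under $\mathbb{N}_{x,0}$ the convergence \eqref{equation:aproximacionA} is already known, and hence holds automatically for every subtrajectory $(\rho^u,\overline{W}^u)$, $u\in\mathcal{D}$, since these are just restrictions of $(\rho,\overline{W})$; the relation $A_t(\rho^u,\overline{W}^u)=A_{(\mathfrak{g}(u)+t)\wedge\mathfrak{d}(u)}-A_{\mathfrak{g}(u)}$ together with the support description \eqref{equation:soporteAditiva:2} then gives all the claimed properties for every such subtrajectory. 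A single application of the exit formula [Theorem~\ref{theorem:exit}] transfers these statements to $\mathbf{N}_{x,r}$ for some (hence every) $r\ge0$. This bypasses entirely the need for an analogue of Lemma~\ref{lem:L:under:P:mu:w}: you never have to establish the approximation of $A$ under $\mathbb{N}_{y,\ell}$ for $y\neq x$, nor control any domination for the Poisson sum, nor argue the support characterisation from scratch.

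Your approach buys you a proof that is independent of the exit formula, which could be useful if one wanted to reorder the logical dependencies, but at the cost of the delicate step you yourself flag, namely the uniform convergence of the approximation of $A$ under $\mathbb{N}_{y,\ell}$ with $y\neq x$ and the associated domination. The paper's route is shorter precisely because it leverages Theorem~\ref{theorem:exit}, already available at this point, to pull everything back to $\mathbb{N}_{x,0}$ where the result is known.
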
 

\begin{proof} 
Recall that $(\rho,\overline{W})$ is a snake path under $\mathbb{N}_{x,0}$. It follows  from  \eqref{equation:aproximacionA} that,  under $\mathbb{N}_{x,0}$ and outside of a null set,  for any subtrajectory $(\rho' , \overline{W}')$ of $(\rho , \overline{W})$ corresponding to some interval $[a,b]$ for $0 \leq a < b < \infty$,  the process   $A^\prime:=A(\rho' , \overline{W}')$ is well defined, where the $\liminf$ in the definition of  $A(\rho', W')$  can be replaced by a limit, and with the convergence holding uniformly on compact intervals.  Moreover, it is plain that  $A_t(\rho', \overline{W}') = A_{(a+t) \wedge b}(\rho,\overline{W}) - A_{a}(\rho,\overline{W})$, for $t \geq 0$. By the latter identity and \eqref{equation:soporteAditiva:2}, we deduce as well that the support of the  measure $\dd A(\rho',\overline{W}')$ is precisely the complement of the constancy intervals of $\widehat{\Lambda}'$.  This analysis applies in particular to every $(\rho^u,\overline{W}^u)$ for $u \in \mathcal{D}$, and an application of Theorem \ref{theorem:exit}  then shows that the statement of the lemma  holds at least for some  $r\geq 0$. To extend it for every $r\geq 0$,  it suffices to remark that, since the distribution of $(\rho,W,\Lambda-r)$ under $\mathbb{N}_{x,r}$ is precisely $\mathbb{N}_{x,0}$,   if any of the previously mentioned properties  failed for some $r'$ it would fail for every $r \geq 0$.
\end{proof}

As an immediate consequence of Lemma~\ref{lemma:subordinadoNblack},  the process  $\widetilde{H}$ is  continuous  under $\mathbf{N}_{x,r}$ for $r \geq 0$.  In particular,  we can consider the associated real tree $\mathcal{T}_{\widetilde{H}}$ which, as usual, is rooted at $p_{\widetilde{H}}(0)$. In the next lemma, we establish the relation between the fractal masses at the branching points of $\mathcal{T}_{\widetilde{H}}$ and the excursion-boundary sizes $L_\sigma(\rho^u, W^u)$, for $u \in \mathcal{D}$. In this direction, let  $\widetilde{v}:\mathbb{R}_+\to \mathbb{R}_+$ be the unique function defined by the relation
\begin{equation}\label{int:tilde:v}
\int_{\widetilde{v}(a)}^\infty \frac{\dd \lambda}{\widetilde{\psi}(\lambda)} = a,\quad  \text{for }a > 0.
\end{equation}
In particular $\widetilde{v}$ is continuous and  we also have $ \widetilde{v}(a) \to  \infty$ as $a \to 0$ since $\widetilde{\psi}$ verifies (A4).
\begin{lem}\label{lemma:convergenceL}
Fix $r\geq 0$. Under $\mathbf{N}_{x,r}$ and for every $\varepsilon>0$, let  $ {\textbf{\emph{m}}} ( \widetilde{H}  , \epsilon )$ be the number of connected components of $\mathcal{T}_{\widetilde{H}}\setminus\{ 0 \}$ having  at least a point  at distance  $\varepsilon$ from the root $p_{\widetilde{H}}(0)$.  Then the following convergence holds 
\begin{equation}\label{eq:lim:L:Z}
L_\sigma(\rho,W)=\lim \limits_{\varepsilon \rightarrow 0} \frac{ {\textbf{\emph{m}}} (\widetilde{H}  , \epsilon )}{\widetilde{v}(\varepsilon)}, \quad \mathbf{N}_{x,r}\text{-a.e.} 
\end{equation}
\end{lem}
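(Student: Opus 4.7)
The strategy is to identify $\mathbf{m}(\widetilde{H},\varepsilon)$ with the number of ``tall enough'' atoms in a Poisson point measure produced by the special Markov property, and then to conclude via a law of large numbers for Poisson random variables whose mean is $L_\sigma\widetilde{v}(\varepsilon)$.

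Under $\mathbf{N}_{x,r}$ the local time satisfies $\widehat{\Lambda}_s\geq r$ for every $s\geq 0$ (because $\mathcal{L}$ is non-decreasing along each ancestral path and starts at $r$), with $\widehat{\Lambda}_0=r$. Via the time change $\widetilde{H}_t=\widehat{\Lambda}_{A_t^{-1}}$, this gives $\widetilde{H}_0=r$ and $\widetilde{H}_t\geq r$, hence $m_{\widetilde{H}}(0,t)=r$ and the distance from the root $p_{\widetilde{H}}(0)$ to $p_{\widetilde{H}}(t)$ equals $\widetilde{H}_t-r$. The connected components of $\mathcal{T}_{\widetilde{H}}\setminus\{p_{\widetilde{H}}(0)\}$ are therefore in bijection with the excursion intervals of $\widetilde{H}$ above the level $r$, and a given component reaches distance $\varepsilon$ from the root if and only if on the corresponding excursion $\widetilde{H}$ exceeds the value $r+\varepsilon$.

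First I would use Lemma \ref{lemma:subordinadoNblack} (stating that the support of $\dd A$ coincides with the complement of the constancy intervals of $\widehat{\Lambda}$) together with the regularity and instantaneous character of $x$ for $\xi$ to identify the excursion intervals of $\widetilde{H}$ above $r$ with the connected components $(s_i,t_i)_{i\in\mathcal{I}}$ of $\{s\geq 0:~H(\rho_s)>\uptau(W_s)\}$, which is precisely the set $\{s:~\widehat{\Lambda}_s>r\}$. Writing $(\rho^i,\overline{W}^i)_{i\in\mathcal{I}}$ for the associated subtrajectories and $\widetilde{H}^i$ for the functional \eqref{equation:Htildefuncional} applied to $(\rho^i,\overline{W}^i)$, the previous paragraph then rewrites as
$$
\mathbf{m}(\widetilde{H},\varepsilon) \;=\; \#\{i\in\mathcal{I}:~\sup\widetilde{H}^i > r+\varepsilon\}.
$$

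Second, by Corollary \ref{corollary:special2}, conditionally on $\mathrm{tr}(\rho,W)$ (and therefore on $L_\sigma$, which is measurable with respect to $\mathrm{tr}(\rho,W)$ by Corollary \ref{corollary:L*sirvepatodo}), the point measure $\sum_i\delta_{(\rho^i,\overline{W}^i)}$ is a Poisson point measure with intensity $\mathbbm{1}_{[0,L_\sigma]}(\ell)\,\dd\ell\cdot\mathbb{N}_{x,r}(\dd\rho,\dd\overline{W})$. By \cite[Theorem 5.1]{2024structure}, applied after translating the local time component by $r$, the process $\widetilde{H}^i$ under $\mathbb{N}_{x,r}$ has the law of $r$ plus the height process of a $\widetilde{\psi}$-Lévy excursion. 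The classical Le Gall--Le Jan formula for the tail of the maximum of a Lévy-tree height process (see e.g. \cite[Chapter 1]{DLG02}), combined with the definition \eqref{int:tilde:v} of $\widetilde{v}$, then yields
$$
\mathbb{N}_{x,r}\bigl(\sup\widetilde{H} > r+\varepsilon\bigr) \;=\; \widetilde{v}(\varepsilon),
$$
so that, conditionally on $L_\sigma$, the random variable $\mathbf{m}(\widetilde{H},\varepsilon)$ is Poisson with parameter $L_\sigma\widetilde{v}(\varepsilon)$.

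Finally, \eqref{int:tilde:v} together with \eqref{1_infinity_psi} applied to $\widetilde{\psi}$ gives $\widetilde{v}(\varepsilon)\to\infty$ as $\varepsilon\to 0$. On $\{L_\sigma=0\}$ the Poisson point measure has no atoms, so $\mathbf{m}(\widetilde{H},\varepsilon)=0$ and \eqref{eq:lim:L:Z} trivially holds. On $\{L_\sigma>0\}$, I would pick a fast-decreasing sequence $\varepsilon_n\downarrow 0$ along which $\widetilde{v}(\varepsilon_{n+1})/\widetilde{v}(\varepsilon_n)\to 1$ and apply the strong law of large numbers for Poisson random variables to obtain $\mathbf{m}(\widetilde{H},\varepsilon_n)/\widetilde{v}(\varepsilon_n)\to L_\sigma$ almost surely; a monotonicity sandwich exploiting that $\varepsilon\mapsto\mathbf{m}(\widetilde{H},\varepsilon)$ and $\varepsilon\mapsto\widetilde{v}(\varepsilon)$ are non-increasing then upgrades this to the continuous limit $\varepsilon\to 0^+$.

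The main difficulty is the identification in the first step. Although the description of $\mathcal{T}_{\widetilde{H}}$ as $\mathcal{T}_H$ with each excursion component $\mathcal{C}_u$ collapsed makes the bijection heuristically transparent, turning it into a measure-theoretic statement requires a careful use of $\widetilde{H}=\widehat{\Lambda}\circ A^{-1}$ and of Lemma \ref{lemma:subordinadoNblack}: one must match the excursions of $\widetilde{H}$ above $r$ with those of $\widehat{\Lambda}$ above $r$ in the original time scale, despite the fact that the time change $A^{-1}$ is not strictly monotone and the set $\{\widehat{\Lambda}=r\}$ may be fractal.
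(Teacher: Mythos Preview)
Your proposal is correct and follows essentially the same route as the paper: identify $\mathbf{m}(\widetilde H,\varepsilon)$ as a Poisson count via the special Markov property (Proposition~\ref{proposition:L*}/Corollary~\ref{corollary:special2}), compute its mean as $L_\sigma\,\widetilde v(\varepsilon)$ using that $\widetilde H$ under $\mathbb N_{x,r}$ is (up to the shift by $r$) the height process of a $\widetilde\psi$-excursion, and conclude by a law of large numbers.

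Two minor differences worth noting. First, the paper reduces to $r=0$ at the outset and then invokes \eqref{eq:donde:se:anula:Lambda} under $\mathbb N_{x,0}$ to get, for each atom, $\{\widehat\Lambda^i>0\}=\{H(\rho^i)>0\}$; this cleanly yields the identification of the excursions of $\widehat\Lambda$ above $0$ with the intervals $(s_i,t_i)$, avoiding the pathwise regularity argument you sketch. Second, for the last step the paper does not pass through a subsequence plus monotonicity sandwich: it sets $P_t=\mathbf m(\widetilde H,e^{-t})$, observes that (conditionally on $L_\sigma$) this is a counting process with continuous compensator $\nu_t=L_\sigma\,\widetilde v(e^{-t})$, time-changes by $\nu^{-1}$ to a standard Poisson process, and reads off the continuous limit directly from the strong law for Poisson processes. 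Your sequence-plus-sandwich argument also works, but note that the $\mathbf m(\widetilde H,\varepsilon_n)$ are nested rather than independent, so ``SLLN for Poisson random variables'' here really means a Borel--Cantelli argument with variance bounds; the paper's time-change avoids this small wrinkle.
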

\begin{proof}
Since $(\rho,W,\Lambda -r)$ under $\mathbf{N}_{x,r}$ is distributed as $(\rho,W,\Lambda)$ under $\mathbf{N}_{x,0}$ it suffices to prove the result for $r = 0$. We argue under $\mathbf{N}_{x,0}$ and as usual, to simplify notation we  omit  the dependence on $(\rho,W)$ and $(\rho,\overline{W})$  in our functionals. Let us  write $(s_i,t_i)_{i\in \mathcal{I}}$ for the connected components of $\{H(\rho_s)>\uptau(W_s):~s\geq 0\}$ and let $(\rho^i,\overline{W}^i)_{i \in \mathcal{I}}$ be the associated subtrajectories. By the special Markov property of Proposition \ref{proposition:L*}, conditionally  $L_\sigma$, the point measure $ \sum_{i \in \mathcal{I}}\delta_{(L_{s_i}, \rho^i, \overline{W}^i )}$ is a Poisson point measure with intensity $L_\sigma\cdot \mathbb{N}_{x,0}$. 
In particular, if $L_\sigma=0$  the set $\mathcal{T}_{\widetilde{H}} \setminus \{ p_{\widetilde{H}}(0) \}$ is empty (since in that case $\widehat{\Lambda}$  is constant) and there is nothing to prove. In what follows we argue on the event $\{L_\sigma>0\}$. Using again the special Markov property of Proposition \ref{proposition:L*} followed by   \eqref{eq:donde:se:anula:Lambda} under $\mathbb{N}_{x,0}$, we get that the collection $\Lambda^i, i \in \mathcal{I}$, are precisely the subtrajectories  associated with  the connected components of $\{ t \geq 0 : \widehat{\Lambda}_t > 0 \}$. Now  the characterisation of the support of $\dd A$ obtained in Lemma \ref{lemma:subordinadoNblack} gives:
\begin{equation*}
  {\textbf{{m}}} ( \widetilde{H}  , \epsilon )=     \#\big\{i\in \mathcal{I}:~\sup_{t \geq 0} \widehat{\Lambda}_t^i \geq \varepsilon\big\}, 
\end{equation*}
for every $\epsilon > 0$. Since by \eqref{equation:soporteAditiva:2} the processes $\Lambda$,  $\widetilde{H}$ under $\mathbb{N}_{x,r}$ differ by a time-change, the quantities   $\mathbb{N}_{x,r}(\sup \widehat{\Lambda}  > \epsilon)$ and $\mathbb{N}_{x,0}(\sup \widetilde{H} > \epsilon)$ coincide, and we infer from our previous discussion that conditionally on $L_{\sigma}$, the random variable  ${\textbf{{m}}} ({\widetilde{H}}  , \epsilon )$ is Poisson  with intensity $L_\sigma\cdot \mathbb{N}_{x,0}(\sup \widetilde{H} > \epsilon)$. Since $\widetilde{H}$ under $\mathbb{N}_{x,0}$ is the height process of $\widetilde{\psi}$-Lévy  excursion,  \cite[Corollary 1.4.2]{DLG02} gives that the continuous, unbounded function $(\widetilde{v}(a): a > 0)$  is precisely $\mathbb{N}_{x,0}(\sup \widetilde{H} > a)$ for $a > 0$.  In particular,   standard properties of Poisson measures yield that conditionally on $L_\sigma$, the process $P_t :=  \textbf{m} ( \widetilde{H},  e^{-t} )$ for $t \geq 0$ is  a counting process  with continuous predicable compensator given by $\nu_t := L_\sigma \cdot \widetilde{v}(e^{-t})$ for  $t \geq 0$. It is   classical  that  the time-changed process $(P_{\nu^{-1}_t}: t \geq 0)$ is a standard Poisson process, see e.g. \cite[Corollary 25.26]{kallenberg},  and we derive the convergence \eqref{eq:lim:L:Z} by the strong law of large numbers for Poisson processes. 
\end{proof}

We are now in position to prove Theorem \ref{theo:X:H:tilde}. 

\begin{proof}[Proof of Theorem \ref{theo:X:H:tilde}]
Recall that under $\mathbb{P}_{0,x,0}$ and $\mathbb{N}_{x,0}$, we write  $\widetilde{X}^\prime$ for the Lévy process associated with $\widetilde{H}$ through Lemma \ref{coro:reconstruccionRhoX} and \eqref{temps:local:I:p:s}. As discussed above, we start by proving the result under $\mathbb{P}_{0,x,0}$, and to do so it suffices to establish that the  jump processes $\Delta \widetilde{X}'_t$, $\Delta \widetilde{X}_t$ for $t \geq 0$  coincide. We will then derive the statement under $\mathbb{N}_{x,0}$ by standard arguments. 
\par  
Since we already know that $\widetilde{X}$ and  $\widetilde{X}^\prime$ have the same distribution, it  suffices to check that for every jump-time $t \geq 0$ of $\widetilde{X}$, there exists $0 \leq t' \leq t$ such that $\Delta \widetilde{X}_{t} \leq \Delta \widetilde{X}_{t^\prime}^\prime$.  Recall from the construction \eqref{E:LevyIto} in terms of the point measure $\widetilde{\mathcal{E}}$ that the jump-times of  $\widetilde{X}$ are precisely the family $A_{\mathfrak{g}(u)}$, for $u\in \mathcal{D}_+$, and that the respective jumps are given by    $L_\sigma(\rho^u, \overline{W}^u)$, for $u \in \mathcal{D}_+$. We now relate these to the jumps of $\widetilde{X}^\prime$ by exploiting  classical results concerning the fractal masses at branching points on Lévy trees. To this end, first note that by straightforward arguments,  a.s. for every $u \in \mathcal{D}_+$ we have:
\begin{equation*}
    \widetilde{H}_{(A_{\mathfrak{g}(u)}+t)\wedge A_{\mathfrak{d}(u)}}(\rho,\overline{W})= \widetilde{H}_{t}(\rho^u,\overline{W}^u), \quad \text{ for } t\geq 0. 
\end{equation*}
To simplify notation we denote the process in the last display by $\widetilde{H}^u$.  For every $u\in \mathcal{D}_+$,  let $\mathcal{T}_{\widetilde{H}^u}$ be the tree coded by $\widetilde{H}^u$ and write $0_u$ for its root. We  let $\textbf{m}(\widetilde{H}^u , \epsilon )$ be  the number of connected components of $\mathcal{T}_{\widetilde{H}^u} \setminus \{ 0_u \}$ having   at least a point  at distance  $\varepsilon > 0$ from the root $0_u$. By the exit formula 
 \eqref{eq:cor:exit:formula} and Lemma \ref{lemma:convergenceL},  it follows that $\mathbb{P}_{0,x,0}$ a.s. for all $u \in \mathcal{D}_+$ we have  
\begin{equation*}
    \lim_{\varepsilon \rightarrow 0} \frac{\textbf{m}(\widetilde{H}^u , \epsilon )}{\widetilde{v}(\varepsilon)} =  L_{\sigma}(\rho^u,\overline{W}^u).   
\end{equation*}
Still under $\mathbb{P}_{0,x,0}$, for each point $[u] \in \mathcal{T}_{\widetilde{H}}$ with $u \in \mathcal{D}_+$ we let $\widetilde{M}([u],\epsilon)$ be the number of connected components of $\mathcal{T}_{\widetilde{H}} \setminus [u]$. Since $(\rho^u, \overline{W}^u)$ is the subtrajectory of $(\rho , \overline{W})$  associated with $[\mathfrak{g}(u), \mathfrak{d}(u)]$, it is plain that we have   $\widetilde{M}([u], \epsilon) \geq  \textbf{m}(\widetilde{H}^u , \epsilon )$ for every $u \in \mathcal{D}_+$ a.s. Hence, we deduce from the last display that 
\begin{equation*}
    \limsup_{\epsilon \rightarrow 0} \frac{\widetilde{M}([u],\epsilon)}{\widetilde{v}(\varepsilon)} \geq    L_{\sigma}(\rho^u,\overline{W}^u),   
\end{equation*}
for every $u\in \mathcal{D}_+$, $\mathbb{P}_{0,x,0}$ a.s. Since  $\widetilde{v}(\varepsilon) \to \infty$  as $\varepsilon \to 0$, we get that  a.s.  for every $u\in \mathcal{D}_+$, we have   $\widetilde{M}([u],\epsilon) \rightarrow \infty$  as $\epsilon \rightarrow 0$ at least along a subsequence, which gives that the point $[u] \in \mathcal{T}_{\widetilde{H}}$ is a point of infinite multiplicity for $\mathcal{T}_{\widetilde{H}}$.   An application of Theorem 4.7 in \cite{FractalAspectsofLevyTrees} (see as well  the discussion right-after) ensures   that the left-hand side in the previous display  converges a.s. towards $\Delta \widetilde{X}^\prime_{t_u}$, where $t_u = \inf\{ t \geq 0 : p_{\widetilde{H}}(t) = [u]  \}$. Finally,  since $p_{\widetilde{H}}(\{A_{g(u)}\}) = [u]$, it must hold that $t_u \leq A_{g(u)}$. This proves that under under $\mathbb{P}_{0,x,0}$, the processes $\widetilde{X}$, $\widetilde{X}^\prime$ are indistinguishables and therefore $\widetilde{H}$ is the height process of $\widetilde{X}$.  
\par 
Let us now explain how to derive from this the analogous  result under $\mathbb{N}_{x,0}$. Still under $\mathbb{P}_{0,x,0}$, consider      the connected components $(a_i,b_i)_{i \in \mathcal{I}}$ of   $\{ t\geq 0 : \rho_t \neq 0  \}$ and for each $i \in \mathcal{I}$ write $(\rho^i,\overline{W}^i)$ for the subtrajectory associated with $[a_i,b_i]$. Hence, $(\rho^i,\overline{W}^i)_{i\in \mathcal{I}}$ are the excursions of the Lévy snake $(\rho,\overline{W})$ away from $(0,x,0)$. Next, for every $i\in \mathcal{I}$, let us write $\widetilde{\mathcal{E}}_i:=\widetilde{\mathcal{E}}(\rho^i,\overline{W}^i)$, $\widetilde{H}^i:=\widetilde{H}(\rho^i,\overline{W}^i)$ and $\widetilde{X}^i$ for the process defined as in  \eqref{E:LevyIto} replacing $\widetilde{\mathcal{E}}$ by $\widetilde{\mathcal{E}}_i$. Now, we claim that the point measure:
\begin{equation}
\sum \limits_{i\in \mathcal{I}} \delta_{-\widetilde{X}_{A_{a_i}}, \widetilde{X}^i, \widetilde{H}^i}
\end{equation}
is a Poisson point measure with intensity $\mathbbm{1}_{\mathbb{R}_+}(\ell) \mathrm{d}\ell \widetilde{N}(\mathrm{d} X,  \mathrm{d} H)$, where $\widetilde{N}$ is the excursion measure above the minimum of a $\widetilde{\psi}$-Lévy process (with associated local time minus the running infimum, see Section \ref{section:MPonLT}) and  $\widetilde{N}(\mathrm{d} X, \mathrm{d} H)$ stands for the corresponding distribution of the Lévy and height excursion process. Before proving the claim let us explain why the desired result follows from it. Fix $\varepsilon>0$, and consider $(\rho^j,\overline{W}^j)$
the first excursion among $(\rho^i,\overline{W}^i)_{i\in \mathcal{I}}$ satisfying $\sup \widetilde{H}^i>\varepsilon$. Then, the distribution of $(\rho^j,\overline{W}^j)$  is precisely $\mathbb{N}_{x,0}(\cdot~ |\sup\widetilde{H}>\varepsilon)$. Therefore, the law of $(\widetilde{X}^j, \widetilde{H}^j)$ is $\mathbb{N}_{x,r}( \dd \widetilde{X}, \dd \widetilde{H} | \sup \widetilde{H} > \epsilon )$ and  by the claim, this  distribution  coincides with  $\widetilde{N}(\mathrm{d} X,  \mathrm{d} H ~|~\sup\widetilde{H}>\varepsilon)$. Since  this holds for every $\varepsilon>0$ and by \eqref{eq:donde:se:anula:Lambda} we  have $\mathbb{N}_{x,0}(\sup  \widetilde{H}=0)=0$, we derive that under $\mathbb{N}_{x,0}$,  the process $\widetilde{X}$ is a $\widetilde{\psi}$-Lévy excursion and 
 $\widetilde{H}$ is the associated  height process. It remains to establish the claim. In this direction, remark that, by  the support characterization \eqref{equation:soporteAditiva:2} and \eqref{eq:donde:se:anula:Lambda}, the points $a_i,b_i$, $i\in \mathcal{I}$, do not belong to $\{\mathfrak{g}(u):~u\in \mathcal{D}(\rho,W)\}$ and they are  points of both left and right increase for  $A(\rho,\overline{W})$. Further,   since the convergence  \eqref{equation:aproximacionA:def}
 holds uniformly  in compacts intervals (with a limit instead of a liminf), we must have  \begin{equation}\label{A:additive}
 A_{(a_i+t)\wedge b_i}(\rho,\overline{W})= A_{a_i}(\rho,\overline{W})+A_{t}(\rho^i,\overline{W}^i),
\end{equation} 
 for every $i\in \mathcal{I}$ and $t\geq 0$. It is now straightforward to deduce from our previous observations that 
 \begin{equation}\label{X:i:H:i:restriction}
\widetilde{X}^{i}_t= \widetilde{X}_{(A_{a_i}+t)\wedge A_{b_i}}- \widetilde{X}_{A_{a_i}}, \quad \text{ and }\quad \widetilde{H}^{i}_t= \widetilde{H}_{(A_{a_i}+t)\wedge A_{b_i}},
\end{equation}
for every $i\in \mathcal{I}$ and  $t\geq 0$.
Since $\widetilde{X}$ and $\widetilde{X}^\prime$ are indistinguishables and $\widetilde{H}$ is the height process of $\widetilde{X}^\prime$, it follows from excursion theory for Lévy processes that to obtain the claim it suffices to show that $(A_{a_i}, A_{b_i})$, $i\in \mathcal{I}$, are the excursion intervals of $\widetilde{X}^\prime$ above its running  infimum. By \eqref{equation:zeros} this is equivalent to establishing that the intervals  
$(A_{a_i}, A_{b_i})$, $i\in \mathcal{I}$, are the connected components of $\{t\geq 0:~\widetilde{H}_t>0\}$. However, this  follows from definition \eqref{equation:Htildefuncional}, the support characterization \eqref{equation:soporteAditiva:2} and \eqref{eq:donde:se:anula:Lambda}. This completes the proof of the theorem.
\end{proof}
We conclude this section by identifying the conditional distribution of $\mathcal{E}$ knowing $\widetilde{H}$ under $\mathbb{P}_{0,x,0}$ and $\mathbb{N}_{x,0}$.  Note that in particular,  this characterises   the distribution of  $\mathcal{E}$ under $\mathbb{N}_{x,0}$ since we already know  by  Theorem \ref{theo:X:H:tilde} that $\widetilde{H}$ under $\mathbb{N}_{x,0}$ is the height process of a $\widetilde{\psi}$-Lévy excursion. First, by Theorem \ref{theo:X:H:tilde} paired with Lemma \ref{coro:reconstruccionRhoX} and \eqref{temps:local:I:p:s}, we know that $\widetilde{X}$, its jump measure $\widetilde{\mathcal{E}}$, and  $\widetilde{H}$ are functionals of each-other.  Hence, it suffices to characterise the law of $\mathcal{E}$ conditionally on $\widetilde{\mathcal{E}}$ under $\mathbb{P}_{0,x,0}$ and $\mathbb{N}_{x,0}$. Since $\widetilde{\mathcal{E}}$ is the image of $\mathcal{E}$ under the map  $(a, \upvarrho, \omega )\mapsto (a, L_\sigma^* (\upvarrho,\omega))$, the description can now be obtained by  standard  methods. 
\par 
In this direction, by Proposition~\ref{corollary:exponente} and  classic disintegration theorems, there exists  a $\widetilde{\pi}$-a.e. uniquely determined family of probability measures  $(\mathbf{N}_{x}^{*, \ell})_{\ell>0}$ on   $\mathbb{D}(\mathbb{R}_+, \mathcal{M}_f(\mathbb{R}_+) \times \mathcal{W}_E)$ such that the function $\ell\mapsto \mathbf{N}_{x}^{*, \ell}$ is  Borel measurable, for every fixed $\ell>0$ the measure $\mathbf{N}_{x}^{*, \ell}$ is supported on $\{L^*_\sigma= \ell\}$, and we have
\begin{equation*}
    \mathbf{N}_{x}^*=\mathbf{N}_{x}^*( \, \cdot \cap \{L_\sigma^*=0\})+\int_{(0,\infty)} \widetilde{\pi}(\mathrm{d} \ell) \, \mathbf{N}_{x}^{*,\ell}. 
\end{equation*}
Finally, let us write $\mathcal{D}_{\widetilde{X}}$ for the set of  jump-times of $\widetilde{X}$; recall these are is in one-to-one correspondence with the elements of $\mathcal{D}_+$ by the map $u\mapsto A_{\mathfrak{g}(u)}$. If $u\in \mathcal{D}_+$ with $s=A_{\mathfrak{g}(u)}$, we write $(\rho^{(s)},W^{(s)}):= (\rho^{u,*}, W^{u,*})$. 
\begin{cor}\label{corolary:condlaw} Under $\mathbb{P}_{0,x,0}$ and conditionally on $\tilde{\mathcal{E}}$, the point measures 
$$\mathcal{E}_+(\mathrm{d} s, \mathrm{d}\upvarrho, \mathrm{d} \omega):=\mathbbm{1}_{L_\sigma^* (\upvarrho,\omega)>0}\mathcal{E}(\mathrm{d} s, \mathrm{d}\upvarrho, \mathrm{d} \omega) \quad \text{ and } \quad  \mathcal{E}_0(\mathrm{d} s, \mathrm{d}\upvarrho, \mathrm{d} \omega):=\mathbbm{1}_{L_\sigma^* (\upvarrho,\omega)=0}\mathcal{E}(\mathrm{d} s, \mathrm{d}\upvarrho, \mathrm{d} \omega)$$ are independent, and their conditional distributions are as follows:
\begin{itemize}
    \item[\emph{(i)}]The conditional distribution of $\mathcal{E}_+$ given $\tilde{\mathcal{E}}$ is characterised by the relation: 
     \begin{align}\label{equation:condlaw}
         \mathbb{E}_{0,x,0}\Big[  g( \, \widetilde{\mathcal{E}} \, ) \exp \big(-  \langle \mathcal{E}_+,f \rangle  \big)      
        \Big]  
        =  \mathbb{E}_{0,x,0}\Big[  g(\, \widetilde{\mathcal{E}} \, ) \prod_{z \in \mathcal{D}_{\widetilde{X}}} \mathbf{N}_x^{*, \Delta \widetilde{X}_z }\Big( \exp \big( -f(z , \rho, W ) \big) \Big)  \Big],  
    \end{align}
    which holds for every non-negative measurable function $f$ on $\mathbb{R}_+\times \mathbb{D}(\mathbb{R}_+, \mathcal{M}_f(\mathbb{R}_+)\times \mathcal{W}_{E})$.
    \item[\emph{(ii)}]  The point measure  $\mathcal{E}_0$ conditionally on $\widetilde{\mathcal{E}}$ is a Poisson point measure  with intensity measure given by 
    \begin{equation*}
        \mathbbm{1}_{[ 0,A_\sigma )}\dd t \otimes  \mathbf{N}_{x}^*( \, \cdot \cap \{L_\sigma^*=0\}). 
    \end{equation*}
\end{itemize}
Furthermore, the previous statements hold if we replace $\mathbb{P}_{0,x,0}$ by the excursion measure $\mathbb{N}_{x,0}$.
\end{cor}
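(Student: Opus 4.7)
The plan is to derive the corollary as a direct consequence of Theorem \ref{theorem:excursionPPP} together with two standard tools from Poisson calculus: the restriction theorem (which splits a Poisson point measure according to a measurable partition of the intensity into independent pieces) and the marking theorem (which identifies the conditional law of the atoms of a Poisson point measure given their projection under a measurable map). The extension from $\mathbb{P}_{0,x,0}$ to $\mathbb{N}_{x,0}$ will be achieved through the excursion decomposition of $(\rho,\overline{W})$ away from $(0,x,0)$ recalled at the end of the proof of Theorem \ref{theo:X:H:tilde}.

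Under $\mathbb{P}_{0,x,0}$, Theorem \ref{theorem:excursionPPP} states that $\mathcal{E}$ is a Poisson point measure with intensity $\mathbbm{1}_{\mathbb{R}_+}(t)\dd t \otimes \mathbf{N}_{x}^{*}$. Since $\{L_\sigma^*=0\}$ and $\{L_\sigma^*>0\}$ form a measurable partition, the restriction theorem implies that $\mathcal{E}_0$ and $\mathcal{E}_+$ are independent Poisson point measures with intensities respectively equal to $\mathbbm{1}_{\mathbb{R}_+}(t)\dd t\otimes\mathbf{N}_{x}^{*}(\cdot\cap\{L_\sigma^*=0\})$ and $\mathbbm{1}_{\mathbb{R}_+}(t)\dd t\otimes\mathbf{N}_{x}^{*}(\cdot\cap\{L_\sigma^*>0\})$. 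By Proposition \ref{corollary:exponente} the image of the latter through $L_\sigma^*$ is $\widetilde{\pi}$, and by the disintegration displayed before the statement of the corollary we may rewrite $\mathbf{N}_{x}^{*}(\cdot\cap\{L_\sigma^*>0\})=\int_{(0,\infty)}\widetilde{\pi}(\dd\ell)\,\mathbf{N}_{x}^{*,\ell}$. Hence $\widetilde{\mathcal{E}}$, the image of $\mathcal{E}_+$ under $(t,\upvarrho,\omega)\mapsto(t,L_\sigma^*(\upvarrho,\omega))$, is a Poisson point measure with intensity $\mathbbm{1}_{\mathbb{R}_+}(t)\dd t\otimes\widetilde{\pi}(\dd\ell)$; the marking theorem then yields that, conditionally on $\widetilde{\mathcal{E}}=\sum_{s\in\mathcal{D}_{\widetilde{X}}}\delta_{(s,\Delta\widetilde{X}_s)}$, the marks $(\rho^{(s)},W^{(s)})_{s\in\mathcal{D}_{\widetilde{X}}}$ are independent with conditional laws $\mathbf{N}_{x}^{*,\Delta\widetilde{X}_s}$. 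This gives the Laplace characterisation \eqref{equation:condlaw} after noting that $\langle\mathcal{E}_+,f\rangle=\sum_{s\in\mathcal{D}_{\widetilde{X}}}f(s,\rho^{(s)},W^{(s)})$. Item (ii) is immediate: since $\widetilde{\mathcal{E}}$ is a functional of $\mathcal{E}_+$, the independence $\mathcal{E}_0\perp\mathcal{E}_+$ upgrades to $\mathcal{E}_0\perp\widetilde{\mathcal{E}}$, so conditionally on $\widetilde{\mathcal{E}}$ the measure $\mathcal{E}_0$ remains Poisson with the stated intensity (recalling that $A_\sigma=\infty$, $\mathbb{P}_{0,x,0}$-a.s.).

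For the extension to $\mathbb{N}_{x,0}$, the key tool is that under $\mathbb{P}_{0,x,0}$ the point measure $\sum_{i\in\mathcal{I}}\delta_{(-I_{a_i},\rho^i,\overline{W}^i)}$ encoding the excursions of $(\rho,\overline{W})$ away from $(0,x,0)$ is a Poisson point measure with intensity $\mathbbm{1}_{\mathbb{R}_+}(\ell)\dd\ell\otimes\mathbb{N}_{x,0}$. Thanks to the additivity \eqref{A:additive} of $A$ and the excursion identification \eqref{X:i:H:i:restriction}, the objects $(\mathcal{E},\widetilde{\mathcal{E}},\mathcal{E}_0,\mathcal{E}_+)$ decompose additively over the $(\rho^i,\overline{W}^i)$, so that for each $i\in\mathcal{I}$ the restriction $(\mathcal{E}^i,\widetilde{\mathcal{E}}^i,\mathcal{E}_0^i,\mathcal{E}_+^i)$ is a measurable functional of $(\rho^i,\overline{W}^i)$ alone. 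Fixing $\epsilon>0$ and selecting the first excursion with $\sigma_i>\epsilon$—whose law is $\mathbb{N}_{x,0}(\,\cdot\mid\sigma>\epsilon)$—and reading the $\mathbb{P}_{0,x,0}$ statements (i)–(ii) on this excursion yields the corollary under $\mathbb{N}_{x,0}(\,\cdot\mid\sigma>\epsilon)$; letting $\epsilon\downarrow 0$ completes the proof. Under $\mathbb{N}_{x,0}$ the quantity $A_\sigma$ is now finite and, by Theorem \ref{theo:X:H:tilde}, a measurable functional of $\widetilde{\mathcal{E}}$, which is why the intensity in (ii) is supported on $[0,A_\sigma)$.

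The main technical point I anticipate is verifying that the indexation $s\in\mathcal{D}_{\widetilde{X}}\mapsto(\rho^{(s)},W^{(s)})$ appearing in \eqref{equation:condlaw} can be carried out in a measurable way with respect to $\widetilde{\mathcal{E}}$, i.e.\ that the bijection $u\leftrightarrow A_{\mathfrak{g}(u)}$ between $\mathcal{D}_+$ and $\mathcal{D}_{\widetilde{X}}$ discussed before Theorem \ref{theo:X:H:tilde} provides a measurable enumeration of the excursions consistent with the marking procedure. This is essentially a bookkeeping issue ensured by Corollary \ref{cor:A:distinct} (injectivity of $u\mapsto A_{\mathfrak{g}(u)}$) together with the measurability of $\widetilde{X}$ as a functional of $\mathcal{E}$ provided by Theorem \ref{theo:X:H:tilde}; once it is settled, the corollary is obtained by the Poissonian manipulations described above with no further analytic input.
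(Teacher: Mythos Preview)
Your proposal is correct and follows essentially the same approach as the paper. The paper's own proof is a single sentence invoking Theorem \ref{theo:X:H:tilde} and ``classic properties of Poisson point measures'' under $\mathbb{P}_{0,x,0}$, then refers to the excursion-decomposition argument of Theorem \ref{theo:X:H:tilde} for the passage to $\mathbb{N}_{x,0}$; your restriction/marking argument and the first-excursion-with-$\sigma>\epsilon$ transfer are precisely the details being omitted there.
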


In other words, point (i) gives that conditionally on $\widetilde{\mathcal{E}}$, the excursions $(\rho^{(z)}, W^{(z)})_{z \in \mathcal{D}_{\widetilde{X}}}$ are independent, and their respective distribution is given by $\mathbf{N}_{x}^{*,\Delta \widetilde{X}_z}$.   Under $\mathbb{P}_{0,x,0}$, the variable $A_\sigma$ is a.s. infinite, and point (ii) then yields that under $\mathbb{P}_{0,x,0}$ the point measure $\mathcal{E}_0$ is independent from $\widetilde{\mathcal{E}}$. Note that under $\mathbb{P}_{0,x,0}$ and $\mathbb{N}_{x,0}$, the variable  $A_\sigma$ is $\widetilde{\mathcal{E}}$ measurable since it is the lifetime of $\widetilde{H}$.  
 
\begin{proof} The statements under $\mathbb{P}_{0,x,0}$ follow immediately  form Theorem \ref{theo:X:H:tilde} and classic properties of Poisson point measures. The result under the measure $\mathbb{N}_{x,0}$ follows by standard arguments as the ones employed in the proof of Theorem \ref{theo:X:H:tilde}. We leave the details to the reader.  
\end{proof}

\section{Consistency with  Abraham-Le Gall's excursion theory}\label{sec:ref:consis:}

We conclude this work verifying the consistency of our excursion theory with the one developed by Abraham and Le Gall  \cite{ALG15} for Brownian motion indexed by the Brownian tree. In this section, we assume that $\psi(\lambda) := \lambda^2/2$ for $\lambda \in \mathbb{R}_+$, that the Polish space  $(E,\dd _E) $ is $\mathbb{R}$ equipped with its euclidean metric, and we set $x := 0$. For $(y,r) \in \mathbb{R}\times \mathbb{R}_+$ we let $\Pi_{y,r}$ be the law of a one-dimensional Brownian motion and its local time at $0$  started from $(y,r)$.  The local time is unique up to a multiplicative constant, that we fix here according to the following  approximation:
\begin{equation*}
    \mathcal{L}_t = \lim_{\epsilon \to 0} \frac{1}{2\epsilon} \int_0^t \dd s \,  1_{\{ |\xi_s| \leq \epsilon \}}, \quad \Pi_{y,r}\text{-a.s.} 
\end{equation*}
Straightforward computations give that assumptions \ref{continuity_snake_2}, \ref{Asssumption_2} and \ref{Asssumption_3} are satisfied, which allows us to consider the measures $\mathbb{N}_{y,r}$ for $(y,r) \in \mathbb{R}\times \mathbb{R}_+$. We write $\mathbf{N}_0^*$ for the corresponding excursion measure at $0$. In this setting,   under $\mathbb{N}_{y,r}, \mathbf{N}_0^*$ and for every  $s \geq 0$, the measure $\rho_s$ is just the Lebesgue measure restricted to $[0, H(\rho_s)]$, and  $H(\rho)$ under $\mathbb{N}_{y,r}$ is distributed as a non-negative Brownian excursion. Since the processes  $H(\rho)$ and $\zeta$ are indistinguishable, in this case  $\rho_t$ is a functional of $\overline{W}_t$. Hence, the exploration process is of no use,  and for this reason  in \cite{ALG15} the results are stated solely in terms of $\overline{W}$ (which is now a Markov process). However, in order to keep the same notation and framework as in the rest of the manuscript, we will still work with  the pair $(\rho,\overline{W})$. 
\par 
The tree-indexed process $(\widehat{W}_{a})_{a \in \mathcal{T}_{H(\rho)} }$ under $\mathbb{N}_{0}$ is the so-called  Brownian motion indexed by the Brownian tree, and the work \cite{ALG15} was devoted to the study of its  excursions away from $0$. In \cite{ALG15}, the authors established that there exists a unique measure $\mathbb{N}^*_0$ on  $\mathbb{D}(\mathbb{R}_+, \mathcal{M}_f(\mathbb{R}_+)\times \mathcal{W}_{\mathbb{R}})$  verifying
\begin{equation*}
    \mathbb{N}_0^*\big( \Phi(\rho, W)\big)=\lim \limits_{\varepsilon \rightarrow 0} \varepsilon^{-1} \mathbb{N}_{\varepsilon,0}\big( \Phi \circ \mathrm{tr}(\rho,W) \big),
\end{equation*}
for every non-negative bounded continuous function $\Phi$ vanishing in 
$$\big\{(\upvarrho,\omega)\in \mathbb{D}(\mathbb{R}_+, \mathcal{M}_f(\mathbb{R}_+)\times \mathcal{W}_{\mathbb{R}}):~ \sup_{s\geq 0} |\widehat{\omega}_s|<\delta\big\},
$$ 
for some $\delta>0$. Then, the following measure was considered:
\begin{equation*}
    \mathbb{M}_0:=\frac{1}{2}\big(\mathbb{N}_0^* +\check{\mathbb{N}}_0^*\big), 
\end{equation*}
where $\check{\mathbb{N}}_0^*$ is the image of $\mathbb{N}_0^*$ by the mapping $(\upvarrho,\omega)\mapsto (\upvarrho,-\omega)$. The measure $\mathbb{M}_0$ was named the excursion measure away from 
$0$ of Brownian motion indexed by the Brownian tree.  Their analysis often relies on the following master formula. For every non-negative measurable functions $g: \mathbb{R} \to \mathbb{R}_+$ and $\Phi : \mathbb{D}(\mathbb{R}_+, \mathcal{M}_f(\mathbb{R}_+)\times \mathcal{W}_{\mathbb{R}}) \to \mathbb{R}_+$, we have  
\begin{equation} \label{equation:averagingN+Localtime}
        \mathbb{N}_{0,0} \Big( \sum_{u \in \mathcal{D}} g(\widehat{\Lambda}_u) \Phi(\rho^{u,*}, W^{u,*}) \Big) = \int_0^\infty \dd \ell \,   g(\ell) 
 \mathbb{M}_{0}( \Phi ).  
\end{equation}
As we shall see,  \eqref{equation:averagingN+Localtime} is a special case of the exit formula proved in Theorem \ref{theorem:exit} and this fact will allow us to relate   the measure $\mathbb{M}_{0}$ with $\mathbf{N}_0^*$. 
\begin{prop}\label{lemma:consistency1}
    The measures $\mathbf{N}_0^*$ and $\mathbb{M}_0$ are identical.  
\end{prop}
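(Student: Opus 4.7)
The plan is to apply the exit formula of Theorem \ref{theorem:exit} to a specific pair $(F,G)$, so that its left-hand side coincides with the left-hand side of the master formula \eqref{equation:averagingN+Localtime}, and then to evaluate its right-hand side explicitly. In this direction, I would take $F(\upvarrho, \overline{\omega}, s) := g(\pi_2(\widehat{\overline{\omega}}_s))$ for a non-negative measurable $g$ on $\mathbb{R}_+$, where $\pi_2$ extracts the local-time coordinate from $\overline{E} = E \times \mathbb{R}_+$, and $G(\upvarrho, \overline{\omega}) := \Phi(\mathrm{tr}(\upvarrho, \omega))$ for an arbitrary non-negative measurable $\Phi$ on $\mathbb{D}(\mathbb{R}_+, \mathcal{M}_f(\mathbb{R}_+)\times \mathcal{W}_{\mathbb{R}})$. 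Since $\texttt{T}_u(\rho,\overline{W})$ coincides with $(\rho,\overline{W})$ on $[0,\mathfrak{g}(u)]$, and using Lemma \ref{lemma:contanteExcursion} to identify $\widehat{\Lambda}_{\mathfrak{g}(u)} = \widehat{\Lambda}_u$, the left-hand side of \eqref{eq:theorem:exit} equals $\mathbb{N}_{0,0}\big(\sum_{u\in\mathcal{D}} g(\widehat{\Lambda}_u)\Phi(\rho^{u,*}, W^{u,*})\big)$, which is exactly the left-hand side of the master formula \eqref{equation:averagingN+Localtime} and therefore evaluates to $\int_0^\infty g(\ell)\,\dd \ell \cdot \mathbb{M}_0(\Phi)$. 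On the right-hand side of \eqref{eq:theorem:exit}, the definition of $\mathbf{N}_0^*$ gives $\mathbf{N}_{0,\widehat{\Lambda}_s}(G(\rho,\overline{W})) = \mathbf{N}_0^*(\Phi)$ independently of $s$, so what remains is to compute the scalar factor $\mathbb{N}_{0,0}\big(\int_0^\sigma \dd A_s\, g(\widehat{\Lambda}_s)\big)$.

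The next step is to evaluate this scalar via the subordination picture. Since $A$ is continuous and non-decreasing with $\widetilde{H}_t = \widehat{\Lambda}_{A^{-1}_t}$ (see \eqref{equation:Htildefuncional}), a standard change of variables gives
$$ \int_0^\sigma \dd A_s\, g(\widehat{\Lambda}_s) = \int_0^{A_\sigma} g(\widetilde{H}_t)\,\dd t. $$
By Theorem \ref{theo:X:H:tilde}, $\widetilde{H}$ under $\mathbb{N}_{0,0}$ is the height process of a $\widetilde{\psi}$-Lévy excursion, and hence the many-to-one formula \eqref{eq:many:to:one:N} applied to the $\widetilde{\psi}$ excursion measure, together with $H(\mathcal{J}_a) = a$ a.s. (which is justified exactly as in the proof of Lemma \ref{proposition:only:spineN*} since $\widetilde{\psi}$ also satisfies \eqref{1_infinity_psi}), yields
$$ \mathbb{N}_{0,0}\Big( \int_0^{A_\sigma} g(\widetilde{H}_t)\,\dd t \Big) = \int_0^\infty g(a)\exp(-\widetilde{\alpha} a)\,\dd a. $$
In the Brownian case $\psi(\lambda)=\lambda^2/2$ we have $\alpha = 0$, so by \eqref{alpha:beta:tilde} also $\widetilde{\alpha} = \mathcal{N}(1-\exp(-\alpha \uptau)) = 0$, and the exponential disappears, leaving the plain Lebesgue integral $\int_0^\infty g(a)\,\dd a$.

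Putting the two sides of the exit formula together, the resulting identity reads
$$ \int_0^\infty g(\ell)\,\dd \ell \cdot \mathbb{M}_0(\Phi) = \int_0^\infty g(a)\,\dd a \cdot \mathbf{N}_0^*(\Phi) $$
for every non-negative measurable $g$ and $\Phi$. Choosing any $g \ge 0$ with $\int_0^\infty g = 1$ yields $\mathbb{M}_0(\Phi) = \mathbf{N}_0^*(\Phi)$ for every non-negative measurable $\Phi$, which gives the claimed equality of measures. I do not anticipate any serious obstacle; the only mildly delicate point is the change-of-variables step, which is justified by \eqref{equation:soporteAditiva:2}: the support of $\dd A$ avoids the constancy intervals of $\widehat{\Lambda}$, so that $g(\widehat{\Lambda}_s)$ sees only well-defined values on the support of $\dd A$ and the substitution $s = A^{-1}_t$ is transparent on $[0,A_\sigma]$.
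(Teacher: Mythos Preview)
Your proof is correct and follows essentially the same route as the paper: apply the exit formula \eqref{eq:theorem:exit} with $G=\Phi\circ\mathrm{tr}$, identify the left-hand side with the master formula \eqref{equation:averagingN+Localtime}, factor out $\mathbf{N}_0^*(\Phi)$ on the right using Definition~\ref{definition:N*}, and evaluate $\mathbb{N}_{0,0}\big(\int_0^\sigma \dd A_s\, g(\widehat{\Lambda}_s)\big)$ via the time change to $\widetilde{H}$, the many-to-one formula \eqref{eq:many:to:one:N} for the $\widetilde{\psi}$-excursion, and $\widetilde{\alpha}=0$. The only cosmetic difference is that the paper cites \cite[Theorem~5.1]{2024structure} directly for the law of $\widetilde{H}$ under $\mathbb{N}_{0,0}$, whereas you invoke Theorem~\ref{theo:X:H:tilde}; both are fine.
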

\begin{proof}
Let $g$ and $\Phi$ be as in \eqref{equation:averagingN+Localtime}, and consider in the exit formula \eqref{eq:theorem:exit} the functional $\Phi \circ \mathrm{tr}(\upvarrho,\omega)$ for $(\upvarrho , \omega) \in \mathbb{D}(\mathbb{R}_+, \mathcal{M}_f(\mathbb{R}_+) \times \mathcal{W}_{\mathbb{R}})$.   Theorem \ref{theorem:exit} entails  that the left hand-side of \eqref{equation:averagingN+Localtime} equals:
\begin{equation*}
\mathbb{N}_{0,0}\Big( \int_0^\sigma \mathrm{d}A_s  ~ g(\widehat{\Lambda}_s) \cdot  \mathbf{N}_{0,{\widehat{\Lambda}_s}}\big(\Phi \circ \mathrm{tr}( \rho , W)  \big)\Big).     
\end{equation*}
Recalling the definition of $\widetilde{H}$ given in \eqref{equation:Htildefuncional}, we infer by a time-change   that the previous displays is given by: 
$$
\mathbb{N}_{0,0}\Big( \int_0^{A_\sigma} \mathrm{d}s  ~ g(\widetilde{H}_s) \cdot  \mathbf{N}_{0,{\widetilde{H}_s}}\big(\Phi \circ \mathrm{tr}( \rho , W) \big)\Big).
$$
Under $\mathbb{N}_{0,0}$, the process $\widetilde{H}$ is the height process of a $\widetilde{\psi}$-Lévy excursion 
 \cite[Theorem 5.1]{2024structure} and by \eqref{alpha:beta:tilde} we must have $\widetilde{\alpha}=0$. Since $A_\sigma$ is the lifetime of $\widetilde{H}$, an application of \eqref{eq:many:to:one:N}  then gives that the last display writes: 
\begin{equation*}
    \int_0^\infty \dd \ell \,   g(\ell) \cdot
 \mathbf{N}_{0,\ell}\big( \Phi\circ \mathrm{tr}( \rho , W)  \big). 
\end{equation*}
Getting back to  \eqref{equation:averagingN+Localtime}, the desired result follows by recalling  the fact that, for any $\ell \geq 0$, the measure $\mathbf{N}_{0}^*$ is the push-forward measure of  $\mathbf{N}_{0,\ell}$ by the map $(\upvarrho,\omega) \mapsto \mathrm{tr}(\upvarrho,\omega)$.
\end{proof}
Making use of the re-rooting formula \cite[Theorem 28]{ALG15} and  the scaling property of $\mathbb{N}_0^*$ (which are not available for general spatial motions), a notion of exit local time was introduced in \cite{ALG15}. A more precise version, which we now recall, was later proved in the work \cite{Disks}. Specifically, \cite[Corollary 37]{Disks} states that, if for every $(\upvarrho, \omega)\in\mathbb{D}(\mathbb{R}_+, \mathcal{M}_f(\mathbb{R}_+) \times \mathcal{W}_{\mathbb{R}})$, we set: 
 \begin{equation*}
        \widetilde{L}^*_t(\upvarrho,\omega)= \liminf_{\varepsilon \to  0} \varepsilon^{-2} \int_{0}^{t} \mathrm{d} s~ \mathbbm{1}_{ \{  |\widehat{\omega}_s| \, \leq \,  \varepsilon \}},
 \end{equation*}
 then, under the excursion measure $\mathbb{M}_0$, this convergence holds   a.e. uniformly in compact intervals, where the $\liminf$ can be replaced by a limit. In particular, the process $\widetilde{L}^{*}(\rho,W) = (\widetilde{L}^{*}_s(\rho,W), s \geq 0)$ under $\mathbb{M}_0$ is a continuous, non-decreasing functional of $(\rho,W)$  with Stieltjes measure $\dd \widetilde{L}^*$ supported in the set $\{ t \geq 0 : \widehat{W}_t = 0 \}$. As usual, we omit its dependence on $(\rho,W)$ when there is no risk of confusion. 

\begin{prop}\label{prop:equi:L}
Under $\mathbf{N}_0^*$, the processes $\widetilde{L}^{*}$ and $L^*$ are indistinguishables.
\end{prop}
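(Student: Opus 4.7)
The argument proceeds by reducing the indistinguishability statement to a per-excursion identity under $\mathbb{N}_{0,0}$ via the exit formula of Theorem \ref{theorem:exit}, and then matching the two approximations defining $L^*$ and $\widetilde{L}^*$ on each excursion using the Brownian structure of the spatial motion.

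\textbf{Step 1 (regularity).} Under $\mathbf{N}_0^*$, both processes are a.e. continuous, non-decreasing and vanish at $0$: for $L^*$ this is Corollary \ref{corollary:L*sirvepatodo}, and for $\widetilde{L}^*$ this is \cite[Corollary 37]{Disks} together with the identification $\mathbf{N}_0^* = \mathbb{M}_0$ from Proposition \ref{lemma:consistency1}. In particular, both Stieltjes measures are supported on $\{s \geq 0 : \widehat{W}_s = 0\}$.

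\textbf{Step 2 (reduction).} An application of the exit formula of Theorem \ref{theorem:exit} to  bounded continuous functionals of the pair $(L^*, \widetilde{L}^*)$, combined with Corollary \ref{corollary:identificacionL*}, shows that the indistinguishability of $L^*$ and $\widetilde{L}^*$ under $\mathbf{N}_0^*$ is equivalent to the pathwise identity
\begin{equation*}
\widetilde{L}^*_t(\rho^{u,*}, W^{u,*}) \;=\; L_{\Gamma_t(\rho^u, W^u)}(\rho^u, W^u), \qquad u \in \mathcal{D},\ t \geq 0,
\end{equation*}
holding $\mathbb{N}_{0,0}$-a.e., where $L$ denotes the exit local time of the Lévy snake from $\mathbb{R} \setminus \{0\}$.

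\textbf{Step 3 (matching the approximations).} After a change of variable through $\Gamma$, the tip-based approximation defining $\widetilde{L}^*_t(\rho^{u,*}, W^{u,*})$ becomes
\begin{equation*}
\varepsilon^{-2} \int_0^{\Gamma_t(\rho^u, W^u)}\dd s\, \mathbbm{1}_{\{|\widehat{W}^u_s| \leq \varepsilon,\, H(\rho^u_s) \le \uptau(W^u_s)\}},
\end{equation*}
while $L_{\Gamma_t}(\rho^u, W^u)$ is the $k \to \infty$ limit, by Lemma \ref{lem:equa:convL}, of $\varepsilon_k^{-1}$ times the Lebesgue measure of the set $\{s \leq \Gamma_t(\rho^u, W^u) : \uptau(W^u_s) < H(\rho^u_s) < \uptau(W^u_s) + \varepsilon_k\}$. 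I will show that both converge to the same limit. The idea is to compute the expectation of each approximation at a point $U$ sampled according to $\dd L_s$, using the spinal decomposition of Proposition \ref{proposition:ManyToOneL*}. Under this spinal law the computation reduces to the same one-dimensional Brownian expectation, using the classical fact that, given the normalisation of $\mathcal{L}$ fixed at the start of the section, the local time at $0$ of a one-dimensional Brownian motion is simultaneously the $\varepsilon \to 0$ limit of $(2\varepsilon)^{-1}$ times the occupation of $[-\varepsilon, \varepsilon]$ and of $\varepsilon^{-1}$ times the occupation of the set $\{0 < |\xi_s| < \varepsilon\}$ before its first return to $0$. A domination argument in the spirit of Proposition \ref{prop:inside:L} then upgrades convergence of the first moments to the almost sure pathwise identity.

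\textbf{Main obstacle.} The main difficulty lies in Step 3: the two approximations look quite different (hitting-time based versus tip based) and have different normalisations ($\varepsilon^{-1}$ versus $\varepsilon^{-2}$). The identification requires carefully exploiting the Brownian nature of the spatial motion to show that the scaling discrepancy is absorbed by the typical behaviour of the tip near $0$, and that the proportionality constant between the two limits is exactly $1$ under the chosen normalisation of $\mathcal{L}$.
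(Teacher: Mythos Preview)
Your reduction in Steps~1--2 is sound (though the paper lifts directly to $\mathbf{N}_{0,0}$ via the pushforward $\mathbf{N}_0^* = \mathbf{N}_{0,0}(\mathrm{tr}(\rho,W)\in\cdot)$ rather than going through $\mathbb{N}_{0,0}$ and the exit formula; both work). The genuine gap is in Step~3.

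The argument ``first moments agree under the spinal law of Proposition~\ref{proposition:ManyToOneL*}, then domination upgrades to pathwise equality'' does not go through. Proposition~\ref{proposition:ManyToOneL*} describes $\mathbf{N}^{\bullet,L}_{0,0}$, i.e.\ a point sampled according to $\dd L$; invoking it for the tip-based approximation $\varepsilon^{-2}\int \mathbbm{1}_{\{|\widehat{W}_s|\le\varepsilon\}}\dd s$ presupposes that the limiting measure of this approximation is $\dd L$, which is exactly what you are trying to prove. A first-moment match of the two approximations (even with domination) would at best yield $\mathbf{N}_{0,0}(L_\sigma)=\mathbf{N}_{0,0}(\widetilde L_\sigma)$ or an $L^1$ statement, not the a.e.\ identity of the two increasing processes. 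The one-dimensional Brownian fact you quote concerns the local time of $\xi$ itself, whereas what must be matched are two \emph{snake} functionals built from the tip process $\widehat{W}$ versus the hitting-time structure $\uptau(W_\cdot)$; no direct reduction to a one-dimensional computation is available at that level.

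The paper supplies the missing pathwise ingredient: it works under the Lebesgue-pointed measure $\mathbf{N}_{0,0}^\bullet$ on $\{H(\rho_U)<\uptau(W_U)\}$, applies the Markov property at $U$ so that the post-$U$ process decomposes into a Poisson point measure of subtrajectories $(\rho^i,\overline W^i)$ with intensity $\rho_U(\dd h)\,\mathbb{N}_{\overline W_U(h)}$, and then invokes \cite[Proposition~34]{Disks}, which states that under $\mathbb{N}_y$ for $y\neq 0$ the tip-based approximation $\varepsilon^{-2}\int \mathbbm{1}_{\{|\widehat W_s|\le\varepsilon,\,H(\rho_s)\le\uptau(W_s)\}}\dd s$ converges a.e.\ uniformly to $L_\cdot(\rho,W)$. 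Because only finitely many subtrajectories with $H(\rho_{a_i})>\delta$ have $\inf|\widehat W^i|<\varepsilon_0$, the sum passes to the limit and one obtains $L_{T_\delta}-L_U=\widetilde L_{T_\delta}-\widetilde L_U$; letting $\delta\to 0$ and using continuity at $0$ gives the claim. The step you are missing is precisely this external input from \cite{Disks} (Proposition~34), which converts the heuristic ``Brownian local-time normalisations agree'' into the needed a.e.\ convergence under each $\mathbb{N}_y$.
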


\begin{proof}
For every  $(\upvarrho, \omega)\in \mathbb{D}(\mathbb{R}_+, \mathcal{M}_f(\mathbb{R}_+) \times \mathcal{W}_{\mathbb{R}})$ and $t \geq 0$, we set 
\begin{equation}\label{equation:convergenceLocaltime*}
        \widetilde{L}_t(\upvarrho,\omega) := \liminf_{\varepsilon \to  0} \varepsilon^{-2} \int_{0}^{t} \mathrm{d} s~ \mathbbm{1}_{ \{  |\widehat{\omega}_s| \, \leq \,  \varepsilon, \, H(\upvarrho_s)\leq\uptau(\omega_s)   \}}. 
\end{equation}
Since $\mathbf{N}_{0}^*$ is the push-forward measure of  $\mathbf{N}_{0,0}$ by the map $(\upvarrho,\omega) \mapsto \mathrm{tr}(\upvarrho,\omega)$, we infer from Lemma \ref{lemma:consistency1} and \cite[Corollary 37]{Disks}  that under $\mathbf{N}_{0,0}$ outside of a negligible set, the time-changed process
    \begin{equation}\label{equation:consistecy}
        \epsilon^{-2}\int_0^t\dd s \,  \mathbbm{1}_{\{ |\widehat{W}_{\Gamma_s} |\leq \epsilon \}} , \quad  \text{ for } t \geq 0, 
    \end{equation}
converges uniformly in compact intervals as $\epsilon \rightarrow 0$. This in turn implies that  the convergence in \eqref{equation:convergenceLocaltime*} holds under $\mathbf{N}_{0,0}$  uniformly in compact intervals (with a limit instead of a liminf), and that the limit of \eqref{equation:consistecy} as $\epsilon \rightarrow 0$ is precisely  
$(\widetilde{L}_{\Gamma_t}(\rho,W):t \geq 0)$. Thus, to complete the proof, it suffices to show that $L(\rho,W)$ and $\widetilde{L}(\rho,W)$ are indistinguishable under $\mathbf{N}_{0,0}$. Since  $L(\rho , W)$ and $\widetilde{L}(\rho , W)$ are continuous  with    $L_0(\rho,W)=\widetilde{L}_0(\rho,W)=0$, it is enough to prove  that $L_{\sigma}(\rho,W)-L_s(\rho,W)=\widetilde{L}_{\sigma}(\rho,W)-\widetilde{L}_s(\rho,W)$ for Lebesgue almost every $s\in (0, \sigma)$ or in other words, that under the pointed measure $\mathbf{N}_{0,0}^\bullet$  we have 
\begin{equation}\label{equation:const2}
    L_{\sigma}(\rho,W)-L_U(\rho,W)=\widetilde{L}_{\sigma}(\rho,W)-\widetilde{L}_U(\rho,W).  
\end{equation}
Observe that $\widetilde{L}(\rho , W)$ and $L(\rho , W)$ are constant on each connected component of the set  $\{s\geq 0:~\uptau(W_s)<H(\rho_s)\}$.  Moreover, Lemma \ref{proposition:only:spineN*} gives $\mathbf{N}^\bullet_{0,0}(\uptau(W_U)=H(\rho_U))=0$.   Hence, it suffices to establish the equality on the event  $\{H(\rho_U)<\uptau(W_U)\}$, and for the rest of the proof we work under the measure
\begin{equation*}
    \mathbf{N}^{\bullet}_{0,0}(\, \cdot \,  \cap \{H(\rho_U)<\uptau(W_U)\}).
\end{equation*}
Now, for every $\delta>0$,  set $T_\delta:=\inf\{s\geq U:~H(\rho_s)\leq \delta \}$. Remark that since $H(\rho_s)>0$ for $s\in(0,\sigma)$, the continuity of $\widetilde{L}(\rho , W)$ and $L(\rho , W)$ implies that  $\widetilde{L}_{T_\delta}(\rho,W) \to \widetilde{L}_{\sigma}(\rho,W)$ and ${L}_{T_\delta}(\rho,W) \to {L}_{\sigma}(\rho,W)$  as $\delta\to 0$.  Therefore, it suffices to show that \eqref{equation:const2} holds with $\sigma$ replaced by $T_\delta$, for any $\delta > 0$.  To this end, fix an arbitrary $\delta>0$  and let  $(a_i,b_i)_{i\in \mathcal{I}}$ be  the connected components  of the set $\{t \geq 0 : \, H(\rho_{U+t})- \inf_{s\in [U,U+t]}H(\rho_s)>0\}$. As usual we write $(\rho^i,\overline{W}^i)_{i \in \mathcal{I}}=(\rho^i, W^i, \Lambda^i)_{i \in \mathcal{I}}$ for the corresponding  subtrajectories.   By \eqref{Lebesgue:no:quiere:verlo},  \eqref{equation:convergenceLocaltime*}, Lemma \ref{lem:L:under:P:mu:w}  and  the Markov property given in Proposition \ref{eq:strong:Markov:N},   the proof of  \eqref{equation:const2} with $\sigma$ replaced by $T_\delta$ boils down  to establish that
\begin{equation}\label{eq:final:L:prime}
 \lim_{\varepsilon \to 0 }\sum \limits_{i\in \mathcal{I}} \mathbbm{1}_{ \delta<H(\rho_{a_i})< \uptau(W_U)} \Big|  L_{\infty}(\rho^i,W^i)-\epsilon^{-1}  \int_{a_i}^{b_i} \dd s \,  \mathbbm{1}_{\{|\widehat{W}_s^i|\leq \varepsilon,  
H(\rho^i_s) \leq \uptau(W_s^i)   \}} \Big| = 0, \quad \text{a.e.}
\end{equation}
 In this direction recall that by  the Markov property of Proposition \ref{eq:strong:Markov:N}  and the discussion following \eqref{equation:PoissonH},  the point measure
\begin{equation*}
    \sum_{i \in  \mathcal{I} } \delta_{(H(\rho_{a_i}),  \rho^i ,   \overline{W}^i )}
\end{equation*}
 is a Poisson point measure with intensity $ \rho_U(\dd h) \, \mathbb{N}_{\overline{W}_U( h ) } ( \dd \rho ,  \dd \overline{W} )$. In particular, it follows from \cite[Proposition 34]{Disks} that:
 \begin{equation*}
\lim_{\varepsilon\to 0} \Big|L_\infty(\rho^i,W^i)-\varepsilon^{-1}\int_{0}^{\infty} \dd s \,  \mathbbm{1}_{\{|\widehat{W}_s^i|\leq \varepsilon,  
H(\rho^i_s) \leq \uptau(W_s^i) \}} \Big|=0, \quad \text{ a.e.}
\end{equation*}
 for every $i\in \mathcal{I}$ such that $\delta<H(\rho_{a_i})< \uptau(W_U)$. The desired result  now follows since, by continuity, there exists   $\varepsilon_0>0$  such that there are only finitely many indices $i\in \mathcal{I}$, with $\delta<H(\rho_{a_i})< \uptau(W_U)$, verifying $\inf |\widehat{W}^i|<\varepsilon_0$. 
\end{proof}

\noindent \textbf{Notation index} 
\begin{itemize}
    \itemsep0em 
    \item $\mathbb{D}(\mathbb{R}_+, M)$, for an arbitrary Polish space  $M$,  stands for  the space of $M$-valued rcll paths indexed by $\mathbb{R}_+$, endowed with the Skorokhod topology
     \item $\sigma_h = \inf \big\{t\geq 0:~h(s)=0 \, \text{ for every }s \geq t\big\}$   lifetime of $h \in \mathbb{D}(\mathbb{R}_+, \mathbb{R})$ (Section \ref{sec:trees:1})
      \item $m_h(s,t)$  infimum of $h$ in the interval $[s\wedge t, s\vee t]$ (Section \ref{sec:trees:1})
      \item $d_h$  metric on $\mathcal{T}_h$ (Section \ref{sec:trees:1}) 
      \item $\mathcal{T}_h$ tree coded by the continuous  non-negative function $h:\mathbb{R}_+ \to \mathbb{R}_+$ (Section \ref{sec:trees:1})
    \item $p_h$  canonical projection from $\mathbb{R}_+$ onto $\mathcal{T}_h$ (Section \ref{sec:trees:1})
     \item $\text{Mult}_{i}(\mathcal{T}_h)$  points of multiplicity $i \in \mathbb{N}$ in $\mathcal{T}_h$ (Section \ref{sec:trees:1})

    \item $E$ a Polish space with metric $\dd_E$ (Section \ref{secsnake})
    \item $\xi$  canonical process in $\mathbb{C}(\mathbb{R}_+, E)$, the space of continuous functions indexed by $\mathbb{R}_+$ taking values in $E$ (Section \ref{secsnake})
    \item $\Pi_y$  law of an $E$-valued continuous Markov process started from $y\in E$  (Section \ref{secsnake})
    \item $\mathcal{W}_E$  space of finite $E$-valued paths (Section \ref{secsnake})
    \item $\zeta_\w$ lifetime of $\w \in \mathcal{W}_E$ (Section \ref{secsnake})
    \item $\widehat{\w} := \w(\zeta_\w)$  for  $\w \in \mathcal{W}_E$ (Section \ref{secsnake})  
    \item $\zeta(\omega) = (\zeta_{\omega_s}: s \geq 0)$  lifetime process of a continuous $\mathcal{W}_E$-valued path $\omega$ (Section \ref{secsnake})
    \item $X$  canonical process in $\mathbb{D}(\mathbb{R}_+, \mathbb{R})$ (Section \ref{subsection:height})
    \item $\psi$  Laplace exponent \eqref{equation:psi} of a spectrally positive Lévy process  (Section \ref{subsection:height})
    \item $H$  height process (Section \ref{subsection:height})
    \item $N$ excursion measure at $0$ of the reflected Lévy process $X-I$ (Section \ref{subsection:height})
    
     \item $\mathcal{M}_f(\mathbb{R}_+)$ set of finite measures on $\mathbb{R}_+$  (Section \ref{subsection:explorationprocess})
    \item $H(\mu) := \sup \text{supp } \mu$ for  $\mu \in \mathcal{M}_f(\mathbb{R}_+)$ (Section \ref{subsection:explorationprocess})
    \item $\kappa_a\mu$  pruning operation for $\mu \in \mathcal{M}_f(\mathbb{R}_+)$ and $a \geq 0$ (Section \ref{subsection:explorationprocess})
    \item $[\mu , \nu]$ concatenation of $\mu , \nu \in \mathcal{M}_f(\mathbb{R}_+)$ with $H(\mu) < \infty$ (Section \ref{subsection:explorationprocess})
    \item $\langle \mu , f \rangle$  integral of a measurable function $f: \mathbb{R}_+ \to \mathbb{R}$ with respect to $\mu$  (Section \ref{subsection:explorationprocess})
    \item $\rho^\mu$ exploration process started from $\mu \in \mathcal{M}_f(\mathbb{R}_+)$ (Section \ref{subsection:explorationprocess})
    \item $\textbf{P}_\mu$  law of $\rho^\mu$ for $\mu \in \mathcal{M}_f(\mathbb{R}_+)$ (Section \ref{subsection:explorationprocess})
    \item $\eta$  dual of the exploration process (Section \ref{subsection:explorationprocess})
     \item $\mathcal{M}^{0}_{f}:=\big\{\mu\in \mathcal{M}_{f}(\mathbb{R}_{+}):\:H(\mu)<\infty \ \text{ and } \text{supp } \mu = [0,H(\mu)]\big\}\cup\{0\}$  (Section \ref{subsection:explorationprocess})

    \item $\Theta:=\big\{(\mu, \w) \in \mathcal{M}_f^0 \times \mathcal{W}_{E}:~H(\mu)=\zeta_{\text{w}}\big\}$  subset of  initial conditions for the Lévy snake (Section \ref{section:snake})
    \item $(\rho , W)$  canonical process in $\mathbb{D}(\mathbb{R}_+, \mathcal{M}_f(\mathbb{R}_+) \times \mathcal{W}_E )$ (Section \ref{secsnake})
    \item $\mathbb{P}_{\mu , \w}$  law in $\mathbb{D}(\mathbb{R}_+, \mathcal{M}_f(\mathbb{R}_+)\times \mathcal{W}_E )$ of the Lévy snake started from $(\mu , \w) \in \Theta$ (Section \ref{section:snake})
    \item $\mathbb{N}_y$ excursion measure away from $(0,y) \in \mathcal{M}_f(\mathbb{R}_+) \times \mathcal{W}_E$ of the Lévy snake (Section \ref{section:snake})
    \item $\mathcal{S}_{\mu, \w}$ subset of $\mathbb{D}(\mathbb{R}_+, \mathcal{M}_f(\mathbb{R}_+) \times \mathcal{W}_E )$ of snake paths started from ${(\mu , \w) \, \in \, \Theta }$ (Section \ref{section:snake})
    \item $\mathcal{S} := \bigcup_{(\mu , \w) \, \in \, \Theta } \mathcal{S}_{\mu, \w}$  set of snake paths (Section \ref{section:snake})
   
    \item $\mathcal{N}$ excursion measure of $\xi$ away from $x$ (Section \ref{section:framework})
    \item $\overline{E} = E \times \mathbb{R}_+$  (Section \ref{section:framework})
    \item  $\overline{\w} = (\w, \ell)$  elements of $\mathcal{W}_{\overline{E}}$ (Section \ref{section:framework})
    \item $\overline{\Theta}:= \big\{(\mu, \overline{\w}) \in \mathcal{M}_f^0 \times \mathcal{W}_{\overline{E}}:~H(\mu)=\zeta_{\text{w}}\big\}$ (Section \ref{section:framework})
    \item $\overline{\Theta}_x$  subset of $\overline{\Theta}$ satisfying conditions (i) and (ii) from Section \ref{section:framework}  (Section \ref{section:framework})
    \item  $\Pi_{y,r}$  law of an $E$-valued continuous Markov process and its local time at $x$  started from $(y,r)\in \overline{E}$  (Section \ref{section:framework})
    \item  $(\rho , \overline{W})$  canonical process in $\mathbb{D}(\mathbb{R}_+, \mathcal{M}_f(\mathbb{R}_+) \times \mathcal{W}_{\overline{E}} )$ (Section \ref{section:framework})
    
    
    \item $\uptau(\text{w}):=\inf\big\{t > 0 : ~ \text{w}(t) = x\big\}$  return time to $x$ of  $\w \in \mathcal{W}_E$ (Section \ref{section:truncationboundary})
    \item $V_t(\upvarrho , \omega) := \int_0^t \dd s \,  \mathbbm{1}_{\{  \zeta_{\omega_s} \leq \uptau(\omega_s)    \}}$ for $(\upvarrho , \omega) \in \mathbb{D}(\mathbb{R}_+, \mathcal{M}_f(\mathbb{R}_+) \times \mathcal{W}_E )$  (Section \ref{section:truncationboundary}) 
    \item $\Gamma_s(\upvarrho, \omega):=\inf\big\{t\geq 0 :  V_t(\upvarrho , \omega) > s\big\}$, $s \geq 0$,  right-inverse of $V(\upvarrho , \omega)$ (Section \ref{section:truncationboundary})
    \item $\mathrm{tr}\big(\upvarrho ,\omega\big) :=(\upvarrho_{\Gamma_s(\upvarrho, \omega)},\omega_{\Gamma_s(\upvarrho , \omega)})_{s\in \mathbb{R}_+}$  truncation of $(\upvarrho , \omega) \in \mathbb{D}(\mathbb{R}_+, \mathcal{M}_f(\mathbb{R}_+) \times \mathcal{W}_E )$  (Section \ref{section:truncationboundary})
    \item $L(\upvarrho , \omega) = (L_t(\upvarrho , \omega) : t \geq 0)$ exit local time   (Section \ref{section:truncationboundary})
    \item $\mathcal{D}(\upvarrho, \omega)$ collection of debut times for $(\upvarrho, \omega) \in \mathcal{S}_x$  (Section \ref{sub:sect:debut})
    \item $(\upvarrho^u,\overline{\omega}^u)$ for $u \in \mathcal{D}(\upvarrho, \omega)$ subtrajectory stemming from $u$ for $(\upvarrho, \overline{\omega}) \in \overline{\mathcal{S}}_x$ (Section \ref{sub:sect:debut})
    \item $(\upvarrho^{u,*},{\omega}^{u,*})$ for $u \in \mathcal{D}(\upvarrho, \omega)$ excursion away from $x$ associated with $u$ of $(\upvarrho, \omega) \in \mathcal{S}_x$ (Section \ref{sub:sect:debut})
    \item $\mathbf{N}_{x,r}$ for $r \geq 0$ (Section \ref{section:excursionmeasure})
    \item $\mathbf{N}_{x}^*$ the excursion measure away from $x$ of $(\widehat{W}_a)_{a \in \mathcal{T}_H}$ (Section \ref{section:excursionmeasure})
    \item $(U^{(1)}, U^{(2)})$ a two-dimensional subordinator with exponent \eqref{identity:exponenteSubord} (Section \ref{section:MainSpinalDecomp})
    \item $(J_a , \widecheck{J}_a)  := \big(\mathbbm{1}_{[0,a]}(t)~ \dd U^{(1)}_t , \mathbbm{1}_{[0,a]}(t) ~\dd U^{(2)}_t \big)$  Lebesgue-Stieltjes measure of $(U^{(1)}, U^{(2)})$ restricted to $[0,a]$, for $a \geq 0$  (Section \ref{section:MainSpinalDecomp})
    \item $ \big(\upvarrho, \overline{\omega}\big)^{s,\leftarrow}$ and  $\big(\upvarrho, \overline{\omega}\big)^{s,\rightarrow}$ pieces of path before and after $s \geq 0$ of $(\upvarrho , \overline{\omega}) \in \mathbb{D}(\mathbb{R}_+, \mathcal{M}_f(\mathbb{R}_+)\times \mathcal{W}_{\overline{E}})$ \\(Section \ref{section:MainSpinalDecomp})
    \item $A(\upvarrho, \overline{\omega}) = (A_t(\upvarrho, \overline{\omega}))_{t \geq 0}$ the local time at $x$ of $(\widehat{\omega}_a)_{a \in \mathcal{T}_H}$  for $(\upvarrho, \overline{\omega}) \in \mathbb{D}(\mathbb{R}_+, \mathcal{M}_f(\mathbb{R}_+) \times \mathcal{W}_E )$  (Section~\ref{section:additivefuncionals})    
    \item $\texttt{T}_u(\rho, \overline{W})$ the Lévy snake ``trimmed'' from the subtrajectory  stemming from $u \in \mathcal{D}$ (Section \ref{subsection:exitformula})
    \item $\mathcal{E}(\upvarrho, \overline{\omega})$  the excursion process of $(\upvarrho, \overline{\omega}) \in \mathbb{D}(\mathbb{R}_+, \mathcal{M}_f(\mathbb{R}_+)\times \mathcal{W}_{\overline{E}})$ (Section \ref{section:Poisson})
    \item $L^* = (L^*_t: t \geq 0)$  (Section \ref{section:L})
    \item $\widetilde{H}(\upvarrho, \overline{\omega})$  for $(\upvarrho, \overline{\omega}) \in  \mathbb{D}(\mathbb{R}_+, \mathcal{M}_f(\mathbb{R}_+) \times \mathcal{W}_{\overline{E}})$  (Section \ref{section:joinlaw})
    \item $\widetilde{\psi}$  Laplace exponent of the tree coded by the local time (Section \ref{section:joinlaw})

\end{itemize}

\bibliographystyle{siam}

\end{document}